\newtheorem{lemma}{Lemma}[section]
\newtheorem{proposition}{Proposition}[section]
\newtheorem{thm}{Theorem}[section]
\newtheorem{corollary}{Corollary}[section]
\newtheorem{remark}{Remark}[section]
\def\Var{\textsf{Var}} % the symbol Var for covariance used the sans serif letter
\def\misc{\textsf{Misclust}}
\def\snr{\textsf{SNR}}
\def\h{\textsf{H}}
\def\text#1{\mbox{\rm #1}}
\def\X{\mathscr{X}}
\def\Z{\mathcal{Z}}
\def\sgn{\text{sign}}
\DeclarePairedDelimiter{\ceil}{\lceil}{\rceil}
\newcommand{\argmin}{\mathop{\rm argmin}}
\newcommand{\argmax}{\mathop{\rm argmax}}
\newcommand{\indc}[1]{{\mathbf{1}_{\left\{{#1}\right\}}}}
\newcommand{\norm}[1]{\left\|{#1} \right\|}
\newcommand{\wh}{\widehat}
\newcommand{\wt}{\widetilde}
\newcommand{\fnorm}[1]{\|#1\|_{\rm F}}
\newcommand{\opnorm}[1]{\|#1\|}
\newcommand{\rank}{\mathop{\sf rank}}
\newcommand{\iprod}[2]{\left \langle #1, #2 \right\rangle}
\newtheorem*{conda}{Condition A}
\newtheorem*{condb}{Condition B}
\newtheorem*{condb'}{Condition B'}
\newtheorem*{condc}{Condition C}
\newtheorem*{condd}{Condition D}
\newtheorem*{conde}{Condition E}
\newcommand{\br}[1]{\left( #1 \right)}
\newcommand{\sbr}[1]{\left[ #1 \right]}
\newcommand{\cbr}[1]{\left\{ #1 \right\}}
\newcommand{\ebr}[1]{\exp\left( #1 \right)}
\newcommand{\pbr}[1]{\p\left( #1 \right)}
\newcommand{\mathr}{\mathbb{R}}
\newcommand{\mathn}{\mathcal{N}}
\newcommand{\abs}[1]{\left| #1 \right|}
\newcommand{\p}{\mathbb{P}}
\newcommand{\E}{\mathbb{E}}
\newcommand{\dist}{\stackrel{d}{=}}
\newcommand{\Zk}{\mathbb{Z}/k\mathbb{Z}}
\renewcommand{\complement}{\mathsf{c}}
\title{Iterative Algorithm for Discrete Structure Recovery
%\thanks{funding}
}
\author[1]{Chao Gao}
\author[2]{Anderson Y. Zhang}
\affil[1]{
University of Chicago
% \\
% \{chao.gao, huibin.zhou\}@yale.edu
}
\affil[2]{
University of Pennsylvania
}
\begin{document}
\maketitle

\begin{abstract}
We propose a general modeling and algorithmic framework for discrete structure recovery that can be applied to a wide range of problems. Under this framework, we are able to study the recovery of clustering labels, ranks of players, signs of regression coefficients, cyclic shifts, and even group elements from a unified perspective. A simple iterative algorithm is proposed for discrete structure recovery, which generalizes methods including Lloyd's algorithm and the power method. A linear convergence result for the proposed algorithm is established in this paper under appropriate abstract conditions on stochastic errors and initialization. We illustrate our general theory by applying it on several representative problems: (1) clustering in Gaussian mixture model, (2) approximate ranking, (3) sign recovery in compressed sensing, (4) multireference alignment,  and (5) group synchronization,  and show that minimax rate is achieved in each case.

\smallskip

\textbf{Keywords.} {$k$-means clustering, approximate ranking, high-dimensional statistics, Hamming distance, variable selection, cyclic shift, permutation}
\end{abstract}
% \begin{keyword}[class=AMS]
% \kwd[Primary ]{62H12}
% \kwd[; secondary ]{62C20}
% \end{keyword}
% \begin{keyword}
% \kwd{Convex programming, group Lasso, Minimax rates, Rates of convergence, Sparse CCA (SCCA)}
% \end{keyword}

% \end{frontmatter}

%%!TEX root = manuscript20190917.tex

\section{Introduction}

Discrete structure is commonly seen in modern statistics and machine learning, and various problems can be formulated into tasks of recovering the underlying discrete structure.
A leading example is clustering analysis \citep{hartigan1975clustering}, where the discrete structure of the data is parametrized by a vector of clustering labels. Theoretical and algorithmic understandings of clustering analysis have received much attention in the recent literature especially due to the interest in community detection of network data \citep{girvan2002community,leskovec2010empirical,zhang2016minimax,mossel2018proof}. Other important examples of discrete structure recovery include ranking \citep{braverman2008noisy,mao2017minimax}, variable selection \citep{ji2012ups,butucea2018variable}, crowdsourcing \citep{dawid1979maximum,gao2016exact}, estimation of unknown permutation \citep{collier2016minimax,pananjady2017linear}, graph matching \citep{conte2004thirty,ding2018efficient}, and recovery of hidden Hamiltonian cycle \citep{broder1994finding,bagaria2018hidden}.

Despite the the progress of understanding discrete structures in various specific problems, a general theoretical investigation has been lacking in the literature. This is partly due to the fact that theory of discrete structure recovery can be quite different from traditional statistical estimation of continuous parameters. In fact, it has been argued that the nature of discrete structure recovery is closely related to hypothesis testing theory \citep{gao2018minimax}. In addition, the existing literature on the statistical guarantees of discrete structure recovery mostly focuses on characterizing the condition of exact recovery \citep{zhao2006model,meinshausen2006high,lounici2008sup, wasserman2009high,mossel2014consistency,abbe2015exact,bagaria2018hidden}. Let $z^*=(z_1^*,z_2^*,\dotsc,z_p^*)$ represent a discrete structure of interest, where each $z_j^*$ parametrizes a discrete status of either the $j$th sample or the $j$th variable of the data set. The exact recovery is achieved by some estimator $\wh{z}$ if $\wh{z}_j=z_j^*$ for all $j\in[p]$. However, exact recovery of discrete structure usually requires a strong signal to noise ratio condition. A more interesting, more realistic, but harder problem is when only partial recovery \citep{yun2014accurate,zhang2016minimax,gao2016exact,gao2018community, butucea2018variable,ndaoud2018optimal,gao2017phase}  of $z^*$ is possible. Under this regime, a statistical guarantee can be established on the proportion of errors, and the result will naturally lead to the condition of exact recovery as a special case.

\iffalse
\textcolor{red}{Despite the wide existence of discrete structure, its theoretical analysis is much less compared to that of continuous parameters. %The statistical and computational properties of methods recovering discrete structure are not well understood. 
This is due to the nature that discrete structure recovery is closely related to hypothesis testing theory \citep{gao2018minimax}, different from the traditional analysis of continuous parameter estimation. One the other hand, the existing literature on the statistical guarantees of estimating the discrete structure recovery  mostly focuses on characterizing the condition for exact recovery \citep{zhao2006model,meinshausen2006high,lounici2008sup, wasserman2009high,mossel2014consistency,abbe2015exact,bagaria2018hidden}. That is, if a vector $z^*=(z_1^*,z_2^*,\dotsc,z_p^*)$ representing the underlying discrete structure and each $z_j^*$ parametrizes a discrete status of either the $j$th sample or the $j$th variable of the data set, the exact recovery is achieved when $\wh{z}_j=z_j^*$ for all $j\in[p]$ for some estimator $\hat z$. However, exact recovery usually requires  strong separation condition of parameters.
A more interesting, more realistic, and harder problem is when only partial recovery \citep{yun2014accurate,zhang2016minimax,gao2016exact,gao2018community, butucea2018variable,ndaoud2018optimal,gao2017phase}  of $z^*$ is possible. Under this regime, statistical guarantees can be established on the proportion of error, and covering the exact recovery as a special case.
}
\fi

Discrete structure recovery is also challenging from a computational point of view. In spite of being optimal in many cases, maximum likelihood estimation of $z^*$ is often combinatorial and thus computationally infeasible. Though convex relaxations such as linear programming or semidefinite programming can be derived for many specific problems \citep{hajek2016achieving,hajek2016achieving2,montanari2016semidefinite,bagaria2018hidden,giraud2018partial,bandeira2014multireference,ling2020solving}, they may not be scalable to very large data sets and the analysis of partial recovery of convex relaxation is usually quite involved \citep{fei2018exponential,fei2020achieving,giraud2018partial}. Moreover, in many examples such as clustering and variable selection, the data generating process is parametrized both by a discrete structure and a continuous model parameter. The presence of the nuisance continuous parameter further complicates the design of efficient algorithms.

The goal of this paper is to develop a general modeling and algorithmic framework for partial recovery of discrete structures. We first propose a general structured linear model parametrized by a discrete structure $z^*$ and a global continuous parameter $B^*$, which unifies various problems of discrete structure recovery into the same framework. A simple iterative algorithm is then proposed for recovering $z^*$, which can be informally written in the following form
\begin{equation}
z^{(t)}=\argmin_z\sum_{j=1}^p\left\|T_j-\nu_j\left(\wh{B}(z^{(t-1)}),z_j\right)\right\|^2\quad \text{for all }t\geq 1. \label{eq:informal}
\end{equation}
Here, $T_j$ is some local statistic whose distribution depends both on the $j$th label $z_j^*$ and the global continuous parameter $B^*$ of the model. 
Because of the separability of the objective function across $j\in[p]$, each $z_j^{(t)}$ takes the value of $z_j$ such that $\nu_j(\wh{B}(z^{(t-1)}),z_j)$ is the closest to $T_j$, and therefore computation of (\ref{eq:informal}) is straightforward. The general iterative procedure (\ref{eq:informal}) recovers some interesting algorithms, among which perhaps the most important one is Lloyd's algorithm  \citep{lloyd1982least} for $k$-means clustering. In the clustering context, $T_j$ is the $j$th data point, and $\nu_j(\wh{B}(z^{(t-1)}),z_j)$ is the $z_j$th estimated clustering center computed based on the clustering labels $z^{(t-1)}$ from the previous step. In addition, (\ref{eq:informal}) also leads to algorithms in approximate ranking, sign recovery and many other problems that will be studied in details in this paper.

The main result of our paper characterizes conditions under which (\ref{eq:informal}) converges with respect to some loss function $\ell(\cdot,\cdot)$ to be defined later. An informal statement of the result is given below,
\begin{equation}
\ell(z^{(t)},z^*)\leq 2\xi_{\rm ideal}(\delta) + \frac{1}{2}\ell(z^{(t-1)},z^*)\quad \text{for all }t\geq 1, \label{eq:informal-converge}
\end{equation}
with high probability. That is, the value of $\ell(z^{(t)},z^*)$ converges at a linear rate to $4\xi_{\rm ideal}(\delta)$. Here, we use $\xi_{\rm ideal}(\delta)$ to characterize the error of an ideal procedure,
\begin{equation}
\wh{z}^{\rm ideal}=\argmin_z\sum_{j=1}^p\left\|T_j-\nu_j\left(\wh{B}(z^*),z_j\right)\right\|^2, \label{eq:ideal}
\end{equation}
and the definition of $\xi_{\rm ideal}(\delta)$ with a general $\delta>0$ will be given in Section \ref{sec:con}. The convergence result (\ref{eq:informal-converge}) is established with some $\delta>0$ arbitrarily close to $0$. We note that the ideal procedure (\ref{eq:ideal}) is not realizable because of its dependence on the true $z^*$, but (\ref{eq:informal-converge}) shows that the iterative algorithm (\ref{eq:informal}) achieves almost the same statistical performance of (\ref{eq:ideal}). The general abstract result is then applied to several concrete examples: clustering for Gaussian mixture model, approximate ranking, sign recovery in compressed sensing,  multireference alignment, and group synchronization, which represent different types of discrete structure recovery problems. %We will domonstrate how to recover clustering labels, ranks of players, and signs of variables using the same iterative algorithm.
Moreover, in each of the examples, we can relate $\xi_{\rm ideal}(\delta)$ to the minimax rate of the problem, and therefore claim that the simple algorithm (\ref{eq:informal}) is both computationally efficient and minimax optimal.

Another popular method that is suitable for discrete structure recovery is the EM algorithm \citep{dempster1977maximum}. The global convergence of EM algorithm has been established under the setting of unimodal likelihood \citep{wu1983convergence} and the setting of two-component Gaussian mixtures \citep{daskalakis2016ten,xu2016global,xu2018benefits,wu2019randomly}. Local convergence results for general settings are  obtained by \cite{balakrishnan2017statistical}. However, the most important difference between \cite{balakrishnan2017statistical} and our work, besides the obvious difference of algorithms, is that our convergence guarantee (\ref{eq:informal-converge}) is established for the estimation error of the discrete structure $z^*$, while the convergence result in \cite{balakrishnan2017statistical} for the EM algorithm is established for the estimation error of the continuous model parameter $B^*$. Results like (\ref{eq:informal-converge}) may be possibly established for the EM algorithm in the context of clustering using the techniques suggested by the paper \cite{zhang2017theoretical}\footnote{The paper \cite{zhang2017theoretical} established the convergence of mean-field coordinate ascent and Gibbs sampling in the sense of (\ref{eq:informal-converge}) for community detection in stochastic block models. Due to the connection and similarity between the EM algorithm and variational Bayes, we believe the techniques used in (\ref{eq:informal-converge}) can also be applied to the analysis of EM algorithms for clustering problems.}, but whether (\ref{eq:informal-converge}) can be proved for the EM algorithm in general settings is unknown.

The most related work to us in the literature is the analysis of Lloyd's algorithm in Gaussian mixture models by \cite{lu2016statistical}. Since Lloyd's algorithm is a special case of (\ref{eq:informal}), our convergence result (\ref{eq:informal-converge}) recovers the result in \cite{lu2016statistical} as a special case with even a slightly weaker condition on the number of clusters. We also mention the recent paper \cite{ndaoud2018sharp} that studies a variant of Lloyd's algorithm and improves the signal to noise ratio condition in \cite{lu2016statistical} for the two-component Gaussian mixtures.

~\\
\emph{Organization. }Our general modeling and algorithmic framework will be introduced in Section \ref{sec:framework}. In Section \ref{sec:con}, we formulate abstract conditions under which we can establish the convergence of the algorithm. Applications to specific examples will be discussed afterwards, including clustering in Gaussian mixture model (Section \ref{sec:clustering}), approximate ranking (Section \ref{sec:rank-results}),  sign recovery in compressed sensing (Section \ref{sec:regression}), multireference alignment (Section \ref{sec:MRA}), and group synchronization (Section \ref{sec:group}). Section \ref{sec:disc} discusses the potential limitations of our framework and possible open problems. Finally, all the technical proofs will be given in Section \ref{sec:pf}.

~\\
\emph{Notation.} For $d \in \mathbb{N}$, we write $[d] = \{1,\dotsc,d\}$.  Given $a,b\in\mathbb{R}$, we write $a\vee b=\max(a,b)$ and $a\wedge b=\min(a,b)$.  For two positive sequences $a_n$ and $b_n$, we write $a_n\lesssim b_n$ to mean that there exists a constant $C > 0$ independent of $n$ such that $a_n\leq Cb_n$ for all $n$; moreover, $a_n \asymp b_n$ means $a_n\lesssim b_n$ and $b_n\lesssim a_n$.  For a set $S$, we use $\indc{S}$ and $|S|$ to denote its indicator function and cardinality respectively. For a vector $v = (v_1,\ldots,v_d)^T \in\mathbb{R}^d$, we define $\norm{v}^2=\sum_{\ell=1}^d v_\ell^2$.  The trace inner product between two matrices $A,B\in\mathbb{R}^{d_1\times d_2}$ is defined as $\iprod{A}{B} =\sum_{\ell=1}^{d_1}\sum_{\ell'=1}^{d_2}A_{\ell \ell'}B_{\ell \ell'}$, while the Frobenius and operator norms of $A$ are given by $\fnorm{A}=\sqrt{\iprod{A}{A}}$ and $\opnorm{A}=s_{\max}(A)$ respectively, where $s_{\max}(\cdot)$ denotes the largest singular value.  The notation $\mathbb{P}$ and $\mathbb{E}$ are generic probability and expectation operators whose distribution is determined from the context.

\section{A General Framework of Models and Algorithms}\label{sec:framework}

We start with the introduction of structured linear model. Consider a pair of random vectors $Y\in\mathbb{R}^N$ and $X\in\mathbb{R}^D$. We impose the relation that
\begin{equation}
\mathbb{E}(Y|X)=\X_{z^*}(B^*). \label{eq:SLM}
\end{equation}
On the right hand side of (\ref{eq:SLM}), $z^*=(z_1^*,\dotsc,z_p^*)$ is a vector of discrete labels, and each $z_j^*$ is allowed to take its value from a label set of size $k$. For simplicity, we assume the label set to be $[k]$ without loss of generality. The vector $B^*$ is the model parameter that lives in a subspace indexed by $z^*$. We use the notation $\mathcal{B}_{z^*}$ for this subspace. Finally, $\X_{z^*}$ is a linear operator jointly determined by $X$ and $z^*$. It maps from $\mathcal{B}_{z^*}$ to $\mathbb{R}^N$.

The general structured linear model (\ref{eq:SLM}) can be viewed as a slight variation of the one introduced by \cite{gao2015general}. It is particularly suitable for the research of label recovery and includes some important examples that will be studied in this paper.

To estimate the labels $z_1^*,\dotsc,z_p^*$, one strategy is to first compute a local statistic $T_j=T_j(X,Y)\in\mathbb{R}^d$ and then infer $z_j^*$ from $T_j$ for each $j\in[p]$. We require that
\begin{equation}
\mathbb{E}(T_j|X)=\mu_j(B^*,z_j^*). \label{eq:local-T-E}
\end{equation}
Then, suppose the model parameter $B^*$ was known, a natural procedure to estimate $z_j^*$ would find an $a\in[k]$ such that $\|T_j-\mu_j(B^*,a)\|^2$ is the smallest. However, for some applications, the form of $\mu_j(B^*,z_j^*)$ may not be available, and thus we need to associate each $\mu_j(B^*,z_j^*)$ with a surrogate $\nu_j(B^*,z_j^*)$. An oracle procedure that uses the knowledge of $B^*$ is given by
\begin{equation}
\wh{z}_j^{\rm oracle} = \argmin_{a\in[k]}\|T_j-\nu_j(B^*,a)\|^2. \label{eq:oracle-procedure}
\end{equation}
On the other hand, since $B^*$ is unknown in practice, we need to replace the $B^*$ in (\ref{eq:oracle-procedure}) by an estimator. A natural procedure is the least-squares estimator $\wh{B}(z^*)$, where for a given $z$, $\wh{B}(z)$ is defined by
\begin{equation}
\wh{B}(z)=\argmin_{B\in\mathcal{B}_{z}}\|Y-\X_{z}(B)\|^2. \label{eq:B-oracle}
\end{equation}
This time we need to know $z$ in (\ref{eq:B-oracle}) to compute $\wh{B}(z)$. Therefore, we shall combine (\ref{eq:oracle-procedure}) and (\ref{eq:B-oracle}) and obtain the following iterative algorithm.

\begin{algorithm}[H]
\DontPrintSemicolon
\SetKwInOut{Input}{Input}\SetKwInOut{Output}{Output}
\Input{The data $Y$, $X$ and the number of iterations $t_{\max}$.}
\Output{The estimator $\wh{z}={z}^{(t_{\max})}$.} 
\nl Compute the initializer ${z}^{(0)}$.

\nl For $t$ in $1:t_{\max}$, compute 
\begin{align}\label{eqn:local_B}
&{B}^{(t)}=\argmin_{B\in\mathcal{B}_{{z}^{(t-1)}}}\|Y-\X_{z^{(t-1)}}(B)\|^2,\\
\text{and }& {z}_j^{(t)}=\argmin_{a\in[k]}\|T_j(X,Y)-\nu_j({B}^{(t)},a)\|^2\quad \forall\; j\in[p].\label{eqn:local_testing}
\end{align}
%\qquad For $j$ in $1:p$, compute 
%\begin{align}
%
%\end{align}

 \caption{Iterative discrete structure recovery}\label{alg:general}
\end{algorithm}

Let us now discuss a few important examples. Though we regard $X$ and $Y$ to be vectors in our general framework, in some specific examples, it is often more convenient to arrange the data into matrices instead of vectors. Of course, the two representations are equivalent and the relation can be precisely described with the operations of vectorization and Kronecker product.

\paragraph{Clustering in Gaussian Mixture Model.} Consider $Y\in\mathbb{R}^{d\times p}$ with $Y_1,\ldots,Y_p$ standing for its columns. We assume that $Y_j\sim \mathn(\theta_{z_j^*}^*,I_d)$ independently for $j\in[p]$. Here, $z_1^*,\ldots, z_p^*\in[k]$ are $p$ clustering labels and $\theta_1^*,\ldots,\theta_k^*\in\mathbb{R}^d$ are $k$ clustering centers. In our general framework, we have $N=dp$, $B^*$ is the concatenation of the $k$ clustering centers, and $\mathcal{B}_{z^*}=\mathbb{R}^{d\times k}$. The linear operator $\X_{z^*}$ maps the matrix $\{\theta_{a}^*\}_{a \in [k]}\in\mathbb{R}^{d\times k}$ to the matrix $\{\theta_{z_j^*}^*\}_{j\in[p]}\in\mathbb{R}^{d\times p}$.
For the algorithm to recover the clustering labels, the obvious local statistic is $T_j=Y_j$ for $j\in[p]$. Moreover, we set $\nu_j(B^*,a)=\mu_j(B^*,a)=\theta_a^*$. Then, Algorithm \ref{alg:general} is specialized into the following iterative procedures:
\begin{eqnarray*}
\theta_a^{(t)} &=& \frac{\sum_{j=1}^p\indc{z_j^{(t-1)}=a}Y_j}{\sum_{j=1}^p\indc{z_j^{(t-1)}=a}}, \quad\quad a\in[k], \\
z_j^{(t)} &=& \argmin_{a\in[k]}\|Y_j-\theta_a^{(t)}\|^2, \quad\quad j\in[p].
\end{eqnarray*}
This is recognized as Lloyd's algorithm \citep{lloyd1982least}, the most popular way to solve $k$-means clustering.

\paragraph{Approximate Ranking.} In the task of ranking, we consider the observation of pairwise interaction data $Y_{ij}$ for $(i,j)\in[p]^2$ and $i\neq j$. The rank or the position of the $j$th player is specified by an integer $z_j^*\in[p]$. What is known as the pairwise comparison model assumes that $Y_{ij}\sim \mathn(\beta^*(z_i^*-z_j^*),1)$ for some signal strength parameter $\beta^*\in\mathbb{R}$. Our goal is to estimate the discrete position $z_j^*$ for each player $j\in[p]$. This is known as the approximate ranking problem \citep{gao2017phase}, which is different from exact ranking where $z^*$ corresponds to a permutation. 
\iffalse
For identifiability, we assume that
\begin{equation}
\frac{1}{p}\sum_{j=1}^pz_j^*=\color{red}{ \frac{1}{p}\sum_{j=1}^p j}.
\label{eq:exact-id-rank}
\end{equation}
\fi
It is easy to see that this approximate ranking model is a special case of our general structured linear model. To be specific, we have $N=p(p-1)$, $B^*$ is identified with $\beta^*$, and $\mathcal{B}_{z^*}=\mathbb{R}$. The linear operator $\X_{z^*}$ maps $\beta^*$ to $\{\beta^*(z_i^*-z_j^*)\}_{1\leq i\neq j\leq p}$. To recover $z_j^*$, it is natural to define
\begin{equation}
T_j=\frac{1}{\sqrt{2(p-1)}}\sum_{i\in[p]\backslash\{j\}}(Y_{ji}-Y_{ij}). \label{eq:T_j-rank}
\end{equation}
Thus, we have
\begin{align}\label{eqn:rank_nu_def}
\mu_j(B^*,a)=\frac{2p}{\sqrt{2(p-1)}}\beta^*\left(a-\frac{1}{p}\sum_{i=1}^pz_i^*\right).
\end{align}
Because of the dependence of $\mu_j(B^*,a)$ on the unknown $\frac{1}{p}\sum_{i=1}^pz_i^*$, we also introduce $\nu_j(B^*,a)$ that replaces $\frac{1}{p}\sum_{i=1}^pz_i^*$ with a fixed value $\frac{p+1}{2}$,
$$\nu_j(B^*,a)=\frac{2p}{\sqrt{2(p-1)}}\beta^*\left(a-\frac{p+1}{2}\right).$$
The choice of $\frac{p+1}{2}$ is due to the parameter space of $z^*$ that will be introduced in Section \ref{sec:rank-results}.
This leads to the following iterative algorithm:
\begin{eqnarray}
\nonumber \beta^{(t)} &=& \frac{\sum_{1\leq i\neq j\leq p}(z_i^{(t-1)}-z_j^{(t-1)})Y_{ij}}{\sum_{1\leq i\neq j\leq p}(z_i^{(t-1)}-z_j^{(t-1)})^2}, \\
\label{eq:f-matching} z_j^{(t)} &=& \argmin_{a\in[p]}\left|\sum_{i\in[p]\backslash\{j\}}(Y_{ji}-Y_{ij})-2p\beta^{(t)}\left(a-\frac{p+1}{2}\right)\right|^2, \quad\quad j\in[p].
\end{eqnarray}
Since (\ref{eq:f-matching}) is recognized as feature matching \citep{collier2016minimax}, this is the iterative feature matching algorithm suggested by \cite{gao2017phase} for approximate ranking.

\paragraph{Sign Recovery in Compressed Sensing.} In a standard regression problem, we assume $Y|X\sim \mathn(X\beta^*,I_n)$. Consider a random design setting, where $X_{ij}\stackrel{iid}{\sim}\mathn(0,1)$ for $(i,j)\in[n]\times[p]$. We study the sign recovery problem, which is equivalent to estimating $z_j^*\in\{-1,0,1\}$, where the three possible values of $z_j^*$ standing for $\beta_j^*$ being negative, zero, and positive. We also define the sparsity level $s=\sum_{j=1}^p|z_j^*|$. In order that sign recovery is information-theoretically possible, we assume that either $\beta_j^*=0$ or $|\beta_j^*|\geq \lambda$. The same setting has been considered by \cite{ndaoud2018optimal}. The sparse linear regression model is clearly a special case of our general framework with the choices $N=n$, $B^*=\beta^*$, and $\mathcal{B}_{z^*}=\{\beta\in\mathbb{R}^p:\beta_j=\beta_j|z_j^*|\}$. The linear operator $\X_{z^*}$ maps $\beta^*$ to $X\beta^*$. Following \cite{ndaoud2018optimal}, we use the local statistic 
\begin{align}\label{eqn:local_statistics_reg}
T_j=\|X_j\|^{-1}X_j^TY 
\end{align}
to recover $z_j^*$. Here, $X_j\in\mathbb{R}^n$ stands for the $j$th column of $X$. Computing its conditional expectation, we obtain
\begin{align}\label{eqn:reg_mu_a_def}
\mu_j(B^*,a)=a\|X_j\|\max\{|\beta_j^*|,\lambda\}+\|X_j\|^{-1}\sum_{l\in[p]\backslash\{j\}}\beta_l^*X_j^TX_l,
\end{align}
for $a\in\{-1,0,1\}$, because of the assumption that $\beta_j^*$ is either $0$ or larger than $\lambda$ in magnitude. Replacing $\max\{|\beta_j^*|,\lambda\}$ in the above formula by some threshold level $2t(X_j)$, we get
\begin{align}\label{eqn:reg_nu_a_def}
\nu_j(B^*,a)=2a\|X_j\|t(X_j)+\|X_j\|^{-1}\sum_{l\in[p]\backslash\{j\}}\beta_l^*X_j^TX_l,
\end{align}
for $a\in\{-1,0,1\}$. The threshold level is specified by
\begin{equation}
t(X_j)=\frac{\lambda}{2}+\frac{\log\frac{p-s}{s}}{\lambda\|X_j\|^2}, \label{eq:def-t(x)}
\end{equation}
which can be derived from a minimax analysis \citep{butucea2018variable,ndaoud2018optimal}. Specializing Algorithm \ref{alg:general} to the current context gives
\begin{eqnarray}
\label{eq:LS} \beta^{(t)} &=& \argmin_{\left\{\beta\in\mathbb{R}^p: \beta_j=\beta_j|z_j^{(t-1)}|\right\}}\|y-X\beta\|^2, \\
\label{eq:VS} z_j^{(t)} &=& \begin{cases}
1 & \frac{X_j^TY-\sum_{l\in[p]\backslash\{j\}}\beta_l^{(t)}X_j^TX_l}{\|X_j\|^2} > t(X_j) \\
0 & -t(X_j) \leq \frac{X_j^TY-\sum_{l\in[p]\backslash\{j\}}\beta_l^{(t)}X_j^TX_l}{\|X_j\|^2} \leq t(X_j) \\
-1 & \frac{X_j^TY-\sum_{l\in[p]\backslash\{j\}}\beta_l^{(t)}X_j^TX_l}{\|X_j\|^2} < -t(X_j).
\end{cases}
\end{eqnarray}
We note that (\ref{eq:VS}) is a slight modification of the variable selection procedure in \cite{ndaoud2018optimal}. The main difference is that \cite{ndaoud2018optimal} uses an estimator of $\beta^*$ computed with an independent data set, while we compute a least-squares procedure (\ref{eq:LS}) restricted on the support of $z^{(t-1)}$ obtained from the previous step using the same data set.

\paragraph{Multireference Alignment.} Consider independent data points $Y_j\sim \mathn(Z_j^*\theta^*,I_d)$ for $j\in[p]$. A common parameter $\theta^*\in\mathbb{R}^d$ is shared by the $p$ observations. The matrix $Z_j^*$ is a cyclic shift such that $(Z_j\theta^*)_i=\theta^*_{i+t_j(\text{mod }d)}$ for some integer $t_j$. In other words, for each $j\in[p]$, a noisy shifted version is observed. To put the problem into our general framework, we have $N=dp$, $B^*=\theta^*$, $z_j^*=Z_j^*$, and $\mathcal{B}_{z_j^*}=\mathbb{R}^d$. The linear operator $\X_{z^*}$ maps $\theta^*$ to $(Z_1^*\theta^*,\cdots,Z_p^*\theta^*)$. We are interested in the recovery of the cyclic shifts $Z_1^*,\cdots,Z_p^*$. For this purpose, consider the local statistic $T_j=Y_j$ for all $j\in[p]$. This results in $\nu_j(B^*,U)=\mu_j(B^*,U)=U\theta^*$ for any $U \in\mathcal{C}_d$ where $\mathcal{C}_d$ is  the class of cyclic shifts.
%any cyclic shift $U\in\mathbb{R}^{d\times d}$. 
Then, Algorithm \ref{alg:general} is specialized into the following iterative procedures:
\begin{eqnarray*}
\theta^{(t)} &=& \frac{1}{p}\sum_{j=1}^p(Z_j^{(t-1)})^TY_j, \\
Z_j^{(t)} &=& \argmin_{U\in\mathcal{C}_d}\|Y_j-U\theta^{(t)}\|^2, \quad\quad j\in[p].
\end{eqnarray*}
%We use the notation $\mathcal{C}_d$ for the class of cyclic shifts. 
It is clear that $|\mathcal{C}_d|=d$ and thus the update for $Z_j^{(t)}$ has a linear complexity. The statistical property of the above algorithm will be analyzed in Section \ref{sec:MRA}.

\paragraph{Group Synchronization.}

Consider a group $(\mathcal{G},\circ)$ and group elements $g_1,\cdots,g_p\in\mathcal{G}$. The group synchronization problem is the recovery of $g_1,\cdots,g_p$ from noisy versions of $g_i\circ g_j^{-1}$. It turns out a number of important instances of group synchronization can be regarded as special cases of our general framework, and thus can be provably solved by the iterative algorithm. Let us illustrate by the simplest example of $\mathbb{Z}_2$ synchronization. In this model, one observes $Y_{ij}\sim \mathn(\lambda^* z_i^*z_j^*,1)$ for all $1\leq i<j\leq p$ with $z_1^*,\cdots,z_p^*\in\{-1,1\}$. The parameter $\lambda^*\in\mathbb{R}$ plays the role of signal-to-noise ratio.

Though it is most natural to identify $\lambda^*$ with $\beta^*$ in the general framework, this treatment would result in an iterative algorithm with a computationally infeasible update of $z^{(t)}$ because of the quadratic dependence. A smart and much better way is to regard the vector $\lambda^*z^*\in\mathbb{R}^p$ as $\beta^*$ by taking advantage of the flexibility of $\mathcal{B}_{z^*}$. To be specific, we can organize the observations into a matrix $Y=z^*(\beta^*)^T+W\in\mathbb{R}^{p\times p}$ with $\beta^*\in\mathcal{B}_{z^*}=\{\beta=\lambda z^*: \lambda\in\mathbb{R}\}$. In this way, we have $N=\frac{p(p-1)}{2}$, $B^*=\beta^*$, and $\mathcal{B}_{z^*}=\{\beta=\lambda z^*: \lambda\in\mathbb{R}\}$. Thus, given $\beta^*$, the mean of $Y$ is linear with respect to $z^*$. To derive an iterative algorithm, we let $T_j=Y_j$ be the $j$th column of $Y$. We then have $\nu_j(B^*,a)=\mu_j(B^*,a)=a\beta^*$ for $a\in\{-1,1\}$. The iterative algorithm is
\begin{eqnarray}
\label{eq:dead-cells} \beta^{(t)} &=& \argmin_{\beta=\lambda z^{(t-1)}:\lambda\in\mathbb{R}}\fnorm{Y-z^{(t-1)}\beta^T}^2, \\
\nonumber z_j^{(t)} &=& \argmin_{a\in\{-1,1\}}\|Y_j-a\beta^{(t)}\|^2.
\end{eqnarray}
It is easy to see that (\ref{eq:dead-cells}) has a closed form $\beta^{(t)}=\frac{(z^{(t-1)})^TYz^{(t-1)}}{p^2}z^{(t-1)}$. This leads to the following equivalent form of the algorithm
\begin{equation}
z_j^{(t)} = \begin{cases}
\sgn(Y_j^Tz^{(t-1)}), & (z^{(t-1)})^TYz^{(t-1)} \geq 0, \\
-\sgn(Y_j^Tz^{(t-1)}), & (z^{(t-1)})^TYz^{(t-1)} < 0,
\end{cases} \label{eq:power-z2}
\end{equation}
which is a variation of the power method. Despite the simplicity of (\ref{eq:power-z2}), its statistical property is unknown in the literature and will be analyzed in Section \ref{sec:z2}.

In addition to $\mathbb{Z}_2$ synchronization, the above idea can also be applied to other group synchronization problems. Examples of $\Zk$ synchronization and permutation synchronization will be analyzed in Section \ref{sec:zk} and \ref{sec:perm} respectively.
%Examples including $\Zk$ synchronization and permutation synchronization will also be discussed in Section \ref{sec:group}.

\iffalse
It is also worth mentioning the connection between (\ref{eq:LS}), (\ref{eq:VS}) and some existing iterative procedures for support recovery in literature. The update (\ref{eq:VS}) behaves as a hard thresholding. When $\norm{X_j}=1,\forall j\in[n]$, it estimates support by investigating the magnitudes of coordinates of $\beta^{(t-1)} + X^T(y- X\beta^{(t-1)})$. Our algorithm differs from the Iterative Hard Thresholding algorithm \cite{blumensath2009iterative} mainly due to the presence of the least square step (\ref{eq:LS}). In terms of support recovery, (\ref{eq:LS}) and (\ref{eq:VS}) together are close related to the Hard Thresholding Pursuit \cite{foucart2011hard}, which also perform a least square estimation from some estimation of the support in each iteration, same as (\ref{eq:LS}). However, is is followed by a new estimation of the support, which are the indexes of the largest $s$ coordinates with the larges absolute value in $\beta^{(t-1)} + X^T(y- X\beta^{(t-1)})$, which is different from (\ref{eq:VS}). 
\fi

\section{Convergence Analysis}\label{sec:con}

In this section, we formulate abstract conditions under which we can derive the statistical and computational guarantees of Algorithm \ref{alg:general}. 

\paragraph{A General Loss Function}
Our goal is to establish a bound for every $t\geq 1$ with respect to the loss $\ell(z^{(t)},z^*)$. The loss function is defined by
\begin{equation}
\ell(z,z^*) = \sum_{j=1}^p\|\mu_j(B^*,z_j)-\mu_j(B^*,z_j^*)\|^2. \label{eq:loss}
\end{equation}
It has a close relation to the Hamming loss $h(z,z^*)=\sum_{j=1}^p\indc{z_j\neq z_j^*}$.
Define $$\Delta_{\min}^2=\min_{j\in[p]}\min_{1\leq a\neq b\leq k}\|\mu_j(B^*,a)-\mu_j(B^*,b)\|^2,$$ and then we immediately have
\begin{equation}
\ell(z,z^*)\geq \Delta_{\min}^2h(z,z^*). \label{eq:l-to-hamming}
\end{equation}

\paragraph{Error Decomposition} By (\ref{eq:local-T-E}), we can decompose each local statistic as
\begin{equation}
T_j=\mu_j(B^*,z_j^*)+\epsilon_j.\label{eq:Tj-model}
\end{equation}
We usually have $\epsilon_j\sim \mathn(0,I_d)$, but this is not required, and we shall also note that the $\epsilon_j$'s may not even be independent across $j\in[p]$. By (\ref{eqn:local_testing}), if we start the algrotihm from any $z$, then $z^*_j$ will be incorrectly estimated after one iteration if $z_j^* \neq \argmin_{a\in[k]}\|T_j-\nu_j({\wh B}(z),a)\|^2$.
Consequently, assume $z_j^*=a$, and it is important to analyze the event
\begin{equation}
\|T_j-\nu_j(\wh{B}(z),b)\|^2 \leq \|T_j-\nu_j(\wh{B}(z),a)\|^2, \label{eq:error:a-b}
\end{equation}
for any $b\in[k]\backslash\{a\}$. Recall the definition of $\wh{B}(z)$ in (\ref{eq:B-oracle}). We plug (\ref{eq:Tj-model}) into (\ref{eq:error:a-b}), and then after some rearrangement, we can see that the event (\ref{eq:error:a-b}) is equivalent to
\begin{equation}
\iprod{\epsilon_j}{\nu_j(\wh{B}(z^*),a)-\nu_j(\wh{B}(z^*),b)} \leq -\frac{1}{2}\Delta_j(a,b)^2 + F_j(a,b;z) + G_j(a,b;z) + H_j(a,b). \label{eq:decomposition}
\end{equation}
On the right hand side of (\ref{eq:decomposition}), $\Delta_j(a,b)^2$ is the main term that characterizes the difference between the two labels $a$ and $b$. It is defined as
$$\Delta_j(a,b)^2=\|\mu_j(B^*,a)-\nu_j(B^*,b)\|^2-\|\mu_j(B^*,a)-\nu_j(B^*,a)\|^2.$$
Note that with the notation $\Delta_j(a,b)^2$, we have implicitly assume that $\Delta_j(a,b)^2\geq 0$ throughout the paper. This assumption is easily satisfied in all the examples considered in the paper.
The other three terms in (\ref{eq:decomposition}) are the error terms that we need to control. Their definitions are given by
\begin{eqnarray*}
F_j(a,b;z) &=& \iprod{\epsilon_j}{\br{\nu_j(\wh{B}(z^*),a)-\nu_j(\wh{B}(z),a)}-\br{\nu_j(\wh{B}(z^*),b)-\nu_j(\wh{B}(z),b)}}, \\
G_j(a,b;z) &=& \frac{1}{2}\left(\|\mu_j(B^*,a)-\nu_j(\wh{B}(z),a)\|^2-\|\mu_j(B^*,a)-\nu_j(\wh{B}(z^*),a)\|^2\right) \\
&& -\frac{1}{2}\left(\|\mu_j(B^*,a)-\nu_j(\wh{B}(z),b)\|^2-\|\mu_j(B^*,a)-\nu_j(\wh{B}(z^*),b)\|^2\right), \\
H_j(a,b) &=& \frac{1}{2}\left(\|\mu_j(B^*,a)-\nu_j(\wh{B}(z^*),a)\|^2-\|\mu_j(B^*,a)-\nu_j(B^*,a)\|^2\right) \\
&& -\frac{1}{2}\left(\|\mu_j(B^*,a)-\nu_j(\wh{B}(z^*),b)\|^2-\|\mu_j(B^*,a)-\nu_j(B^*,b)\|^2\right).
\end{eqnarray*}
With these quantities defined as above, we can check that (\ref{eq:decomposition}) is indeed equivalent to (\ref{eq:error:a-b}). To help readers understand the meaning of these error terms, we work out the formulas in the context of $\mathbb{Z}_2$ synchronization. By specializing the definitions of the error terms in $\mathbb{Z}_2$ synchronization, we have for any $a\neq b$,
\begin{eqnarray}
\label{eq:aspid111} F_j(a,b;z) &=& (a-b)\iprod{\epsilon_j}{\wh{\beta}(z^*)-\wh{\beta}(z)}, \\
\label{eq:aspid112} G_j(a,b;z) &=& 2\iprod{\beta^*}{\wh{\beta}(z^*)-\wh{\beta}(z)}, \\
\label{eq:aspid113} H_j(a,b) &=& 2\iprod{\beta^*}{\beta^*-\wh{\beta}(z^*)}.
\end{eqnarray}

The reason to have such decomposition (\ref{eq:decomposition}) is as follows. 
\begin{itemize}
\item By igoring the three error terms, the event $\langle{\epsilon_j},{\nu_j(\wh{B}(z^*),a)-\nu_j(\wh{B}(z^*),b)}\rangle \leq -\frac{1}{2}\Delta_j(a,b)^2$ contributes to the ideal recovery error rate. That is, even if we were given the true $z^*$, applying one iteration in Algorithm \ref{alg:general}, i.e., (\ref{eqn:local_testing}) would still result in some error.
\item The error terms $F_j(a,b;z)$ and $G_j(a,b;z)$ can be controlled by the difference between $\wh{B}(z)$ and $\wh{B}(z^*)$, which further depends on $\ell(z,z^*)$. We will treat $F_j(a,b;z)$ and $G_j(a,b;z)$ differently because the former involves the additional randomness of $\epsilon_j$.
\item The error term $H_j(a,b)$ can be controlled by the difference between $\wh{B}(z^*)$ and $B^*$. In fact, unlike $F_j(a,b;z)$ or $G_j(a,b;z)$, $H_j(a,b)$ does not depend on $z$, and thus its value remains unchanged throughout the iterations.
\end{itemize}
\iffalse
As a consequence, after excluding $\langle{\epsilon_j},{\nu_j(\wh{B}(z^*),a)-\nu_j(\wh{B}(z^*),b)}\rangle \leq -\frac{1}{2}\Delta_j(a,b)^2$, the remaining terms in (\ref{eq:error:a-b}) can be grouped as follows:
\begin{align*}
& \text{ iteration error } + \text{ stochastic error } \\
=& \text{ iteration error envolving }\epsilon_j +  \text{ iteration error not envolving }\epsilon_j + \text{ stochastic error, }
\end{align*}
which correspond to $F_j(a,b;z), G_j(a,b;z)$ and $H_j(a,b;z)$ in (\ref{eq:decomposition}).
\fi

\paragraph{Conditions for Algorithmic Convergence} Now we need to discuss how to analyze the error terms $F_j(a,b;z)$, $G_j(a,b;z)$ and $H_j(a,b)$. There are three types of conditions that we will impose.
\begin{conda}[$\ell_2$-type error control]
Assume that
$$
\max_{\{z:\ell(z,z^*)\leq\tau\}}\sum_{j=1}^p\max_{b\in[k]\backslash\{z_j^*\}}\frac{F_j(z_j^*,b;z)^2\|\mu_j(B^*,b)-\mu_j(B^*,z_j^*)\|^2}{\Delta_j(z_j^*,b)^4\ell(z,z^*)} \leq \frac{1}{256}\delta^2
$$
holds with probability at least $1-\eta_1$, for some $\tau,\delta,\eta_1>0$.
\end{conda}
\begin{condb}[restricted $\ell_2$-type error control]
Assume that
$$
\max_{\{z:\ell(z,z^*)\leq\tau\}}\max_{T\subset[p]}\frac{\tau}{4\Delta_{\min}^2|T|+\tau}\sum_{j\in T}\max_{b\in[k]\backslash\{z_j^*\}}\frac{G_j(z_j^*,b;z)^2\|\mu_j(B^*,b)-\mu_j(B^*,z_j^*)\|^2}{\Delta_j(z_j^*,b)^4\ell(z,z^*)} \leq \frac{1}{256}\delta^2$$
holds with probability at least $1-\eta_2$, for some $\tau,\delta,\eta_2>0$.
\end{condb}
\begin{condc}[$\ell_{\infty}$-type error control]
Assume that
$$
\max_{j\in[p]}\max_{b\in[k]\backslash\{z_j^*\}}\frac{|H_j(z_j^*,b)|}{\Delta_j(z_j^*,b)^2} \leq \frac{1}{4}\delta
$$
holds with probability at least $1-\eta_3$, for some $\tau,\delta,\eta_3>0$.
\end{condc}

Conditions A, B and C are for the error terms $F_j(a,b;z)$, $G_j(a,b;z)$ and $H_j(a,b)$, respectively. Because of the difference of the three terms that we have mentioned earlier, they are controlled in different ways. Both Conditions A and B impose $\ell_2$-type controls and relate $F_j(a,b;z)$ and  $G_j(a,b;z)$ to the loss function $\ell(z,z^*)$. On the other hand, $H_j(a,b)$ is controlled by an $\ell_{\infty}$-type bound in Condition C.

Next, we define a quantity referred to as the ideal error,
\begin{equation}\label{eqn:xi_ideal_def}
\xi_{\rm ideal}(\delta) = \sum_{j=1}^p\sum_{b\in[k]\backslash\{z_j^*\}}\|\mu_j(B^*,b)-\mu_j(B^*,z_j^*)\|^2\indc{\iprod{\epsilon_j}{\nu_j(\wh{B}(z^*),z_j^*)-\nu_j(\wh{B}(z^*),b)} \leq -\frac{1-\delta}{2}\Delta_j(z_j^*,b)^2}.
\end{equation}
We note that $\xi_{\rm ideal}(\delta)$ is a quantity that does not change with $t$. In fact, with some $\delta>0$, $\xi_{\rm ideal}(\delta)$ can be shown to be an error bound for the ideal procedure $\wh{z}_j^{\rm ideal}$ defined in (\ref{eq:ideal}). We therefore choose $\xi_{\rm ideal}(\delta)$ with a small $\delta>0$ as the target error that $z^{(t)}$ converges to. In specific examples studied later in Sections \ref{sec:clustering}-\ref{sec:regression}, we will show $\xi_{\rm ideal}(\delta)$ can be bounded by the minimax rate of each problem.

\iffalse
{\color{red}{which is closely related to ideal procedure $\wh{z}_j^{\rm ideal}$ defined in (\ref{eq:ideal}). The exact expression of its loss $\ell(\wh{z}_j^{\rm ideal} ,z^*)$
%can be bounded by $\ell(\wh{z}_j^{\rm ideal} ,z^*) \leq  \sum_{j=1}^p\sum_{b\in[k]\backslash\{z_j^*\}}\|\mu_j(B^*,b)-\mu_j(B^*,z_j^*)\|^2 \indc{\|T_j - \nu_j(\wh{ B}(z^*),b)\|  \leq   \|T_j - \nu_j(\wh{ B}(z^*),z^*_j)\|}$. 
is related to events $\|T_j - \nu_j(\wh{ B}(z^*),b)\|  \leq   \|T_j - \nu_j(\wh{ B}(z^*),z^*_j)\|$, for each $j\in[p]$ and $b\neq z^*_j$. Each of them can be further decomposed, with the leading term being $\langle{\epsilon},{\nu_j(\wh{B}(z^*),z_j^*)-\nu_j(\wh{B}(z^*),b)}\rangle \leq -\Delta_j(z_j^*,b)^2/2$. Since the remaining terms are all about the differences  $\nu_j(B^*,b)- \nu_j(\wh{B}(z^*),b)$ and $\nu_j(B^*,z^*_j)- \nu_j(\wh{B}(z^*),z^*_j)$ which are small as $\wh{B}(z^*)$ is close $B^*$, they are expected to be negligible  compared to $\Delta_j(z_j^*,b)^2/2$. The definition of $\xi_{\rm ideal}(\delta)$ in (\ref{eqn:xi_ideal_def})  generalizes $\ell(\wh{z}_j^{\rm ideal} ,z^*)$ by considering $\langle{\epsilon},{\nu_j(\wh{B}(z^*),z_j^*)-\nu_j(\wh{B}(z^*),b)}\rangle \leq -(1-\delta)\Delta_j(z_j^*,b)^2/2$. We therefore choose some a small $\delta>0$ such that $z^{(t)}$ is able to converges to. In the three specific applications studied in Section \ref{sec:clustering} to \ref{sec:regression}, $\delta$ can be sufficiently small such that $\xi_{\rm ideal}(\delta)$ matches with the minimax rate.
}}
\fi

\begin{condd}[ideal error]
Assume that
\begin{equation}
\xi_{\rm ideal}(\delta) \leq \frac{1}{4}\tau,
\end{equation}
with probability at least $1-\eta_4$, for some $\tau,\delta,\eta_4>0$.
\end{condd}

Finally, we need a condition on $z^{(0)}$, the initialization of Algorithm \ref{alg:general}.
\begin{conde}[initialization]
Assume that
$$\ell(z^{(0)},z^*) \leq \tau,$$
with probability at least $1-\eta_5$, for some $\tau,\eta_5>0$.
\end{conde}

\paragraph{Convergence Guarantee} With all the conditions specified, we establish the convergence guarantee for Algorithm \ref{alg:general}.

\begin{thm}\label{thm:main}
Assume Conditions A,B,C, D and E hold for some $\tau,\delta,\eta_1,\eta_2,\eta_3,\eta_4, \eta_5>0$. We then have
$$\ell(z^{(t)},z^*)\leq 2\xi_{\rm ideal}(\delta) + \frac{1}{2}\ell(z^{(t-1)},z^*)\quad \text{for all }t\geq 1,$$
with probability at least $1-\eta$, where $\eta=\sum_{i=1}^5\eta_i$.
\end{thm}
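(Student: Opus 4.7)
The overall plan is to work throughout on the intersection of the five high-probability events underlying Conditions A through E, so that all subsequent inequalities are deterministic; a union bound shows this intersection has probability at least $1 - \sum_{i=1}^5 \eta_i$. I will then prove the recursion by induction on $t$, with the base case $\ell(z^{(0)}, z^*) \leq \tau$ supplied by Condition E and with the inductive step simultaneously delivering the stated contraction and preserving the invariant $\ell(z^{(t)}, z^*) \leq \tau$.

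The engine of the argument is the decomposition (\ref{eq:decomposition}): if $z_j^{(t)} = b \neq z_j^* =: a$, then the definition of $z_j^{(t)}$ via (\ref{eqn:local_testing}) forces
$$\iprod{\epsilon_j}{\nu_j(\wh{B}(z^*), a) - \nu_j(\wh{B}(z^*), b)} \leq -\frac{1}{2}\Delta_j(a,b)^2 + F_j(a,b;z^{(t-1)}) + G_j(a,b;z^{(t-1)}) + H_j(a,b).$$
Since Condition C uniformly bounds $|H_j(a,b)| \leq \frac{\delta}{4}\Delta_j(a,b)^2$, this event is contained in the union of three cases: (i) the ``ideal'' event $\iprod{\epsilon_j}{\cdot} \leq -\frac{1-\delta}{2}\Delta_j(a,b)^2$ defining $\xi_{\rm ideal}(\delta)$; (ii) $|F_j(a,b;z^{(t-1)})| > \frac{\delta}{8}\Delta_j(a,b)^2$; or (iii) $|G_j(a,b;z^{(t-1)})| > \frac{\delta}{8}\Delta_j(a,b)^2$ — otherwise $F+G+H \leq \frac{\delta}{2}\Delta_j(a,b)^2$ and the displayed inequality collapses into the ideal inequality. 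Summing $\|\mu_j(B^*, z_j^{(t)}) - \mu_j(B^*, z_j^*)\|^2$ over $j$ and splitting according to these three cases yields $\ell(z^{(t)}, z^*) \leq \xi_{\rm ideal}(\delta) + R_F + R_G$, where $R_F, R_G$ are the loss contributions from (ii) and (iii).

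I would bound $R_F$ by the Markov-type indicator inequality $\mathbf{1}\{|F_j| > \frac{\delta}{8}\Delta_j^2\} \leq 64 F_j^2/(\delta^2\Delta_j^4)$, noting that at most one $b$ per $j$ contributes so the inner sum over $b$ can be relaxed to a max; Condition A then gives $R_F \leq \frac{64}{\delta^2}\cdot\frac{\delta^2\,\ell(z^{(t-1)}, z^*)}{256} = \frac{1}{4}\ell(z^{(t-1)}, z^*)$. For $R_G$ the same Markov step combined with Condition B, applied with $T$ the set of indices whose $G$-term exceeded threshold, produces $R_G \leq \frac{4\Delta_{\min}^2|T| + \tau}{4\tau}\,\ell(z^{(t-1)}, z^*)$. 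Crucially, by (\ref{eq:l-to-hamming}) every $j \in T$ contributes at least $\Delta_{\min}^2$ to $\ell(z^{(t)}, z^*)$, so $\Delta_{\min}^2 |T| \leq \ell(z^{(t)}, z^*)$; plugging this in gives the self-referential inequality
$$\ell(z^{(t)}, z^*)\left(1 - \frac{\ell(z^{(t-1)}, z^*)}{\tau}\right) \leq \xi_{\rm ideal}(\delta) + \frac{1}{2}\ell(z^{(t-1)}, z^*).$$

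The main obstacle is closing this implicit inequality into the explicit recursion while simultaneously maintaining the inductive invariant. This is exactly where Condition D ($\xi_{\rm ideal}(\delta) \leq \tau/4$) interacts with the inductive hypothesis: together they force $\ell(z^{(t-1)}, z^*)/\tau$ bounded sufficiently away from $1$ that dividing through by $(1 - \ell(z^{(t-1)}, z^*)/\tau)$ delivers $\ell(z^{(t)}, z^*) \leq 2\xi_{\rm ideal}(\delta) + \frac{1}{2}\ell(z^{(t-1)}, z^*)$, and then $2\xi_{\rm ideal}(\delta) + \frac{1}{2}\ell(z^{(t-1)}, z^*) \leq 2\cdot\tau/4 + \tau/2 = \tau$ closes the induction. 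The delicate part is the precise allocation $\frac{\delta}{8} + \frac{\delta}{8} + \frac{\delta}{4} = \frac{\delta}{2}$ of slack across F, G, H, together with the peculiar weighting $\tau/(4\Delta_{\min}^2|T| + \tau)$ in Condition B — both chosen so that the constant $\frac{1}{2}$ in front of $\ell(z^{(t-1)}, z^*)$ emerges cleanly and the self-referential bound on $R_G$ closes properly.
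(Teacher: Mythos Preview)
Your overall strategy—working on the intersection of the five events, running induction on $t$, invoking decomposition (\ref{eq:decomposition}), and feeding Conditions A, B, C into the three error terms—is exactly the paper's approach, and the structure is right.

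The gap is in the constants, and it prevents the argument from closing. By allocating $\delta/8$ separately to $|F_j|$ and $|G_j|$ via a three-way union bound, your Markov step produces the coefficient $64/\delta^2$, so $R_F \leq \tfrac{1}{4}\ell(z^{(t-1)},z^*)$ and $R_G \leq \tfrac{4\Delta_{\min}^2|T|+\tau}{4\tau}\ell(z^{(t-1)},z^*)$. If you now use $\Delta_{\min}^2|T|\leq\ell(z^{(t)},z^*)$ together with the inductive hypothesis $\ell(z^{(t-1)},z^*)\leq\tau$, the $G$-contribution becomes $\ell(z^{(t)},z^*)+\tfrac{1}{4}\ell(z^{(t-1)},z^*)$, and the inequality collapses to $0\leq\xi_{\rm ideal}(\delta)+\tfrac{1}{2}\ell(z^{(t-1)},z^*)$, which is vacuous. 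Your alternative self-referential form
\[
\ell(z^{(t)},z^*)\bigl(1-\ell(z^{(t-1)},z^*)/\tau\bigr)\leq\xi_{\rm ideal}(\delta)+\tfrac{1}{2}\ell(z^{(t-1)},z^*)
\]
cannot be closed either: the inductive hypothesis only gives $\ell(z^{(t-1)},z^*)\leq\tau$, so $1-\ell(z^{(t-1)},z^*)/\tau$ may be $0$. Condition D bounds $\xi_{\rm ideal}(\delta)$, not $\ell(z^{(t-1)},z^*)$; it does not push that ratio away from $1$ as you assert. And even if it did, dividing by $1/2$ would turn the $\tfrac{1}{2}\ell(z^{(t-1)},z^*)$ on the right into $\ell(z^{(t-1)},z^*)$, not the $\tfrac{1}{2}$ the theorem requires.

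The fix, which is what the paper does, is \emph{not} to split $F$ and $G$ by a further union bound. After Condition C absorbs $H_j$, you have the event $\tfrac{\delta}{4}\Delta_j(z_j^*,b)^2 \leq F_j+G_j$; apply Markov to $F_j+G_j$ directly and then use $(F_j+G_j)^2\leq 2F_j^2+2G_j^2$. This yields coefficient $32/\delta^2$ rather than $64/\delta^2$, halving both $R_F$ and $R_G$. One then lands on
\[
\ell(z^{(t)},z^*)\leq\xi_{\rm ideal}(\delta)+\tfrac{1}{4}\ell(z^{(t-1)},z^*)+\tfrac{1}{2}\ell(z^{(t)},z^*),
\]
and the stated recursion follows by moving $\tfrac{1}{2}\ell(z^{(t)},z^*)$ to the left and multiplying by $2$.
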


The theorem shows that the error of $z^{(t)}$ converges to $4\xi_{\rm ideal}(\delta)$ at a linear rate. Among all the conditions, Conditions A, B and C are the most important ones. The largest $\tau$ that makes Conditions A, B and C hold simultaneously will be the required error bound for the initialization in Condition E. With (\ref{eq:l-to-hamming}), Theorem \ref{thm:main} also implies that the iterative algorithm achieves an error of $4\xi_{\rm ideal}(\delta)/\Delta^2_{\min}$ in terms of Hamming distance.

In Sections \ref{sec:clustering}-\ref{sec:group}, we will apply Theorem \ref{thm:main} to the examples mentioned in Section \ref{sec:framework}, covering different categories of discrete structures: clustering label, rank,  variable sign, cyclic shift, and group element.
The clustering labels are discrete objects without order or any topological structure. This is in contrast to the ranks that are ordered objects in the space of natural numbers. The variable signs are similar to the clustering labels except two differences. The first difference is the prior knowledge that most variables are zero in the context of sparse linear regression. The second difference is that a nonzero sign only implies a range of a variable instead of its specific value.  Group elements have their own unique properties that depend on the specific settings. Despite all the differences between these discrete structures, we are able to analyze them in a unified framework with the same algorithm.

%Each of Sections \ref{sec:clustering} to \ref{sec:regression}  covers one discrete structure recovery case, and is organized as follows. We first derive the explicit expressions for the error terms $F_j(a,b;z)$, $G_j(a,b;z)$ and $H_j(a,b;z)$. Then we check Conditions A, B and C, followed by obtaining an explicit formula of the ideal error $\xi_{\rm ideal}(\delta)$ for Condition D. Assuming Condition E is satisfied for the initialization, after applying Theorem \ref{thm:main}, we  are able to establish the linear convergence of the corresponding iterative algorithm for each case. In addition, by the connection with the Hamming loss in (\ref{eq:l-to-hamming}), we are also able to establish the statistical optimality of the iterative algorithm, as the error it converges to matches with the minimax lower bound, in terms of the Hamming loss. We conclude each section by showing that Condition E can be easily satisfied by simple methods.

\section{Clustering in Gaussian Mixture Model}\label{sec:clustering}

We assume the data matrix $Y\in\mathbb{R}^{d\times p}$ is generated from a Gaussian mixture model. This means we have $Y_j=\theta_{z_j^*}+\epsilon_j\sim \mathn(\theta_{z_j^*},I_d)$ independently for $j\in[p]$, where $z^*\in[k]^p$ is the vector of clustering labels that we aim to recover. Specializing Algorithm \ref{alg:general} to the clustering problem, we obtain the well-known Lloyd's algorithm, which can be summarized as
$$z_j^{(t)}=\argmin_{a\in[k]}\|Y_j-\wh{\theta}_a(z^{(t-1)})\|^2,\quad j\in[p],$$
where for each $z\in[k]^p$, we use the notation
$$\wh{\theta}_a(z)=\frac{\sum_{j=1}^p\indc{z_j=a}Y_j}{\sum_{j=1}^p\indc{z_j=a}}, \quad  a\in[k].$$
Even though general $k$-means clustering is known to be NP-hard \citep{dasgupta2008hardness,aloise2009np,mahajan2012planar}, local convergence of the Lloyd's iteration can be established under certain data-generating mechanism \citep{kumar2010clustering,awasthi2012improved}. In particular, the recent work \cite{lu2016statistical} shows that under the Gaussian mixture model, the misclustering error of $z^{(t)}$ in the Lloyd's iteration linearly converges to the minimax optimal rate. In this section, we show that our theoretical framework developed in Section \ref{sec:con} leads to a result that is comparable to the one in \cite{lu2016statistical}.

\subsection{Conditions}
To analyze the algorithmic convergence, we note that $\mu_j(B^*,a) = \nu_j(B^*,a) = \theta_a^*$, $\Delta_j(a,b)^2=\|\theta_a^*-\theta_b^*\|^2$, $\ell(z,z^*)=\sum_{j=1}^p\|\theta_{z_j}^*-\theta_{z_j^*}^*\|^2$, and $\Delta_{\min}=\min_{1\leq a\neq b\leq k}\|\theta_a^*-\theta_b^*\|$ in the current setting. The error terms that we need to control are
\begin{eqnarray*}
F_j(a,b;z) &=& \iprod{\epsilon_j}{\wh{\theta}_a(z^*)-\wh{\theta}_a(z)-\wh{\theta}_b(z^*)+\wh{\theta}_b(z)}, \\
G_j(a,b;z) &=& \frac{1}{2}\left(\|\theta_a^*-\wh{\theta}_a(z)\|^2-\|\theta_a^*-\wh{\theta}_a(z^*)\|^2-\|\theta_a^*-\wh{\theta}_b(z)\|^2+\|\theta_a^*-\wh{\theta}_b(z^*)\|^2\right), \\
H_j(a,b) &=& \frac{1}{2}\left(\|\theta_a^*-\wh{\theta}_a(z^*)\|^2-\|\theta_a^*-\wh{\theta}_b(z^*)\|^2+\|\theta_a^*-\theta_b^*\|^2\right).
\end{eqnarray*}
The following lemma controls the error terms $F_j(a,b;z)$, $G_j(a,b;z)$ and $H_j(a,b)$.
\begin{lemma}\label{lem:kmeans-error}
Assume that $\min_{a\in[k]}\sum_{j=1}^p\indc{z_j^*=a}\geq\frac{\alpha p}{k}$ and $\tau\leq \frac{\Delta_{\min}^2\alpha p}{2k}$ for some constant $\alpha>0$. Then, for any $C'>0$, there exists a constant $C>0$ only depending on $\alpha$ and $C'$ such that
\begin{eqnarray}
\nonumber && \max_{\{z:\ell(z,z^*)\leq\tau\}}\sum_{j=1}^p\max_{b\in[k]\backslash\{z_j^*\}}\frac{F_j(z_j^*,b;z)^2\|\mu_j(B^*,b)-\mu_j(B^*,z_j^*)\|^2}{\Delta_j(z_j^*,b)^4\ell(z,z^*)} \\
\label{eq:kmeans-error1} &\leq& C\frac{k^2(kd/p+1)}{\Delta_{\min}^2}\left(1+\frac{k(d/p+1)}{\Delta_{\min}^2}\right), \\
\nonumber && \max_{\{z:\ell(z,z^*)\leq\tau\}}\max_{T\subset[p]}\frac{\tau}{4\Delta_{\min}^2|T|+\tau}\sum_{j\in T}\max_{b\in[k]\backslash\{z_j^*\}}\frac{G_j(z_j^*,b;z)^2\|\mu_j(B^*,b)-\mu_j(B^*,z_j^*)\|^2}{\Delta_j(z_j^*,b)^4\ell(z,z^*)} \\
\label{eq:kmeans-error2} &\leq& C\left(\frac{k\tau}{p\Delta_{\min}^2} + \frac{k(d+p)}{p\Delta_{\min}^2} + \frac{k^2(d+p)^2}{p^2\Delta_{\min}^4}\right),
\end{eqnarray}
and
\begin{eqnarray}
\label{eq:kmeans-error3} \max_{j\in[p]}\max_{b\in[k]\backslash\{z_j^*\}}\frac{|H_j(z_j^*,b)|}{\Delta_j(z_j^*,b)^2} &\leq& C\left(\frac{k(d+\log p)}{p\Delta_{\min}^2} + \sqrt{\frac{k(d+\log p)}{p\Delta_{\min}^2} }\right),
\end{eqnarray}
with probability at least $1-p^{-C'}$.
\end{lemma}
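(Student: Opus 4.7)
I will derive the three bounds by explicit algebraic expansion combined with Gaussian concentration. The fundamental primitive is the identity
\begin{equation*}
\wh\theta_a(z)-\wh\theta_a(z^*)=\frac{1}{n_a^{(z)}}\sum_{j=1}^p\big(\indc{z_j=a}-\indc{z_j^*=a}\big)\big(Y_j-\wh\theta_a(z^*)\big),\quad n_a^{(z)}:=\sum_j\indc{z_j=a},
\end{equation*}
together with the decomposition $Y_j-\wh\theta_a(z^*)=(\theta_{z_j^*}^*-\theta_a^*)+(\epsilon_j-\bar\epsilon_a)$, where $\bar\epsilon_a:=\wh\theta_a(z^*)-\theta_a^*$. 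This splits $\Delta\wh\theta_a:=\wh\theta_a(z)-\wh\theta_a(z^*)$ into a signal part depending only on labels and a noise part linear in the $\epsilon_j$'s.

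I will start with the easiest bound, \prettyref{eq:kmeans-error3}. Since $\bar\epsilon_a$ is the average of $n_a\geq\alpha p/k$ i.i.d.\ $\mathn(0,I_d)$ vectors, a $\chi^2$ tail bound and a union over $a\in[k]$ give $\max_a\|\bar\epsilon_a\|^2\lesssim k(d+\log p)/p$, and a sub-Gaussian tail bound with a union over $(a,b)$ gives $\max_{a\neq b}|\iprod{\theta_a^*-\theta_b^*}{\bar\epsilon_b}|\lesssim\Delta_j(a,b)\sqrt{k(d+\log p)/p}$, each holding with probability at least $1-p^{-C'}$. Substituting into the identity $H_j(a,b)=\tfrac12(\|\bar\epsilon_a\|^2-\|\bar\epsilon_b\|^2)+\iprod{\theta_a^*-\theta_b^*}{\bar\epsilon_b}$ and dividing by $\Delta_j^2\geq\Delta_{\min}^2$ yields \prettyref{eq:kmeans-error3}.

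The core step is a uniform control, over $\{z:\ell(z,z^*)\leq\tau\}$, of $\max_a\|\Delta\wh\theta_a\|$. The hypothesis $\tau\leq\Delta_{\min}^2\alpha p/(2k)$ together with \prettyref{eq:l-to-hamming} ensures $h(z,z^*)\leq\alpha p/(2k)$, so $n_a^{(z)}\asymp p/k$ on this event. The signal part of $\Delta\wh\theta_a$ is controlled by Cauchy--Schwarz over the at most $h(z,z^*)\leq\ell(z,z^*)/\Delta_{\min}^2$ indices where $z_j\neq z_j^*$, producing a term of order $(k/p)^2 h(z,z^*)\ell(z,z^*)$. The noise part is the norm of a sum of the $\epsilon_j$'s with $\{-1,0,1\}$ coefficients supported on a set of size at most $h(z,z^*)$. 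Controlling this simultaneously over all $z$ with $\ell(z,z^*)\leq\tau$ is the main obstacle: I will invoke the sparse-vector Gaussian concentration used in \cite{lu2016statistical}, which gives a uniform bound of order $(k/p)^2 h(z,z^*)(d+\log p)$. Combining the pieces yields
\begin{equation*}
\max_a\|\Delta\wh\theta_a\|^2\;\lesssim\;(k^2/p^2)\,h(z,z^*)\,\big(d+\log p+\ell(z,z^*)\big)
\end{equation*}
uniformly on the event $\ell(z,z^*)\leq\tau$.

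Equipped with this, \prettyref{eq:kmeans-error1} follows from $F_j^2\leq 2\|\epsilon_j\|^2(\|\Delta\wh\theta_{z_j^*}\|^2+\|\Delta\wh\theta_b\|^2)$ and a $\chi^2$ tail bound $\sum_j\|\epsilon_j\|^2\lesssim pd$, after absorbing the denominator using $h(z,z^*)\leq\ell(z,z^*)/\Delta_{\min}^2$. For \prettyref{eq:kmeans-error2}, I will use the expansion
\begin{equation*}
G_j(a,b;z)=\iprod{\theta_a^*-\theta_b^*}{\Delta\wh\theta_b}-\iprod{\bar\epsilon_b}{\Delta\wh\theta_b}+\iprod{\bar\epsilon_a}{\Delta\wh\theta_a}+\tfrac12\big(\|\Delta\wh\theta_a\|^2-\|\Delta\wh\theta_b\|^2\big),
\end{equation*}
whose leading term is of size $\Delta_j\|\Delta\wh\theta_b\|$. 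After squaring, dividing by $\Delta_j^4\ell(z,z^*)$, and summing over $j\in T$, the weight $\tau/(4\Delta_{\min}^2|T|+\tau)$ interpolates between $|T|$ and $\tau/(4\Delta_{\min}^2)$ and produces the three terms in \prettyref{eq:kmeans-error2}: the linear-in-$\tau$ term from the signal part, the $(d+p)$ term from the noise part, and the quadratic term from the $\|\Delta\wh\theta\|^4$ contribution. The technically delicate step throughout is the sparse-vector Gaussian concentration used to obtain the uniform bound on $\|\Delta\wh\theta_a\|$; once this is in hand, everything else is algebraic.
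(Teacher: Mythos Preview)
Your approach to \prettyref{eq:kmeans-error3} and your expansion of $G_j$ are fine and track the paper's argument closely. There are, however, two problems, one fatal.

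\textbf{The bound on $F$ is too crude.} From $F_j^2\leq 2\|\epsilon_j\|^2\bigl(\|\Delta\wh\theta_{z_j^*}\|^2+\|\Delta\wh\theta_b\|^2\bigr)$ and $\sum_{j}\|\epsilon_j\|^2\lesssim pd$, the best you can get is
\[
\sum_{j=1}^p\max_{b\neq z_j^*}\frac{F_j(z_j^*,b;z)^2}{\Delta_j(z_j^*,b)^2\,\ell(z,z^*)}
\;\lesssim\;\frac{pd}{\Delta_{\min}^2}\cdot\frac{\max_a\|\Delta\wh\theta_a\|^2}{\ell(z,z^*)}
\;\gtrsim\;\frac{kd}{\Delta_{\min}^2},
\]
where the last step comes from the signal part of $\Delta\wh\theta_a$ alone (plugging $\ell\asymp\tau$). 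This does \emph{not} dominate the claimed bound $k^2(kd/p+1)/\Delta_{\min}^2$ whenever $d\gg k$, so \prettyref{eq:kmeans-error1} as stated does not follow. The paper's proof avoids the loss by first partitioning the sum over $j$ by true cluster and then using the \emph{operator-norm} inequality
\[
\sum_{j:z_j^*=a}\bigl\langle\epsilon_j,\,v\bigr\rangle^2
= v^T\Bigl(\sum_{j:z_j^*=a}\epsilon_j\epsilon_j^T\Bigr)v
\;\leq\;\|v\|^2\,\Bigl\|\sum_{j:z_j^*=a}\epsilon_j\epsilon_j^T\Bigr\|
\;\lesssim\;\|v\|^2\,(d+n_a),
\]
which is \prettyref{eq:esp-km3}. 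Summing over $a\in[k]$ replaces your factor $pd$ by $\sum_a(d+n_a)=kd+p$, and this is precisely what yields the $k^2(kd/p+1)$ scaling. The Cauchy--Schwarz step $|\langle\epsilon_j,v\rangle|\leq\|\epsilon_j\|\,\|v\|$ throws away this structure.

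\textbf{The uniform noise bound is misstated.} Your claim that the noise part of $\|\Delta\wh\theta_a\|^2$ is of order $(k/p)^2\,h(z,z^*)(d+\log p)$ uniformly over $\{z:\ell(z,z^*)\leq\tau\}$ is not what the sparse-subset lemma gives: the number of $z$ at Hamming distance $m$ is of order $(pk)^m$, so a union bound costs $m\log(pk)$ in the exponent, not $\log p$. The correct uniform bound (the paper's \prettyref{eq:esp-km2}) is $\max_{T\subset[p]}\|\sum_{j\in T}\epsilon_j\|^2\lesssim |T|(d+p)$, leading to
\[
\max_a\|\Delta\wh\theta_a\|^2\;\lesssim\;\frac{k^2}{p^2\Delta_{\min}^2}\,\ell(z,z^*)^2+\frac{k^2(d+p)}{p^2\Delta_{\min}^2}\,\ell(z,z^*),
\]
which is \prettyref{eq:useful-4}. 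This corrected version still suffices for \prettyref{eq:kmeans-error2} along the lines you sketch, but the $d+\log p$ you wrote would not be available.
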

From the bounds (\ref{eq:kmeans-error1})-(\ref{eq:kmeans-error3}), we can see that a sufficient condition that Conditions A, B and C hold is $\frac{\tau}{p\Delta_{\min}^2/k} \rightarrow 0$
and
\begin{equation}
\frac{\Delta_{\min}^2}{k^2(kd/p+1)} \rightarrow \infty. \label{eq:kmeans-signal}
\end{equation}
In fact, under this sufficient condition, we can set $\delta=\delta_p$ to be some sequence $\delta_p$ converging to $0$ in Conditions A, B and C.

Next, we need to control $\xi_{\rm ideal}(\delta)$ in Condition D. This is given by the following lemma.

\begin{lemma}\label{lem:kmeans-ideal}
Assume $\frac{\Delta_{\min}^2}{\log k+ kd/p}\rightarrow\infty$, $p/k\rightarrow\infty$, and $\min_{a\in[k]}\sum_{j=1}^p\indc{z_j^*=a}\geq\frac{\alpha p}{k}$ for some constant $\alpha>0$. Then, for any sequence $\delta_p = o(1)$, we have
$$\xi_{\rm ideal}(\delta_p) \leq p\exp\left(-(1+o(1))\frac{\Delta_{\min}^2}{8}\right),$$
with probability at least $1-\exp\left(-\Delta_{\min}\right)$.
\end{lemma}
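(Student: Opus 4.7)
The plan is to exploit the Gaussian tail of the key inner-product event defining $\xi_{\rm ideal}$, after carefully decoupling $\epsilon_j$ from the estimator $\wh\theta_{z_j^*}(z^*)$ that depends on it. The target is essentially an expectation-plus-Markov argument with the conditional variance of the inner product being $(1+o(1))\Delta_j(z_j^*,b)^2$.

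Specializing (\ref{eqn:xi_ideal_def}), write $\wh\theta_a(z^*) = \theta_a^* + \eta_a$ with $\eta_a = n_a^{-1}\sum_{j'\,:\,z_{j'}^*=a}\epsilon_{j'}$, $n_a=\sum_j\indc{z_j^*=a}$, and $v_{j,b}=\theta_{z_j^*}^*-\theta_b^*$, so the event appearing in $\xi_{\rm ideal}(\delta_p)$ reads
$$\iprod{\epsilon_j}{v_{j,b}+\eta_{z_j^*}-\eta_b}\leq -\tfrac{1-\delta_p}{2}\|v_{j,b}\|^2.$$
Since $\epsilon_j$ is contained in $\eta_{z_j^*}$, direct conditioning is obstructed. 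I would split $\eta_{z_j^*}=\epsilon_j/n_{z_j^*}+\wt\eta_{z_j^*}$ where $\wt\eta_{z_j^*}$ depends only on $\{\epsilon_{j'}\}_{j'\neq j,z_{j'}^*=z_j^*}$, so the event rewrites as
$$\iprod{\epsilon_j}{u_{j,b}}\leq -\tfrac{1-\delta_p}{2}\|v_{j,b}\|^2-\|\epsilon_j\|^2/n_{z_j^*},\qquad u_{j,b}:=v_{j,b}+\wt\eta_{z_j^*}-\eta_b.$$
Because $\|\epsilon_j\|^2/n_{z_j^*}\geq 0$, this event is contained in $\{\iprod{\epsilon_j}{u_{j,b}}\leq -\tfrac{1-\delta_p}{2}\|v_{j,b}\|^2\}$; crucially, $u_{j,b}$ is now independent of $\epsilon_j$.

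Next I introduce the good event
$$\mathcal{A}=\Bigl\{\max_{a\in[k]}\|\eta_a\|\leq C\sqrt{(d+\log k+\Delta_{\min})k/p}\Bigr\}$$
which by chi-square concentration applied to $\sqrt{n_a}\eta_a\sim\mathcal{N}(0,I_d)$ and a union bound over $[k]$ satisfies $\Prob(\mathcal{A}^\complement)\leq \tfrac12 e^{-\Delta_{\min}}$. Under the hypothesis $\Delta_{\min}^2/(\log k+kd/p)\to\infty$ together with $p/k\to\infty$, each of $kd/p,(\log k)k/p,\Delta_{\min}k/p$ is $o(\Delta_{\min}^2)$, so on $\mathcal{A}$ we have $\|\wt\eta_{z_j^*}-\eta_b\|=o(\Delta_{\min})\leq o(\|v_{j,b}\|)$, giving $\|u_{j,b}\|^2=(1+o(1))\|v_{j,b}\|^2$ uniformly in $j,b$. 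Conditioning on $\{\epsilon_{j'}\}_{j'\neq j}$, $\iprod{\epsilon_j}{u_{j,b}}$ is $\mathcal{N}(0,\|u_{j,b}\|^2)$, and the standard Gaussian tail yields
$$\Prob\!\bigl(\iprod{\epsilon_j}{u_{j,b}}\leq -\tfrac{1-\delta_p}{2}\|v_{j,b}\|^2\,\big|\,\{\epsilon_{j'}\}_{j'\neq j}\bigr)\indc{\mathcal{A}}\leq \exp\!\Bigl(-\tfrac{(1-\delta_p)^2\|v_{j,b}\|^4}{8\|u_{j,b}\|^2}\Bigr)\leq \exp\!\Bigl(-\tfrac{(1-o(1))\|v_{j,b}\|^2}{8}\Bigr).$$

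Finally I take expectations and sum. Since $x\mapsto x e^{-x/8}$ is decreasing for $x\geq 8$ and $\Delta_{\min}^2\to\infty$, each summand $\|v_{j,b}\|^2\exp(-(1-o(1))\|v_{j,b}\|^2/8)$ is dominated by $\Delta_{\min}^2\exp(-(1-o(1))\Delta_{\min}^2/8)$; absorbing the factor $k\Delta_{\min}^2$ into the exponent via $\log k=o(\Delta_{\min}^2)$ and $\log\Delta_{\min}^2=o(\Delta_{\min}^2)$ gives
$$\Expect[\xi_{\rm ideal}(\delta_p)\indc{\mathcal{A}}]\leq p\exp\!\bigl(-(1-o(1))\Delta_{\min}^2/8\bigr).$$
Markov's inequality with multiplier $2e^{\Delta_{\min}}$ yields that with probability at least $1-\tfrac12 e^{-\Delta_{\min}}$, $\xi_{\rm ideal}(\delta_p)\indc{\mathcal{A}}\leq 2pe^{\Delta_{\min}}\cdot\Expect[\xi_{\rm ideal}(\delta_p)\indc{\mathcal{A}}]$, and since $\Delta_{\min}=o(\Delta_{\min}^2)$ the Markov slack is absorbed into the $o(1)$. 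Combined with $\Prob(\mathcal{A}^\complement)\leq \tfrac12 e^{-\Delta_{\min}}$, this delivers the claimed bound. The main obstacle is the first step: the self-interaction between $\epsilon_j$ and $\wh\theta_{z_j^*}(z^*)$; it is resolved cleanly by dropping the nonnegative $\|\epsilon_j\|^2/n_{z_j^*}$ term, after which the inner product becomes Gaussian with variance close to $\Delta_j(z_j^*,b)^2$ and the usual sub-Gaussian tail gives the minimax exponent.
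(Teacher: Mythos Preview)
Your strategy is essentially the paper's: both exploit the nonnegativity of $\|\epsilon_j\|^2/n_{z_j^*}$ to decouple the self-interaction, then use a Gaussian tail and finish with Markov. The paper conditions on $\epsilon_j$ and splits the threshold into three pieces with a budget $\bar\delta$; you condition on $\{\epsilon_{j'}\}_{j'\neq j}$ and use a good event. The latter is conceptually cleaner, but as written it has a genuine technical gap.

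The issue is twofold. First, your good event $\mathcal{A}=\{\max_a\|\eta_a\|\leq\cdots\}$ controls $\eta_{z_j^*}$, not the leave-one-out $\wt\eta_{z_j^*}=\eta_{z_j^*}-\epsilon_j/n_{z_j^*}$; since $\epsilon_j$ is unconstrained on $\mathcal{A}$, the claim ``on $\mathcal{A}$ we have $\|\wt\eta_{z_j^*}-\eta_b\|=o(\Delta_{\min})$'' does not follow. Second, $\mathcal{A}$ depends on $\epsilon_j$ through $\eta_{z_j^*}$, so it is not $\sigma(\{\epsilon_{j'}\}_{j'\neq j})$-measurable. Hence the displayed inequality $\mathbb{P}(\cdot\mid\{\epsilon_{j'}\}_{j'\neq j})\indc{\mathcal{A}}\leq\exp(\cdots)$ is not justified, and even if it were, its expectation would not equal $\mathbb{E}[\indc{\text{event}}\indc{\mathcal{A}}]$.

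The repair is to bound each term $\mathbb{P}(\text{event}_{j,b})$ directly (as the paper does), without a global good event. For fixed $(j,b)$, introduce $\mathcal{B}_{j,b}=\{\|\wt\eta_{z_j^*}-\eta_b\|\leq\bar\delta\,\|v_{j,b}\|\}$, which \emph{is} $\sigma(\{\epsilon_{j'}\}_{j'\neq j})$-measurable. Then
\[
\mathbb{P}(\text{event}_{j,b})\;\leq\;\mathbb{P}(\mathcal{B}_{j,b}^\complement)\;+\;\exp\!\Bigl(-\tfrac{(1-\delta_p)^2}{8(1+\bar\delta)^2}\,\|v_{j,b}\|^2\Bigr).
\]
Since $\wt\eta_{z_j^*}-\eta_b$ is mean-zero Gaussian with covariance $\lesssim (k/p)I_d$, a chi-square tail with $\bar\delta^2\asymp kd/(p\Delta_{\min}^2)+k/p=o(1)$ gives $\mathbb{P}(\mathcal{B}_{j,b}^\complement)\leq\exp(-c\|v_{j,b}\|^2)$, which is dominated by the main term. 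This yields $\mathbb{P}(\text{event}_{j,b})\leq C\exp(-(1-o(1))\|v_{j,b}\|^2/8)$ with the exponent depending on $\|v_{j,b}\|^2$ (crucial so that the weight $\|v_{j,b}\|^2$ in $\xi_{\rm ideal}$ can be absorbed). After this, your expectation-plus-Markov finish goes through verbatim, and no global good event is needed.
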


We note that the signal condition $\frac{\Delta_{\min}^2}{\log k+ kd/p}\rightarrow\infty$ required by Lemma \ref{lem:kmeans-ideal} is implied by the stronger condition (\ref{eq:kmeans-signal}). Therefore, we need to require (\ref{eq:kmeans-signal}) for the Conditions A, B, C and D to hold simultaneously.

\subsection{Convergence}
With the help of Lemma \ref{lem:kmeans-error} and Lemma \ref{lem:kmeans-ideal}, we can specialize Theorem \ref{thm:main} into the following result.

\begin{thm}\label{thm:main-kmeans}
Assume (\ref{eq:kmeans-signal}) holds, $p/k\rightarrow\infty$, and $\min_{a\in[k]}\sum_{j=1}^p\indc{z_j^*=a}\geq\frac{\alpha p}{k}$ for some constant $\alpha>0$. Suppose $z^{(0)}$ satisfies
\begin{equation}
\ell(z^{(0)},z^*) = o\left(\frac{p\Delta_{\min}^2}{k}\right), \label{eq:kmeans-ini}
\end{equation}
with probability at least $1-\eta$. Then, we have
$$\ell(z^{(t)},z^*)\leq p\exp\left(-(1+o(1))\frac{\Delta_{\min}^2}{8}\right) + \frac{1}{2}\ell(z^{(t-1)},z^*)\quad \text{for all }t\geq 1,$$
with probability at least $1-\eta-\exp\left(-\Delta_{\min}\right)-p^{-1}$.
\end{thm}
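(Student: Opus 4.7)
The proof is essentially a matter of specializing Theorem \ref{thm:main} to the Gaussian mixture setting by plugging in the two lemmas already at our disposal. My plan is therefore to verify Conditions A--E one by one with a carefully chosen pair $(\tau,\delta)$, then invoke Theorem \ref{thm:main} and take a union bound on the failure probabilities.

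First, I would select $\tau$ and $\delta$. Let $\tau = c\, p\Delta_{\min}^2/k$ for a sufficiently small constant $c$ (and in any case below $\alpha p \Delta_{\min}^2/(2k)$ so that Lemma \ref{lem:kmeans-error} applies). By the initialization hypothesis (\ref{eq:kmeans-ini}), this $\tau$ dominates $\ell(z^{(0)},z^*)$ with probability at least $1-\eta$, so Condition E holds. For $\delta$, I would pick any sequence $\delta=\delta_p\to 0$ that dominates the square roots of the right-hand sides of (\ref{eq:kmeans-error1})--(\ref{eq:kmeans-error3}); such a choice exists because, under the signal assumption (\ref{eq:kmeans-signal}), each of the three bounds in Lemma \ref{lem:kmeans-error} tends to $0$. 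Concretely: (\ref{eq:kmeans-error1}) is $o(1)$ directly from (\ref{eq:kmeans-signal}); in (\ref{eq:kmeans-error2}) the term $k\tau/(p\Delta_{\min}^2)$ is $o(1)$ by our choice of $\tau$, and the remaining terms are controlled by (\ref{eq:kmeans-signal}) since $\Delta_{\min}^2\gg k^2\vee k^3d/p$; similarly (\ref{eq:kmeans-error3}) is $o(1)$ since $\log p$ is absorbed by the same scaling up to a further benign factor, and the dominant square-root term is again $o(1)$. Thus Conditions A, B, C hold with probability at least $1-p^{-C'}$ for any prescribed $C'$, which I would take equal to $1$.

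Next I would verify Condition D via Lemma \ref{lem:kmeans-ideal}. Its hypotheses ($p/k\to\infty$, cluster balance, and $\Delta_{\min}^2/(\log k+kd/p)\to\infty$) are all implied by our assumptions, in particular (\ref{eq:kmeans-signal}), so the lemma gives $\xi_{\rm ideal}(\delta_p)\leq p\exp(-(1+o(1))\Delta_{\min}^2/8)$ with probability at least $1-\exp(-\Delta_{\min})$. Condition D requires this bound to be at most $\tau/4 = (c/4) p\Delta_{\min}^2/k$. Since $\Delta_{\min}^2\to\infty$ faster than $\log k$ under (\ref{eq:kmeans-signal}), $\exp(-\Delta_{\min}^2/8)\ll \Delta_{\min}^2/k$, so Condition D is satisfied for large $p$.

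With all five conditions verified, Theorem \ref{thm:main} yields
\[
\ell(z^{(t)},z^*)\leq 2\xi_{\rm ideal}(\delta_p)+\tfrac{1}{2}\ell(z^{(t-1)},z^*)
\]
for every $t\geq 1$, and substituting the bound on $\xi_{\rm ideal}(\delta_p)$ from Lemma \ref{lem:kmeans-ideal} yields the stated linear contraction. The total failure probability is obtained by a union bound: $\eta$ from the initialization, $p^{-C'}=p^{-1}$ from Lemma \ref{lem:kmeans-error} (covering Conditions A--C), and $\exp(-\Delta_{\min})$ from Lemma \ref{lem:kmeans-ideal}. The main (and only) subtlety I anticipate is the bookkeeping to confirm that a single sequence $\delta_p\to 0$ simultaneously satisfies all three error-control conditions while keeping $\tau$ large enough to absorb the initialization error and small enough for Lemma \ref{lem:kmeans-error} to apply; this is really just a matter of tracking the polynomial-in-$k$ factors against the assumed SNR, and the window opened by (\ref{eq:kmeans-signal}) and (\ref{eq:kmeans-ini}) is wide enough for any slowly-vanishing $\delta_p$ to work.
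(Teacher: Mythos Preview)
Your approach is correct and matches the paper's, which simply states that Theorem~\ref{thm:main-kmeans} is a direct consequence of Theorem~\ref{thm:main} via Lemmas~\ref{lem:kmeans-error} and~\ref{lem:kmeans-ideal} and omits the details. One small fix: to make $k\tau/(p\Delta_{\min}^2)=o(1)$ in (\ref{eq:kmeans-error2}) you need $\tau=o(p\Delta_{\min}^2/k)$ rather than a fixed fraction $c\,p\Delta_{\min}^2/k$; since the initialization hypothesis gives $\ell(z^{(0)},z^*)=o(p\Delta_{\min}^2/k)$, such a sequence $\tau$ sandwiched between the two exists, exactly as you anticipate in your closing paragraph.
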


\begin{remark}
Our result is comparable to the main result of \cite{lu2016statistical}. The main difference is that the convergence analysis in \cite{lu2016statistical} is for the misclustering error, defined by
\begin{equation}
\misc(z,z^*)=\frac{1}{p}\sum_{j=1}^p\indc{z_j\neq z_j^*}, \label{eq:misc-loss}
\end{equation}
while Theorem \ref{thm:main-kmeans} is established for an $\ell_2$ type loss function, which is  more natural  in our general framework. The main condition of Theorem \ref{thm:main-kmeans} is the signal requirement (\ref{eq:kmeans-signal}). Interestingly, this is exactly the same condition used in \cite{lu2016statistical}. On the other hand, we only require $k=o(p)$ for the number of clusters allowed, whereas \cite{lu2016statistical} assumes a slightly stronger condition $k=o(p/(\log p)^{1/3})$.
\end{remark}

In the context of clustering, the loss function (\ref{eq:misc-loss}) may be more natural than $\ell(z,z^*)$. Given the relation that
$$\misc(z,z^*)\leq \frac{\ell(z,z^*)}{p\Delta_{\min}^2},$$
we immediately obtain the following corollary on the misclustering error.

\begin{corollary}\label{cor:main-kmeans}
Assume (\ref{eq:kmeans-signal}) holds, $p/k\rightarrow\infty$, and $\min_{a\in[k]}\sum_{j=1}^p\indc{z_j^*=a}\geq\frac{\alpha p}{k}$ for some constant $\alpha>0$. Suppose $z^{(0)}$ satisfies (\ref{eq:kmeans-ini}) with probability at least $1-\eta$. Then, we have
\begin{equation}
\misc(z^{(t)},z^*) \leq \exp\left(-(1+o(1))\frac{\Delta_{\min}^2}{8}\right) + 2^{-t}\quad \text{for all }t\geq 1, \label{eq:misc-bound-ite}
\end{equation}
with probability at least $1-\eta-\exp\left(-\Delta_{\min}\right)-p^{-1}$.
\end{corollary}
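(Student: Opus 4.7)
The plan is to derive the corollary directly from Theorem \ref{thm:main-kmeans} by (i) unrolling the one-step contraction into a closed-form bound, and (ii) converting the $\ell_2$-type loss to the misclustering loss via the stated inequality $\misc(z,z^*)\le \ell(z,z^*)/(p\Delta_{\min}^2)$.

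First I would iterate the recursion
$$
\ell(z^{(t)},z^*)\leq p\exp\!\left(-(1+o(1))\frac{\Delta_{\min}^2}{8}\right) + \tfrac{1}{2}\ell(z^{(t-1)},z^*)
$$
from $t$ down to $0$. Because the additive term does not depend on $t$, the resulting geometric sum $\sum_{i=0}^{t-1}2^{-i}$ is bounded by $2$, and we obtain
$$
\ell(z^{(t)},z^*)\leq 2p\exp\!\left(-(1+o(1))\frac{\Delta_{\min}^2}{8}\right) + 2^{-t}\,\ell(z^{(0)},z^*),
$$
valid on the same event of probability at least $1-\eta-\exp(-\Delta_{\min})-p^{-1}$ as in Theorem \ref{thm:main-kmeans}.

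Next I would divide by $p\Delta_{\min}^2$ and invoke $\misc(z^{(t)},z^*)\le \ell(z^{(t)},z^*)/(p\Delta_{\min}^2)$ to get
$$
\misc(z^{(t)},z^*)\leq \frac{2}{\Delta_{\min}^2}\exp\!\left(-(1+o(1))\frac{\Delta_{\min}^2}{8}\right) + 2^{-t}\,\frac{\ell(z^{(0)},z^*)}{p\Delta_{\min}^2}.
$$
To reach the stated bound (\ref{eq:misc-bound-ite}) I would then do two absorptions. For the first term, note that condition (\ref{eq:kmeans-signal}) forces $\Delta_{\min}^2\to\infty$, so $\log(2/\Delta_{\min}^2)=o(\Delta_{\min}^2)$, which lets the prefactor $2/\Delta_{\min}^2$ be absorbed into the $(1+o(1))$ in the exponent, yielding the first summand on the right-hand side of (\ref{eq:misc-bound-ite}). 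For the second term, the initialization assumption (\ref{eq:kmeans-ini}), namely $\ell(z^{(0)},z^*)=o(p\Delta_{\min}^2/k)$, gives $\ell(z^{(0)},z^*)/(p\Delta_{\min}^2)=o(1/k)=o(1)$, so for all sufficiently large $p$ this factor is at most $1$ and the second term is bounded by $2^{-t}$.

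The whole argument is routine and there is no real obstacle; everything technical is already inside Theorem \ref{thm:main-kmeans}. The only small point worth attention is verifying that condition (\ref{eq:kmeans-signal}) actually implies $\Delta_{\min}^2\to\infty$, which is immediate since (\ref{eq:kmeans-signal}) forces $\Delta_{\min}^2/k^2\to\infty$ and $k\ge 1$, and hence justifies absorbing the $1/\Delta_{\min}^2$ prefactor into the exponent. Combining the two absorptions yields (\ref{eq:misc-bound-ite}) on the desired event.
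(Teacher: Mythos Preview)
Your proposal is correct and follows exactly the route the paper intends: the paper states that Corollary~\ref{cor:main-kmeans} is immediate from Theorem~\ref{thm:main-kmeans} via the inequality $\misc(z,z^*)\le \ell(z,z^*)/(p\Delta_{\min}^2)$ and omits the details, while you have simply written out the unrolling of the recursion and the two absorptions explicitly. There is no substantive difference.
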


According to a lower bound result in \cite{lu2016statistical}, the quantity $\exp\left(-(1+o(1))\frac{\Delta_{\min}^2}{8}\right)$ is the minimax rate of recovering $z^*$ with respect to the loss function $\misc(z,z^*)$ under the Gaussian mixture model. Since $\misc(z,z^*)$ takes value in the set $\{j/p: j\in[p]\cup\{0\}\}$, the term $2^{-t}$ in (\ref{eq:misc-bound-ite}) is negligible as long as $2^{-t}=o(p^{-1})$. We therefore can claim
$$\misc(z^{(t)},z^*) \leq \exp\left(-(1+o(1))\frac{\Delta_{\min}^2}{8}\right) \quad \text{for all }t\geq 3\log p.$$
In other words, the minimax rate is achieved after at most $\ceil{3\log p}$ iterations.

\subsection{Initialization}
To close this section, we discuss how to initialize Lloyd's algorithm. In the literature, this is usually done by spectral methods \citep{kumar2010clustering,awasthi2012improved,lu2016statistical}. We consider the following variation that is particularly suitable for Gaussian mixture models. Our initialization procedure has two steps:
\begin{enumerate}
\item Perform a singular value decomposition on $Y$, and obtain $Y = \sum_{l=1}^{p\wedge n} \wh d_l \wh u_l\wh v_l^T$ with $\wh d_1 \geq \ldots \geq \wh d_{p\wedge n} \geq 0$, $\{\wh u_l\}_{l\in[p\wedge n]}\in\mathr^d$ and $\{\wh v_l\}_{l\in[p\wedge n]}\in\mathr^p$. With $\wh U = (\wh u_1,\ldots, \wh u_k)\in\mathr^{d\times k}$, we define
\begin{equation}
\wh{\mu}=\wh U^T Y \in\mathr^{k\times p}. \label{eq:low-rank-spectral}
\end{equation}
\item Find some $\beta_1^{(0)},\dotsc,\beta_k^{(0)}\in\mathbb{R}^k$ and $z^{(0)}\in[k]^p$ that satisfy
\begin{equation}
\sum_{j=1}^p\|\wh{\mu}_j-\beta^{(0)}_{z_j^{(0)}}\|^2 \leq M\min_{\substack{\beta_1,\dotsc,\beta_k\in\mathbb{R}^k\\ z\in[k]^p}}\sum_{j=1}^p\|\wh{\mu}_j-\beta_{z_j}\|^2, \label{eq:k-means-relax-approx}
\end{equation}
where $\wh{\mu}_j$ is the $j$th column of $\wh{\mu}$.
\end{enumerate}
The first step (\ref{eq:low-rank-spectral}) serves as a dimensionality reduction procedure, which reduces the dimension of data from $d$ to $k$. Then, the columns of $\wh{\mu}$ are collected to compute the $M$-approximation of the $k$-means objective in (\ref{eq:k-means-relax-approx}). We note that approximation of the $k$-means objective can be computed efficiently in polynomial time \citep{kumar2004simple,kanungo2004local,arthur2007k}. For example, the $k$-means++ algorithm \citep{arthur2007k} can efficiently solve (\ref{eq:k-means-relax-approx}) with $M=O(\log k)$. However, we shall treat $M$ flexible here, and its value will be reflected in the error bound of $z^{(0)}$. The second step (\ref{eq:k-means-relax-approx}) can also be replaced by a greedy clustering algorithm used in \citep{gao2017achieving}. The theoretical guarantee of $z^{(0)}$ is given in the following proposition.

\begin{proposition}\label{prop:ini-clust}
Assume $\min_{a\in[k]}\sum_{j=1}^p\indc{z_j^*=a}\geq\frac{\alpha p}{k}$ for some constant $\alpha>0$ and $\Delta_{\min}^2/((M+1)k^2(1+d/p))\rightarrow\infty$.
For any $C'>0$, there exists a constant $C>0$ only depending on $\alpha$ and $C'$ such that
\begin{equation}
\min_{\pi\in\Pi_k} \ell(\pi\circ z^{(0)}, z^*) \leq  C(M+1) k(p+d), \label{eq:da-niu-bi}
\end{equation}
with probability at least $1-e^{-C'(p+d)}$, where $\Pi_k$ denotes the set of permutations on $[k]$.
\end{proposition}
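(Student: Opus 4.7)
My plan is to combine a low-rank denoising estimate with a matching step. Write $\Theta^*\in\mathbb{R}^{d\times p}$ for the matrix whose $j$-th column is $\theta^*_{z_j^*}$, so that $Y=\Theta^*+E$ with $E$ having i.i.d.\ $\mathn(0,1)$ entries; $\rank(\Theta^*)\leq k$, and a standard Gaussian bound gives $\opnorm{E}\leq C(\sqrt d+\sqrt p)$ with probability at least $1-e^{-C'(d+p)}$. Since $\wh U\wh U^T Y$ is the best rank-$k$ approximation of $Y$, Weyl's inequality gives $\opnorm{\wh U\wh U^T Y-Y}=\sigma_{k+1}(Y)\leq\opnorm{E}$ and hence $\opnorm{\wh U\wh U^T Y-\Theta^*}\leq 2\opnorm{E}$; as this residual has rank at most $2k$, we get $\fnorm{\wh U\wh U^T Y-\Theta^*}^2\leq 8k\opnorm{E}^2\leq Ck(d+p)$. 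The orthogonal decomposition $\wh U\wh U^T Y-\Theta^*=\wh U\wh U^T E-(I-\wh U\wh U^T)\Theta^*$ then yields both $\fnorm{\wh U\wh U^T E}^2\leq Ck(d+p)$ and $\fnorm{(I-\wh U\wh U^T)\Theta^*}^2\leq Ck(d+p)$ separately.

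Next we push the $M$-approximate bound (\ref{eq:k-means-relax-approx}) back to the original space. Let $\alpha_a=\wh U^T\theta^*_a$ and $\wh c_a=\wh U\beta^{(0)}_a$. Using the feasible candidate $(z=z^*,\beta_a=\alpha_a)$ together with $\sum_j\|\wh\mu_j-\alpha_{z_j^*}\|^2=\fnorm{\wh U^T E}^2\leq k\opnorm{E}^2\leq Ck(d+p)$, we obtain $\sum_j\|\wh\mu_j-\beta^{(0)}_{z_j^{(0)}}\|^2\leq CMk(d+p)$ and then $\sum_j\|\beta^{(0)}_{z_j^{(0)}}-\alpha_{z_j^*}\|^2\leq C(M+1)k(d+p)$ by triangle. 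Decomposing $\wh c_{z_j^{(0)}}-\theta^*_{z_j^*}=\wh U(\beta^{(0)}_{z_j^{(0)}}-\alpha_{z_j^*})-(I-\wh U\wh U^T)\theta^*_{z_j^*}$ and invoking the previous bound on $\fnorm{(I-\wh U\wh U^T)\Theta^*}^2$ then gives
\[
\sum_{j=1}^p\|\wh c_{z_j^{(0)}}-\theta^*_{z_j^*}\|^2\leq C(M+1)k(d+p)=:\varepsilon.
\]

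For the matching step, define $\pi(a)=\argmin_{b\in[k]}\|\wh c_a-\theta^*_b\|$ for each $a\in[k]$. We first show $\pi$ is a bijection. If some $b^\dagger\in[k]$ were not in the image, then for every $j\in S_{b^\dagger}:=\{i:z_i^*=b^\dagger\}$ the minimality of $\pi(z_j^{(0)})$ together with $\|\theta^*_{\pi(z_j^{(0)})}-\theta^*_{b^\dagger}\|\leq\|\wh c_{z_j^{(0)}}-\theta^*_{\pi(z_j^{(0)})}\|+\|\wh c_{z_j^{(0)}}-\theta^*_{b^\dagger}\|\leq 2\|\wh c_{z_j^{(0)}}-\theta^*_{b^\dagger}\|$ forces $\|\wh c_{z_j^{(0)}}-\theta^*_{b^\dagger}\|\geq\Delta_{\min}/2$, so $\alpha p\Delta_{\min}^2/(4k)\leq|S_{b^\dagger}|\Delta_{\min}^2/4\leq\varepsilon$, contradicting $\Delta_{\min}^2/((M+1)k^2(1+d/p))\to\infty$. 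With $\pi$ a permutation, the minimality $|C_a|\|\theta^*_{\pi(a)}-\wh c_a\|^2\leq\sum_{j\in C_a}\|\theta^*_{z_j^*}-\wh c_a\|^2$ (with $C_a=\{j:z_j^{(0)}=a\}$) summed over $a$, combined with $\|\theta^*_{\pi(z_j^{(0)})}-\theta^*_{z_j^*}\|^2\leq 2\|\theta^*_{\pi(z_j^{(0)})}-\wh c_{z_j^{(0)}}\|^2+2\|\wh c_{z_j^{(0)}}-\theta^*_{z_j^*}\|^2$, delivers $\ell(\pi\circ z^{(0)},z^*)\leq 4\varepsilon$, which is the desired bound.

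The main obstacle is this last matching step: showing the nearest-center map $\pi$ is a bijection---this is where both the separation $\Delta_{\min}^2/((M+1)k^2(1+d/p))\to\infty$ and the balance $|S_a^*|\geq\alpha p/k$ enter---and then converting the Frobenius-type estimation error on the $\wh c_a$'s into the label loss $\ell$ after permutation. By contrast, the denoising ingredients (best rank-$k$ approximation via Weyl, Gaussian operator-norm concentration) and the Pythagorean identities relative to $\wh U$ are routine.
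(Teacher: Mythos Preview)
Your proof is correct and follows the same high-level strategy as the paper: obtain the rank-$k$ denoising bound $\fnorm{\wh U\wh U^T Y-\Theta^*}^2\lesssim k(p+d)$, push the $M$-approximate $k$-means bound to the conclusion $\sum_j\|\wh c_{z_j^{(0)}}-\theta^*_{z_j^*}\|^2\leq C(M+1)k(p+d)$, and then construct a matching permutation.

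The one genuine difference is in the matching step. The paper argues indirectly: it isolates the set $S=\{j:\|\theta^{(0)}_{z_j^{(0)}}-\theta^*_{z_j^*}\|\geq\Delta_{\min}/2\}$, shows $|S|=o(p/k)$, and deduces that on $S^\complement$ the labeling is exactly a permutation of the truth; it then bounds $\max_a\|\theta^*_a-\theta^{(0)}_{\pi_0(a)}\|^2$ using the per-cluster lower bound $|\mathcal C_a|\geq\alpha p/(2k)$ and combines this with $|S|$ to control $\ell$. Your approach is more direct: you \emph{define} $\pi(a)$ as the nearest true center to $\wh c_a$, prove surjectivity (hence bijectivity) by the contradiction argument on a hypothetically missed label, and then the minimality of $\pi(a)$ immediately gives $\sum_j\|\theta^*_{\pi(z_j^{(0)})}-\wh c_{z_j^{(0)}}\|^2\leq\varepsilon$ without any per-cluster size estimate on the \emph{estimated} clusters. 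Your route is a bit shorter and avoids the intermediate $\max_a$ bound; the paper's route yields, as a byproduct, the stronger structural statement that all but $o(p/k)$ indices are exactly correctly labeled after permutation, which you do not extract but also do not need for the stated proposition.
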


We remark that 
%the signal to noise ratio condition $\Delta_{\min}^2/((M+1)k^2(1+d/p))\rightarrow\infty$ required by Proposition \ref{prop:ini-clust} is almost identical to (\ref{eq:kmeans-signal}), which is required by Theorem \ref{thm:main-kmeans}. A 
a signal to noise ratio condition that is sufficient for both the conclusions of Proposition \ref{prop:ini-clust} and Theorem \ref{thm:main-kmeans} is given by
\begin{equation}
\frac{\Delta_{\min}^2}{(M +1)k^2(kd/p+1)} \rightarrow \infty, \label{eq:ini-clustering-signal-con}
\end{equation}
which is almost identical to (\ref{eq:kmeans-signal}).
Note that the clustering structure is only identifiable up to a label permutation, and this explains the necessity of the minimum over $\Pi_k$ in (\ref{eq:da-niu-bi}). In other words, (\ref{eq:da-niu-bi}) implies that there exists some $\pi\in\Pi_k$, such that $\ell(z^{(0)},\pi^{-1}\circ z^*)\leq C(M+1) k^2(p+d)$. Then, under the condition (\ref{eq:ini-clustering-signal-con}), (\ref{eq:kmeans-ini}) is satisfied with $z^*$ replaced by $\pi^{-1}\circ z^*$. Therefore, Theorem \ref{thm:main-kmeans} implies that $\ell(z^{(t)},\pi^{-1}\circ z^*)$ converges to the minimax error with a linear rate.

\section{Approximate Ranking}\label{sec:rank-results}

In this section, we study the estimation of $z^*\in[p]^p$ using the pairwise interaction data generated according to $Y_{ij}\sim \mathn(\beta^*(z_i^*-z_j^*),1)$ independently for all $1\leq i\neq j\leq p$. This model can be viewed as a special case of the more general pairwise comparison model $Y_{ij}\sim \mathn(\theta_{z_i^*}^*-\theta_{z_j^*}^*,1)$, where $\theta_i^*$ parametrizes the ability of the $i$th player, and the choice $\theta_i^*=\alpha^* +\beta^*i$ leads to $Y_{ij}\sim \mathn(\beta^*(z_i^*-z_j^*),1)$ that will be studied in this section. Let $\Pi_p$ be the set of all possible permutations of $[p]$. We assume the rank vector $z^*$ belongs to the following class,
\begin{equation}
\mathcal{R}=\left\{z\in[p]^p: \min_{\wt{z}\in\Pi_p}\|z-\wt{z}\|^2\leq c_p\right\}, \label{eq:def-R}
\end{equation}
for some sequence $1\leq c_p=o(p)$. In other words, $\mathcal{R}$ is a set of approximate permutations. A rank vector $z^*\in\mathcal{R}$ is allowed to have ties and not necessarily to start from $1$. To be more precise, a $z^*\in\mathcal{R}$ should be interpreted as discrete positions of the $p$ players in the latent space of their abilities. This is in contrast to the exact ranking problem, also known as ``noisy sorting" in the literature, where $z^*$ is assumed to be a permutation \citep{braverman2008noisy,shah2017simple,mao2017minimax}.

For the loss function
\begin{equation}
L_2(z,z^*)=\frac{1}{p}\sum_{j=1}^p(z_j-z_j^*)^2, \label{eq:emp-l2}
\end{equation}
the minimax rate of estimating $z^*$ takes the following formula,
\begin{equation}
\inf_{\wh{z}}\sup_{z^*\in\mathcal{R}}\mathbb{E}L_2(\wh{z},z^*)\asymp \begin{cases}
\exp\left(-(1+o(1))\frac{p(\beta^*)^2}{4}\right), & p(\beta^*)^2 > 1, \\
\frac{1}{p(\beta^*)^2}\wedge p^2, & p(\beta^*)^2 \leq 1.
\end{cases}\label{eq:minimax-rank-rate}
\end{equation}
See Theorems 2.2 and 2.3 in \cite{gao2017phase}\footnote{The paper \cite{gao2017phase} considers a parameter space that is slightly different from $\mathcal{R}$. However, the proof of \cite{gao2017phase} can be modified so that the same minimax rate also applies to $\mathcal{R}$}. Interestingly, the minimax rate either takes a polynomial form or an exponential form, depending on the signal strength parametrized by $p(\beta^*)^2$. In the paper \cite{gao2017phase}, a combinatorial procedure is constructed to achieve the optimal rate (\ref{eq:minimax-rank-rate}), and whether (\ref{eq:minimax-rank-rate}) can be achieved by a polynomial-time algorithm is unknown. This is where our proposed iterative algorithm comes. We will particularly focus on the regime of $p(\beta^*)^2\rightarrow\infty$, where the minimax rate takes an exponential form.
%When $p(\beta^*)^2=O(1)$, the polynomial minimax rate can be achieved by a simple method of moment procedure (see Theorem ?? in the end of this section). However, when $p(\beta^*)^2\rightarrow\infty$, whether the exponential minimax rate can be achieved by a polynomial-time algorithm is unknown in the literature. This is when our proposed iterative algorithm becomes particularly useful, and we will show it is the first polynomial-time algorithm to achieve the minimax rate of approximate ranking in the regime of $p(\beta^*)^2\rightarrow\infty$.

Specializing Algorithm \ref{alg:general} to the approximate ranking problem, we can write the iterative feature matching algorithm as
$$z_j^{(t)} = \argmin_{a\in[p]}\left|\sum_{i\in[p]\backslash\{j\}}(Y_{ji}-Y_{ij})-2p\wh{\beta}(z^{(t-1)})\left(a-\frac{p+1}{2}\right)\right|^2,\quad j\in[p],$$
where for each $z\in[p]^p$, we use the notation
\begin{equation}
\wh{\beta}(z)=\frac{\sum_{1\leq i\neq j\leq p}(z_i-z_j)Y_{ij}}{\sum_{1\leq i\neq j\leq p}(z_i-z_j)^2}. \label{eq:beta-z-rank}
\end{equation}

\subsection{Conditions}
From (\ref{eqn:rank_nu_def}), we have
\begin{align}\label{eqn:rank_delta_def}
\Delta_j(a,b)^2=\frac{2p^2(\beta^*)^2}{p-1}\left[(a-b)^2-2(a-b)\left(\frac{1}{p}\sum_{j=1}^pz_j^*-\frac{p+1}{2}\right)\right],
\end{align}
and 
\begin{align}\label{eqn:rank_loss_l_def}
\ell(z,z^*)=\frac{2p^2(\beta^*)^2}{p-1}\sum_{j=1}^p(z_j-z_j^*)^2
\end{align} 
 in the current setting. It is easy to check that $\Delta_j(a,b)^2>0$ for all $a\neq b$ as long as $z^*\in\mathcal{R}$. From (\ref{eq:T_j-rank}) and  (\ref{eqn:rank_nu_def}), we have $T_j = \mu_j(B^*,z^*_j) + \epsilon_j$ where $\epsilon_j\sim \mathn(0,1)$ for all $j\in[p]$. One can also write down the formulas of $F_j(a,b;z)$, $G_j(a,b;z)$ and $H_j(a,b)$, which are included in Section \ref{sec:proof_ranking} due to the limit of space.
 
\begin{lemma}\label{lem:rank-error}
Assume $z^*\in\mathcal{R}$, $\tau= o(p^2(\beta^*)^2)$, and $p(\beta^*)^2\geq 1$. Then, for any $C'>0$, there exists a constant $C>0$ only depending on $C'$ such that
\begin{equation}
\max_{\{z:\ell(z,z^*)\leq\tau\}}\sum_{j=1}^p\max_{b\in[p]\backslash\{z_j^*\}}\frac{F_j(z_j^*,b;z)^2\|\mu_j(B^*,b)-\mu_j(B^*,z_j^*)\|^2}{\Delta_j(z_j^*,b)^4\ell(z,z^*)} \leq Cp^{-2}, \label{eq:rank-error1}
\end{equation}
\begin{eqnarray}
\nonumber && \max_{\{z:\ell(z,z^*)\leq\tau\}}\max_{T\subset[p]}\frac{\tau}{4\Delta_{\min}^2|T|+\tau}\sum_{j\in T}\max_{b\in[p]\backslash\{z_j^*\}}\frac{G_j(z_j^*,b;z)^2\|\mu_j(B^*,b)-\mu_j(B^*,z_j^*)\|^2}{\Delta_j(z_j^*,b)^4\ell(z,z^*)} \\
\label{eq:rank-error2} &\leq& C\left(\frac{\tau}{p^2|\beta^*|^2} + \frac{1}{p|\beta^*|^2}\right),
\end{eqnarray}
and
\begin{equation}
\max_{j\in[p]}\max_{b\in[p]\backslash\{z_j^*\}}\frac{|H_j(z_j^*,b)|}{\Delta_j(z_j^*,b)^2} \leq C \frac{1}{\sqrt{p}|\beta^*|}, \label{eq:rank-error3} 
\end{equation}
with probability at least $1-(C'p)^{-1}$ for a sufficiently large $p$.
\end{lemma}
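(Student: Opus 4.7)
The plan is to exploit the scalar, affine-in-$a$ structure of $\nu_j(B,a) = c B(a - (p+1)/2)$ with $c := 2p/\sqrt{2(p-1)}$, which reduces everything to two scalar random quantities: $E := \wh\beta(z^*) - \beta^*$ and $D(z) := \wh\beta(z^*) - \wh\beta(z)$. Direct calculation, using the fact that $\mu_j(B^*,a) = c\beta^*(a - \bar z^*)$ differs from $\nu_j(B^*,a) = c\beta^*(a - (p+1)/2)$ only by the ``bias offset'' $\gamma_0 := \bar z^* - (p+1)/2$, yields
\begin{align*}
F_j(a,b;z) &= c(a-b) D(z) \epsilon_j, \\
G_j(a,b;z) &= c^2 (a-b)\bigl[(DE + D^2/2)(a+b-(p+1)) - D\beta^* \gamma_0\bigr], \\
H_j(a,b) &= c^2 (a-b)\bigl[\beta^* \gamma_0 E - \tfrac{1}{2} E^2 (a+b-(p+1))\bigr].
\end{align*}
Since $z^* \in \mathcal{R}$ forces $|\gamma_0| \leq \sqrt{c_p/p} = o(1)$, we have $\Delta_j(z_j^*,b)^2 = c^2(\beta^*)^2(z_j^*-b)^2[1 - 2\gamma_0/(z_j^*-b)] \asymp c^2(\beta^*)^2(z_j^*-b)^2$ and $\|\mu_j(B^*,b)-\mu_j(B^*,z_j^*)\|^2 = c^2(\beta^*)^2(z_j^*-b)^2$, so the powers of $(z_j^*-b)$ cancel exactly in all three target ratios and the resulting expressions are (essentially) independent of $b$.

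Next, I control $E$ and $\sup_z|D(z)|$. Writing $\wh\beta(z) = [\beta^* A(z,z^*) + \sum_{i\neq j}(z_i-z_j)W_{ij}]/S(z)$ with $S(z) := \sum_{i\neq j}(z_i-z_j)^2 \asymp p^4$ and $A(z,z^*) := \sum_{i\neq j}(z_i-z_j)(z_i^*-z_j^*)$, $E$ is a single centered Gaussian of variance $1/S(z^*) \asymp p^{-4}$, so $|E| \lesssim \sqrt{\log p}/p^2$ with high probability. For $D(z)$ I split into a deterministic bias $\beta^*[1 - A(z,z^*)/S(z)]$ and a Gaussian noise part; the identity $S(z)-A(z,z^*) = A(z,z-z^*)$ combined with Cauchy--Schwarz bounds the bias by $|\beta^*|\cdot\|z-z^*\|/p^{5/2}$, while for fixed $z$ the noise part is Gaussian of variance $O(\|z-z^*\|^2/p^7)$. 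A peeling argument over level sets $\{z \in \mathcal{R} : \|z-z^*\|^2 \in [r^2,2r^2]\}$, whose cardinality is at most $p^{O(r^2)}$, then produces a uniform bound $\sup_{\ell(z,z^*)\leq\tau}D(z)^2 \lesssim (\beta^*)^2\|z-z^*\|^2/p^5 + \|z-z^*\|^2\log p/p^7$; under $p(\beta^*)^2 \geq 1$ this is $\lesssim (\beta^*)^4\|z-z^*\|^2/p^2$.

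Plugging back: the $F$ ratio simplifies to $D(z)^2\epsilon_j^2/[(\beta^*)^2\ell(z,z^*)(1+o(1))]$ independent of $b$, so summing over $j$ and using $\sum_j\epsilon_j^2 \lesssim p$ together with $\ell(z,z^*) \asymp p(\beta^*)^2\|z-z^*\|^2$ yields the claimed $\lesssim 1/p^2$. For $H$, the explicit formula combined with $|\gamma_0|\leq\sqrt{c_p/p}$, $|E|\lesssim\sqrt{\log p}/p^2$, and $c\asymp\sqrt{p}$ gives a bound of order $\sqrt{c_p\log p}/(|\beta^*|p^{5/2})$, well inside the claimed $1/(\sqrt{p}|\beta^*|)$. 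For $G$, the restricted prefactor $\tau/(4\Delta_{\min}^2|T|+\tau)$ is essential: the quadratic-in-$D$ piece contributes the $\tau/(p^2(\beta^*)^2)$ term (matched against the $\tau$ part of the prefactor for large $|T|$), while the $DE$ and $D\beta^*\gamma_0$ cross-terms contribute the $1/(p(\beta^*)^2)$ piece (matched against the $|T|$ part for small $|T|$).

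The hardest step is the uniform control of $D(z)$ in stage two: the noise part of $\wh\beta(z)$ is a Gaussian process indexed by a combinatorially large discrete subset of $[p]^p$, and one must balance the Gaussian tail $\exp(-p^7 t^2/\|z-z^*\|^2)$ against the cardinality $p^{O(r^2)}$ of each shell. The assumption $p(\beta^*)^2 \geq 1$ is precisely what ensures that both the bias contribution $\|z-z^*\|^2/p^5$ and the noise contribution $\|z-z^*\|^2\log p/p^7$ to $D(z)^2$ fit inside the target scale $(\beta^*)^4\|z-z^*\|^2/p^2$.
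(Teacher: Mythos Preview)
Your overall strategy---reducing everything to the two scalars $E$ and $D(z)$, exploiting the cancellation of $(z_j^*-b)$-powers via $\Delta_j^2 \asymp c^2(\beta^*)^2(z_j^*-b)^2$, and then bounding $E$ and $\sup_z|D(z)|$---is exactly the paper's strategy. However, your explicit formulas for $G_j$ and $H_j$ are missing their dominant terms. If you carry the algebra through carefully you will find, with $\beta_0 := b-(p+1)/2$,
\[
H_j(a,b) = c^2(a-b)\Bigl[\beta^* E\,\gamma_0 \;+\; \beta^* E\,\beta_0 \;+\; \tfrac{1}{2}E^2\bigl(a+b-(p+1)\bigr)\Bigr],
\]
and $G_j$ likewise contains a term $-c^2(a-b)\,\beta^* D\,\beta_0$. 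The piece $\beta^* E\,\beta_0$ (resp.\ $\beta^* D\,\beta_0$) is of size $O(p|\beta^*E|)$ (resp.\ $O(p|\beta^* D|)$) and is precisely what saturates the bounds $1/(\sqrt{p}|\beta^*|)$ and $\tau/(p^2(\beta^*)^2)$ in the lemma; your stated formulas omit it, which is why your $H$ bound came out far smaller than the target. Once you restore these terms the argument goes through as you outline. (Two further arithmetic slips, neither fatal: Cauchy--Schwarz on the bias gives $|\beta^*[1-A(z,z^*)/S(z)]| \lesssim |\beta^*|\,\|z-z^*\|/p^{3/2}$, not $p^{5/2}$; and balancing the Gaussian tail against the shell cardinality $p^{O(r^2)}$ gives a noise contribution $\|z-z^*\|^4\log p/p^7$, not $\|z-z^*\|^2\log p/p^7$. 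Both still fit under $p(\beta^*)^2\geq 1$ and $\|z-z^*\|^2=o(p)$.)

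Where your route genuinely diverges from the paper is in the uniform control of the Gaussian part of $D(z)$. You propose a peeling/union bound over the discrete shells $\{z:\|z-z^*\|^2\in[r^2,2r^2]\}$ of cardinality $p^{O(r^2)}$. The paper instead observes that the coefficient array $\{a_i-a_j\}$ is always a rank-$2$ matrix, so a single $\epsilon$-net over rank-$2$ matrices of covering number $e^{O(p)}$ yields
\[
\max_{a\in\mathbb{R}^p}\frac{\bigl|\sum_{i\neq j}(a_i-a_j)w_{ij}\bigr|}{\sqrt{\sum_{i\neq j}(a_i-a_j)^2}} \lesssim \sqrt{p}
\]
uniformly over all of $\mathbb{R}^p$ (their inequality (\ref{eq:w-ranking2})). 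Your peeling does work and even gives a sharper bound on $E$ itself ($|E|\lesssim \sqrt{\log p}/p^2$ versus the paper's $p^{-3/2}$), but the rank-$2$ device is cleaner: one high-probability event covers every $z$ at once, the shell bookkeeping disappears, and the bound extends to continuous perturbations for free.
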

Lemma \ref{lem:rank-error} implies that Conditions A , B and C hold with some sequence $\delta=\delta_p=o(1)$ as long as $\tau= o(p^2(\beta^*)^2)$ and $p(\beta^*)^2\rightarrow\infty$.

Next, we need to control $\xi_{\rm ideal}(\delta)$ in Condition D. This is given by the following lemma.

\begin{lemma}\label{lem:ranking-ideal}
Assume $p(\beta^*)^2\rightarrow\infty$. Then, for any sequence $\delta_p = o(1)$, we have
$$\xi_{\rm ideal}(\delta_p) \leq p\exp\left(-(1+o(1))\frac{p(\beta^*)^2}{4}\right),$$
with probability at least $1-\exp\left(-\sqrt{p(\beta^*)^2}\right)-p^{-1}$.
\end{lemma}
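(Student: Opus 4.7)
The plan is to reduce $\xi_{\rm ideal}(\delta_p)$ to a sum of weighted one-sided Gaussian tail indicators by first conditioning on a high-probability event on which $\wh\beta(z^*)$ is close to $\beta^*$, and then applying Markov's inequality to the resulting expectation.

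Writing $Y_{ij}=\beta^*(z_i^*-z_j^*)+W_{ij}$ with $W_{ij}\stackrel{iid}{\sim}\mathn(0,1)$ for $i\neq j$, we have $\wh\beta(z^*)-\beta^*\sim\mathn\bigl(0,[\sum_{i\neq k}(z_i^*-z_k^*)^2]^{-1}\bigr)$. Because $z^*\in\mathcal{R}$ forces $\sum_{i\neq k}(z_i^*-z_k^*)^2\asymp p^4$, and $p(\beta^*)^2\to\infty$ gives $|\beta^*|\gg p^{-1/2}$, a Gaussian tail bound produces an event $A$ with $\Prob(A)\geq 1-p^{-1}$ on which $|\wh\beta(z^*)/\beta^*-1|\leq\epsilon_p$ for some $\epsilon_p=o(1)$. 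Similarly, $z^*\in\mathcal{R}$ and Cauchy--Schwarz give $|p^{-1}\sum_i z_i^*-(p+1)/2|\leq\sqrt{c_p/p}=o(1)$, so by (\ref{eqn:rank_delta_def}), $\Delta_j(z_j^*,b)^2=\tfrac{2p^2(\beta^*)^2}{p-1}(z_j^*-b)^2(1+o(1))$ uniformly over $j\in[p]$ and $b\in[p]\setminus\{z_j^*\}$.

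Since $\nu_j(\wh B(z^*),a)-\nu_j(\wh B(z^*),b)=\sqrt{2p^2/(p-1)}\,\wh\beta(z^*)(a-b)$, the event inside $\xi_{\rm ideal}$ is $\sqrt{2p^2/(p-1)}\,\wh\beta(z^*)(z_j^*-b)\epsilon_j\leq -\tfrac{1-\delta_p}{2}\Delta_j(z_j^*,b)^2$. On $A$ (WLOG $\beta^*>0$; the opposite sign is symmetric), the bound $|\wh\beta(z^*)|\leq(1+\epsilon_p)\beta^*$ and sign analysis give the implication
$$\sgn(z_j^*-b)\epsilon_j\leq -c_{j,b},\qquad c_{j,b}^2=(1+o(1))\,\frac{p(\beta^*)^2(z_j^*-b)^2}{2},$$
uniformly in $j,b$. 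Since $\epsilon_j\sim\mathn(0,1)$ marginally, the Gaussian tail bound yields $\Prob(\text{event}_{j,b}\cap A)\leq\exp\bigl(-(1+o(1))\tfrac{p(\beta^*)^2(z_j^*-b)^2}{4}\bigr)$. Letting $\tilde\xi=\xi_{\rm ideal}(\delta_p)\indc{A}$ and using $\|\mu_j(B^*,b)-\mu_j(B^*,z_j^*)\|^2=\tfrac{2p^2(\beta^*)^2}{p-1}(z_j^*-b)^2$, summing over $j$ and grouping by $m=|z_j^*-b|\geq 1$ (the $m=1$ term dominates since $p(\beta^*)^2\to\infty$) yields
$$\E[\tilde\xi]\lesssim p\cdot p(\beta^*)^2\exp\Bigl(-(1+o(1))\tfrac{p(\beta^*)^2}{4}\Bigr)=p\exp\Bigl(-(1+o(1))\tfrac{p(\beta^*)^2}{4}\Bigr),$$
where the polynomial prefactor is absorbed via $\log(p(\beta^*)^2)=o(p(\beta^*)^2)$. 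Applying Markov's inequality with threshold $e^{\sqrt{p(\beta^*)^2}}\E[\tilde\xi]$, together with $\sqrt{p(\beta^*)^2}=o(p(\beta^*)^2)$, bounds $\tilde\xi$ by $p\exp(-(1+o(1))p(\beta^*)^2/4)$ with probability at least $1-\exp(-\sqrt{p(\beta^*)^2})$; intersecting with $A$ gives the claimed total probability $1-\exp(-\sqrt{p(\beta^*)^2})-p^{-1}$.

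The main obstacle is the statistical dependence between $\wh\beta(z^*)$ (which enters $\nu_j$) and $\epsilon_j$, since both are linear functionals of the noise $W_{ij}$; this rules out a direct one-sided Gaussian tail calculation for the indicator event. Conditioning on $A$ circumvents this: the two-sided bound on $\wh\beta(z^*)$ allows one to uniformly weaken the event into a tail event involving only $\epsilon_j$, for which the marginal $\mathn(0,1)$ distribution suffices regardless of the precise value of $\wh\beta(z^*)$.
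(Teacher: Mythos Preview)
Your proof is correct and follows essentially the same route as the paper: control $\wh\beta(z^*)$ near $\beta^*$ on a high-probability event, reduce the indicator in $\xi_{\rm ideal}$ to a one-sided Gaussian tail in $\epsilon_j\sim\mathn(0,1)$, sum over $j$ and $|z_j^*-b|$, and finish with Markov's inequality at threshold $e^{\sqrt{p(\beta^*)^2}}\mathbb{E}[\tilde\xi]$. The only cosmetic difference is that the paper splits the indicator additively with an extra $\bar\delta$ term (and also invokes $\max_j|\epsilon_j|\lesssim\sqrt{\log p}$) to make the cross term $\epsilon_j(\wh\beta(z^*)-\beta^*)$ vanish identically, whereas you absorb $\wh\beta(z^*)$ multiplicatively via the event $A$, which is slightly more direct.
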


We note that the signal condition $p(\beta^*)^2\rightarrow\infty$ implies that Conditions A, B, C and D  hold simultaneously. 

\subsection{Convergence}
With the help of Lemma \ref{lem:rank-error} and Lemma \ref{lem:ranking-ideal}, we can specialize Theorem \ref{thm:main} into the following result.

\begin{thm}\label{thm:rank-main}
Assume $p(\beta^*)^2\rightarrow\infty$ and $z^*\in \mathcal{R}$. Suppose $z^{(0)}$ satisfies $\ell(z^{(0)},z^*)=o(p^2(\beta^*)^2)$ with probability at least $1-\eta$. Then, we have
$$\ell(z^{(t)},z^*)\leq p\exp\left(-(1+o(1))\frac{p(\beta^*)^2}{4}\right) + \frac{1}{2}\ell(z^{(t-1)},z^*)\quad \text{for all }t\geq 1,$$
with probability at least $1-\eta-\exp\left(-\sqrt{p(\beta^*)^2}\right)-2p^{-1}$.
\end{thm}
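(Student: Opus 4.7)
}
The plan is to deduce Theorem \ref{thm:rank-main} as a direct specialization of the abstract convergence result Theorem \ref{thm:main} by verifying Conditions A--E in the current ranking setting, with the bounds on the error terms and on $\xi_{\rm ideal}(\delta)$ already prepared by Lemma \ref{lem:rank-error} and Lemma \ref{lem:ranking-ideal}.

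First, I would pick the radius $\tau$. Since the hypothesis on $z^{(0)}$ gives $\ell(z^{(0)},z^*)=o(p^2(\beta^*)^2)$ with probability at least $1-\eta$, there exists a sequence $\rho_p\to 0$ such that $\ell(z^{(0)},z^*)\le \rho_p\, p^2(\beta^*)^2$ on that event; set $\tau=\rho_p\, p^2(\beta^*)^2$ (enlarging $\rho_p$ if necessary so that it decays slowly enough to also dominate the ideal error, see below). Then Condition E holds with probability at least $1-\eta$. Next I would choose the slack parameter as some $\delta_p=o(1)$ that tends to $0$ slowly. With this $\tau$ and $\delta_p$, I plug $\tau=o(p^2(\beta^*)^2)$ and $p(\beta^*)^2\to\infty$ into the three bounds (\ref{eq:rank-error1})--(\ref{eq:rank-error3}) of Lemma \ref{lem:rank-error}. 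Each of the right-hand sides tends to $0$, so for $p$ large they are all bounded by $\delta_p^2/256$ (or $\delta_p/4$ in the $\ell_\infty$ case). This verifies Conditions A, B, and C with the common failure probability $\eta_1=\eta_2=\eta_3=(C'p)^{-1}$.

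For Condition D, I would apply Lemma \ref{lem:ranking-ideal} with the same $\delta_p$, which yields
\[
\xi_{\rm ideal}(\delta_p)\le p\exp\!\left(-(1+o(1))\tfrac{p(\beta^*)^2}{4}\right)
\]
with probability at least $1-\exp(-\sqrt{p(\beta^*)^2})-p^{-1}$. Since $p(\beta^*)^2\to\infty$, this ideal error is super-polynomially small in $p(\beta^*)^2$, whereas $\tau=\rho_p\,p^2(\beta^*)^2$ is of larger order; hence $\xi_{\rm ideal}(\delta_p)\le \tau/4$ for $p$ large, and Condition D holds.

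With all five conditions verified, I would apply Theorem \ref{thm:main} to conclude that on the intersection of the five good events,
\[
\ell(z^{(t)},z^*)\le 2\xi_{\rm ideal}(\delta_p)+\tfrac{1}{2}\ell(z^{(t-1)},z^*)\qquad\text{for all }t\ge 1,
\]
and substitute the upper bound on $\xi_{\rm ideal}(\delta_p)$ from Lemma \ref{lem:ranking-ideal} into the first summand. A union bound over the failure probabilities gives the claimed probability $1-\eta-\exp(-\sqrt{p(\beta^*)^2})-2p^{-1}$ (the $(C'p)^{-1}$ terms from Lemma \ref{lem:rank-error} are absorbed into the $p^{-1}$ slack by taking $C'$ appropriately). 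The only subtlety, and the ``main obstacle'' so to speak, is the mutual compatibility of the parameters: one must verify that a single sequence $\delta_p\to 0$ can simultaneously meet the left-hand sides of Conditions A--C and also be admissible in Lemma \ref{lem:ranking-ideal}. Since Lemma \ref{lem:rank-error}'s bounds are $o(1)$ unconditionally (given $\tau=o(p^2(\beta^*)^2)$ and $p(\beta^*)^2\to\infty$) and Lemma \ref{lem:ranking-ideal} is valid for any $\delta_p=o(1)$, one can simply choose $\delta_p$ to decay slower than the maximum of the three right-hand sides in Lemma \ref{lem:rank-error}; this is a routine diagonal argument that poses no real difficulty.
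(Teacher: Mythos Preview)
Your proposal is correct and matches the paper's approach exactly: the paper explicitly states that Theorem \ref{thm:rank-main} is a direct consequence of Theorem \ref{thm:main} (using Lemma \ref{lem:rank-error} and Lemma \ref{lem:ranking-ideal} to verify Conditions A--D) and omits the details. Your handling of the parameter choices ($\tau$, $\delta_p$) and the union-bound bookkeeping for the failure probability $1-\eta-\exp(-\sqrt{p(\beta^*)^2})-2p^{-1}$ is precisely what the paper intends.
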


Using the relation from (\ref{eq:emp-l2}) and (\ref{eqn:rank_loss_l_def}) that
\begin{equation}
L_2(z,z^*)=\frac{p-1}{2p^3(\beta^*)^2}\ell(z,z^*), \label{eq:relation-loss-rank}
\end{equation}
we immediately obtain the following result on the loss $L_2(z,z^*)$.
\begin{corollary}\label{cor:rank-main}
Assume $p(\beta^*)^2\rightarrow\infty$ and $z^*\in \mathcal{R}$. Suppose $z^{(0)}$ satisfies $\ell(z^{(0)},z^*)=o(p^2(\beta^*)^2)$ with probability at least $1-\eta$. Then, we have
\begin{equation}
L_2(z^{(t)},z^*) \leq \exp\left(-(1+o(1))\frac{p(\beta^*)^2}{4}\right) + 2^{-t}\quad \text{for all }t\geq 1, \label{eq:L2-bound-ite}
\end{equation}
with probability at least $1-\eta-\exp\left(-\sqrt{p(\beta^*)^2}\right)-2p^{-1}$.
\end{corollary}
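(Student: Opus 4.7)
The plan is to deduce Corollary \ref{cor:rank-main} from Theorem \ref{thm:rank-main} by unrolling the one-step contraction and then applying the rescaling identity (\ref{eq:relation-loss-rank}). So first I would, on the high-probability event on which Theorem \ref{thm:rank-main} gives the recursion
$$\ell(z^{(t)},z^*)\leq p\exp\!\left(-(1+o(1))\tfrac{p(\beta^*)^2}{4}\right) + \tfrac{1}{2}\ell(z^{(t-1)},z^*)\quad\text{for all }t\geq 1,$$
iterate this in $t$. Since the recursion has the form $a_t\leq C + \tfrac{1}{2}a_{t-1}$ with $C=p\exp(-(1+o(1))p(\beta^*)^2/4)$, a geometric-series unrolling yields
$$\ell(z^{(t)},z^*)\leq 2C + 2^{-t}\ell(z^{(0)},z^*) = 2p\exp\!\left(-(1+o(1))\tfrac{p(\beta^*)^2}{4}\right) + 2^{-t}\ell(z^{(0)},z^*).$$

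Next I would multiply both sides by the scaling factor $(p-1)/(2p^3(\beta^*)^2)$ and invoke (\ref{eq:relation-loss-rank}) to pass to the $L_2$ loss. The first term becomes
$$\tfrac{p-1}{2p^3(\beta^*)^2}\cdot 2p\exp\!\left(-(1+o(1))\tfrac{p(\beta^*)^2}{4}\right)\leq \tfrac{1}{p(\beta^*)^2}\exp\!\left(-(1+o(1))\tfrac{p(\beta^*)^2}{4}\right).$$
For the second term, use the assumption $\ell(z^{(0)},z^*)=o(p^2(\beta^*)^2)$, which in particular gives $\ell(z^{(0)},z^*)\leq p^2(\beta^*)^2$ for $p$ large enough, so
$$\tfrac{p-1}{2p^3(\beta^*)^2}\cdot 2^{-t}\ell(z^{(0)},z^*)\leq \tfrac{p-1}{2p}\cdot 2^{-t}\leq 2^{-t}.$$

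The only minor subtlety — and really the one checking step worth flagging — is absorbing the prefactor $1/(p(\beta^*)^2)$ of the first term into the exponent. Under the assumption $p(\beta^*)^2\to\infty$, the polynomial factor contributes only $o(p(\beta^*)^2)$ to the logarithm, so $\tfrac{1}{p(\beta^*)^2}\exp(-(1+o(1))p(\beta^*)^2/4) = \exp(-(1+o(1))p(\beta^*)^2/4)$, matching (\ref{eq:L2-bound-ite}). The probability bound is inherited verbatim from Theorem \ref{thm:rank-main}, so no additional failure events are introduced. The entire argument is bookkeeping given the previous theorem; there is no substantive obstacle beyond ensuring that the $o(1)$ in the exponent swallows both the explicit $1/(p(\beta^*)^2)$ prefactor and any slack coming from the initialization hypothesis.
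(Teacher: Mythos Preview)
Your proposal is correct and follows exactly the approach the paper indicates: the paper states that the corollary follows ``immediately'' from Theorem \ref{thm:rank-main} via the relation (\ref{eq:relation-loss-rank}), and your unrolling of the contraction plus rescaling and absorption of the $1/(p(\beta^*)^2)$ prefactor into the $o(1)$ exponent is precisely the bookkeeping that makes this work.
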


We observe that $L_2(z,z^*)$ takes value in the set $\{j/p: j\in\mathbb{N}\cup\{0\}\}$, the term $2^{-t}$ in (\ref{eq:L2-bound-ite}) is negligible as long as $2^{-t}=o(p^{-1})$. We therefore can claim
$$L_2(z^{(t)},z^*) \leq \exp\left(-(1+o(1))\frac{p(\beta^*)^2}{4}\right) \quad \text{for all }t\geq 3\log p.$$
Hence, by (\ref{eq:minimax-rank-rate}) the iterative feature matching algorithm achieves the minimax rate of approximate ranking in the regime of $p(\beta^*)^2\rightarrow\infty$ after at most $\ceil{3\log p}$ iterations.

\subsection{Initialization}

To initialize the iterative feature matching algorithm, we consider a simple ranking procedure based on the statistics $\{T_j\}_{j\in[p]}$. That is, letting $T_{(1)}\leq \cdots \leq T_{(p)}$ be the order statistics of $\{T_j\}_{j\in[p]}$, we define $z^{(0)}$ to be a permutation vector that satisfies $T_{z_j^{(0)}}=T_{(j)}$ for all $j\in[p]$.

\begin{proposition} \label{prop:initial-rank}
Assume $z^*\in\mathcal{R}$ and $\beta^*>0$. Then, we have
$$L_2(z^{(0)},z^*)\lesssim \begin{cases}
o(1), & p(\beta^*)^2\rightarrow\infty, \\
\frac{1}{p(\beta^*)^2}\wedge p^2, & p(\beta^*)^2=O(1),
\end{cases}$$
with probability at least $1-p^{-1}$.
\end{proposition}

Note that the additional condition $\beta^*>0$ guarantees that $z^{(0)}$ estimates $z^*$ instead of its reverse order.
In the regime of $p(\beta^*)^2\rightarrow\infty$, the initialization procedure achieves $L_2(z^{(0)},z^*)=o(1)$ with high probability. Given the relation (\ref{eq:relation-loss-rank}), this implies that $\ell(z^{(0)},z^*)=o(p^2(\beta^*)^2)$, and thus the initialization condition of Theorem \ref{thm:rank-main} is satisfied. In the regime of $p(\beta^*)^2=O(1)$, the initialization procedure achieves the rate $\frac{1}{p(\beta^*)^2}\wedge p^2$, which is already minimax optimal according to (\ref{eq:minimax-rank-rate}), and there is no need for the improvement via the iterative algorithm.

\section{Sign Recovery in Compressed Sensing}\label{sec:regression}

We consider a regression model $Y=X\beta^*+w\in\mathbb{R}^n$, where $X\in\mathbb{R}^{n\times p}$ is a random design matrix with i.i.d. entries $X_{ij}\sim \mathn(0,1)$, and $w$ is an independent noise vector with i.i.d. entries $w_i\sim \mathn(0,1)$. Our goal is to recover the signs of the regression coefficients $\beta_j^*$'s. Formally speaking, we assume
$$z^*\in\mathcal{Z}_s=\left\{z\in\{-1,0,1\}^p:\sum_{j=1}^p|z_j|=s\right\},$$
and $\beta^*\in\mathcal{B}_{z^*,\lambda}$, where for some $z\in\{-1,0,1\}^p$ and some $\lambda>0$, the space $\mathcal{B}_{z,\lambda}$ is defined by
$$\mathcal{B}_{z,\lambda}=\left\{\beta\in\mathbb{R}^p: \beta_j=z_j|\beta_j|, \min_{\{j\in[p]:z_j\neq 0\}}|\beta_j|\geq\lambda\right\}.$$
The problem is to estimate the sign vector $z^*$. A closely related problem is support recovery, which is equivalent to estimating the vector $\{|z_j^*|\}_{j\in[p]}$. This problem has received much attention in the literature of compressed sensing, where one usually has control over the distribution of the design matrix. Necessary and sufficient conditions on $(n,p,s,\lambda)$ for exact support recovery have been derived in \cite{fletcher2009necessary,wainwright2009information,aeron2010information,wang2010information,saligrama2011thresholded,rad2011nearly} and references therein.
Recently, the minimax rate of partial support recovery with respect to the Hamming loss has been derived in \cite{ndaoud2018optimal}. Their results can be easily modified to the estimation of the sign vector $z^*$ as well. We will state the lower bound result in \cite{ndaoud2018optimal} as our benchmark. To do that, we need to introduce the normalized Hamming loss
$$\h_{(s)}(z,z^*)=\frac{1}{s}h(z,z^*)=\frac{1}{s}\sum_{j=1}^p\indc{z_j\neq z_j^*}.$$
We also define the signal-to-noise ratio of the problem by
\begin{align}\label{eqn:lambda_def}
\snr=\frac{\lambda\sqrt{n}}{2}-\frac{\log\frac{p-s}{s}}{\lambda\sqrt{n}}.
\end{align}

\begin{thm}[Ndaoud and Tsybakov \citep{ndaoud2018optimal}]\label{thm:lower-regression}
Assume $\limsup {s}/{p}<\frac{1}{2}$ and $s\log p\leq n$. If $\snr\rightarrow\infty$, we have
$$\inf_{\wh{z}}\sup_{z^*\in\mathcal{Z}_s}\sup_{\beta^*\in\mathcal{B}_{z^*,\lambda}}\mathbb{E}\h_{(s)}(\wh{z},z^*)\geq \exp\left(-\frac{(1+o(1))\snr^2}{2}\right)-4e^{-s/8}.$$
Otherwise if $\snr=O(1)$, we then have
$$\inf_{\wh{z}}\sup_{z^*\in\mathcal{Z}_s}\sup_{\beta^*\in\mathcal{B}_{z^*,\lambda}}\mathbb{E}\h_{(s)}(\wh{z},z^*)\geq c,$$
for some constant $c>0$.
\end{thm}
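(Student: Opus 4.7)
The plan is to lower bound the minimax risk by the Bayes risk under a product prior on $z^*$, and then reduce each per-coordinate Bayes error to a one-dimensional Gaussian mixture test whose Bayes error matches $\exp(-(1+o(1))\snr^2/2)$. Since $\h_{(s)}(\wh z, z^*) = s^{-1}\sum_{j=1}^p \indc{\wh z_j \neq z_j^*}$, the minimax risk is bounded below by $s^{-1}\sum_{j=1}^p \P(\wh z_j \neq z_j^*)$ under any prior, so the problem reduces to a coordinate-wise Bayes error computation. The quantity $\snr$ in (\ref{eqn:lambda_def}) is precisely the Neyman--Pearson boundary for the associated test, and the threshold $t(X_j)$ in (\ref{eq:def-t(x)}) is exactly the log-likelihood cutoff.

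I would use the product prior in which each $z_j^*$ is independently $\pm 1$ with probability $s/(2p)$ each and $0$ with probability $1-s/p$, paired with $\beta_j^* = z_j^*\lambda$ so that $\beta^* \in \mathcal{B}_{z^*,\lambda}$ on the event $|\supp(z^*)| = s$; a binomial Chernoff bound controls the complementary event by $2e^{-s/8}$, providing half of the $-4e^{-s/8}$ correction. Conditioning on $X$ and on the event that $\|P_{X_{-j}^\perp}X_j\|^2/n = 1+o(1)$ uniformly over $j$, which holds with probability at least $1-2e^{-s/8}$ by $\chi^2$ tails together with $s\log p \leq n$ for the union bound, the coordinate-wise posterior on $z_j^*$ reduces to a three-way Gaussian mixture test with prior weights $(s/(2p),\,1-s/p,\,s/(2p))$ and modes at $\pm\|X_j\|\lambda$. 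The Bayes-optimal rule thresholds at $\pm\|X_j\|t(X_j)$, and a direct Mills-ratio lower bound on the dominant false-positive contribution gives
$$
\P(\wh z_j^{\rm Bayes} \neq z_j^*) \;\geq\; \frac{s}{p}\,\Phi(-\snr)\,(1+o(1)) \;\geq\; \frac{s}{p}\exp\!\left(-(1+o(1))\frac{\snr^2}{2}\right).
$$
Summing over $j$ and normalizing by $s$ recovers the announced rate $\exp(-(1+o(1))\snr^2/2)$, and the two $2e^{-s/8}$ concentration corrections combine into the subtracted $4e^{-s/8}$.

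For the $\snr = O(1)$ regime, the same coordinate-wise Bayes calculation shows that the three-way Gaussian test with bounded signal has Bayes error bounded below by a positive constant uniformly in $(n,p,s,\lambda)$, because the Gaussian modes $0,\pm\|X_j\|\lambda$ remain at bounded total-variation distance from one another, producing the claimed constant $c$. The principal technical obstacle is the sufficient-statistic argument under the \emph{discrete} product prior on $\beta^*$: unlike the Gaussian-conjugate case where residualization against $X_{-j}$ yields an exactly sufficient scalar statistic, the discrete prior makes the marginal likelihood for $z_j^*$ involve additional terms beyond $\|P_{X_{-j}^\perp}X_j\|^{-1}X_j^T P_{X_{-j}^\perp} Y$. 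The standard workaround, which I would follow, is either to residualize against $X_{-j}$ and then bound the difference between $\|P_{X_{-j}^\perp}X_j\|^2$ and $n$ by $O(p/n)$ (absorbed into the $o(1)$ in the exponent under $s\log p \leq n$), or to run a direct Le Cam two-point argument between neighboring parameters differing in exactly one coordinate, with the prior weights $(s/(2p),1-s/p)$ chosen so that the asymmetric Neyman--Pearson threshold produces $\snr$ rather than $\lambda\sqrt{n}/2$ in the exponent. The remaining ingredients --- the Mills ratio $\Phi(-x) \geq (1+o(1))(x\sqrt{2\pi})^{-1}e^{-x^2/2}$, $\chi^2_n$ concentration, and the binomial Chernoff bound --- are routine.
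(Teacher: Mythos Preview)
The paper takes a completely different and much shorter route. Because the theorem is attributed to Ndaoud--Tsybakov, the paper does not re-prove the Bayes lower bound at all: it quotes their non-asymptotic result
\[
\inf_{\wh z}\sup_{z^*,\beta^*}\mathbb{E}\,\h_{(s)}(\wh z,z^*)\;\geq\;\frac{1}{2s}\,\psi(n,p,s,\lambda,0)\;-\;4e^{-s/8},
\]
with $\psi$ as in (\ref{eqn:psi_def}), and then invokes Lemma~\ref{prop:reg_rate_simplify} (together with the trivial inequality $\psi\geq\wt\psi$) to show that $\psi(n,p,s,\lambda,0)/s=\exp\bigl(-(1+o(1))\snr^2/2\bigr)$ when $\snr\to\infty$ and is bounded below by a constant otherwise. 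All of the actual work in the paper is the Gaussian-tail calculus inside Lemma~\ref{prop:reg_rate_simplify}, not the prior/Bayes reduction you are reconstructing.

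Your outline reproduces the shape of the Ndaoud--Tsybakov argument (product prior with weights $s/(2p),1-s/p,s/(2p)$, Chernoff bound for the event $|\supp(z^*)|=s$, per-coordinate Bayes testing), but the coordinate-decoupling step has a concrete gap. The likelihood $\exp\bigl(-\tfrac12\|Y-\lambda\sum_j z_j^* X_j\|^2\bigr)$ does not factor across $j$, so a scalar sufficient statistic for $z_j^*$ must be produced by some additional device. Your proposed device, projecting onto the orthogonal complement of $X_{-j}$ and using $\|P_{X_{-j}^\perp}X_j\|$, fails under the stated assumptions: only $s\log p\le n$ is imposed, so $p>n$ is allowed, and then $P_{X_{-j}^\perp}=0$ almost surely and the statistic vanishes. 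The correct mechanism is a \emph{genie} lower bound that reveals $\beta^*_{-j}$ to the estimator; then $\|X_j\|^{-1}X_j^T\bigl(Y-X_{-j}\beta^*_{-j}\bigr)\sim\mathn\bigl(z_j^*\lambda\|X_j\|,1\bigr)$ is sufficient for $z_j^*$, the relevant norm is $\|X_j\|$ rather than $\|P_{X_{-j}^\perp}X_j\|$, and your $\chi^2$ concentration plus Mills-ratio steps then produce exactly the quantity $\psi(n,p,s,\lambda,0)$ that the paper simplifies. Your alternative ``Le Cam two-point with asymmetric weights'' suffers from the same coupling issue unless you first pass to the genie problem.
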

We remark that the lower bound result in \cite{ndaoud2018optimal} is stated in a more general non-asymptotic form. Here, we choose to work out its asymptotic formula (by Lemma \ref{prop:reg_rate_simplify}) so that we can better compare the lower bound with the upper bound rate achieved by our algorithm. In \cite{ndaoud2018optimal}, the minimax rate is achieved by a thresholding procedure that requires sample splitting. Though theoretically sound, the requirement of splitting the data into two halves may not be appealing in practice. This is where our general Algorithm \ref{alg:general} comes. We will show that Algorithm \ref{alg:general} can achieve the minimax rate without sample splitting.

Our analysis is focused in the regime where $\snr\rightarrow\infty$, which is necessary for consistency under the loss $\h_{(s)}(\wh{z},z^*)$ according to Theorem \ref{thm:lower-regression}. %The condition also implies a few helpful facts that will be used later. We summarize these facts in the following proposition.
Specializing Algorithm \ref{alg:general} to the current setting, we obtain the following iterative procedure
\begin{equation}
z_j^{(t)} = \begin{cases}
1 & \frac{X_j^TY-\sum_{l\in[p]\backslash\{j\}}\wh{\beta}_l(z^{(t-1)})X_j^TX_l}{\|X_j\|^2} > t(X_j) \\
0 & -t(X_j) \leq \frac{X_j^TY-\sum_{l\in[p]\backslash\{j\}}\wh{\beta}_l(z^{(t-1)})X_j^TX_l}{\|X_j\|^2} \leq t(X_j) \\
-1 & \frac{X_j^TY-\sum_{l\in[p]\backslash\{j\}}\wh{\beta}_l(z^{(t-1)})X_j^TX_l}{\|X_j\|^2} < -t(X_j)
\end{cases},\quad\quad j\in[p], \label{eq:iterative-sign-recovery}
\end{equation}
where $t(X_j)$ is defined by (\ref{eq:def-t(x)}).
Here, for some $z\in\{-1,0,1\}^p$, we use the notation
$$\wh{\beta}(z)=\argmin_{\{\beta\in\mathbb{R}^p:\beta_j=\beta_j|z_j|\}}\|y-X\beta\|^2.$$
In other words, $\wh{\beta}(z)$ is the least-squares solution on the support of $z$. The formula (\ref{eq:iterative-sign-recovery}) resembles the thresholding procedure proposed in \cite{ndaoud2018optimal}. In \cite{ndaoud2018optimal}, $\wh{\beta}_l(z^{(t-1)})$ is replaced by some estimator $\wh{\beta}_l$ computed from an independent data set. In comparison, we use $\wh{\beta}_l(z^{(t-1)})$ and thus avoid sample splitting. The iteration (\ref{eq:iterative-sign-recovery}) is also different from existing algorithms in the literature for support/sign recovery in compressed sensing. For example, the popular iterative hard thresholding algorithm \citep{blumensath2009iterative} updates the regression coefficients with a gradient step instead of a full least-squares step. The hard thresholding pursuit algorithm \citep{foucart2011hard} has a full least-squares steps, but updates the support by choosing the $s$ variables with the largest absolute values.

\subsection{Conditions}
For any $j\in[p]$, $T_j$ is the local statistic defined in (\ref{eqn:local_statistics_reg}) and it can be decomposed as
%\begin{align}\label{eqn:reg_epsilon_def}
$T_j  = \mu_j(B^*,z^*_j) + \epsilon_j$,
%\end{align}
with $\epsilon_j = \|X_j\|^{-1} X_j^Tw \sim \mathn(0,1)$. To analyze the algorithmic convergence, we need to specialize the abstract objects $\|\mu_j(B^*,z^*_j)-\mu_j(B^*,b)\|^2$, $\Delta_j(z^*_j,b)^2$, and $\ell(z,z^*)$ into the current setting. With the formulas (\ref{eqn:reg_mu_a_def}) and (\ref{eqn:reg_nu_a_def}), we have
\begin{align}
\norm{\mu_j(B^*,z^*_j)  -\mu_j(B^*,b) }^2 = \begin{cases}
\lambda^2\|X_j\|^2 &  z^*_j=0 \text{ and } b \neq 0\\
|\beta_j^*|^2\|X_j\|^2 & z^*_j\neq 0 \text{ and } b=0\\
4|\beta_j^*|^2\|X_j\|^2 &  z^*_jb=-1,
\end{cases}\label{eqn:reg_mu_diff}
\end{align}
which leads to the formula of the loss function
\begin{eqnarray*}
\ell(z,z^*) &=& \sum_{j=1}^p\left(\lambda^2\|X_j\|^2\indc{z_j^*=0,z_j\neq 0}+|\beta_j^*|^2\|X_j\|^2\indc{z_j^*\neq 0,z_j=0}+4|\beta_j^*|^2\|X_j\|^2\indc{z_jz_j^*=-1}\right).
\end{eqnarray*}
By (\ref{eq:l-to-hamming}), we have the relation
\begin{equation}
\h_{(s)}(z,z^*) \leq \frac{\ell(z,z^*)}{s\Delta_{\min}^2},\label{eq:relation-H_s-l}
\end{equation}
where $\Delta_{\min}^2=\lambda^2\min_{j\in[p]}\|X_j\|^2$ in the current setting. Lastly, the formula of $\Delta_j(z^*_j,b)^2$ is given by
\begin{align}
\Delta_j(z^*_j,b)^2 = 
\begin{cases}
 4t(X_j)^2 \|X_j\|^2  & z^*_j=0 \text{ and } b \neq 0\\
4t(X_j) (|\beta_j^*| - t(X_j))\|X_j\|^2 & z^*_j\neq 0 \text{ and } b=0  \\
 8 t(X_j)|\beta_j^*|\|X_j\|^2 & z^*_jb=-1. 
\end{cases}\label{eqn:reg_delta}
\end{align}
One may question whether we always have $\Delta_j(z^*_j,b)^2>0$ for all $b\neq z^*_j$ and $j\in[p]$. We note that this property is guaranteed by Lemma \ref{prop:useful-reg} with high probability.

Next, we analyze the error terms. In the current setting, they are
\begin{eqnarray*}
F_j(a,b;z) &=& 0, \\
G_j(a,b;z) &=& 2(a-b)t(X_j)\sum_{l\in[p]\backslash\{j\}}\left(\wh{\beta}_l(z)-\wh{\beta}_l(z^*)\right)X_j^TX_l, \\
H_j(a,b) &=& 2(a-b)t(X_j)\sum_{l\in[p]\backslash\{j\}}\left(\wh{\beta}_l(z^*)-\beta^*\right)X_j^TX_l.
\end{eqnarray*}

\begin{lemma}\label{lem:regression-error}
Assume $s\log p\leq n$ and $\tau\leq C_0sn\lambda^2$ for some constant $C_0>0$. Then, for any $C'>0$, there exists a cosntant $C>0$ only depending on $C_0$ and $C'$ such that
\begin{eqnarray}
\nonumber && \max_{\{z:\ell(z,z^*)\leq\tau\}}\max_{T\subset[p]}\frac{\tau}{4\Delta_{\min}^2|T|+\tau}\sum_{j\in T}\max_{b\in\{-1,0,1\}\backslash\{z_j^*\}}\frac{G_j(z_j^*,b;z)^2\|\mu_j(B^*,b)-\mu_j(B^*,z_j^*)\|^2}{\Delta_j(z_j^*,b)^4\ell(z,z^*)} \\
\label{eq:regression-error-1} &\leq& C\frac{s(\log p)^2}{n}\left(1+\frac{1}{n\lambda^2}\right)\max_{j\in[p]}\left[\frac{|\beta_j^*|^2}{(|\beta_j^*|-t(X_j))^2}\vee\frac{\lambda^2}{t(X_j)^2}\right],
\end{eqnarray}
and
\begin{eqnarray}
\label{eq:regression-error-2} \max_{j\in[p]}\max_{b\in\{-1,0,1\}\backslash\{z_j^*\}}\frac{|H_j(z_j^*,b)|}{\Delta_j(z_j^*,b)^2} \leq C\sqrt{\frac{s(\log p)^2}{n}}\frac{1}{\min_{j\in[p]}\sqrt{n}||\beta_j^*|-t(X_j)|},
\end{eqnarray}
with probability at least $1-p^{-C'}$.
\end{lemma}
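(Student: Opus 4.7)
The plan is to work on a high-probability event $\mathcal{E}$ capturing standard good behavior of the Gaussian design $X$ and noise $w$: (a) column norm concentration $\|X_j\|^2 = n(1+o(1))$ uniformly in $j$; (b) near-orthogonality $\max_{j\neq l}|X_j^T X_l| \lesssim \sqrt{n\log p}$; (c) restricted invertibility, so that $\|(X_S^T X_S)^{-1}\|_{\mathrm{op}} \lesssim 1/n$ uniformly over all $S\subset[p]$ with $|S| \leq s + \tau/\Delta_{\min}^2 \lesssim s$; and (d) Gaussian concentration for linear functionals of $w$. These hold with probability $1-p^{-C'}$, essentially by the auxiliary Lemma~\ref{prop:useful-reg} combined with standard sub-Gaussian tail bounds; the event is large enough because the hypothesis $s\log p\leq n$ gives the requisite entropy budget for restricted invertibility.

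For the $H_j$ bound \eqref{eq:regression-error-2}, I would use the closed form $\wh\beta(z^*)-\beta^* = (X_{S^*}^T X_{S^*})^{-1}X_{S^*}^T w$ on $S^* = \mathrm{supp}(z^*)$, so that the sum
$\sum_{l\in[p]\setminus\{j\}}(\wh\beta_l(z^*)-\beta_l^*) X_j^T X_l$
is a linear functional of $w$ with variance at most $X_j^T P_{S^*} X_j \leq \|X_{S^*}^T X_j\|^2\|(X_{S^*}^T X_{S^*})^{-1}\|_{\mathrm{op}} \lesssim s\log p$ by (b) and (c). Gaussian concentration plus a union bound over $j$ gives magnitude $\lesssim \sqrt{s(\log p)^2}$. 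Multiplying by the $2t(X_j)$ factor in $H_j$ and dividing by $\Delta_j(z_j^*,b)^2$, whose uniform lower bound from \eqref{eqn:reg_delta} is $\gtrsim t(X_j)\min_{j'}||\beta_{j'}^*|-t(X_{j'})|\cdot n$, yields exactly the right hand side of \eqref{eq:regression-error-2}.

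For the $G_j$ bound \eqref{eq:regression-error-1}, the main identity is $X(\wh\beta(z)-\wh\beta(z^*)) = (P_S - P_{S^*})Y$ where $P_S$ is the orthogonal projector onto the column span of $X_S$. The constraint $\ell(z,z^*)\leq\tau$ bounds the symmetric difference $|S\triangle S^*|\leq \tau/\Delta_{\min}^2$. Rewriting
$\sum_{l\neq j}(\wh\beta_l(z)-\wh\beta_l(z^*))X_j^T X_l = X_j^T(P_S-P_{S^*})Y - \|X_j\|^2(\wh\beta_j(z)-\wh\beta_j(z^*))$
and splitting $(P_S-P_{S^*})Y$ into a deterministic part, living in the span of $X_{S\triangle S^*}$ and of norm $\lesssim \|\beta^*_{S^*\setminus S}\|_2 \cdot \sqrt{n}$, plus a stochastic part of norm $\lesssim \sqrt{|S\triangle S^*|\log p}$, gives a per-$j$ bound of order $\frac{s(\log p)^2}{n}$ times appropriate weights drawn from \eqref{eqn:reg_mu_diff}--\eqref{eqn:reg_delta}. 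Squaring, summing over $j\in T$, and applying the factor $\tau/(4\Delta_{\min}^2|T|+\tau)$ absorbs the $|T|$ that appears, which together with the relation $h(z,z^*) \leq \ell(z,z^*)/\Delta_{\min}^2$ collapses the extra $\tau/\Delta_{\min}^2$ factor into \eqref{eq:regression-error-1}.

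The hardest part will be carrying out the uniform control over all $z$ with $\ell(z,z^*)\leq\tau$ in the $G_j$ bound, since the support $S=\mathrm{supp}(z)$ varies. I would handle this by taking a union bound over the $\binom{p}{\leq s+\tau/\Delta_{\min}^2}$ candidate supports, using $s\log p\leq n$ so the union bound is compatible with event (c) and with Gaussian concentration of the stochastic piece of $(P_S-P_{S^*})w$. A secondary bookkeeping issue is that the ratio $\|\mu_j(B^*,b)-\mu_j(B^*,z_j^*)\|^2/\Delta_j(z_j^*,b)^4$ splits into three distinct cases via \eqref{eqn:reg_mu_diff} and \eqref{eqn:reg_delta}: it equals $\lambda^2/(16t(X_j)^4\|X_j\|^2)$ when $z_j^*=0$, $|\beta_j^*|^2/(16t(X_j)^2(|\beta_j^*|-t(X_j))^2\|X_j\|^2)$ when $b=0$ and $z_j^*\neq 0$, and $1/(16t(X_j)^2\|X_j\|^2)$ when $z_j^*b=-1$; taking the maximum of these produces the $\max_{j\in[p]}[|\beta_j^*|^2/(|\beta_j^*|-t(X_j))^2\vee \lambda^2/t(X_j)^2]$ factor in \eqref{eq:regression-error-1}, while the $(1+1/(n\lambda^2))$ factor comes from the two terms $(P_S-P_{S^*})w$ and $(P_S-P_{S^*})X\beta^*$, whose ratio scales with $1/(n\lambda^2)$.
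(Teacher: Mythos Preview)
Your overall strategy is aligned with the paper's, but there is a concrete gap in the $H_j$ analysis. You claim that $\sum_{l\neq j}(\wh\beta_l(z^*)-\beta_l^*)X_j^T X_l$ is a linear functional of $w$ with variance at most $X_j^T P_{S^*} X_j\lesssim s\log p$. This is correct only for $j\notin S^*$, where the sum indeed equals $X_j^T P_{S^*} w$. For $j\in S^*$ it fails on two counts: first, $X_j^T P_{S^*} X_j = \|X_j\|^2 \asymp n$, not $s\log p$; second, since $\wh\beta_j(z^*)-\beta_j^*\neq 0$, the sum is not $X_j^T P_{S^*} w$ but rather $X_j^T w - \|X_j\|^2\bigl[(X_{S^*}^T X_{S^*})^{-1}X_{S^*}^T w\bigr]_{\phi_{S^*}(j)}$. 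The paper handles this by a block matrix inversion of $(X_{S^*}^T X_{S^*})^{-1}$ with respect to the partition $X_{S^*}=(X_j,X_{S^*_{-j}})$; after cancellation the expression collapses to
\[
\|X_j\|^{-1} X_j^T X_{S^*_{-j}}\bigl(X_{S^*_{-j}}^T(I-P_j)X_{S^*_{-j}}\bigr)^{-1}X_{S^*_{-j}}^T(I-P_j)w,
\]
whose magnitude is $\lesssim \sqrt{s(\log p)^2/n}$ uniformly in $j\in S^*$. Without this cancellation your bound would be off by a factor of order $\sqrt{n/(s\log p)}$.

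For the $G_j$ bound your projection-based rewriting is a legitimate starting point, but the paper takes a more granular route that your sketch does not cover. It introduces $\wt\beta_j(z)=\|X_j\|^{-2}X_j^T(Y-\sum_{l\neq j}X_l\wh\beta_l(z))$, so the quantity of interest is exactly $\|X_j\|(\wt\beta_j(z)-\wt\beta_j(z^*))$, and then splits $[p]$ into $S_1=S^*\cap S^c$, $S_2=S^*\cap S$, $S_3=S^{*c}\cap S$, $S_4=S^{*c}\cap S^c$. Each regime is treated separately (again with block inversion for $S_2$ and $S_3$), and the key structural fact is that the total contribution from $S_1\cup S_2\cup S_3$ is bounded by a multiple of $\ell(z,z^*)/n$ \emph{independently of $|T|$}, while only the $S_4$ piece scales with $|T|$ and is absorbed by the $\tau/(4\Delta_{\min}^2|T|+\tau)$ prefactor. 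Your sketch does not separate these regimes, and in particular you do not explain how the subtracted term $\|X_j\|^2(\wh\beta_j(z)-\wh\beta_j(z^*))$ in your identity is controlled; this term is nonzero exactly on $S\cup S^*$ and is not captured by a global bound on $\|(P_S-P_{S^*})Y\|$. The uniform control over all admissible $z$ also demands a finer catalogue of high-probability events than a single union bound over supports: the paper records roughly a dozen distinct events (restricted eigenvalues, cross-projection norms, various quadratic forms in $w$) in a dedicated lemma, each calibrated so that the union over $\binom{p}{\leq 2C_0 s}$ supports is affordable.
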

The two error bounds (\ref{eq:regression-error-1}) and (\ref{eq:regression-error-2}) are complicated. However, by Lemma \ref{prop:useful-reg}, if we additionally assume $\limsup {s}/{p}<\frac{1}{2}$, $\snr\rightarrow\infty$, and $s(\log p)^4=o(n)$, the right hand sides of (\ref{eq:regression-error-1}) and (\ref{eq:regression-error-2}) can be shown to be of order $o((\log p)^{-1})$. Therefore, Conditions A, B and C hold with some $\delta=\delta_p=o((\log p)^{-1})$.

The following lemma controls $\xi_{\rm ideal}(\delta)$ in Condition D.
\begin{lemma}\label{lem:regression-ideal}
Assume $\limsup {s}/{p}<\frac{1}{2}$, $s\log p\leq n$, and $\snr\rightarrow\infty$. Then for any sequence $\delta_p=o((\log p)^{-1})$, we have
$$\xi_{\rm ideal}(\delta_p)\leq sn\lambda^2\exp\left(-\frac{(1+o(1))\snr^2}{2}\right),$$
with probability at least $1-\exp(-\snr)-p^{-1}$.
\end{lemma}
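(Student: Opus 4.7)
The plan is to exploit that in the sign-recovery setting the quantities inside $\xi_{\rm ideal}(\delta_p)$ reduce to one-dimensional Gaussian tail events. Since $T_j$ is scalar, $\epsilon_j=\|X_j\|^{-1}X_j^Tw\sim \mathn(0,1)$ conditionally on $X$, and because the continuous-parameter-dependent part of $\nu_j$ in~\eqref{eqn:reg_nu_a_def} does not depend on $a$, it cancels out in the difference, leaving
\[
\nu_j(\wh B(z^*),z_j^*)-\nu_j(\wh B(z^*),b)=2(z_j^*-b)\|X_j\|t(X_j).
\]
Thus the indicator event defining $\xi_{\rm ideal}(\delta_p)$ becomes a Gaussian tail event for $\epsilon_j$ with threshold $\frac{(1-\delta_p)\Delta_j(z_j^*,b)^2}{4|z_j^*-b|\|X_j\|t(X_j)}$, which we simplify case by case via~\eqref{eqn:reg_delta}.

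Working on the high-probability good event of Lemma~\ref{prop:useful-reg} (so that $\|X_j\|^2=(1+o(1))n$ uniformly in $j$), the three cases are: Case~A ($z_j^*=0$, $b\ne 0$), threshold $(1-\delta_p)t(X_j)\|X_j\|$, prefactor $\lambda^2\|X_j\|^2$; Case~B ($z_j^*\ne 0$, $b=0$), threshold $(1-\delta_p)(|\beta_j^*|-t(X_j))\|X_j\|$, prefactor $|\beta_j^*|^2\|X_j\|^2$; and Case~C ($z_j^*b=-1$), threshold $(1-\delta_p)|\beta_j^*|\|X_j\|$, prefactor $4|\beta_j^*|^2\|X_j\|^2$. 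The key algebraic identity, forced by the minimax calibration~\eqref{eq:def-t(x)}, is
\[
t(X_j)^2\|X_j\|^2=\snr^2+2\log\tfrac{p-s}{s}+o(1),\qquad (\lambda-t(X_j))^2\|X_j\|^2=\snr^2+o(1).
\]
Plugging $\Prob(\epsilon_j\leq -u)\leq e^{-u^2/2}$ into each case, Case~A contributes $2(p-s)\lambda^2 n\cdot\frac{s}{p-s}\,e^{-(1+o(1))\snr^2/2}=sn\lambda^2 e^{-(1+o(1))\snr^2/2}$, because the $(p-s)/s$ factor from the log-odds in $t(X_j)$ exactly cancels the number of such $j$; Case~B is the same rate after a brief calculus argument that $|\beta_j^*|=\lambda$ is extremal; and Case~C's threshold is at least $\asymp 2\snr$, so its tail is $e^{-2\snr^2}$ and it is negligible.

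Combining, $\E[\xi_{\rm ideal}(\delta_p)\mid X]\leq sn\lambda^2 e^{-(1+o(1))\snr^2/2}$ on the good $X$-event. A conditional Markov bound then gives $\xi_{\rm ideal}(\delta_p)\leq e^{\snr}\cdot \E[\xi_{\rm ideal}(\delta_p)\mid X]$ with probability at least $1-e^{-\snr}$, and under $\snr\to\infty$ the factor $e^{\snr}$ is absorbed into the $o(1)$ in the exponent of $e^{-(1+o(1))\snr^2/2}$. The residual $p^{-1}$ failure probability in the statement accounts for the complement of the good $X$-event from Lemma~\ref{prop:useful-reg}.

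The main obstacle is the Case~A cancellation: the threshold $t(X_j)\|X_j\|$ strictly exceeds $\snr$, and its squared excess must be computed \emph{exactly} as $2\log\frac{p-s}{s}+o(1)$ so that the sum over $(p-s)$ zero coordinates collapses to the single-coordinate rate with prefactor $s$; this works only because of the minimax-tuned threshold~\eqref{eq:def-t(x)}. A secondary subtlety is that the slack $\delta_p$ contributes $2\delta_p t(X_j)^2\|X_j\|^2=O(\delta_p(\snr^2+\log\frac{p-s}{s}))$ to the exponent, which must be $o(1)$; since both $\snr^2$ and $\log(p/s)$ are at most $O(\log p)$ in the regime where the claimed rate is nontrivial, this forces the strengthened requirement $\delta_p=o((\log p)^{-1})$ rather than merely $\delta_p=o(1)$.
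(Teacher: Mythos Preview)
Your proposal is correct and follows the same route as the paper's proof: reduce the indicator events to one-dimensional Gaussian tails using that the $\beta^*$-dependent part of $\nu_j$ cancels, split by cases, exploit the threshold calibration~\eqref{eq:def-t(x)} so that in Case~A the sum over the $p-s$ zero coordinates collapses to the single-term rate with prefactor $s$, and finish with conditional Markov on the good $X$-event. The only step you should not gloss over is the Case~B claim that $|\beta_j^*|=\lambda$ is extremal; the paper proves this by showing that $y\mapsto y^2\,\mathbb{P}\bigl(\epsilon>(1-\delta)\|\zeta\|(y-t(\zeta))\bigr)$ is decreasing for $y\geq\lambda$, which requires the Mills-ratio bound $G(x)\leq g(x)/x$ to resolve the competition between the growing prefactor $y^2$ and the shrinking tail.
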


\subsection{Convergence}
With Lemma \ref{lem:regression-error} and Lemma \ref{lem:regression-ideal}, we then can specialize Theorem \ref{thm:main} into the following result.
\begin{thm}\label{thm:regression-iter}
Assume $\limsup {s}/{p}<\frac{1}{2}$, $s(\log p)^4= o(n)$, and $\snr\rightarrow\infty$. Suppose $\ell(z^{(0)},z^*)\leq C_0sn\lambda^2$ with probability at least $1-\eta$ for some constant $C_0>0$. Then, we have
$$\ell(z^{(t)},z^*)\leq sn\lambda^2\exp\left(-\frac{(1+o(1))\snr^2}{2}\right) + \frac{1}{2}\ell(z^{(t-1)},z^*)\quad \text{for all }t\geq 1,$$
with probability at least $1-\eta-\exp(-\snr)-2p^{-1}$.
\end{thm}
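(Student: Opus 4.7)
The plan is to verify the five abstract conditions (A, B, C, D, E) of Theorem \ref{thm:main} and then invoke that theorem directly. Set $\tau = C_0 s n \lambda^2$, which is exactly the bound guaranteed by the initialization assumption, so Condition E holds with $\eta_5 = \eta$. Since the local statistic satisfies $T_j = \mu_j(B^*, z_j^*) + \epsilon_j$ with $\epsilon_j$ a linear functional of the noise $w$ alone, and the surrogate $\nu_j(B,a) = 2a\|X_j\| t(X_j) + \|X_j\|^{-1}\sum_{l\neq j} \beta_l X_j^T X_l$ has its dependence on $B$ concentrated in the linear term, one checks that $F_j(a,b;z) \equiv 0$. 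Thus Condition A is vacuously satisfied with $\eta_1 = 0$.

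For Conditions B and C, I would apply Lemma \ref{lem:regression-error} with the chosen $\tau$. The goal is to choose a sequence $\delta_p$ for which both the $G$-type bound \eqref{eq:regression-error-1} and the $H$-type bound \eqref{eq:regression-error-2} are $o(\delta_p^2)$ and $o(\delta_p)$ respectively. Under the hypotheses $\limsup s/p < 1/2$, $s(\log p)^4 = o(n)$, and $\snr \to \infty$, one combines the auxiliary estimates on design quantities (Lemma \ref{prop:useful-reg}) to show that $\max_j \frac{|\beta_j^*|^2}{(|\beta_j^*|-t(X_j))^2} \vee \frac{\lambda^2}{t(X_j)^2}$ stays bounded and $\min_j \sqrt{n}||\beta_j^*| - t(X_j)| \gtrsim \snr$, so both right-hand sides are $o((\log p)^{-2})$. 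Hence Conditions B and C hold with some $\delta_p = o((\log p)^{-1})$, each with failure probability $p^{-C'}$; choosing $C'$ large enough absorbs them into a single $p^{-1}$ term.

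For Condition D, I would apply Lemma \ref{lem:regression-ideal} with the $\delta_p$ fixed above, giving
\[
\xi_{\rm ideal}(\delta_p) \leq s n \lambda^2 \exp\!\left(-\tfrac{(1+o(1))\snr^2}{2}\right).
\]
Since $\snr \to \infty$, this is $o(s n \lambda^2) \leq \tau/4$ for large $p$, so Condition D holds with $\eta_4 \leq \exp(-\snr) + p^{-1}$. Summing the failure probabilities gives the total $\eta + \exp(-\snr) + 2p^{-1}$ stated in the theorem. Theorem \ref{thm:main} then yields $\ell(z^{(t)}, z^*) \leq 2\xi_{\rm ideal}(\delta_p) + \tfrac{1}{2}\ell(z^{(t-1)}, z^*)$, and substituting the bound on $\xi_{\rm ideal}(\delta_p)$ completes the proof (the factor $2$ can be absorbed into the $1 + o(1)$ inside the exponent).

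The main obstacle is the verification step for Conditions B and C: one must carefully combine the deterministic error bounds of Lemma \ref{lem:regression-error} with tail estimates on $\|X_j\|^2$, the off-diagonal inner products $X_j^T X_l$, and the least-squares residual $\wh{B}(z^*) - \beta^*$, then show that the signal-to-noise condition $\snr \to \infty$ together with $s(\log p)^4 = o(n)$ is strong enough to drive the resulting quantities below the $\delta_p^2$ and $\delta_p$ thresholds simultaneously. Everything else is bookkeeping of probability bounds and a routine substitution into Theorem \ref{thm:main}.
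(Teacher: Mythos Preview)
Your proposal is correct and follows exactly the paper's approach: verify Conditions A--E by noting $F_j\equiv 0$, invoking Lemma~\ref{lem:regression-error} together with Lemma~\ref{prop:useful-reg} to control $G_j$ and $H_j$ with some $\delta_p=o((\log p)^{-1})$, then apply Lemma~\ref{lem:regression-ideal} for Condition~D and read off the conclusion from Theorem~\ref{thm:main}. One small quantitative slip: Lemma~\ref{prop:useful-reg} only gives $\max_j|\beta_j^*|/||\beta_j^*|-t(X_j)|\leq\sqrt{\log p}$ (not bounded) and $\min_j\sqrt{n}||\beta_j^*|-t(X_j)|>1$ (not $\gtrsim\snr$), so the right-hand sides of \eqref{eq:regression-error-1}--\eqref{eq:regression-error-2} are $o((\log p)^{-1})$ rather than $o((\log p)^{-2})$---but this is precisely what the paper asserts and is enough to run the argument.
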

The relation (\ref{eq:relation-H_s-l}) and a simple concentration result for $\min_{j\in[p]}\|X_j\|^2$ immediately implies a convergence result for the loss $\h_{(s)}(z,z^*)$.
\begin{corollary}\label{cor:regression-iter}
Assume $\limsup {s}/{p}<\frac{1}{2}$, $s(\log p)^4= o(n)$, and $\snr\rightarrow\infty$. Suppose $\ell(z^{(0)},z^*)\leq C_0sn\lambda^2$ with probability at least $1-\eta$ for some constant $C_0>0$. Then, we have
\begin{equation}
\h_{(s)}(z^{(t)},z^*)\leq \exp\left(-\frac{(1+o(1))\snr^2}{2}\right) + 2^{-t}\quad \text{for all }t\geq 1,\label{eq:reg-iter-bound-h}
\end{equation}
with probability at least $1-\eta-\exp(-\snr)-2p^{-1}$.
\end{corollary}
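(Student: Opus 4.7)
The plan is to transfer the $\ell$-loss guarantee of Theorem \ref{thm:regression-iter} to the normalized Hamming loss via the deterministic relation \prettyref{eq:relation-H_s-l}, and then use chi-squared concentration to replace $\Delta_{\min}^2=\lambda^2\min_{j\in[p]}\|X_j\|^2$ by $\lambda^2 n(1-o(1))$. The only genuinely new ingredient on top of Theorem \ref{thm:regression-iter} is the control of $\min_j\|X_j\|^2$, and the rest is bookkeeping.

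First I would unfold the one-step recursion
$$\ell(z^{(t)},z^*)\leq 2\xi+\tfrac{1}{2}\ell(z^{(t-1)},z^*),\qquad \xi:=sn\lambda^2\exp\!\left(-\tfrac{(1+o(1))\snr^2}{2}\right),$$
by induction on $t$ to obtain $\ell(z^{(t)},z^*)\leq 4\xi+2^{-t}\ell(z^{(0)},z^*)\leq 4\xi+2^{-t} C_0 s n\lambda^2$, on the event of Theorem \ref{thm:regression-iter}. This event has probability at least $1-\eta-\exp(-\snr)-2p^{-1}$.

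Second I would lower-bound $\Delta_{\min}^2$. Since each $\|X_j\|^2$ is $\chi^2_n$, standard chi-squared tail bounds yield $\Prob(\|X_j\|^2\leq n(1-\delta))\leq e^{-cn\delta^2}$ for a universal $c>0$ and any fixed $\delta\in(0,1)$. A union bound over $j\in[p]$ combined with $s\log p\leq n$ (and $s\geq 1$, hence $\log p\leq n$) gives
$$\min_{j\in[p]}\|X_j\|^2\geq n(1-o(1))$$
with probability at least $1-e^{-c'n}$ for some $c'>0$, which is negligible compared with the terms already present in the probability bound of Theorem \ref{thm:regression-iter} and can be absorbed into them.

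Combining the two steps through \prettyref{eq:relation-H_s-l},
$$\h_{(s)}(z^{(t)},z^*)\leq\frac{\ell(z^{(t)},z^*)}{s\lambda^2\min_{j\in[p]}\|X_j\|^2}\leq \frac{4\xi+C_0 s n\lambda^2\cdot 2^{-t}}{s\lambda^2 n(1-o(1))}.$$
The first summand becomes $(1+o(1))\cdot 4\exp(-(1+o(1))\snr^2/2)$; since $\snr\to\infty$, the factor $4(1+o(1))$ is absorbed into the $1+o(1)$ inside the exponent, producing $\exp(-(1+o(1))\snr^2/2)$. The second summand is $(1+o(1))C_0\cdot 2^{-t}$, which matches the $2^{-t}$ in \prettyref{eq:reg-iter-bound-h} after absorbing the constant $C_0$ (equivalently, shifting $t$ by $\lceil\log_2 C_0\rceil$, which is a constant). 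I do not expect any real obstacle here — the only minor subtlety is the constant-absorption in the $2^{-t}$ term and making sure the chi-squared concentration event is cheap enough to be swallowed by the existing failure probability, both of which are straightforward.
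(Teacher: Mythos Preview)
Your proposal is correct and follows essentially the same route as the paper: the paper's proof also invokes the relation \eqref{eq:relation-H_s-l} together with the chi-squared concentration bound \eqref{eqn:re0} (which gives $\min_{j}\|X_j\|^2\geq n/2$, already on the high-probability event underlying Theorem~\ref{thm:regression-iter}) and then cites Theorem~\ref{thm:regression-iter}. The only difference is that the paper references the pre-established bound \eqref{eqn:re0} from Lemma~\ref{lem:random-events-reg} rather than re-deriving it, so no additional failure probability needs to be introduced and then absorbed.
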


Since the loss function $\h_{(s)}(z,z^*)$ takes value in the set $\{j/s: j\in[p]\cap\{0\}\}$, the term $2^{-t}$ in (\ref{eq:reg-iter-bound-h}) is negligible as long as $2^{-t}=o(s^{-1})$. We therefore can claim
$$\h_{(s)}(z^{(t)},z^*)\leq \exp\left(-\frac{(1+o(1))\snr^2}{2}\right)\quad \text{for all }t\geq 3\log s,$$
when $s\rightarrow\infty$. If instead we have $s=O(1)$, then any $t\rightarrow\infty$ will do. This implies after at most $\ceil{3\log p}$ iterations, Algorithm \ref{alg:general} achieves the minimax rate.

\begin{remark}
The leading term of the non-asymptotic minimax lower bound in \cite{ndaoud2018optimal} with respect to the loss $\h_{(s)}(z,z^*)$ takes the form of $\psi(n,p,s,\lambda,0)/s$, where
\begin{align}\label{eqn:psi_def}
\psi(n,p,s,\lambda,\delta)=s\mathbb{P}\left(\epsilon>(1-\delta)\|\zeta\|(\lambda-t(\zeta))\right)+(p-s)\mathbb{P}\left(\epsilon>(1-\delta)\|\zeta\|t(\zeta)\right)
\end{align}
with $\epsilon\sim \mathn(0,1)$ and $\zeta\sim \mathn(0,I_n)$ independent of each other. By scrutinizing the proof of Lemma \ref{lem:regression-ideal}, we can also write (\ref{eq:reg-iter-bound-h}) as
$$\h_{(s)}(z^{(t)},z^*)\lesssim  \psi(n,p,s,\lambda,\delta_p)/s + 2^{-t}\quad \text{for all }t\geq 1,$$
with high probability with some $\delta_p=o((\log p)^{-1})$.
\end{remark}

\subsection{Initialization}
Our final task in this section is to provide an initialization procedure that satisfies the bound $\ell(z^{(0)},z^*)\leq C_0sn\lambda^2$ with high probability. We consider a simple procedure that thresholds the solution of the square-root SLOPE \citep{bogdan2015slope,su2016slope,derumigny2018improved,bellec2018slope}. It has the following two steps:
\begin{enumerate}
\item Compute
\begin{equation}
\wt{\beta}=\argmin_{\beta\in\mathbb{R}^p}\left(\|Y-X\beta\|+A\|\beta\|_{\rm SLOPE}\right), \label{eq:sqrt-slope}
\end{equation}
where the penalty takes the form $\|\beta\|_{\rm SLOPE}=\sum_{j=1}^p\sqrt{{\log\left({2p}/{j}\right)}}|\beta|_{(j)}$. Here $|\beta|_{(1)}\geq |\beta|_{(2)}\geq\cdots\geq |\beta|_{(p)}$ is a non-increasing ordering of $|\beta_1|, |\beta_2|,\cdots, |\beta_p|$.
\item For any $j\in[p]$, compute $z^{(0)}_j=\sgn(\wt{\beta}_j)\indc{|\wt{\beta}_j|\geq \lambda/2}$.
\end{enumerate}
The theoretical guarantee of $z^{(0)}$ is given by the following proposition.
\begin{proposition}\label{prop:reg_init}
Assume $\limsup {s}/{p}<\frac{1}{2}$, $s\log p \leq n$, and $\snr\rightarrow\infty$. For some sufficiently large constant $A>0$ in (\ref{eq:sqrt-slope}) and any constant $C'>0$, there exist some $C_0$ and $C_1$ only depending on $A$ and $C'$, such that
$$\ell(z^{(0)},z^*)\leq C_0sn\lambda^2,$$
with probability at least $1-e^{-C_1s\log(ep/s)}-p^{-C'}$.
\end{proposition}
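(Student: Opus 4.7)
The plan has two main ingredients: (a) a high-probability $\ell_2$-estimation bound for the square-root SLOPE solution $\wt{\beta}$, and (b) a deterministic case analysis translating the thresholding step into control of $\ell(z^{(0)},z^*)$.

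For (a), I would invoke the oracle inequality for square-root SLOPE established in references cited in the paper, such as \cite{bellec2018slope,derumigny2018improved}: for a sufficiently large tuning constant $A$, the solution $\wt{\beta}$ of (\ref{eq:sqrt-slope}) satisfies
\[
\|\wt{\beta}-\beta^*\|^2 \,\leq\, \frac{Cs\log(ep/s)}{n}
\]
with probability at least $1-e^{-C_1 s\log(ep/s)}$, under $s\log p\leq n$ and a restricted-eigenvalue / weighted restricted isometry hypothesis on $X$. The latter is known to hold with at least the same probability for the i.i.d.\ $\mathcal{N}(0,1)$ design considered here. Separately, standard $\chi^2$ concentration together with a union bound over $j\in[p]$ yield $\max_{j\in[p]}\|X_j\|^2 \leq 2n$ with probability at least $1-p^{-C'}$.

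For (b), I would use the explicit formula
\[
\ell(z,z^*) \,=\, \sum_{j=1}^p\Bigl(\lambda^2\|X_j\|^2\indc{z_j^*=0,z_j\neq 0}+|\beta_j^*|^2\|X_j\|^2\indc{z_j^*\neq 0,z_j=0}+4|\beta_j^*|^2\|X_j\|^2\indc{z_jz_j^*=-1}\Bigr)
\]
and check each of the three event types against the thresholding rule $z^{(0)}_j=\sgn(\wt\beta_j)\indc{|\wt\beta_j|\geq\lambda/2}$: (i) if $z^*_j=0$ and $z^{(0)}_j\neq 0$, then $|\wt\beta_j|\geq\lambda/2$ and $\beta^*_j=0$, giving $\lambda^2\leq 4(\wt\beta_j-\beta^*_j)^2$; (ii) if $z^*_j\neq 0$ and $z^{(0)}_j=0$, then $|\wt\beta_j|<\lambda/2\leq|\beta^*_j|/2$, giving $|\beta^*_j|^2\leq 4(\wt\beta_j-\beta^*_j)^2$; (iii) if $z^*_jz^{(0)}_j=-1$, then $\wt\beta_j$ and $\beta^*_j$ have opposite signs, so $|\wt\beta_j-\beta^*_j|\geq|\beta^*_j|$, whence $4|\beta^*_j|^2\leq 4(\wt\beta_j-\beta^*_j)^2$. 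In all cases, the per-index contribution is at most $4(\wt\beta_j-\beta^*_j)^2\|X_j\|^2$, and summing yields the deterministic bound
\[
\ell(z^{(0)},z^*) \,\leq\, 4\max_{j\in[p]}\|X_j\|^2\cdot\|\wt\beta-\beta^*\|^2 \,\leq\, C s\log(ep/s)
\]
on the intersection of the two high-probability events above.

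To upgrade this to the advertised $C_0 s n\lambda^2$ bound, I would use the $\snr\to\infty$ hypothesis. Since $\snr\geq 0$ eventually, $\lambda\sqrt{n}/2\geq\log((p-s)/s)/(\lambda\sqrt{n})$, i.e., $n\lambda^2\geq 2\log((p-s)/s)$; together with $\limsup s/p<\tfrac12$ this gives $n\lambda^2\gtrsim\log(ep/s)$, and therefore $s\log(ep/s)\leq C_0 sn\lambda^2$ for a constant $C_0$ depending only on $A$ and $C'$. A final union bound over the two failure events produces the stated probability $1-e^{-C_1 s\log(ep/s)}-p^{-C'}$. The main obstacle is verifying that the i.i.d.\ $\mathcal{N}(0,1)$ design (whose columns have norm of order $\sqrt n$ rather than being unit-normalized) satisfies the exact hypotheses of the cited square-root SLOPE oracle inequality with failure probability exponential in $s\log(ep/s)$; once this is quoted correctly, the rest of the argument is an elementary case analysis.
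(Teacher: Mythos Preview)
Your proposal is correct and follows essentially the same approach as the paper: invoke an $\ell_2$ oracle inequality for the square-root SLOPE estimator (the paper cites Proposition~5.1 of \cite{ndaoud2018optimal} rather than \cite{bellec2018slope,derumigny2018improved}, but the content is the same), bound $\max_j\|X_j\|^2$ by $2n$ via $\chi^2$ concentration, and then carry out exactly the three-case deterministic analysis you describe to conclude $\ell(z^{(0)},z^*)\lesssim n\|\wt\beta-\beta^*\|^2\lesssim s\log(ep/s)\lesssim sn\lambda^2$. The only cosmetic difference is that the paper writes the case bounds with constant $8$ instead of $4$, and your flagged ``obstacle'' about the design normalization is indeed handled by the cited oracle inequality.
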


\section{Multireference Alignment}\label{sec:MRA}

Multireference alignment \citep{bandeira2014multireference,aguerrebere2016fundamental} is an important problem in mathematical chemistry and captures fundamental aspects of applications such as cryogenic electron microscopy (cryo-EM) \citep{sigworth1998maximum,frank2006three}. In this problem, we have observations $Y_j\sim \mathn(Z_j^*\theta^*,I_d)$ for $j\in[p]$ with $\theta^*\in\mathbb{R}^d$ and $Z_j^*\in\mathcal{C}_d$. Here, $\mathcal{C}_d \subset\cbr{0,1}^{d\times d}$ is the set of index cyclic shifts. For any $U\in\mathcal{C}_d$, there exists some integer $t$ such that $(Uv)_i=v_{i+t(\text{mod } d)}$ for any vector $v\in\mathbb{R}^d$. The literature for this problem has been focused on the recovery of the common signal parameter $\theta^*\in\mathbb{R}^d$ shared by the $p$ observations \citep{bandeira2014multireference,perry2019sample,bandeira2017optimal,monteiller2019alleviating,abbe2018multireference,bendory2017bispectrum}. To the best of our knowledge, optimal estimation of the cyclic shifts $Z_1^*,\cdots, Z_p^*$ still remains an open problem, and this is the focus of the current section. As is already discussed in Section \ref{sec:framework}, the cyclic shifts can be recovered by the general iterative algorithm. Before giving the statistical guarantee of the algorithm, we first present a minimax lower bound of the problem. We define
$$\Delta_{\min}^2=\min_{U\in\mathcal{C}_d}\|(I_d-U)\theta^*\|^2.$$
This quantity plays the role of the minimal signal strength of the problem, and is very different from the signal strength required for estimating $\theta^*$ in \cite{perry2019sample}. Note that $\Delta_{\min}^2$ is a function of $\theta^*$ and thus captures the difficulty of the problem for each instance of $\theta^*\in\mathbb{R}^d$. For example, if each coordinate of $\theta^*$ takes the same value, the corresponding $\Delta_{\min}^2=0$ and thus it is impossible to recover the shifts. The quantity $\Delta_{\min}^2$ is thus a characterization of the diversity of the sequence $\{\theta_i^*\}$.

\begin{thm}\label{thm:mra-lower}
If $\Delta_{\min}^2\rightarrow\infty$, we have
$$\inf_{\wh{Z}}\sup_{Z^*}\mathbb{E}\min_{U\in\mathcal{C}_d}\frac{1}{p}\sum_{j=1}^p\indc{\wh{Z}_jU\neq Z_j^*}\geq \exp\left(-\frac{(1+o(1))\Delta_{\min}^2}{8}\right).$$
Otherwise if $\Delta_{\min}^2=O(1)$, we then have
$$\inf_{\wh{Z}}\sup_{Z^*}\mathbb{E}\min_{U\in\mathcal{C}_d}\frac{1}{p}\sum_{j=1}^p\indc{\wh{Z}_jU\neq Z_j^*}\geq c,$$
for some constant $c>0$.
\end{thm}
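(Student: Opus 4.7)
The approach is to reduce the lower bound to a collection of Gaussian two-point tests via a Bayesian prior, with the main delicate issue being the handling of the global-shift identifiability in the loss. I would pick $U^{\dagger} \in \mathcal{C}_d \setminus \{I_d\}$ achieving $\|(I_d - U^{\dagger})\theta^*\|^2 = \Delta_{\min}^2$, and consider the prior $\pi$ on $(Z_1^*, \dots, Z_p^*)$ under which $Z_1^* = I_d$ is deterministic (to break the identifiability) and $Z_2^*, \dots, Z_p^*$ are i.i.d.\ uniform on $\{I_d, U^{\dagger}\}$. Since the supremum over $Z^*$ dominates the average over $\pi$, the minimax risk is at least the Bayes risk under $\pi$.

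For any estimator $\wh Z$, the loss $L$ is invariant under common right-multiplication $\wh Z \mapsto \wh Z \cdot V$ for $V \in \mathcal{C}_d$, so I would normalize to $\wh Z^0_j := \wh Z_j \wh Z_1^{-1}$, giving $\wh Z^0_1 = I_d$, and write
\[
L(\wh Z, Z^*) = L(\wh Z^0, Z^*) = \min_{U \in \mathcal{C}_d} I_U, \qquad I_U := \frac{1}{p}\sum_{j=1}^p \indc{\wh Z^0_j U \neq Z_j^*}.
\]
Since $\wh Z^0_1 = Z_1^* = I_d$, the $j = 1$ term equals $\indc{U \neq I_d}$, so $I_{I_d}$ reduces to the Hamming loss over the coordinates $j \geq 2$, while every $I_U$ with $U \neq I_d$ is at least $1/p$ plus an additional nonnegative sum. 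The $I_{I_d}$ branch can be lower bounded coordinate-wise by the Neyman-Pearson testing risk between $\mathn(\theta^*, I_d)$ and $\mathn(U^{\dagger}\theta^*, I_d)$, giving $\mathbb{E}_\pi \indc{\wh Z^0_j \neq Z_j^*} \geq \Phi(-\Delta_{\min}/2)$, which by the standard Mills-ratio asymptotics equals $\exp\!\big(-(1+o(1))\Delta_{\min}^2/8\big)$ when $\Delta_{\min} \to \infty$.

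The final step is to transfer this bound from $I_{I_d}$ to $\min_U I_U$. I would argue that for $U \neq I_d$, the equality $\wh Z^0_j U = Z_j^*$ forces $\wh Z^0_j \in \{U^{-1}, U^{\dagger} U^{-1}\}$, which for most $U$ is disjoint from the plausible support $\{I_d, U^{\dagger}\}$ of the (essentially) coordinate-wise Bayes estimator; a conditional Chernoff bound on the (small) number of matches, together with a union bound over the $d$ shifts, then yields $\mathbb{P}_\pi(\min_{U \neq I_d} I_U \geq I_{I_d}) \to 1$, hence $\mathbb{E}_\pi L \geq (1 - o(1)) \Phi(-\Delta_{\min}/2)$. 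The hard part will be the regime where $\Phi(-\Delta_{\min}/2)$ is only of order $1/p$: there the sums defining $I_U$ have conditional means of order one, the Chernoff bound becomes Poisson-like, and the union bound over the $d$ shifts is delicate. I would handle this by modifying $\pi$ to pin a positive fraction of the $Z_j^*$'s at $I_d$ so that the expected number of matches for each $U \neq I_d$ grows with $p$, restoring the concentration. The case $\Delta_{\min}^2 = O(1)$ follows from the same argument without any concentration step, since the two-point Gaussian error is then bounded below by a positive constant.
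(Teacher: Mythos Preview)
Your plan ends up at the same construction the paper uses, but by a longer road. The paper immediately restricts to the subclass
\[
\mathcal{Z}=\bigl\{Z:\ Z_j=I_d\text{ for }j\le p/2,\ Z_j\in\{I_d,\bar U\}\text{ for }j>p/2\bigr\},
\]
i.e.\ it pins \emph{half} of the coordinates from the outset (your fallback modification). With half the coordinates pinned, the global shift $\min_{U}$ is removed by a purely deterministic device, not by concentration: given any estimator $\wh Z$, set $\wh U=\argmax_{U}\#\{j\le p/2:\wh Z_jU=I_d\}$ and define $\wt Z_j=\wh Z_j\wh U$. If $L:=\min_U I_U<1/4$, the minimizing $U^*$ must agree with $\wh U$ (since more than $p/4$ of the first $p/2$ coordinates already match under $U^*$), so $\wt Z=\wh Z\,U^*$ and $\frac1p\sum_{j>p/2}\indc{\wt Z_j\neq Z_j^*}\le L$; if $L\ge 1/4$ the same inequality holds with a factor $2$. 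Hence $\inf_{\wh Z}\sup_{Z^*\in\mathcal Z}\mathbb{E}L\ge \tfrac12\inf_{\wh Z}\sup_{Z^*\in\mathcal Z}\frac1p\sum_{j>p/2}\mathbb{P}(\wh Z_j\neq Z_j^*)$, after which the coordinate-wise Neyman--Pearson step gives exactly your $\Phi(-\Delta_{\min}/2)$ lower bound, and the constant $\tfrac12$ is absorbed in the $(1+o(1))$.

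Your concentration route (pin one coordinate, then argue $\mathbb{P}_\pi(\min_{U\neq I_d}I_U\ge I_{I_d})\to 1$) has a real gap as stated: that probability statement is tied to the coordinate-wise Bayes estimator, but the infimum in the minimax risk is over \emph{all} estimators, and the Bayes estimator for the loss $\min_U I_U$ is not obviously the coordinate-wise one. For an adversarial $\wh Z$ (e.g.\ one that outputs a fixed shift $V\notin\{I_d,\bar U\}$ on many coordinates) the event $\min_{U\neq I_d}I_U\ge I_{I_d}$ can fail outright, even though such estimators still have large loss. So the concentration claim is neither true uniformly nor needed: once you pin a positive fraction, the majority-vote reduction above does the job deterministically. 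Your diagnosis of the ``hard regime'' ($\Phi(-\Delta_{\min}/2)$ of order $1/p$) is thus a symptom of the extra complication rather than an intrinsic difficulty of the problem.
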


Our main result in this section shows that the minimax lower bound in Theorem \ref{thm:mra-lower} can be achieved by an efficient algorithm adaptively over all $\theta^*\in\mathbb{R}^d$ under the signal-to-noise ratio condition $\frac{\Delta_{\min}^2}{d/p+\sqrt{d\log d}}\rightarrow\infty$. Specializing Algorithm \ref{alg:general} to the current problem, the iterative procedure is given by the following formula, 
\begin{equation}
Z_j^{(t)}=\argmin_{U\in\mathcal{C}_d}\|Y_j-U\wh{\theta}(Z^{(t-1)})\|^2, \label{eq:white-palace-difficult}
\end{equation}
where
$$\wh{\theta}(Z)=\frac{1}{p}\sum_{j=1}^pZ_j^TY_j.$$
The computation of (\ref{eq:white-palace-difficult}) is straightforward given that $|\mathcal{C}_d|=d$ and one can simply evaluate $\|Y_j-U\wh{\theta}(Z^{(t-1)})\|^2$ for each $U\in\mathcal{C}_d$.

\subsection{Conditions}

To analyze the algorithmic convergence, we note that $\mu_j(B^*,U)=\nu_j(B^*,U)=U\theta^*$, $\Delta_j(U,V)^2=\|(U-V)\theta^*\|^2$, and $\ell(Z,Z^*)=\sum_{j=1}^p\|(Z_j-Z_j^*)\theta^*\|^2$ under the current setting. The error terms that we need to control are
\begin{eqnarray*}
F_j(U,V;Z) &=& \iprod{\epsilon_j}{(U-V)(\wh{\theta}(Z^*)-\wh{\theta}(Z))}, \\
G_j(U,V;Z) &=& \iprod{\wh{\theta}(Z)-\wh{\theta}(Z^*)}{(V^TU-I_d)\theta^*}, \\
H_j(U,V) &=& \iprod{\wh{\theta}(Z^*)-\theta^*}{(V^TU-I_d)\theta^*}.
\end{eqnarray*}
Here, the noise vector is given by $\epsilon_j=Y_j-Z_j^*\theta^* \sim\mathn(0,I_d)$. The error terms are controlled by the following lemma.

\begin{lemma}\label{lem:error-MRA}
For any $C'>0$, there exists a constant $C>0$ only depending on $C'$ such that
\begin{eqnarray}
\nonumber && \max_{\{Z:\ell(Z,Z^*)\leq \tau\}}\sum_{j=1}^p\max_{U\in\mathcal{C}_d\backslash\{Z_j^*\}} \frac{F_j(Z_j^*,U;Z)^2\|\mu_j(B^*,U) - \mu_j(B^*,Z_j^*)\|^2}{\Delta_j(Z_j^*,U)^4\ell(Z,Z^*)}  \\
\label{eq:error-MRA1}  &\leq&  C \br{ \frac{(\log d+d/p)\log d}{\Delta_{\min}^4} +  \frac{\tau (\log d + d/p) }{p\Delta_{\min}^4}}, \\
\nonumber && \max_{\{Z:\ell(Z,Z^*)\leq \tau\}}\max_{T\subset[p]}\frac{\tau}{4\Delta_{\min}^2|T|+\tau}\sum_{j\in T}\max_{U\in\mathcal{C}_d\backslash\{Z_j^*\}} \frac{G_j(Z_j^*,U;Z)^2\|\mu_j(B^*,U) - \mu_j(B^*,Z_j^*)\|^2}{\Delta_j(Z_j^*,U)^4\ell(Z,Z^*)}   \\
\label{eq:error-MRA2} &\leq& C\br{ \frac{\tau \log d}{p\Delta_{\min}^4} + \frac{\tau^2}{p^2\Delta_{\min}^4}},
\end{eqnarray}
and
\begin{equation}
\max_{j\in[p]}\max_{U\in\mathcal{C}_d\backslash\{Z_j^*\}}\frac{|H_j(Z_j^*,U)|}{\Delta_j(Z_j^*,U)^2} \leq C\sqrt{\frac{d}{p\Delta_{\min}^2}}, \label{eq:error-MRA3} 
\end{equation}
with probability at least $1-e^{-C'd}$.
\end{lemma}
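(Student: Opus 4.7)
The plan is to handle the three bounds (\ref{eq:error-MRA1}), (\ref{eq:error-MRA2}), (\ref{eq:error-MRA3}) separately, exploiting two structural facts: (i) for any $U,V\in\mathcal{C}_d$ the matrix $U$ is orthogonal and $\|(U^T V - I_d)\theta^*\|=\|(V-U)\theta^*\|=\Delta_j(U,V)\geq \Delta_{\min}$; and (ii) the error $\wh\theta(Z^*)-\theta^*= p^{-1}\sum_j (Z_j^*)^T\epsilon_j \sim\mathn(0,p^{-1}I_d)$ because each $(Z_j^*)^T$ is orthogonal.

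For (\ref{eq:error-MRA3}), Cauchy--Schwarz gives $|H_j(Z_j^*,U)|\leq \|\wh\theta(Z^*)-\theta^*\|\cdot \Delta_j(Z_j^*,U)$, so $|H_j|/\Delta_j^2\leq \|\wh\theta(Z^*)-\theta^*\|/\Delta_{\min}$. Since $p\|\wh\theta(Z^*)-\theta^*\|^2\sim\chi^2_d$, a standard Laurent--Massart tail bound yields $\|\wh\theta(Z^*)-\theta^*\|^2\leq Cd/p$ with probability at least $1-e^{-C'd}$, whence the stated bound after a square root.

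For (\ref{eq:error-MRA2}), the identical Cauchy--Schwarz reduction together with $\|\mu_j(B^*,U)-\mu_j(B^*,Z_j^*)\|^2=\Delta_j(Z_j^*,U)^2$ collapses each summand to $\|\wh\theta(Z)-\wh\theta(Z^*)\|^2/\ell(Z,Z^*)$, and the envelope factor $\tau|T|/(4\Delta_{\min}^2|T|+\tau)\leq \tau/(4\Delta_{\min}^2)$ removes the dependence on $T$. It therefore suffices to bound $\|\wh\theta(Z)-\wh\theta(Z^*)\|^2/\ell(Z,Z^*)$ uniformly over $\{Z:\ell(Z,Z^*)\leq\tau\}$. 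I would write $\wh\theta(Z)-\wh\theta(Z^*)=B(Z)+N(Z)$ with $B(Z)=p^{-1}\sum_j(Z_j-Z_j^*)^T Z_j^*\theta^*$ and $N(Z)=p^{-1}\sum_j(Z_j-Z_j^*)^T\epsilon_j$. Using the identity $\|(Z_j-Z_j^*)^T Z_j^*\theta^*\|=\|(Z_j-Z_j^*)\theta^*\|$, Cauchy--Schwarz gives $\|B(Z)\|^2\leq p^{-2}h(Z,Z^*)\ell(Z,Z^*)\leq p^{-2}\Delta_{\min}^{-2}\ell(Z,Z^*)^2$, producing the $\tau^2/(p^2\Delta_{\min}^4)$ contribution. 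For $N(Z)$, the conditional covariance has trace at most $2dh/p^2$ and operator norm at most $4h/p^2$; Gaussian concentration combined with a union bound over the at most $\left(epd/h^*\right)^{h^*}$ admissible $Z$'s (with $h^*=\tau/\Delta_{\min}^2$) gives $\|N(Z)\|^2\lesssim (dh+h\cdot h^*\log(pd))/p^2$ uniformly, and dividing by $\ell\geq \Delta_{\min}^2 h$ and multiplying by $\tau/(4\Delta_{\min}^2)$ produces the $\tau\log d/(p\Delta_{\min}^4)$ term (with the $\log p$ factor absorbed into $\log d$ in the target signal regime).

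For (\ref{eq:error-MRA1}) the chief obstacle is that $F_j(Z_j^*,U;Z)=\langle\epsilon_j,(Z_j^*-U)(\wh\theta(Z^*)-\wh\theta(Z))\rangle$ depends on $\epsilon_j$ both explicitly and implicitly through $\wh\theta(Z)$. I would split the inner vector once more as $B(Z)+N(Z)$. The bias contribution $\langle\epsilon_j,(Z_j^*-U)B(Z)\rangle$ is, conditionally on $Z$, mean-zero Gaussian in $\epsilon_j$ with variance at most $4\|B(Z)\|^2$; a Gaussian maximal inequality over $U\in\mathcal{C}_d$ (contributing a $\log d$ from $|\mathcal{C}_d|=d$), a sum over $j\in[p]$, and a union bound over feasible $Z$'s handle this piece cleanly. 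The noise contribution $\langle\epsilon_j,(Z_j^*-U)N(Z)\rangle$ is a bilinear Gaussian form; isolating the self-interaction where $N(Z)$'s own $j'=j$ summand pairs with $\epsilon_j$ (a $\chi^2$-type scalar) and applying Hanson--Wright to the off-diagonal bilinear remainder, in tandem with the same maximal-inequality-plus-union-bound scheme, will yield the leading $(\log d+d/p)\log d/\Delta_{\min}^4$ piece. Carefully tracking the two covariance contributions---$d/p$ from averaging the $\chi^2_d$-type self-interaction over $p$ samples and $\log d$ from maximizing over the $d$ cyclic shifts---is the most delicate step of the argument.
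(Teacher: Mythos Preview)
Your treatment of (\ref{eq:error-MRA3}) and the decomposition $\wh\theta(Z)-\wh\theta(Z^*)=B(Z)+N(Z)$ underlying (\ref{eq:error-MRA2}) coincide with the paper. Two points of divergence are worth noting.

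First, for the uniform control of $\|N(Z)\|^2$ you take a ``sparse'' union bound over $Z$'s within Hamming radius $h^*=\tau/\Delta_{\min}^2$, paying roughly $h^*\log(pd)$ in the exponent and leaving you with a stray $\log p$ you then hope to absorb. The paper instead takes the blunt union bound over \emph{all} $Z\in\mathcal{C}_d^p$ ($d^p$ configurations, cost $p\log d$). Because the covariance operator norm of $N(Z)$ is $O(h/p^2)$, the $p$ in $p\log d$ cancels against the $1/p^2$, yielding $\|N(Z)\|^2\lesssim (\log d)\,\ell(Z,Z^*)/(p\Delta_{\min}^2)$ uniformly, with no $\log p$ leakage. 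This is both simpler and gives exactly the stated bound.

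Second, for (\ref{eq:error-MRA1}) the paper avoids your bias/noise split and Hanson--Wright entirely via a decoupling trick. Writing $v=\wh\theta(Z^*)-\wh\theta(Z)$, one has
\[
\sum_{j=1}^p\max_{U}\bigl|\langle\epsilon_j,(Z_j^*-U)v\rangle\bigr|^2
=\max_{\tilde U\in\mathcal{C}_d^p} v^T\Bigl(\sum_{j}(Z_j^*-\tilde U_j)^T\epsilon_j\epsilon_j^T(Z_j^*-\tilde U_j)\Bigr)v
\leq \|v\|^2\cdot\max_{\tilde U\in\mathcal{C}_d^p}\bigl\|M_{\tilde U}\bigr\|^2,
\]
where $M_{\tilde U}=(\tilde U_1^T\epsilon_1,\dots,\tilde U_p^T\epsilon_p)\in\mathbb{R}^{d\times p}$. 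For each fixed $\tilde U$ this is a matrix with i.i.d.\ $\mathn(0,1)$ entries (since each $\tilde U_j$ is orthogonal), so $\|M_{\tilde U}\|\leq \sqrt{p}+\sqrt{d}+t$ with probability $\geq 1-e^{-t^2/2}$; a union bound over $d^p$ choices gives $\max_{\tilde U}\|M_{\tilde U}\|^2\lesssim d+p\log d$. This cleanly separates the already-controlled factor $\|v\|^2$ from the $\epsilon$-only random-matrix factor, completely sidestepping the dependency between $\epsilon_j$ and $N(Z)$ that forces you toward bilinear forms. Your route is not wrong in principle, but carrying the self-interaction and Hanson--Wright analysis through all the union bounds while matching the precise $(\log d+d/p)\log d/\Delta_{\min}^4$ leading term is considerably more delicate than the paper's two-line operator-norm argument.
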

From the bounds (\ref{eq:error-MRA1})-(\ref{eq:error-MRA3}), we can see that a sufficient condition that Conditions A, B and C hold is $\tau = o(p\Delta_{\min}^2)$ and
\begin{equation}
\frac{\Delta_{\min}^2}{\log d + d/p}\rightarrow\infty. \label{eq:SNR-MRA}
\end{equation}
In fact, when $d=O(1)$, the above condition is reduced to $\Delta_{\min}^2\rightarrow\infty$, which is the necessary condition for consistency by Theorem \ref{thm:mra-lower}.

Next, we need to bound $\xi_{\rm ideal}(\delta)$ in Condition D. This is given by the following lemma.

\begin{lemma}\label{lem:ideal-MRA}
Assume $\frac{\Delta_{\min}^2}{\log d + d/p}\rightarrow\infty$. Then, for any sequence $\delta_p=o(1)$, we have
$$\xi_{\rm ideal}(\xi_p)\leq p\exp\left(-(1+o(1))\frac{\Delta_{\min}^2}{8}\right),$$
with probability at least $1-\exp(-\Delta_{\min})$.
\end{lemma}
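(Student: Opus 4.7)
The plan is to control $\mathbb{E}[\xi_{\rm ideal}(\delta_p)]$ pair by pair in $(j,U)$ and then promote the expectation bound to a high-probability bound via Markov's inequality. The first step is to decompose $\wh{\theta}(Z^*)=\theta^*+\frac{1}{p}(Z_j^*)^T\epsilon_j+\bar{\epsilon}_{-j}$, where $\bar{\epsilon}_{-j}=\frac{1}{p}\sum_{i\neq j}(Z_i^*)^T\epsilon_i$ is independent of $\epsilon_j$. Setting $V=U(Z_j^*)^T$, a non-identity cyclic shift, the inner product inside the indicator becomes
$$\iprod{\epsilon_j}{(Z_j^*-U)\wh{\theta}(Z^*)}=A_{j,U}+D_{j,U}+C_{j,U},$$
with $A_{j,U}=\iprod{\epsilon_j}{(Z_j^*-U)\theta^*}\sim\mathn(0,\Delta_j(Z_j^*,U)^2)$ independent of $\bar{\epsilon}_{-j}$, $D_{j,U}=\frac{1}{p}\epsilon_j^T(I-\tfrac{1}{2}(V+V^T))\epsilon_j$, and $C_{j,U}=\iprod{\epsilon_j}{(Z_j^*-U)\bar{\epsilon}_{-j}}$. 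The key structural observation is that $D_{j,U}\geq 0$ deterministically: the symmetric circulant $I-\tfrac{1}{2}(V+V^T)$ has eigenvalues $1-\cos(2\pi k_V\ell/d)\geq 0$, where $k_V$ is the shift length of $V\neq I$.

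Using $D_{j,U}\geq 0$ and the elementary split $\indc{A+C\leq -c}\leq \indc{A\leq -c+c'}+\indc{C\leq -c'}$ with $c=(1-\delta_p)\Delta_j(Z_j^*,U)^2/2$ and $c'=\eta_p\Delta_j(Z_j^*,U)^2$ for a tuning parameter $\eta_p=o(1)$, the probability of the event defining $\xi_{\rm ideal}$ is bounded by a Gaussian tail on $A_{j,U}$ plus a chaos tail on $C_{j,U}$. The former gives $\exp(-(1-\delta_p-2\eta_p)^2\Delta_j(Z_j^*,U)^2/8)$. For the latter, $C_{j,U}$ is a Gaussian bilinear form whose matrix $Z_j^*-U$ has $\|Z_j^*-U\|_F^2\leq 2d$ and $\|Z_j^*-U\|\leq 2$, while each coordinate of $\bar{\epsilon}_{-j}$ has variance at most $1/p$; the bilinear Hanson--Wright inequality yields
$$\mathbb{P}(|C_{j,U}|\geq t)\leq 2\exp\bigl(-c\min(t^2 p/d,\,t\sqrt{p})\bigr).$$
Choosing $\eta_p=\max\bigl(\sqrt{d/(p\Delta_{\min}^2)},\,1/\sqrt{p}\bigr)$, which vanishes under the SNR assumption $\Delta_{\min}^2/(\log d+d/p)\to\infty$, makes both tails at most $\exp(-(1-o(1))\Delta_j(Z_j^*,U)^2/8)$.

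Summing over the at most $pd$ pairs and using the monotonicity of $x\mapsto xe^{-\alpha x}$ on $x\geq 1/\alpha$ (valid for $\Delta_j(Z_j^*,U)^2\geq\Delta_{\min}^2\to\infty$), one obtains
$$\mathbb{E}[\xi_{\rm ideal}(\delta_p)]\leq O(pd)\,\Delta_{\min}^2\exp\bigl(-(1-o(1))\Delta_{\min}^2/8\bigr)\leq p\exp\bigl(-(1-o(1))\Delta_{\min}^2/8\bigr),$$
absorbing $\log(d\Delta_{\min}^2)=o(\Delta_{\min}^2)$ into the exponent by the SNR hypothesis. Markov's inequality at threshold $e^{\Delta_{\min}}\mathbb{E}[\xi_{\rm ideal}(\delta_p)]$ then gives $\xi_{\rm ideal}(\delta_p)\leq p\exp(\Delta_{\min}-(1-o(1))\Delta_{\min}^2/8)=p\exp(-(1+o(1))\Delta_{\min}^2/8)$ except on an event of probability at most $e^{-\Delta_{\min}}$, since $\Delta_{\min}=o(\Delta_{\min}^2)$ absorbs into the $o(1)$. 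The main obstacle is the cross term $C_{j,U}$: a uniform bound over all $(j,U)$ would cost $\log p$ factors not controlled by the hypothesis, but the expectation-plus-Markov route sidesteps this, and the sign-definiteness $D_{j,U}\geq 0$ is what allows $D_{j,U}$ to be dropped without loss of leading order.
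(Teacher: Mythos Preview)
Your proof is correct and follows the same overall architecture as the paper: bound $\mathbb{E}[\xi_{\rm ideal}(\delta_p)]$ by splitting $\iprod{\epsilon_j}{(Z_j^*-U)\wh{\theta}(Z^*)}$ into the signal piece $A_{j,U}$, the self-coupling $D_{j,U}$, and the cross piece $C_{j,U}$, then promote via Markov at threshold $e^{\Delta_{\min}}$. The one genuine difference is your handling of $D_{j,U}$. The paper bounds it crudely by $|D_{j,U}|\leq \frac{2}{p}\|\epsilon_j\|^2$ using only $\|I-V\|\leq 2$, then spends a separate $\bar{\delta}/4$ budget and a chi-square tail on $\|\epsilon_j\|^2$ (this is where the paper invokes $p\Delta_{\min}^2/d\to\infty$). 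You instead observe that the symmetrized circulant $I-\tfrac{1}{2}(V+V^T)$ is positive semidefinite, so $D_{j,U}\geq 0$ deterministically and can be dropped from the one-sided event at no cost. This is cleaner and actually exploits the cyclic-shift structure, which the paper's argument does not. For $C_{j,U}$, your bilinear Hanson--Wright is equivalent to the paper's route of conditioning on $\epsilon_j$, Gaussian tail, then chi-square tail on $\|\epsilon_j\|^2$; both produce the same two-regime bound and lead to the same choice of splitting parameter (your $\eta_p$ plays the role of the paper's $\bar{\delta}$).

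One small imprecision: the SNR hypothesis $\Delta_{\min}^2/(\log d+d/p)\to\infty$ alone does not force $1/\sqrt{p}\to 0$ (take $p$ and $d$ bounded, $\|\theta^*\|\to\infty$), so your claim that $\eta_p$ ``vanishes under the SNR assumption'' is not quite right as stated. The paper's proof has the same implicit requirement---its $\bar{\delta}\to 0$ together with $\bar{\delta}\sqrt{p}\gtrsim 1$ also needs $p\to\infty$---and $p\to\infty$ is tacitly assumed throughout the paper, but you should make this explicit.
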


%Combining the condition $\frac{\Delta_{\min}^2}{\log d + d/p}\rightarrow\infty$ required by Lemma \ref{lem:ideal-MRA} and (\ref{eq:snr-pre-mra}) required by Lemma \ref{lem:error-MRA}, a sufficient condition that leads to the conclusions of both the two lemmas is
%\begin{equation}
%\frac{\Delta_{\min}^2}{\log d  + d/p}\rightarrow\infty. \label{eq:SNR-MRA}
%\end{equation}

To summarize, under the signal-to-noise ratio condition (\ref{eq:SNR-MRA}) and the initialization condition $\tau = o(p\Delta_{\min}^2)$, Conditions A, B, C and D hold simultaneously.

\subsection{Convergence}

With the help of Lemma \ref{lem:error-MRA} and Lemma \ref{lem:ideal-MRA}, we can specialize Theorem \ref{thm:main} into the following result.

\begin{thm}\label{thm:final-MRA}
Assume (\ref{eq:SNR-MRA}) holds. Suppose $Z^{(0)}$ satisfies
\begin{equation}
\ell(Z^{(0)},Z^*) = o\left(p\Delta_{\min}^2\right), \label{eq:ini-cond-MRA}
\end{equation}
with probability at least $1-\eta$. Then, we have
$$\ell(Z^{(t)},Z^*)\leq p\exp\left(-(1+o(1))\frac{\Delta_{\min}^2}{8}\right) + \frac{1}{2}\ell(Z^{(t-1)},Z^*)\quad\text{ for all }t\geq 1,$$
with probability at least $1-\eta-\exp(-\Delta_{\min})-e^{-d}$.
\end{thm}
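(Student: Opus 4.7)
The plan is to verify the five abstract conditions of Section~\ref{sec:con} in the MRA setting so that Theorem~\ref{thm:main} applies. The ingredients are supplied by Lemmas~\ref{lem:error-MRA} and~\ref{lem:ideal-MRA}, and the remaining work is to choose compatible parameters $(\tau,\delta)$.

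First I fix the initialization radius. By hypothesis there is a sequence $\omega_p\to\infty$ with $\ell(Z^{(0)},Z^*)\le p\Delta_{\min}^2/\omega_p$ on an event of probability at least $1-\eta$. Replacing $\omega_p$ by any slower-growing sequence still tending to $\infty$ only enlarges the admissible radius, so without loss of generality I may assume $\omega_p$ grows slowly (e.g., $\omega_p\le\log\log p$). Setting $\tau:=p\Delta_{\min}^2/\omega_p$ then gives Condition~E with $\eta_5=\eta$.

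Next I plug this $\tau$ into Lemma~\ref{lem:error-MRA}. Under the SNR hypothesis (\ref{eq:SNR-MRA}), we have $(\log d+d/p)/\Delta_{\min}^2\to 0$, so the first term in (\ref{eq:error-MRA1}) is bounded by $((\log d+d/p)/\Delta_{\min}^2)^2=o(1)$; the second term is $O(\tau(\log d+d/p)/(p\Delta_{\min}^4))=o(1)$ since $\tau/(p\Delta_{\min}^2)\to 0$; both summands in (\ref{eq:error-MRA2}) are likewise $o(1)$; and (\ref{eq:error-MRA3}) is $O(\sqrt{d/(p\Delta_{\min}^2)})=o(1)$. Consequently there is a sequence $\delta_p=o(1)$ for which Conditions~A, B, C hold with $\delta=\delta_p$ and combined failure probability at most $e^{-d}$ after choosing $C'$ in Lemma~\ref{lem:error-MRA} sufficiently large.

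For Condition~D I apply Lemma~\ref{lem:ideal-MRA} with this $\delta_p$ to obtain $\xi_{\rm ideal}(\delta_p)\le p\exp(-(1+o(1))\Delta_{\min}^2/8)$ with probability at least $1-\exp(-\Delta_{\min})$. The required bound $\xi_{\rm ideal}(\delta_p)\le\tau/4=p\Delta_{\min}^2/(4\omega_p)$ reduces to $\omega_p\exp(-(1+o(1))\Delta_{\min}^2/8)\lesssim\Delta_{\min}^2$, which holds eventually because the left-hand side decays exponentially in $\Delta_{\min}^2$ (with $\omega_p$ as a slowly-growing prefactor) while the right-hand side diverges. A union bound over the failure events and an application of Theorem~\ref{thm:main} then deliver the stated recursion. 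The main subtlety is this last matching step: both $\tau$ and $\xi_{\rm ideal}(\delta_p)$ may shrink with $p$, and one must guarantee that the former dominates; truncating $\omega_p$ from above resolves this, and this is precisely where the full SNR condition (\ref{eq:SNR-MRA}) is used rather than merely $\Delta_{\min}^2\to\infty$.
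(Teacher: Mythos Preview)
Your proposal is correct and matches the paper's approach, which simply declares Theorem~\ref{thm:final-MRA} a direct consequence of Theorem~\ref{thm:main} via Lemmas~\ref{lem:error-MRA} and~\ref{lem:ideal-MRA}. One minor point: your closing remark that the full SNR condition (\ref{eq:SNR-MRA}) is used specifically at the Condition~D matching step is misplaced---that step needs only $\Delta_{\min}^2\to\infty$ (with $\omega_p$ truncated), whereas the full strength of (\ref{eq:SNR-MRA}) is what drives the $o(1)$ bounds for Conditions A--C.
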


By the inequality $\frac{1}{p}\sum_{j=1}^p\indc{Z_j\neq Z_j^*}\leq \frac{\ell(Z,Z^*)}{p\Delta_{\min}^2}$, we immediately obtain the following corollary for the Hamming loss.

\begin{corollary}\label{cor:final-MRA}
Assume (\ref{eq:SNR-MRA}) holds. Suppose $Z^{(0)}$ satisfies (\ref{eq:ini-cond-MRA}) with probability at least $1-\eta$. Then, we have
\begin{equation}
\frac{1}{p}\sum_{j=1}^p\indc{Z_j^{(t)}\neq Z_j^*} \leq \exp\left(-(1+o(1))\frac{\Delta_{\min}^2}{8}\right) + 2^{-t}\quad\text{ for all }t\geq 1,\label{eq:path-of-pain}
\end{equation}
with probability at least $1-\eta-\exp(-\Delta_{\min})-e^{-d}$.
\end{corollary}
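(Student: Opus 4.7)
The plan is to obtain Corollary \ref{cor:final-MRA} as a direct consequence of Theorem \ref{thm:final-MRA}, using the elementary fact that the Hamming loss is dominated by $\ell(Z, Z^*)/\Delta_{\min}^2$. Specifically, since every $Z_j, Z_j^* \in \mathcal{C}_d$ and $\mathcal{C}_d$ is a group of orthogonal matrices, whenever $Z_j \neq Z_j^*$ one has $\|(Z_j - Z_j^*)\theta^*\|^2 = \|(Z_j^{*T}Z_j - I_d)\theta^*\|^2 \geq \Delta_{\min}^2$ by the very definition of $\Delta_{\min}^2$. Summing over $j$ yields
$$\frac{1}{p}\sum_{j=1}^p \indc{Z_j \neq Z_j^*} \leq \frac{\ell(Z, Z^*)}{p \Delta_{\min}^2}.$$

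Next I would unroll the recursion from Theorem \ref{thm:final-MRA}. Iterating the one-step bound $\ell(Z^{(t)}, Z^*) \leq p \exp(-(1+o(1))\Delta_{\min}^2/8) + \tfrac12 \ell(Z^{(t-1)}, Z^*)$ gives the geometric-series estimate
$$\ell(Z^{(t)}, Z^*) \leq 2p \exp\!\left(-(1+o(1))\tfrac{\Delta_{\min}^2}{8}\right) + 2^{-t} \ell(Z^{(0)}, Z^*),$$
valid on the same high-probability event. Dividing both sides by $p \Delta_{\min}^2$ and applying the Hamming inequality above, the first term becomes $\frac{2}{\Delta_{\min}^2} \exp(-(1+o(1))\Delta_{\min}^2/8)$. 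Under the SNR assumption (\ref{eq:SNR-MRA}) we have $\Delta_{\min}^2 \to \infty$, so $\log \Delta_{\min}^2 = o(\Delta_{\min}^2)$ and the $\Delta_{\min}^{-2}$ prefactor is absorbed into the $(1+o(1))$ in the exponent, yielding $\exp(-(1+o(1))\Delta_{\min}^2/8)$.

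For the second term, the initialization hypothesis (\ref{eq:ini-cond-MRA}) gives $\ell(Z^{(0)}, Z^*)/(p\Delta_{\min}^2) = o(1)$ with probability at least $1-\eta$, so $2^{-t} \ell(Z^{(0)}, Z^*)/(p\Delta_{\min}^2) = o(2^{-t})$, which can be folded into $2^{-t}$. Combining the two bounds produces (\ref{eq:path-of-pain}). The probability is just a union bound over the events of Theorem \ref{thm:final-MRA} and the initialization, giving $1 - \eta - \exp(-\Delta_{\min}) - e^{-d}$ as claimed. Since all of this is a straightforward rewriting, I do not anticipate any real obstacle; the only subtle point is verifying that the $1/\Delta_{\min}^2$ factor really can be pushed into the $(1+o(1))$ exponent, but this is immediate from the divergence of $\Delta_{\min}^2$.
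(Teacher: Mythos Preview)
Your proposal is correct and matches the paper's own argument: the paper simply states (just before the corollary) that the inequality $\frac{1}{p}\sum_{j=1}^p\indc{Z_j\neq Z_j^*}\leq \ell(Z,Z^*)/(p\Delta_{\min}^2)$ immediately yields the result from Theorem~\ref{thm:final-MRA}, and you have filled in precisely those details (unrolling the recursion, absorbing the $1/\Delta_{\min}^2$ prefactor into the exponent, and bounding the $2^{-t}$ term via the initialization condition).
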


According to our lower bound result given by Theorem \ref{thm:mra-lower}, the quantity $\exp\left(-(1+o(1))\frac{\Delta_{\min}^2}{8}\right)$ is the minimax rate. Moreover, since the loss function $\frac{1}{p}\sum_{j=1}^p\indc{Z_j\neq Z_j^*}$ takes value in the set $\{j/p:j\in[p]\cup\{0\}\}$, the term $2^{-t}$ in (\ref{eq:path-of-pain}) is negligible as long as $2^{-t}=o(p^{-1})$. We therefore can claim
$$\frac{1}{p}\sum_{j=1}^p\indc{Z_j^{(t)}\neq Z_j^*} \leq \exp\left(-(1+o(1))\frac{\Delta_{\min}^2}{8}\right) \quad\text{ for all }t\geq 3\log p.$$
In other words, the minimax rate is achieved after at most $\ceil{3\log p}$ iterations.

\subsection{Initialization}

To close this section, we discuss a simple initialization procedure that achieves the condition (\ref{eq:ini-cond-MRA}). The idea is to find the cyclic shifts for all $j\geq 2$ by using that of $j=1$ as a reference. To be specific, we define $Z_1^{(0)}=I_d$. For each $j\geq 2$, compute
\begin{equation}
Z_j^{(0)} = \argmin_{U\in\mathcal{C}_d}\|Y_1-Z_j^TY_j\|^2. \label{eq:very-naive-MRA}
\end{equation}
The estimator (\ref{eq:very-naive-MRA}) has also been discussed by \cite{bandeira2014multireference}. It is known that (\ref{eq:very-naive-MRA}) does not have optimal statistical error. However, the performance of (\ref{eq:very-naive-MRA}) is sufficient for the purpose of initializing the iterative algorithm (\ref{eq:white-palace-difficult}).

\begin{proposition}\label{prop:ini-MRA}
There exists some $C>0$, such that for any $\eta>0$,  we have
$$\min_{U\in\mathcal{C}_d}\sum_{j=1}^p\|(Z_j^{(0)}U-Z_j^*)\theta^*\|^2\leq C\frac{p\sqrt{d\log d}}{\eta},$$
with probability at least $1-\eta$.
\end{proposition}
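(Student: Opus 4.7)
The plan is to first upper-bound the minimum over $U\in\mathcal{C}_d$ by its value at the specific choice $U = Z_1^*$, reduce to a canonical setting by invariance of isotropic Gaussian noise under the abelian group $\mathcal{C}_d$, then combine the optimality defining $Z_j^{(0)}$ with Young's inequality and maximal inequalities, and conclude by Markov. First I would write
$$\min_{U\in\mathcal{C}_d}\sum_{j=1}^p\|(Z_j^{(0)}U-Z_j^*)\theta^*\|^2 \leq \sum_{j=1}^p\|(Z_j^{(0)}Z_1^*-Z_j^*)\theta^*\|^2,$$
and the $j=1$ term vanishes since $Z_1^{(0)}=I_d$. For each $j\geq 2$, setting $\tilde\epsilon_\ell=(Z_\ell^*)^T\epsilon_\ell \sim \mathcal{N}(0,I_d)$ for $\ell\in\{1,j\}$ and substituting $V=(Z_j^*)^T U Z_1^*$ (a bijection on $\mathcal{C}_d$ by abelianness) reduces the problem to the case $Z_1^*=Z_j^*=I_d$: $Y_1, Y_j$ are i.i.d. $\mathcal{N}(\theta^*,I_d)$, $Z_j^{(0)}=\argmin_{U\in\mathcal{C}_d}\|Y_j-UY_1\|^2$, and the per-$j$ loss becomes $\|\Delta_{\hat{s}_j}\|^2$, where $\Delta_s:=(Z_s-I_d)\theta^*$ and $\hat{s}_j\in\{0,\ldots,d-1\}$ is the shift index of $Z_j^{(0)}$.

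Next I would exploit $\|Y_j-Z_{\hat{s}_j}Y_1\|^2\leq \|Y_j-Y_1\|^2$, obtained by using $s=0$ as a competitor. Expanding both sides and using $\|Z_s\epsilon_1\|=\|\epsilon_1\|$ together with the identity $\langle\Delta_s,Z_s\epsilon_1\rangle=-\langle\Delta_{-s},\epsilon_1\rangle$ yields
$$\|\Delta_{\hat{s}_j}\|^2 \leq 2\langle\epsilon_j,(Z_{\hat{s}_j}-I_d)\epsilon_1\rangle + 2\langle\Delta_{\hat{s}_j},\epsilon_j\rangle + 2\langle\Delta_{-\hat{s}_j},\epsilon_1\rangle.$$
Applying $2\langle\Delta_s,\epsilon\rangle\leq \tfrac{1}{4}\|\Delta_s\|^2 + 4\langle\Delta_s/\|\Delta_s\|,\epsilon\rangle^2$ to the last two terms and using $\|\Delta_{-s}\|=\|\Delta_s\|$ permits absorbing $\tfrac{1}{2}\|\Delta_{\hat{s}_j}\|^2$ into the left-hand side. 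Taking a supremum over the (random) $\hat{s}_j$ gives the deterministic upper bound
$$\|\Delta_{\hat{s}_j}\|^2 \leq 4\max_{s}\bigl|\langle\epsilon_j,(Z_s-I_d)\epsilon_1\rangle\bigr| + 8\max_s \langle u_s,\epsilon_j\rangle^2 + 8\max_s \langle v_s,\epsilon_1\rangle^2,$$
where $u_s,v_s\in\mathbb{R}^d$ are unit vectors and $s$ ranges over $\{0,\ldots,d-1\}$.

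Finally I would bound the three expected maxima. Since $\|Z_s-I_d\|_F^2=2d$ and $\|Z_s-I_d\|_{\mathrm{op}}\leq 2$ for $s\neq 0$, Hanson-Wright plus a union bound over $s$ yields $\mathbb{E}\max_s|\langle\epsilon_j,(Z_s-I_d)\epsilon_1\rangle|\lesssim \sqrt{d\log d}$, while the other two are maxima of $d$ squared $\mathcal{N}(0,1)$ variables, each of expected size $\lesssim \log d$. Hence $\mathbb{E}\|\Delta_{\hat{s}_j}\|^2\lesssim \sqrt{d\log d}$ with an absolute constant that does not involve $\theta^*$, and summing over $j$ gives $\mathbb{E}\bigl[\min_{U\in\mathcal{C}_d}\sum_j\|(Z_j^{(0)}U-Z_j^*)\theta^*\|^2\bigr]\lesssim p\sqrt{d\log d}$; Markov then delivers the claimed $Cp\sqrt{d\log d}/\eta$ bound with probability at least $1-\eta$. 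The main obstacle is obtaining the $\theta^*$-free per-sample bound $\sqrt{d\log d}$: a direct triangle-inequality argument only gives $\|\Delta_{\hat{s}_j}\|^2\lesssim d$, and a Chebyshev estimate on $\mathbb{P}(\hat{s}_j=s)$ is too weak because the chaos term has variance of order $d$. The essential trick is Young's inequality, which absorbs the linear Gaussian terms $\langle\Delta_s,\epsilon\rangle$ (whose magnitudes are dictated by the unknown $\|\theta^*\|$) back into $\|\Delta_{\hat{s}_j}\|^2$, leaving the Hanson-Wright chaos contribution of order $\sqrt{d\log d}$ as the dominant noise and removing any $\|\theta^*\|$-dependence from the final constant.
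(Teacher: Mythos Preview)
Your proof is correct and follows essentially the same strategy as the paper: choose $U=Z_1^*$, use the optimality of $Z_j^{(0)}$ against the true shift as competitor, separate a $\theta^*$-free Gaussian chaos term of size $\sqrt{d\log d}$ from normalized linear terms of size $\log d$, bound expectations per $j$, and conclude by Markov. The only differences are cosmetic: you reduce to $Z_1^*=Z_j^*=I_d$ by invariance and use Young's inequality to absorb the linear terms $2\langle\Delta_s,\epsilon\rangle$ into $\tfrac12\|\Delta_s\|^2$, whereas the paper keeps the general shifts and instead uses the quadratic implication $a^2\le 2ab+2c\Rightarrow a\le 2b+\sqrt{2c}$ to achieve the same normalization; for the chaos term you invoke Hanson--Wright while the paper factorizes $\epsilon_1^TU^T\epsilon_j=\|\epsilon_1\|\cdot(\text{conditional }\mathcal N(0,1))$ and bounds $\mathbb E\|\epsilon_1\|\lesssim\sqrt d$ times $\mathbb E\max_U|\mathcal N(0,1)|\lesssim\sqrt{\log d}$.
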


Proposition \ref{prop:ini-MRA} shows that $Z^{(0)}$ achieves the rate $O(p\sqrt{d\log d})$ for estimating $Z^*$ up to a global cyclic shift $U\in\mathcal{C}_d$. Since $Y_j=Z_j^*\theta^*+\epsilon_j=Z_j^*U^TU\theta^*+\epsilon_j$, the ambiguity of this global cyclic shift cannot be avoided.

In order that the initialization condition (\ref{eq:ini-cond-MRA}) is satisfied, we shall consider the signal-to-noise ratio condition
\begin{equation}
\frac{\Delta_{\min}^2}{\sqrt{d\log d} + d/p}\rightarrow\infty. \label{eq:true-snr-cond-mra}
\end{equation}
Note that (\ref{eq:true-snr-cond-mra}) implies (\ref{eq:SNR-MRA}) and thus the algorithmic convergence holds.
Given the condition (\ref{eq:true-snr-cond-mra}), we can take $\eta=\frac{\sqrt{d\log d}}{\Delta_{\min}^2}$ in Proposition \ref{prop:ini-MRA}. Then, under (\ref{eq:true-snr-cond-mra}), we have
$$\min_{U\in\mathcal{C}_d}\sum_{j=1}^p\|(Z_j^{(0)}U-Z_j^*)\theta^*\|^2= o\left(p\Delta_{\min}^2\right),$$
with probability at least $1-\frac{\sqrt{d\log d}}{\Delta_{\min}^2}$. This means there exists some $U\in\mathcal{C}_d$, such that the initial estimator $\{Z_j^{(0)}U\}$ recovers $\{Z_j^*\}$ after a global shift with an error that satisfies the condition (\ref{eq:ini-cond-MRA}). Theorefore, Corollary \ref{cor:final-MRA} implies that $\min_{U\in\mathcal{C}_d}\frac{1}{p}\sum_{j=1}^p\indc{Z_j^{(t)}U\neq Z_j^*}$ converges to the minimax error with a linear rate under the signal-to-noise ratio condition (\ref{eq:true-snr-cond-mra}).

\section{Group Synchronization}\label{sec:group}

In this section, we study a general class of problems called group synchronization. Given a group $(\mathcal{G},\circ)$ and group elements $g_1,\cdots,g_p\in\mathcal{G}$, we observe noisy versions of $g_i\circ g_j^{-1}$, and the goal is to recover the group elements $g_1,\cdots,g_p$. It turns out our general framework is particularly suitable to solve group synchronization, at least for discrete groups. We will consider the following three representative examples:
\begin{enumerate}
\item $\mathbb{Z}_2$ synchronization. This is the simplest example of group synchronization, and it is closely related to the more general phase/angular synchronization problem \citep{bandeira2017tightness}. The group only consists of two elemnts $\{-1,1\}$, and the group operation is the ordinary product.
\item $\Zk$ synchronization. Also known as joint alignment from pairwise differences, $\Zk$ synchronization was first considered by \cite{chen2018projected}. The group consists of elements $\{0,1,2,\cdots,k-1\}$ with group operation $g\circ h=g+h(\text{mod }k)$.
\item Permutation synchronization. As one of the most popular methods for multiple image alignment, permutation synchronization was first proposed by \cite{pachauri2013solving}. In this example, the group contains all permutations of $[d]$, and the group operation is the composition of two permutations.
\end{enumerate}
Given its importance in applied mathematics and engineering, group synchronization has been extensively studied in the literature \citep{fei2020achieving,perry2016optimality,bandeira2017tightness,abbe2020entrywise,zhou2015multi,yan2015multi,chen2014near,ling2020near,ling2020solving,lerman2019robust,abbe2017group}. Most approaches in the literature are based on semi-definite programming (SDP) and other forms of convex relaxations. In terms of statistical guarantees, the literature is mainly focused on conditions of exact recovery. In fact, for the three examples that we list above, the minimax rates are unknown with $\mathbb{Z}_2$ synchronization being the only exception. In this section, we will show Algorithm \ref{alg:general} can be specialized to the three models and can achieve the minimax rate of each one. Even for $\mathbb{Z}_2$ synchronization, our result offers some new insight of the problem. The minimax rate of $\mathbb{Z}_2$ synchronization is achieved by an SDP procedure \citep{fei2020achieving} in the literature. In comparison, Algorithm \ref{alg:general} leads to a much simpler power method, which is easier to implement in practice.

There are several different options of noise models in the literature. The most standard and popular choice is $Y_{ij}=g_i\circ g_j^{-1}+\sigma W_{ij}$ with $W_{ij}$ being a Gaussian element. However, it is also natural to restrict the noisy observation of $g_i\circ g_j^{-1}$ to be an element of the group $\mathcal{G}$. One way to achieve this is the noise model \citep{singer2011angular},
\begin{equation}
Y_{ij} = \begin{cases}
g_i\circ g_j^{-1} \quad \text{with probability }q, \\
\text{Uniform}(\mathcal{G})\quad \text{with probability }1-q.
\end{cases}
\end{equation}
Another way is through projection \citep{abbe2017group}. Namely, $Y_{ij}=\mathcal{P}_{\mathcal{G}}(g_i\circ g_j^{-1}+\sigma W_{ij})$, where $\mathcal{P}_{\mathcal{G}}$ is a projection onto $\mathcal{G}$ with respect to the $\ell_2$ norm. In addition, one can also consider partial observations on a random graph \citep{chen2014near}. As argued in \cite{abbe2017group}, these noise models are all equivalent to
\begin{equation}
Y_{ij}=\lambda g_i\circ g_j^{-1} + W_{ij},\label{eq:g-sync-canonical}
\end{equation}
with some $\lambda\in\mathbb{R}$ depending on $\sigma$ or $q$ and some additive noise $W_{ij}$ that is sub-Gaussian\footnote{The equivalence between the projection noise and (\ref{eq:g-sync-canonical}) is only true for some special groups}. For simplicity, we thus consider the noise model (\ref{eq:g-sync-canonical}) with $W_{ij}$ being a standard Gaussian element. The results we obtain in this section can all be extended with a sub-Gaussian $W_{ij}$ to include more general noise settings.

\subsection{$\mathbb{Z}_2$ Synchronization} \label{sec:z2}

Consider the observations $Y_{ij}\sim\mathn(\lambda^*z_i^*z_j^*,1)$ independently for all $1\leq i<j\leq p$ with $z_i^*\in\{-1,1\}$ and $\lambda^*\in\mathbb{R}$. Using matrix notation, we can write $Y=\lambda^*z^*z^{*T}+W$, where $W$ is a symmetric matrix such that $W_{ij}=W_{ji}\sim\mathn(0,1)$ for all $1\leq i<j\leq p$ and $W_{ii}=0$ for all $i\in[p]$. This is the simplest group synchronization problem, and is closely related to the problem of angular synchronization \citep{bandeira2017tightness}. The minimax lower bound of this problem has been recently obtained by \cite{fei2020achieving}.

\begin{thm}[Fei and Chen \citep{fei2020achieving}]\label{thm:lower-z2}
If $p\lambda^{*2}\rightarrow\infty$, we have
$$\inf_{\wh{z}}\sup_{z^*}\mathbb{E}\left(\frac{1}{p}\sum_{j=1}^p\indc{\wh{z}_j\neq z_j^*}\wedge \frac{1}{p}\sum_{j=1}^p\indc{\wh{z}_j\neq -z_j^*}\right)\geq \exp\left(-\frac{(1+o(1))p\lambda^{*2}}{2}\right).$$
Otherwise if $p\lambda^{*2}=O(1)$, we then have
$$\inf_{\wh{z}}\sup_{z^*}\mathbb{E}\left(\frac{1}{p}\sum_{j=1}^p\indc{\wh{z}_j\neq z_j^*}\wedge \frac{1}{p}\sum_{j=1}^p\indc{\wh{z}_j\neq -z_j^*}\right)\geq c,$$
for some constant $c>0$.
\end{thm}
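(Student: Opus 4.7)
The plan is to obtain both claims from a single Bayes-risk lower bound that reduces recovery of $z^*$ to $p-1$ independent coordinate-wise binary oracle tests; the two regimes will fall out by evaluating the resulting Gaussian tail asymptotically.

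I first handle the global sign ambiguity. Because the likelihood depends only on $\{z_i^*z_j^*\}_{i<j}$, the parameter is identifiable only up to $z^*\sim -z^*$. Restrict the supremum to $\Theta=\{z\in\{-1,1\}^p:z_1=1\}$, which only shrinks the minimax risk. For any estimator $\wh z$, set $\wt z=\sgn(\wh z_1)\wh z$ (with $\sgn(0)=1$), so $\wt z_1=1=z_1^*$ for $z^*\in\Theta$. Using the identity $h(\wh z,z^*)+h(\wh z,-z^*)=p$, a case analysis on the sign of $\wh z_1$ shows that on the event $\{h(\wt z,z^*)\leq p/2\}$ we have $\min\{h(\wh z,z^*),h(\wh z,-z^*)\}=h(\wt z,z^*)$. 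Hence it suffices to lower bound $\Expect[h(\wt z,z^*)]$ over estimators constrained to $\wt z_1=1$, the restriction to $h(\wt z,z^*)\leq p/2$ being essentially free since otherwise the estimator is worse than random guessing and one may pass to $-\wt z$.

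Next, place the uniform prior $\pi$ on $\Theta$. Since minimax risk dominates Bayes risk and the coordinate-wise Bayes tests decouple,
$$\inf_{\wt z}\sup_{z^*\in\Theta}\Expect[h(\wt z,z^*)]\geq \sum_{j=2}^{p}\inf_{\wt z_j}\Expect_\pi\Expect\sbr{\indc{\wt z_j\neq z_j^*}}.$$
For each $j\geq 2$, condition on $z_{-j}^*$: the sufficient statistic for $z_j^*$ is
$$U_j=\sum_{i\neq j}z_i^*Y_{ij}\sim\mathn\br{\lambda^*(p-1)z_j^*,\,p-1},$$
and the Bayes-optimal likelihood-ratio rule achieves error exactly $\Phi\br{-\abs{\lambda^*}\sqrt{p-1}}$. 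Summing over $j$ and dividing by $p$ yields the bound $\tfrac{p-1}{p}\Phi\br{-\abs{\lambda^*}\sqrt{p-1}}$.

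It remains to evaluate this tail. In the regime $p\lambda^{*2}\to\infty$, the Mills-ratio asymptotic $\Phi(-x)=(1+o(1))x^{-1}\phi(x)$ yields $\Phi\br{-\abs{\lambda^*}\sqrt{p-1}}=\exp\br{-(1+o(1))p\lambda^{*2}/2}$, giving the first claim. In the regime $p\lambda^{*2}=O(1)$, $\abs{\lambda^*}\sqrt{p-1}$ is bounded, so $\Phi\br{-\abs{\lambda^*}\sqrt{p-1}}\geq c_0$ for a positive constant $c_0$, giving the second. The main obstacle is making the sign-ambiguity reduction fully rigorous for pathological estimators; this can be handled cleanly by symmetrizing the prior to the uniform distribution on all of $\{-1,1\}^p$ and noting that the Bayes-optimal estimator under the min-Hamming loss is sign-equivariant, so the oracle-testing lower bound above transfers directly to the Bayes risk and hence to the minimax risk.
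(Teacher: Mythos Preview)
Your overall strategy---restrict the parameter space, pass to a Bayes risk, reduce each coordinate to a binary oracle test, and apply Neyman--Pearson to obtain $\Phi(-|\lambda^*|\sqrt{p-1})$---matches the paper's. The gap is in the sign-ambiguity reduction. After normalizing to $\wh z_1=1$ and restricting to $z_1^*=1$, you have $\min\{h(\wh z,z^*),h(\wh z,-z^*)\}=\min\{h',p-h'\}$ with $h'=\sum_{j\geq 2}\indc{\wh z_j\neq z_j^*}$, and there is no pointwise inequality $\min\{h',p-h'\}\geq h'$; whenever $h'>p/2$ the min-loss is strictly smaller. Your proposed fix via ``sign-equivariance of the Bayes-optimal estimator'' is not spelled out, and it is unclear what it buys: the posterior over $z^*$ is genuinely symmetric under $z^*\mapsto -z^*$, so equivariance alone does not force $h'\leq p/2$ for the Bayes estimator, let alone for an arbitrary one.

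The paper sidesteps this by anchoring the first $p/2$ coordinates (not just one) to $+1$. Then for any $\wh z$, any $z^*$ in this restricted class, and any $u\in\{\pm1\}$, pairing each $j\leq p/2$ with $j+p/2$ gives
\[
\indc{u\wh z_j\neq 1}+\indc{u\wh z_{j+p/2}\neq z_{j+p/2}^*}\;\geq\;\indc{\wh z_j\wh z_{j+p/2}\neq z_{j+p/2}^*};
\]
summing over $j$ and then taking the minimum over $u$ yields $\min_u h(u\wh z,z^*)\geq\sum_{l>p/2}\indc{\wt z_l\neq z_l^*}$ with $\wt z_l:=\wh z_{l-p/2}\wh z_l$, a valid estimator that does not depend on $z^*$. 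This removes the $\min$ cleanly and the coordinate-wise testing bound goes through; the factor $1/2$ lost in front of the Gaussian tail is absorbed into the $(1+o(1))$ in the exponent. The simplest repair of your argument is to adopt this fat anchor.
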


The result is coherent with the necessary condition of weak recovery (i.e. to find a $\wh{z}$ that is correlated with $z^*$) $p\lambda^{*2}\rightarrow\infty$ and strong/exact recover (i.e. to find a $\wh{z}$ that equals $z^*$ up to a sign) $p\lambda^{*2}>2\log p$ in the literature \citep{perry2016optimality,bandeira2017tightness}. It was proved by \cite{fei2020achieving} that the minimax rate can be achieved by a semi-definite programming (SDP). In this section, we show that a simpler power iteration method, a special case of our general iterative algorithm, also achieves this minimax rate.

As is discussed in Section \ref{sec:framework}, the $\mathbb{Z}_2$ synchronization model can be equivalently represented as $Y=z^*(\beta^*)^T+W$ with $\beta^*=\lambda^*z^*\in\mathcal{B}_{z^*}=\{\beta=\lambda z^*:\lambda\in\mathbb{R}\}$. The resulting iterative algorithm can be summarized as
\begin{equation}
z_j^{(t)}=\argmin_{a\in\{-1,1\}}\|Y_j-a\wh{\beta}(z^{(t-1)})\|^2, \label{eq:algo-comp-z2}
\end{equation}
where for any $z\in\{-1,1\}^p$, we use the notation
$$\wh{\beta}(z)=\frac{z^TYz}{p^2}z.$$
The iterative procedure (\ref{eq:algo-comp-z2}) is equivalent to the power method (\ref{eq:power-z2}). The power method enjoys good theoretical properties in the setting of angular synchronization \citep{zhong2018near}, but whether it can achieve the minimax rate of $\mathbb{Z}_2$ synchronization is unknown in the literature.

\subsubsection{Conditions}

To analyze the algorithmic convergence of (\ref{eq:algo-comp-z2}), we note that $\norm{\beta^*}^2 =\abs{\lambda^{*}}^2 p$. Then  $\mu_j(B^*,a)=\nu_j(B^*,a)=a\beta^*$, $\Delta_j(a,b)^2=(a-b)^2\|\beta^*\|^2$, $\Delta_{\min}^2 = \min_{a \neq b}\Delta_j(a,b)^2 = 4\|\beta^*\|^2$, and
\begin{align}\label{eq:z2_loss_identity}
\ell(z,z^*)=\sum_{j=1}^p(z_j-z_j^*)^2\|\beta^*\|^2=p|\lambda^*|^2\sum_{j=1}^p(z_j-z_j^*)^2,
\end{align}
under the current setting. 
The error terms that we need to control are given by the formulas (\ref{eq:aspid111})-(\ref{eq:aspid113}).
Here, the noise vector is given by $\epsilon_j=W_j$, the $j$th column of the error matrix $W$. The error terms are controlled by the following lemma.

\begin{lemma}\label{lem:error-z2}
For any $C'>0$, there exists a constant $C>0$ only depending on $C'$ such that
\begin{eqnarray}
\nonumber && \max_{\{z:\ell(z,z^*)\leq\tau\}}\sum_{j=1}^p  \max_{b\in\{-1,1\}\setminus\{z_j^*\}} \frac{F_j(z_j^*,b;z)^2 \norm{\mu_j(B^*,b)-\mu_j(B^*,z_j^*)}^2}{\Delta_j(z_j^*,b)^4 \ell(z,z^*)} \\
 \label{eq:error-z21} &\leq& C\left(\frac{1}{p\lambda^{*2}} + \frac{1}{p^2\lambda^{*4}}\right),\\
\nonumber && \max_{\{z:\ell(z,z^*)\leq\tau\}}\max_{T\subset[p]}\frac{\tau}{4\Delta_{\min}^2|T|+\tau}\sum_{j\in T}  \max_{b\in\{-1,1\}\setminus\{z_j^*\}} \frac{G_j(z_j^*,b;z)^2 \norm{\mu_j(B^*,b)-\mu_j(B^*,z_j^*)}^2}{\Delta_j(z_j^*,b)^4 \ell(z,z^*)}\\
\label{eq:error-z22} &\leq& C\left(\frac{\tau}{\lambda^{*2}p^2} + \frac{\tau}{\lambda^{*4}p^3}\right),
\end{eqnarray}
and
\begin{equation}
\max_{j\in[p]}\max_{b\neq z_j^*} \frac{\abs{H_j(z_j^*,b)}}{\Delta_j(z_j^*,b)^2}   \leq C\frac{1}{\sqrt{p\lambda^{*2}}}, \label{eq:error-z23} 
\end{equation}
with probability at least $1-e^{-C'p}$.
\end{lemma}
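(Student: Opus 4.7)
The guiding observation is that $\widehat{\beta}(z) = \alpha(z) z$ is rank-one, with scalar $\alpha(z) = z^{T}Y z / p^{2}$. Writing $\alpha(z^*) = \lambda^* + r^*$ with $r^* = z^{*T}Wz^*/p^{2}$, and using $Y = \lambda^* z^*z^{*T} + W$, one gets the closed form $\alpha(z) = \lambda^* (z^Tz^*/p)^{2} + z^TWz/p^{2}$. Using the identity $z^Tz^* = p - 2h(z,z^*)$ for $z,z^* \in \{-1,1\}^p$ and the relation $\ell(z,z^*) = 4p\lambda^{*2} h(z,z^*)$ from \eqref{eq:z2_loss_identity}, every quantity can be tracked in terms of $h = h(z,z^*)$. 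All three error terms are driven by only two objects: the vector $v(z) := \widehat{\beta}(z^*) - \widehat{\beta}(z)$ for $F,G$, and the scalar $\langle\beta^*, \beta^*-\widehat{\beta}(z^*)\rangle = -\lambda^*\, z^{*T}Wz^*/p$ for $H$.

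I would condition throughout on the high-probability event $\mathcal{E}$ on which (i) $\|W\|_{\mathrm{op}} \lesssim \sqrt{p}$ and (ii) $|z^{*T}Wz^*| \lesssim p$, both of which hold with probability at least $1 - e^{-c p}$ by standard Wigner concentration and Hanson--Wright inequalities. The bound (\ref{eq:error-z23}) on $H$ is immediate on $\mathcal{E}$: since $H_j$ is independent of $j,a,b$ and equals $-2\lambda^* z^{*T}Wz^*/p$, dividing by $\Delta_j(z_j^*,b)^{2} = 4p\lambda^{*2}$ and using (ii) produces $|H_j|/\Delta_j^{2} \lesssim 1/(p\lambda^*) = \frac{1}{\sqrt{p}}\cdot\frac{1}{\sqrt{p\lambda^{*2}}} \lesssim 1/\sqrt{p\lambda^{*2}}$.

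For $F$, since $F_j(z_j^*,b;z)^{2} = 4\langle W_j, v(z)\rangle^{2}$ for $b\ne z_j^*$, the sum telescopes into $\sum_j \langle W_j, v(z)\rangle^{2} = \|Wv(z)\|^{2} \le \|W\|_{\mathrm{op}}^{2}\|v(z)\|^{2} \lesssim p \|v(z)\|^{2}$, giving a \emph{deterministic} (on $\mathcal{E}$) uniform bound over $z$ without any union bound. Plugging into the normalizations yields $\lesssim \|v(z)\|^{2}/(\lambda^{*2}\ell(z,z^*))$. Decomposing $v(z) = \alpha(z^*)(z^*-z) + (\alpha(z^*)-\alpha(z))z$, I would bound the first piece by $\alpha(z^*)^{2}\|z^*-z\|^{2} \lesssim \lambda^{*2}\cdot\ell/(p\lambda^{*2}) = \ell/p$. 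For the second piece, expand $\alpha(z^*)-\alpha(z) = \lambda^*(1-(z^Tz^*/p)^{2}) + (z^{*T}Wz^* - z^TWz)/p^{2}$; the algebraic part is $O(\lambda^* h/p)$ and the noise part is controlled via $|(z-z^*)^TW(z+z^*)| \le \|W\|_{\mathrm{op}}\|z-z^*\|\|z+z^*\| \lesssim \sqrt{p}\cdot\sqrt{h}\cdot\sqrt{p}$. Substituting $h = \ell/(4p\lambda^{*2})$ gives $\|v(z)\|^{2} \lesssim \ell/p + \ell^{2}/(p^{3}\lambda^{*2}) + \ell/(p^{2}\lambda^{*2})$, and dividing by $\lambda^{*2}\ell$ together with $\ell \le \tau \lesssim p^{2}\lambda^{*2}$ recovers the stated $\frac{1}{p\lambda^{*2}} + \frac{1}{p^{2}\lambda^{*4}}$.

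For $G$, since $G_j(a,b;z) = 2\langle\beta^*, v(z)\rangle$ is independent of $j$ and $(a,b)$, the sum over $j\in T$ in Condition~B produces a factor of $|T|$, which the weight $\tau/(4\Delta_{\min}^{2}|T|+\tau)$ caps by $\tau/(4\Delta_{\min}^{2}) = \tau/(16 p\lambda^{*2})$. Using $\langle\beta^*, v(z)\rangle = \lambda^*[\alpha(z^*)p - \alpha(z)(z^{*T}z)]$ and the cubic expansion $1 - c^{3} = 6h/p + O(h^{2}/p^{2})$ at $c = 1-2h/p$, the algebraic part is $O(\lambda^{*2}h)$, while the noise part inherits the same $\|W\|_{\mathrm{op}}$ bounds used for $F$; squaring gives $\lambda^{*4}h^{2} + \lambda^{*2}h + O$ lower-order. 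Combining with the cap $\tau/(16p\lambda^{*2})$ and the divisor $\lambda^{*2}\ell = 4p\lambda^{*4}h$, I obtain terms of order $\tau/(p^{2}\lambda^{*2})$ and $\tau/(p^{3}\lambda^{*4})$, matching \eqref{eq:error-z22}, again using $\tau \lesssim p^{2}\lambda^{*2}$ to absorb any $\tau^{2}/(p^{4}\lambda^{*4})$ residue.

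The main obstacle is producing \emph{uniform} control over the combinatorially large class $\{z : \ell(z,z^*)\le\tau\}$ without incurring extra logarithmic factors. The key is to replace any naive union bound by the deterministic operator-norm estimate $\|W\|_{\mathrm{op}} \lesssim \sqrt{p}$: both $\|Wv(z)\|$ in the $F$ analysis and the bilinear form $(z-z^*)^TW(z+z^*)$ appearing in $\alpha(z^*)-\alpha(z)$ are then bounded uniformly in $z$, reducing everything to algebraic bookkeeping in $h$ (equivalently, in $\ell$).
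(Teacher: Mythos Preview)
Your approach is essentially the same as the paper's: both reduce everything to a deterministic bound on the single high-probability event $\|W\|\lesssim\sqrt p$ (the paper obtains the same $\sum_j\langle W_j,v\rangle^2\le\|W\|^2\|v\|^2$ via $\|\sum_j\epsilon_j\epsilon_j^T\|=\|W\|^2$), and both control $\|\widehat\beta(z)-\widehat\beta(z^*)\|$ by splitting into a ``signal'' piece and a ``noise'' piece bounded through the operator norm of $W$. The paper packages this slightly more cleanly as the linear estimate $\|\widehat\beta(z)-\widehat\beta(z^*)\|\lesssim(\lambda^*+p^{-1/2})\|z-z^*\|$, which avoids carrying your quadratic-in-$h$ residues.

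One caveat: you invoke the hypothesis $\tau\lesssim p^2\lambda^{*2}$ to absorb terms like $\ell/(p^3\lambda^{*4})$ and $\tau^2/(p^4\lambda^{*4})$, but the lemma has no assumption on $\tau$. This is not a real obstacle, since $\ell(z,z^*)=p\lambda^{*2}\|z-z^*\|^2\le 4p^2\lambda^{*2}$ holds for every $z\in\{-1,1\}^p$; equivalently, $h\le p$ shows your $h^2/p$ contributions are dominated by the linear-in-$h$ ones. Replace your appeal to $\tau$ by this trivial bound (or, as the paper does, absorb the quadratic term already at the stage $\lambda^*\|z-z^*\|^2/\sqrt p\le 2\lambda^*\|z-z^*\|$) and the argument goes through verbatim without any constraint on $\tau$.
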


From the bounds (\ref{eq:error-z21})-(\ref{eq:error-z23}), we can see that a sufficient condition that Conditions A, B, and C hold is $p\lambda^{*2}\rightarrow\infty$ and $\tau=o(p^2\lambda^{*2})$. Note that $p\lambda^{*2}\rightarrow\infty$ is also the necessary condition for consistency according to Theorem \ref{thm:lower-z2}.

Next, we need to bound $\xi_{\rm ideal}(\delta)$ in Condition D. This is given by the following lemma.

\begin{lemma}\label{lem:ideal-z2}
Assume $p\lambda^{*2}\rightarrow\infty$. Then, for any sequence $\delta_p=o(1)$, we have
$$\xi_{\rm ideal}(\delta_p)\leq p\exp\left(-(1+o(1))\frac{p\lambda^{*2}}{2}\right),$$
with probability at least $1-\exp(-\sqrt{p\lambda^{*2}})$.
\end{lemma}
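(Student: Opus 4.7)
The plan is to find a closed form for $\wh{\beta}(z^*)$, reduce the event inside $\xi_{\rm ideal}(\delta_p)$ to a one-dimensional Gaussian tail estimate, and finish with a Markov-type bound. First I would observe that, since $\mathcal{B}_{z^*}=\{\lambda z^*:\lambda\in\mathbb{R}\}$, the least-squares problem in (\ref{eq:B-oracle}) solves to $\wh\beta(z^*)=\wh\lambda\,z^*$ with $\wh\lambda = \lambda^* + (z^*)^T W z^*/p^2$. Here $(z^*)^T W z^* = 2\sum_{i<j}z_i^* z_j^* W_{ij}$ is a centered Gaussian of variance $2p(p-1)$, so a standard Gaussian tail bound shows that for $\eta_p=(p\lambda^{*2})^{-1/4}\to 0$ the event $E=\{|\wh\lambda-\lambda^*|\leq\eta_p\lambda^*\}$ holds with probability at least $1-\exp(-\sqrt{p\lambda^{*2}})$. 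By the sign symmetry $(z^*,\lambda^*)\leftrightarrow(-z^*,-\lambda^*)$, we may assume $\lambda^*>0$, so $\wh\lambda>0$ on $E$.

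Next I would analyze the $j$-th indicator in (\ref{eqn:xi_ideal_def}). The only competing label is $b=-z_j^*$, and plugging in $\nu_j(\wh B(z^*),a)=a\wh\lambda z^*$ together with $\Delta_j(z_j^*,-z_j^*)^2=4p\lambda^{*2}$, the event becomes $z_j^*\wh\lambda\iprod{\epsilon_j}{z^*}\leq-(1-\delta_p)p\lambda^{*2}$. Up to a negligible diagonal contribution of order $\lambda^*$, $z_j^*\iprod{\epsilon_j}{z^*}$ equals $A_j:=\sum_{i\neq j}z_i^* z_j^* W_{ij}\sim\mathn(0,p-1)$. On $E$ the inequality $\wh\lambda\leq(1+\eta_p)\lambda^*$ forces the event to imply $A_j\leq -c_p\, p\lambda^*$ with $c_p=(1-\delta_p)/(1+\eta_p)-o(1)=1-o(1)$. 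The standard Gaussian tail then gives $\Prob[A_j\leq -c_p p\lambda^*]\leq\exp(-(1-o(1))p\lambda^{*2}/2)$ for each $j$; crucially, the implication from $\mathrm{event}_j\cap E$ to $\{A_j\leq -c_p p\lambda^*\}$ is deterministic, so no delicate decoupling of $A_j$ from $\wh\lambda$ is needed.

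The $j$-th summand of $\xi_{\rm ideal}(\delta_p)$ contributes $\norm{\mu_j(B^*,-z_j^*)-\mu_j(B^*,z_j^*)}^2=4p\lambda^{*2}$ when the event occurs, so linearity of expectation yields $\E[\xi_{\rm ideal}(\delta_p)\indc{E}]\leq 4p^2\lambda^{*2}\exp(-(1-o(1))p\lambda^{*2}/2)$. Applying Markov's inequality with threshold $M=\exp(\sqrt{p\lambda^{*2}})$ then shows that with probability at least $1-\exp(-\sqrt{p\lambda^{*2}})$, the quantity $\xi_{\rm ideal}(\delta_p)\indc{E}$ is at most $4p^2\lambda^{*2}\exp(\sqrt{p\lambda^{*2}}-(1-o(1))p\lambda^{*2}/2)$. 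Since both $\log(p^2\lambda^{*2})$ and $\sqrt{p\lambda^{*2}}$ are $o(p\lambda^{*2})$, this simplifies to $p\exp(-(1+o(1))p\lambda^{*2}/2)$, and a final union bound with $E^c$ yields the claim.

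The main obstacle is the statistical coupling between $\wh\lambda$ and $A_j$, since both depend on column $j$ of $W$. I sidestep this by localizing $\wh\lambda$ on the event $E$: there, the condition on $\wh\lambda$ enters only as a two-sided deterministic inequality, reducing each $\mathrm{event}_j$ to a pure Gaussian tail statement on $A_j$ to which no independence assumption is required.
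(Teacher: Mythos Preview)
Your argument is correct and reaches the same conclusion, but it proceeds differently from the paper. The paper splits each indicator event into a ``main'' part $\iprod{W_j}{\beta^*}\le -(1-\delta-\bar\delta)p\lambda^{*2}$ and a ``perturbation'' part $\iprod{W_j}{\wh\beta(z^*)-\beta^*}\le -\bar\delta p\lambda^{*2}$, bounds each probability separately (the second via $|z^{*T}Wz^*|\le p\|W\|$ and the random-matrix bound of Lemma~\ref{lem:gw-r-m}), sums to get $\mathbb{E}\xi_{\rm ideal}(\delta)$, and then applies Markov. You instead freeze the scalar $\wh\lambda$ once on the high-probability event $E=\{|\wh\lambda-\lambda^*|\le\eta_p\lambda^*\}$; on $E$ the indicator event becomes a deterministic implication $\{A_j\le -c_p p\lambda^*\}$ for the one-dimensional Gaussian $A_j$, so a single tail bound suffices without any decoupling. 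This buys you a cleaner treatment of the dependence between $\wh\lambda$ and $W_j$ and, notably, avoids invoking the operator-norm lemma altogether, since $\wh\lambda-\lambda^*=(z^*)^TWz^*/p^2$ is controlled directly as a scalar Gaussian. The paper's decomposition, on the other hand, is the template it reuses verbatim across the other applications (clustering, MRA, permutation synchronization), so it trades elegance here for uniformity.

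Two minor remarks. First, since $W_{jj}=0$ in this model, $z_j^*\iprod{\epsilon_j}{z^*}$ equals $A_j$ exactly; the ``negligible diagonal contribution of order $\lambda^*$'' you mention does not arise, and the extra $-o(1)$ in $c_p$ can be dropped. Second, your union bound gives failure probability $2\exp(-\sqrt{p\lambda^{*2}})$ rather than $\exp(-\sqrt{p\lambda^{*2}})$; this factor of $2$ is easily absorbed (e.g., by adjusting the Markov threshold) and matches the level of precision used throughout the paper.
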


Again, the same condition $p\lambda^{*2}\rightarrow\infty$ is required for Lemma \ref{lem:ideal-z2}. Thus, under the condition $p\lambda^{*2}\rightarrow\infty$, Conditions A, B, C and D hold simultaneously.

\subsubsection{Convergence}

With the help of Lemma \ref{lem:error-z2} and Lemma \ref{lem:error-z2}, we can specialize Theorem \ref{thm:main} into the following result.

\begin{thm}\label{thm:z2-alg}
Assume $p\lambda^{*2}\rightarrow\infty$. Suppose $z^{(0)}$ satsifies
\begin{equation}
\ell(z^{(0)},z^*)=o(p^2\lambda^{*2}), \label{eq:init-cond-z2}
\end{equation}
with probability at least $1-\eta$. Then, we have
$$\ell(z^{(t)},z^*)\leq p\exp\left(-(1+o(1))\frac{p\lambda^{*2}}{2}\right) + \frac{1}{2}\ell(z^{(t-1)},z^*)\quad\text{ for all }t\geq 1,$$
with probability at least $1-\eta-\exp(-\sqrt{p\lambda^{*2}})-e^{-p}$.
\end{thm}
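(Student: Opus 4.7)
The plan is to apply the abstract convergence result, Theorem~\ref{thm:main}, by verifying Conditions A--E in the $\mathbb{Z}_2$ setting and bookkeeping the probabilities. The structural ingredients are already supplied: Lemma~\ref{lem:error-z2} controls the three error quantities $F_j$, $G_j$, $H_j$, and Lemma~\ref{lem:ideal-z2} controls the ideal error $\xi_{\rm ideal}$. So the proof is really a parameter-selection exercise. First, I would fix some sequence $\delta_p = o(1)$ tending to zero slowly enough that $\delta_p^2$ still dominates each of the right-hand sides in \eqref{eq:error-z21}, \eqref{eq:error-z22}, and \eqref{eq:error-z23}. The assumption $p\lambda^{*2}\to\infty$ makes \eqref{eq:error-z21} and \eqref{eq:error-z23} asymptotically negligible. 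For \eqref{eq:error-z22} I would pick $\tau$ so that $\tau = o(p^2\lambda^{*2})$; then the first term is $\tau/(\lambda^{*2}p^2) = o(1)$ and the second is $o(1/(p\lambda^{*2})) = o(1)$. A concrete choice is $\tau = \ell(z^{(0)},z^*) \vee 4p\exp\bigl(-(1+o(1))p\lambda^{*2}/2\bigr)$, which by the initialization hypothesis~\eqref{eq:init-cond-z2} is $o(p^2\lambda^{*2})$ with probability at least $1-\eta$.

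With this $\tau$ and $\delta = \delta_p$, Conditions A, B, C all hold with probability at least $1-e^{-C'p}$ by Lemma~\ref{lem:error-z2}. Condition D (ideal error) follows from Lemma~\ref{lem:ideal-z2}: since $\xi_{\rm ideal}(\delta_p)\leq p\exp\bigl(-(1+o(1))p\lambda^{*2}/2\bigr)$ with probability $1-\exp(-\sqrt{p\lambda^{*2}})$, and I have included such a term in the definition of $\tau$, the inequality $\xi_{\rm ideal}(\delta_p)\leq \tau/4$ is automatic. Condition E (initialization) holds directly by the choice $\tau \geq \ell(z^{(0)},z^*)$ with probability at least $1-\eta$.

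Once all five conditions are verified, Theorem~\ref{thm:main} yields
\[
\ell(z^{(t)},z^*)\leq 2\xi_{\rm ideal}(\delta_p)+\tfrac12\ell(z^{(t-1)},z^*)\quad\text{for all }t\geq 1,
\]
on the intersection of the good events. Plugging in the bound from Lemma~\ref{lem:ideal-z2} gives the claimed inequality
\[
\ell(z^{(t)},z^*)\leq p\exp\!\Bigl(-(1+o(1))\tfrac{p\lambda^{*2}}{2}\Bigr)+\tfrac12\ell(z^{(t-1)},z^*),
\]
and a union bound over the failure events of Conditions A--E gives the overall probability $1-\eta-\exp(-\sqrt{p\lambda^{*2}})-e^{-p}$ (absorbing the constant $C'$ into the exponent by taking, say, $C'=1$).

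The main obstacle, if any, is not conceptual but rather ensuring the choice of $\tau$ and $\delta_p$ is compatible across all five conditions simultaneously; this requires a consistent verification that $p\lambda^{*2}\to\infty$ alone (together with the initialization hypothesis) is strong enough to beat each of the error terms in Lemma~\ref{lem:error-z2} even as $\tau$ sits somewhere between the exponentially small $\xi_{\rm ideal}$ and the polynomially large $p^2\lambda^{*2}$. Since the dependence of the Condition~B bound on $\tau$ is linear while $\tau = o(p^2\lambda^{*2})$, there is enough room, and no sharper tool is needed beyond the two lemmas already in place.
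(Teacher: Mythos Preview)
Your proposal is correct and matches the paper's approach exactly: the paper states that Theorem~\ref{thm:z2-alg} is a direct consequence of Theorem~\ref{thm:main} (its proof is omitted), and the surrounding text already records that Lemmas~\ref{lem:error-z2} and~\ref{lem:ideal-z2} together with $p\lambda^{*2}\to\infty$ and $\tau=o(p^2\lambda^{*2})$ yield Conditions A--D. One minor cosmetic point: rather than defining $\tau$ as the random quantity $\ell(z^{(0)},z^*)\vee 4p\exp(\cdots)$, it is cleaner to take $\tau$ to be the deterministic sequence implicit in the hypothesis $\ell(z^{(0)},z^*)=o(p^2\lambda^{*2})$, but this is only notational and your argument goes through either way.
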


By the inequality, $\frac{1}{p}\sum_{j=1}^p\indc{z_j\neq z_j^*}\leq \frac{\ell(z,z^*)}{p^2\lambda^{*2}}$, we immediately obtain the following corollary for the Hamming loss.

\begin{corollary}\label{cor:alg-z2}
Assume $p\lambda^{*2}\rightarrow\infty$. Suppose $z^{(0)}$ satisfies (\ref{eq:init-cond-z2}) with probability at least $1-\eta$. Then,
\begin{equation}
\frac{1}{p}\sum_{j=1}^p\indc{z_j^{(t)}\neq z_j^*} \leq \exp\left(-(1+o(1))\frac{p\lambda^{*2}}{2}\right) + 2^{-t}\quad\text{ for all }t\geq 1,
\end{equation}
with probability at least $1-\eta-\exp(-\sqrt{p\lambda^{*2}})-e^{-p}$.
\end{corollary}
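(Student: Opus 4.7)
The plan is to derive the Hamming-loss bound as a direct consequence of Theorem~\ref{thm:z2-alg}. First I would unroll the one-step recursion from Theorem~\ref{thm:z2-alg}. Iterating $t$ times and using the geometric sum $\sum_{i=0}^{t-1} 2^{-i} \leq 2$, I obtain, on the same high-probability event as in Theorem~\ref{thm:z2-alg},
$$\ell(z^{(t)}, z^*) \leq 2p\exp\left(-(1+o(1))\frac{p\lambda^{*2}}{2}\right) + 2^{-t}\ell(z^{(0)}, z^*).$$

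Next I would convert the $\ell_2$-type loss to the Hamming loss. Since $z_j, z_j^* \in \{-1,+1\}$, the identity $(z_j - z_j^*)^2 = 4\indc{z_j \neq z_j^*}$ combined with~(\ref{eq:z2_loss_identity}) yields the pointwise bound stated just before the corollary, namely $\frac{1}{p}\sum_{j=1}^p \indc{z_j \neq z_j^*} \leq \ell(z,z^*)/(p^2\lambda^{*2})$. Applying this to the iterated recursion gives
$$\frac{1}{p}\sum_{j=1}^p \indc{z_j^{(t)} \neq z_j^*} \leq \frac{2}{p\lambda^{*2}}\exp\left(-(1+o(1))\frac{p\lambda^{*2}}{2}\right) + 2^{-t}\frac{\ell(z^{(0)}, z^*)}{p^2\lambda^{*2}}.$$

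Finally I would simplify each term separately. For the first term, under $p\lambda^{*2}\to\infty$ we have $\log(p\lambda^{*2}) = o(p\lambda^{*2})$, so the polynomial prefactor $2/(p\lambda^{*2})$ is absorbed into the $(1+o(1))$ factor in the exponent, giving $\exp(-(1+o(1))p\lambda^{*2}/2)$. For the second term, the initialization condition~(\ref{eq:init-cond-z2})---which holds on an event of probability at least $1-\eta$ that is already included in the probability statement of Theorem~\ref{thm:z2-alg}---gives $\ell(z^{(0)}, z^*)/(p^2\lambda^{*2}) = o(1) \leq 1$, so this term is bounded by $2^{-t}$. Combining the two bounds yields the corollary, on an event of probability at least $1-\eta-\exp(-\sqrt{p\lambda^{*2}})-e^{-p}$ inherited directly from Theorem~\ref{thm:z2-alg}. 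There is no substantive obstacle in this argument; the only step worth verifying carefully is the absorption of the polynomial prefactor into the exponent, which is valid precisely because of the divergence assumption $p\lambda^{*2}\to\infty$.
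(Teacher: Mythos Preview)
Your proposal is correct and follows essentially the same approach as the paper, which simply invokes the inequality $\frac{1}{p}\sum_{j=1}^p\indc{z_j\neq z_j^*}\leq \ell(z,z^*)/(p^2\lambda^{*2})$ and states the corollary as an immediate consequence of Theorem~\ref{thm:z2-alg}. Your explicit unrolling of the recursion and handling of the prefactor and initialization terms simply fills in the details the paper leaves implicit.
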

By the property of the Hamming loss, the algorithmic error $2^{-t}$ is negligible after $\ceil{3\log p}$ iterations, and we have
$$\frac{1}{p}\sum_{j=1}^p\indc{z_j^{(t)}\neq z_j^*} \leq \exp\left(-(1+o(1))\frac{p\lambda^{*2}}{2}\right)\quad\text{ for all }t\geq 3\log p.$$
Thus, the minimax rate is achieved given that the initialization condition (\ref{eq:init-cond-z2}) is satisfied.

\subsubsection{Initialization}

Observe that the expectation of $Y$ has a rank one structure. Thus, a natural initialization procedure is to extract the information of $z^*$ by computing the leading eigenvector of $Y$. Let $\wh{u}=\argmax_{\|u\|=1}u^TYu$, and we define $z_j^{(0)}=\sgn(\wh{u}_j)$ for all $j\in[p]$. The behavior of $\wh{z}^{(0)}$ has been analyzed by \cite{abbe2020entrywise} when $p\lambda^{*2}>2\log p$. Without this condition, we show $\wh{z}^{(0)}$ can be used as a good initialization for the iterative algorithm.

\begin{proposition}\label{prop:ini-z2}
For any $C'>0$, there exists a constant $C>0$ only depending on $C'$ such that
$$\ell(z^{(0)},z^*)\wedge\ell(z^{(0)},-z^*)\leq Cp,$$
with probability at least $1-e^{-C'p}$.
\end{proposition}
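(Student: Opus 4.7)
My plan splits the argument according to the magnitude of $p\lambda^{*2}$, and relies on the fact that $Y$ is invariant in distribution under $(z^*,\lambda^*)\mapsto(-z^*,-\lambda^*)$ while the target loss $\ell(z^{(0)},z^*)\wedge\ell(z^{(0)},-z^*)$ is symmetric in $z^*\leftrightarrow -z^*$, so I may assume $\lambda^*\geq 0$ throughout. If $p\lambda^{*2}\leq C_1$ for some fixed constant $C_1$, then the trivial bound obtained from the identity (\ref{eq:z2_loss_identity}), namely $\ell(z^{(0)},z^*)\leq 4p\cdot p\lambda^{*2}\leq 4C_1p$, already delivers the claim.

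In the complementary regime $p\lambda^{*2}>C_1$, with $C_1$ chosen sufficiently large (depending on $C'$), I invoke the standard spectral bound for Wigner-type random matrices: for every $C'>0$ there is a constant $C_0=C_0(C')$ with $\opnorm{W}\leq C_0\sqrt{p}$ on an event of probability at least $1-e^{-C'p}$. On this event, the choice $C_1\geq 4C_0^2$ ensures $\lambda^*p\geq 2\opnorm{W}$, so the rank-one signal $\lambda^*z^*(z^*)^T$ is spectrally separated from the perturbation $W$, and the Davis--Kahan theorem yields
\begin{equation*}
\min_{s\in\{-1,1\}}\bigl\|\wh u - s\, z^*/\sqrt{p}\bigr\|^2 \;\lesssim\; \frac{\opnorm{W}^2}{(\lambda^*p)^2} \;\lesssim\; \frac{1}{\lambda^{*2}p}.
\end{equation*}
Passing from this $\ell_2$ bound to a sign-agreement bound uses the pointwise observation that $\sgn(\wh u_j)\neq s\,z_j^*$ forces $|\wh u_j - s\,z_j^*/\sqrt{p}|\geq 1/\sqrt{p}$, whence
\begin{equation*}
\sum_{j=1}^p\indc{\sgn(\wh u_j)\neq s\, z_j^*} \;\leq\; p\,\bigl\|\wh u - s\, z^*/\sqrt{p}\bigr\|^2 \;\lesssim\; \frac{1}{\lambda^{*2}}.
\end{equation*}
Multiplying by $4p\lambda^{*2}$ through (\ref{eq:z2_loss_identity}) then gives $\ell(z^{(0)},s\,z^*)\lesssim p$, which is the stated conclusion with the appropriate choice of sign.

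The main obstacle I anticipate is securing the spectral norm estimate $\opnorm{W}\leq C_0\sqrt{p}$ with a $C'$-dependent constant $C_0$ so that the failure probability takes the desired form $e^{-C'p}$; this follows from standard Gaussian concentration for spectral norms of symmetric random matrices, but requires careful bookkeeping of constants. A secondary subtlety is the global sign ambiguity of $\wh u$, which is precisely what the $\min_{s\in\{\pm 1\}}$ in the Davis--Kahan step and the $\ell(z^{(0)},z^*)\wedge\ell(z^{(0)},-z^*)$ in the statement are built to absorb.
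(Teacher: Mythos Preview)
Your argument is correct and follows essentially the same Davis--Kahan route as the paper. Two minor remarks: the paper avoids your case split by observing that the factor $p^2\lambda^{*2}$ picked up when passing to $\ell$ exactly cancels the $1/(p^2\lambda^{*2})$ in the Davis--Kahan bound, yielding $\ell\lesssim\opnorm{W}^2\lesssim p$ uniformly in $\lambda^*$; and your claimed invariance $(z^*,\lambda^*)\mapsto(-z^*,-\lambda^*)$ does not actually leave $Y$ fixed in law (it sends $\mathbb{E}Y\mapsto-\mathbb{E}Y$), though the paper likewise tacitly assumes $\lambda^*\geq 0$ when it calls $z^*/\sqrt{p}$ the leading eigenvector of $\lambda^*z^*z^{*T}$.
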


Proposition \ref{prop:ini-z2} shows that $z^{(0)}$ achieves the rate $O(p)$ for estimating $z^*$ up to a change of sign. Interestingly, the initialization condition (\ref{eq:init-cond-z2}) is satisfied as long as $p\lambda^{*2}\rightarrow\infty$, the same condition that we use for both the lower bound (Theorem \ref{thm:lower-z2}) and the algorithmic convergence (\ref{thm:z2-alg}). Therefore, by Corollary \ref{cor:alg-z2}, $\frac{1}{p}\sum_{j=1}^p\indc{{z}_j^{(t)}\neq z_j^*}\wedge \frac{1}{p}\sum_{j=1}^p\indc{{z}^{(t)}_j\neq -z_j^*}$ converges to the minimax rate with a linear rate under the condition $p\lambda^{*2}\rightarrow\infty$.

\subsection{$\mathbb{Z}/k\mathbb{Z}$ Synchronization} \label{sec:zk}

$\Zk$ synchronization is known as the famous problem of joint alignment from pairwise differences \citep{chen2018projected}. It has some interesting applications in the haplotype assembly problem \citep{si2014haplotype} and and Dixon imaging \citep{zhang2017resolving}. The group $\mathbb{Z}/k\mathbb{Z}$ consists of elements $\{0,1,2,\cdots,k-1\}$. For any $g,h\in\Zk$, the group operation is defined by $g\circ h=(g+h)\;\text{mod }k$. Since it implies the inverse of an element $g$ is $g^{-1} = (k-g) \; \text{mode }k$, which is equal to $k-g$ if $g\neq 0$ and $0$ if $g=0$. Then we have $g \circ h^{-1} = (g-h)\text{ mod }k$.

In $\mathbb{Z}/k\mathbb{Z}$ synchronization, we have independent observations $Y_{ij}\sim\mathn(\lambda^*(z_i^*\circ z_j^{*^{-1}}),1)$ for all $1\leq i\neq j\leq p$ with $\lambda^*\in\mathbb{R}$ and $z_1^*,\cdots,z_p^*\in\Zk$, and our goal is to recover $z_1^*,\cdots,z_p^*$. When $k=2$, the problem is very similar but not identical to $\mathbb{Z}_2$ synchronization. Though $\mathbb{Z}/2\mathbb{Z}=\{0,1\}$ is isomorphic to $\mathbb{Z}_2=\{-1,1\}$, the current model assumption $Y_{ij}\sim\mathn(\lambda^*(z_i^*\circ z_j^{*^{-1}}),1)$ does not lead to the symmetric property $\mathbb{E}Y_{ij}=\mathbb{E}Y_{ji}$ in $\mathbb{Z}_2$ synchronization. This is because $(z_i^*-z_j^*)\text{ mod }k\neq (z_j^*-z_i^*)\text{ mod }k$. As a consequence, it is required that both $Y_{ij}$ and $Y_{ji}$ are observed in the more general setting of $\mathbb{Z}/k\mathbb{Z}$ synchronization.

Minimax rate for estimating $z_1^*,\cdots,z_p^*$ is unknown in the literature.
We first present a minimax lower bound for the problem. 
%Given the similarity between $\mathbb{Z}/2\mathbb{Z}$ and $\mathbb{Z}_2$, we only consider $k\geq 3$.

\begin{thm}\label{thm:lower-zk}
If $p\lambda^{*2}\rightarrow\infty$, we have
$$\inf_{\wh{z}}\sup_{z^*}\mathbb{E}\min_{a\in \Zk}\left(\frac{1}{p}\sum_{j=1}^p\indc{\wh{z}_j\neq z_j^*\circ a^{-1}}\right)\geq \exp\left(-\frac{(1+o(1))p\lambda^{*2}}{4}\right).$$
Otherwise if $p\lambda^{*2}=O(1)$, we then have
$$\inf_{\wh{z}}\sup_{z^*}\mathbb{E}\min_{a\in \Zk}\left(\frac{1}{p}\sum_{j=1}^p\indc{\wh{z}_j\neq z_j^*\circ a^{-1}}\right)\geq c,$$
for some constant $c>0$.
\end{thm}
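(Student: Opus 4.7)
The plan is to establish the lower bound by an Assouad-style two-point argument combined with the sharp optimal Gaussian testing bound $\Phi(-\Delta/2)\ge \exp(-(1+o(1))\Delta^2/8)$ as $\Delta\to\infty$. The overall strategy mirrors the proof of Theorem \ref{thm:lower-z2} for $\mathbb{Z}_2$ synchronization, but must be adapted to the richer alphabet of $\Zk$, the asymmetric observation scheme (both $Y_{ij}$ and $Y_{ji}$ are observed), and the shift-invariance of the loss.

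The first step is to construct a packing of $\Zk^p$ on which testing is provably hard. Pick a base configuration $z^{(0)}\in\Zk^p$ and, for each $j\in[p]$, an alternative $z^{(j)}$ that agrees with $z^{(0)}$ except at coordinate $j$, where $z^{(j)}_j=z^{(0)}_j\circ g_j$ for a unit shift $g_j\in\Zk\setminus\{0\}$ chosen to minimize the KL divergence. Under the Gaussian model,
\begin{equation*}
\mathrm{KL}(P_{z^{(0)}}\|P_{z^{(j)}})=\tfrac12\Delta_j^2,\qquad \Delta_j^2=\lambda^{*2}\sum_{i\ne j}\bigl\{(m_{ij}-m'_{ij})^2+(m_{ji}-m'_{ji})^2\bigr\},
\end{equation*}
where $m_{ij}=(z^{(0)}_i-z^{(0)}_j)\bmod k$ and $m'_{ij}$ is the same quantity with $z^{(j)}_j$ in place of $z^{(0)}_j$. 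A direct calculation shows each per-$i$ summand equals $2$ unless $z^{(0)}_i$ lies in the collision set $\{z^{(0)}_j,z^{(j)}_j\}$, in which case it inflates to $(k-1)^2+1$. Choosing $z^{(0)}$ to be balanced/spread-out and letting $g_j$ depend on $j$ yields $\Delta_j^2=2(p-1)\lambda^{*2}(1+o(1))$ for a positive fraction of $j$'s.

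The second step converts the local testing bound into an estimation lower bound while respecting the shift-invariance of $L(\wh z,z)=\min_{a\in\Zk}\tfrac1p\sum_i\indc{\wh z_i\ne z_i\circ a^{-1}}$. The key deterministic inequality is that for any pair $z^{(0)},z^{(j)}$ differing at only the $j$th coordinate and any estimator $\wh z$,
\begin{equation*}
L(\wh z,z^{(0)})+L(\wh z,z^{(j)})\ge \tfrac{1}{p},
\end{equation*}
which follows from a case split on the argmin shifts $a^*,b^*$: if $a^*=b^*$, then coordinate $j$ is mislabeled by at least one of the hypotheses, and if $a^*\ne b^*$, then a pigeonhole argument on $\{i\ne j\}$ forces at least $p-1$ coordinates to violate one of the two matchings. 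Combining this separation with two-point Le Cam and the sharp Gaussian bound gives a per-$j$ lower bound of order $\tfrac{1}{p}\exp(-(1+o(1))(p-1)\lambda^{*2}/4)$ on the worst-case expected loss. Aggregating over $j\in[p]$ in Assouad fashion produces $\sup_{z^*}\mathbb{E}_{z^*}L(\wh z,z^*)\ge \exp(-(1+o(1))p\lambda^{*2}/4)$ in the regime $p\lambda^{*2}\to\infty$. The second clause ($p\lambda^{*2}=O(1)$) follows from a single two-point application: the KL divergence is bounded, so $1-\mathrm{TV}$ is bounded below by a constant, giving a constant lower bound.

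The main obstacle is the combinatorial construction in Step 1 when $p$ significantly exceeds $k$: pigeonhole forces some $z^{(0)}_i$ to coincide with $z^{(0)}_j$ or $z^{(j)}_j$, inflating $\Delta_j^2$ by a term of order $k^2$ per collision. This is handled by choosing $g_j$ adaptively to avoid the dominant collision direction and absorbing the residual contribution from a vanishing fraction of problem indices into the $(1+o(1))$ factor in the exponent; an alternative route places a uniform prior over an orbit of balanced configurations and works with the averaged Bayes risk, which has the advantage that the collision contributions average out cleanly over the randomness of $z^*$.
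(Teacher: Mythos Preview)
Your overall strategy (two-point testing, Neyman--Pearson, aggregation over coordinates) is the same as the paper's, but the packing you propose does not deliver the exponent $p\lambda^{*2}/4$. You correctly identify that the per-$i$ summand equals $2$ off the collision set $\{z^{(0)}_j,z^{(j)}_j\}$ and $(k-1)^2+1$ on it, but you then assert that a balanced $z^{(0)}$ yields $\Delta_j^2=2(p-1)\lambda^{*2}(1+o(1))$. This is false: in a balanced configuration there are $2p/k$ collision indices for every $j$, and a direct count gives
\[
\Delta_j^2/\lambda^{*2}=(p-2p/k)\cdot 2 + (2p/k)\cdot\bigl((k-1)^2+1\bigr)=2p(k-1),
\]
so the resulting lower bound is only $\exp\bigl(-(1+o(1))p(k-1)\lambda^{*2}/4\bigr)$, off by a factor of $k-1$ in the exponent. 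Neither of your fixes rescues this. Choosing $g_j$ adaptively cannot help: for a balanced $z^{(0)}$ the same calculation with general $g_j$ gives $\Delta_j^2/\lambda^{*2}=2pg_j(k-g_j)$, minimized at $g_j=1$ with value $2p(k-1)$. Averaging over a uniform prior does not change the pairwise KL, so the Bayes route inherits the same inflated separation. The collision contribution is not a ``residual'' you can absorb into $1+o(1)$; it is the dominant term.

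The paper's construction avoids the collision issue entirely by making $z^{(0)}$ extremely \emph{unbalanced} rather than balanced: fix the first $(1-\rho)p$ coordinates at the value $2$, and let only the last $\rho p$ coordinates vary over $\{0,1\}$. When you flip $z_j^*$ between $0$ and $1$ for $j>(1-\rho)p$, every fixed coordinate $i\leq(1-\rho)p$ has $z_i^*=2\notin\{0,1\}$, so there is no wraparound and each such $i$ contributes exactly $2\lambda^{*2}$. The $\rho p$ variable coordinates may collide, but their total contribution is $O(\rho p k^2\lambda^{*2})=o(p\lambda^{*2})$ once $\rho\to 0$ slowly. This yields $\Delta_j^2=2p\lambda^{*2}(1+o(1))$ and the sharp exponent. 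The unbalanced construction also simplifies the shift-invariance step, since the large fixed block pins down the global shift.
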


Next, we will show the above minimax lower bound can be achieved via an computationally efficient algorithm under the signal-to-noise ratio condition $\frac{p\lambda^{*2}}{k^7\log k}\rightarrow\infty$.
We first explain how to view the $\mathbb{Z}/k\mathbb{Z}$ synchronization problem from the perspective of our general framework. Though it is not obvious because of the nonlinear group operation in the model, we can still adopt a similar idea used in $\mathbb{Z}_2$ synchronization and view one of the $z_j^*$ in $\lambda^*(z_i^*\circ z_j^{*^{-1}})$ and $\lambda^*$ jointly as the continuous model parameter. Write $Y=\mathbb{E}Y+W\in\mathbb{R}^{p\times p}$ with $\mathbb{E}Y_{ij}=\lambda^*(z_i^*\circ z_j^{*-1})$ and $W_{ij}\sim \mathn(0,1)$ for all $i\neq j$ and $\mathbb{E}Y_{ii}=W_{ii}=0$ for all $i$. Define $\beta^*=(\lambda^*,z^*)$. It is clear that $\beta^*\in\mathcal{B}_{z^*}=\left\{(\lambda,z^*):\lambda\in\mathbb{R}\right\}=\mathbb{R}\times \{z^*\}$. We can then write $\mathbb{E}Y=\X_{z^*}(\beta^*)$, where the operator $\X_{z^*}(\cdot)$ is determined by $[\X_{z^*}(\beta^*)]_{ij}=[\X_{z^*}((\lambda^*,z^*))]_{ij}=\lambda^*(z_i^*\circ z_j^{*-1})$.

To derive an iterative algorithm, we define the local statistic $T_j=Y_j\in\mathbb{R}^p$, the $j$th column of the matrix $Y$. We then have $\mathbb{E}T_j=\mu_j(\beta^*,z_j^*)=\nu_j(\beta^*,z_j^*)\in\mathbb{R}^p$. For any $i\in[p]$ and any $a\in\Zk$, the $i$th entry of $\mu_j(\beta^*,a)=\nu_j(\beta^*,a)$ is
$$[\mu_j(\beta^*,a)]_i=[\nu_j(\beta^*,a)]_i=\lambda^* (z_i^*\circ a^{-1}).$$
Then, we can specialize Algorithm \ref{alg:general} into the following iterative procedure,
\begin{eqnarray}
\label{eq:zk-alg1}\beta^{(t)} &=& \argmin_{\beta=(\lambda,z^{(t-1)}):\lambda\in\mathbb{R}}\fnorm{Y-\X_{z^{(t-1)}}(\beta)}^2, \\
\label{eq:zk-alg2} z_j^{(t)} &=& \argmin_{a\in\Zk}\|Y_j-\mu_j(\beta^{(t)},a)\|^2.
\end{eqnarray}
The second step (\ref{eq:zk-alg2}) is straightforward, since one can easily evaluate $\|Y_j-\mu_j(\beta^{(t-1)},a)\|^2$ for all $a\in\Zk$. The first step (\ref{eq:zk-alg1}) looks complicated, but thanks to the constraint $\beta=(\lambda,z^{(t-1)})$, it is a simple one-dimensional optimization problem. In fact, there is a closed form solution to (\ref{eq:zk-alg1}), and we can use that to write down an equivalent form of the algorithm  (\ref{eq:zk-alg1})-(\ref{eq:zk-alg2}) as follows,
\begin{eqnarray}
\label{eq:zk-s-alg1} \lambda^{(t)} &=& \frac{\sum_{1\leq i\neq j\leq p}Y_{ij}\left(z_i^{(t-1)}\circ (z_j^{(t-1)})^{-1}\right)}{\sum_{1\leq i\neq j\leq p}\left(z_i^{(t-1)}\circ (z_j^{(t-1)})^{-1}\right)^2}, \\
\label{eq:zk-s-alg2} z_j^{(t)} &=& \argmin_{a\in\Zk}\sum_{i=1}^p\left(Y_{ij}-\lambda^{(t)}(z_i^{(t-1)}\circ a^{-1})\right)^2.
\end{eqnarray}
This example really demonstrates the flexibility of our general framework, especially the possible dependence of the space $\mathcal{B}_z$ on the discrete structure $z$. It allows us to separate $z_i$ from $z_j$ in $z_i\circ z_j^{-1}$, and derive the simple iterative algorithm (\ref{eq:zk-s-alg1})-(\ref{eq:zk-s-alg2}).

\subsubsection{Conditions}

To analyze the algorithmic convergence of (\ref{eq:zk-s-alg1})-(\ref{eq:zk-s-alg2}), we note that
$$\Delta_j(a,b)^2=\lambda^{*2}\sum_{i=1}^p\left((z_i^*\circ a^{-1})-(z_i^*\circ b^{-1})\right)^2,$$
under the current setting. Thus, the loss function is
\begin{equation}
\ell(z,z^*)=\lambda^{*2}\sum_{j=1}^p\sum_{i=1}^p\left((z_i^*\circ z_j^{-1})-(z_i^*\circ z_j^{*-1})\right)^2.\label{eq:loss-zk-sync}
\end{equation}
One can also write down the formulas of $F_j(a,b;z)$, $G_j(a,b;z)$ and $H_j(a,b)$, which are included in Section \ref{sec:proof_zk} due to the limit of space. The error terms are controlled by the following lemma.

\begin{lemma}\label{lem:error-zk}
Assume $\frac{p\lambda^{*2}}{k^4}\rightarrow\infty$ and $\max_{a\in\Zk}\sum_{j=1}^p\indc{z_j^*=a}\leq (1-\alpha)p$ for some constant $\alpha>0$. Then, for any constant $C'>0$, there exists a constant $C>0$ only depending on $C'$ such that
\begin{eqnarray}
\nonumber && \max_{z:\ell(z,z^*)\leq \tau}\sum_{j=1}^p\max_{b\in\Zk\backslash\{z_j^*\}}\frac{F_j(z_j^*,b;z)^2 \norm{\mu_j(B^*,b) - \mu_j(B^*,z_j^*)}^2}{\Delta_j(z_j^*,b)^4\ell(z,z^*)}  \\
\label{eq:error-zk1} &\leq& C\frac{k^4}{p\lambda^{*2}}, \\
\nonumber && \max_{\{z:\ell(z,z^*)\leq\tau\}}\max_{T\subset[p]}\frac{\tau}{4\Delta_{\min}^2|T|+\tau}\sum_{j\in T}\max_{b\in\Zk\backslash\{z_j^*\}}\frac{G_j(z_j^*,b;z)^2 \norm{\mu_j(B^*,b) - \mu_j(B^*,z_j^*)}^2}{\Delta_j(z_j^*,b)^2\ell(z,z^*)}  \\
\label{eq:error-zk2} &\leq& C\frac{\tau k^6}{p^2\lambda^{*2}},
\end{eqnarray}
and
\begin{equation}
\max_{j\in[p]}\max_{a\in\Zk\backslash\{z_j^*\}}\frac{|H_j(z_j^*,a)|}{\Delta_j(z_j^*,a)^2} \leq C\left(\frac{k^2}{p\lambda^{*2}} + \sqrt{\frac{k^2}{p\lambda^{*2}}}\right), \label{eq:error-zk3}
\end{equation}
with probability at least $1-e^{-C'p}$.
\end{lemma}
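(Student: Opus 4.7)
The plan is to mirror the proof of Lemma \ref{lem:error-z2} for $\mathbb{Z}_2$ synchronization, adapting it to the nonlinear dependence of $M_{ij}(z):=z_i\circ z_j^{-1}$ on $z\in(\Zk)^p$. Writing $M(z)\in\mathbb{R}^{p\times p}$ for this matrix (with zero diagonal), one has the closed form
$$\wh{\lambda}(z)=\frac{\iprod{Y}{M(z)}}{\|M(z)\|_F^2}=\lambda^*\frac{\iprod{M(z^*)}{M(z)}}{\|M(z)\|_F^2}+\frac{\iprod{W}{M(z)}}{\|M(z)\|_F^2},$$
so both $\wh{\lambda}(z^*)-\lambda^*$ and $\wh{\lambda}(z)-\wh{\lambda}(z^*)$ are explicit Gaussian functionals. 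Under the balance condition $\max_a|\{j:z_j^*=a\}|\leq(1-\alpha)p$, a counting argument yields $\|M(z^*)\|_F^2\gtrsim \alpha p^2$, and because $\tau=o(p^2\lambda^{*2})$ together with (\ref{eq:loss-zk-sync}) forces $z$ to agree with $z^*$ on a large fraction of coordinates, the same order lower bound propagates to $\|M(z)\|_F^2$ uniformly over $\{z:\ell(z,z^*)\leq\tau\}$. Throughout I will use the identity $\Delta_j(z_j^*,a)^2=\lambda^{*2}\|M_{\cdot j}(z^*)-M^*_a\|^2$, where $M^*_a\in\mathbb{R}^p$ has entries $(z_i^*\circ a^{-1})$, together with $\|M_{\cdot j}(z^*)\|\vee\|M^*_a\|\lesssim k\sqrt{p}$ and $\|M_{\cdot j}(z^*)-M^*_a\|^2\gtrsim p$ (since $b\mapsto b\circ a^{-1}$ is a bijection on $\Zk$, so the two vectors differ in every coordinate by a nonzero integer).

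For the ideal-error bound (\ref{eq:error-zk3}), expanding $H_j(z_j^*,a)$ using $\mu_j(B^*,z_j^*)=\nu_j(B^*,z_j^*)=\lambda^*M_{\cdot j}(z^*)$ gives
$$H_j(z_j^*,a)=\tfrac{1}{2}(\wh{\lambda}(z^*)-\lambda^*)^2\bigl(\|M_{\cdot j}(z^*)\|^2-\|M^*_a\|^2\bigr)-(\wh{\lambda}(z^*)-\lambda^*)\lambda^*\iprod{M_{\cdot j}(z^*)-M^*_a}{M^*_a}.$$
Cauchy--Schwarz and the size bounds above reduce the task to controlling $\wh{\lambda}(z^*)-\lambda^*\sim\mathn(0,1/\|M(z^*)\|_F^2)$; a standard Gaussian tail inequality yields $|\wh{\lambda}(z^*)-\lambda^*|\lesssim 1/\sqrt{p}$ on an event of probability at least $1-e^{-C'p}$, and grouping the quadratic and linear contributions produces the two terms in (\ref{eq:error-zk3}).

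For (\ref{eq:error-zk1}) and (\ref{eq:error-zk2}) the inner perturbation decomposes as
$$\nu_j(\wh{B}(z^*),a)-\nu_j(\wh{B}(z),a)=\wh{\lambda}(z^*)(M^*_a-M^z_a)+\bigl(\wh{\lambda}(z^*)-\wh{\lambda}(z)\bigr)M^z_a,$$
where $M^z_a$ has entries $(z_i\circ a^{-1})$. The first summand is a pure label perturbation whose squared norms, summed over $j$, are bounded by $Ck^2\,\ell(z,z^*)/\lambda^{*2}$ via a direct counting argument: if $z_i=z_i^*$ then the $i$-th entry vanishes, so $\|M^*_a-M^z_a\|^2\leq k^2|\{i:z_i\neq z_i^*\}|$, and the Hamming count is in turn controlled by $\ell(z,z^*)/(p\lambda^{*2})$ via (\ref{eq:loss-zk-sync}). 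The second summand reduces to a scalar perturbation handled by the closed form above, for which a direct calculation gives variance $\|M(z)-M(z^*)\|_F^2/(\|M(z)\|_F^2\|M(z^*)\|_F^2)\asymp\|M(z)-M(z^*)\|_F^2/p^4$ and deterministic part bounded by $C\lambda^*\|M(z)-M(z^*)\|_F/p$; and the same Hamming-count argument gives $\|M(z)-M(z^*)\|_F^2\lesssim k^2\ell(z,z^*)/\lambda^{*2}$. Plugging these bounds into the $G_j$ expansion and summing against the restricted weight $\tau/(4\Delta_{\min}^2|T|+\tau)$ yields (\ref{eq:error-zk2}). For $F_j$, which carries the extra Gaussian factor $\epsilon_j=W_j$, uniform control over $\{z:\ell(z,z^*)\leq\tau\}$ is obtained by dyadic peeling over shells of $\ell(z,z^*)$, combined with a union bound over the $k$ choices of $b$ in the inner maximum.

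The main obstacle will be the uniform-in-$z$ Gaussian control required for (\ref{eq:error-zk1}): a naive union bound over the $k^p$ candidate labellings would introduce a $p\log k$ factor incompatible with the stated rate. The resolution, as in the $\mathbb{Z}_2$ case, exploits that the variance of the relevant functional scales linearly with $\ell(z,z^*)$, so peeling restricts the effective union bound to dyadic shells, and the remaining logarithms are absorbed into the slack between the stated constant $k^4$ and the sharper exponent one would obtain by more careful counting. All tail events are consolidated into a single $e^{-C'p}$ bound matching the probability stated in the lemma.
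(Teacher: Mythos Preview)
Your treatment of $H_j$ and $G_j$ is essentially correct and parallels the paper: both rely on the scalar control $|\wh{\lambda}(z^*)-\lambda^*|\lesssim p^{-1/2}$ and on bounding $\|\wh{\lambda}(z)(z\circ a^{-1})-\wh{\lambda}(z^*)(z^*\circ a^{-1})\|$ via the Hamming count $h(z,z^*)\leq\ell(z,z^*)/(p\lambda^{*2})$, exactly as in (\ref{eq:aspid}) and (\ref{eq:radiance}).

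The gap is in your $F_j$ analysis. Your premise that a naive union bound over the $k^p$ labellings is ``incompatible with the stated rate'' is incorrect, and your proposed fix via peeling is not clearly sufficient for the quantity at hand. The expression $\sum_j\max_b F_j(z_j^*,b;z)^2$ is a \emph{quadratic} form in the Gaussian entries of $W$ (each $F_j$ is itself a product of $\epsilon_j$ with a vector that depends on $W$ through $\wh{\lambda}(z)$), so speaking of its ``variance scaling linearly with $\ell(z,z^*)$'' and peeling on that basis does not make sense without further structure. You also mischaracterize the $\mathbb{Z}_2$ proof: no peeling is used there either.

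What the paper actually does is decouple the $z$-dependence from the $\epsilon_j$-sum by the operator-norm trick already used in (\ref{eq:esp-km3}). Writing $v(a,b)=\wh{\lambda}(z^*)(z^*\circ a^{-1}-z^*\circ b^{-1})-\wh{\lambda}(z)(z\circ a^{-1}-z\circ b^{-1})$, one has
\[
\sum_{j:z_j^*=a}F_j(a,b;z)^2=v(a,b)^T\Bigl(\sum_{j:z_j^*=a}\epsilon_j\epsilon_j^T\Bigr)v(a,b)\leq\|v(a,b)\|^2\,\Bigl\|\sum_{j:z_j^*=a}\epsilon_j\epsilon_j^T\Bigr\|.
\]
The random matrix on the right does not depend on $z$ at all and is $\lesssim p$ by (\ref{eq:esp-km3}). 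The factor $\|v(a,b)\|^2$ is controlled by $\max_a\|\wh{\lambda}(z)(z\circ a^{-1})-\wh{\lambda}(z^*)(z^*\circ a^{-1})\|^2\lesssim\lambda^{*2}k^3h(z,z^*)$, where the scalar $\wh{\lambda}(z)-\wh{\lambda}(z^*)$ \emph{is} handled by the naive union bound over $k^p$ labellings (Lemma~\ref{lem:tech-lem-zk}), costing only $\sqrt{p\log k}$, which is dominated by the deterministic part under $p\lambda^{*2}/k^4\to\infty$. Summing over $a,b$ and dividing by $(p\lambda^{*2})^2h(z,z^*)$ gives the claimed $k^4/(p\lambda^{*2})$ directly, with no peeling required.
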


From the bounds (\ref{eq:error-zk1})-(\ref{eq:error-zk3}), we can see that a sufficient condition under which Conditions A, B and C hold is $\tau=o\left(\frac{p^2\lambda^{*2}}{k^6}\right)$ and
\begin{equation}
\frac{p\lambda^{*2}}{k^4}\rightarrow\infty. \label{eq:snr-zk}
\end{equation}
The signal-to-noise ratio condition (\ref{eq:snr-zk}) extends the condition $p\lambda^{*2}\rightarrow\infty$ required by $\mathbb{Z}_2$ synchronization.

Next, we need to bound $\xi_{\rm ideal}$ in Condition D. This is given by the following lemma.
\begin{lemma}\label{lem:ideal-zk}
Assume $p\lambda^{*2}\rightarrow\infty$, $p/k^2\rightarrow\infty$  and $\max_{a\in\Zk}\sum_{j=1}^p\indc{z_j^*=a}\leq (1-\alpha)p$ for some constant $\alpha>0$. Then, for any sequence $\delta_p=o(1)$, we have
$$\xi_{\rm ideal}(\delta_p)\leq p\exp\left(-(1+o(1))\frac{p\lambda^{*2}}{8}\right),$$
with probability at least $1-\exp(-\sqrt{p\lambda^{*2}})$.
\end{lemma}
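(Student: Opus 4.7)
The proof will follow the template of Lemmas \ref{lem:ideal-z2} and \ref{lem:ideal-MRA}: first show that $\wh\lambda(z^*)$ concentrates around $\lambda^*$, then reduce the indicator in $\xi_{\rm ideal}(\delta_p)$ to a pure Gaussian tail event in $W_j$, finally bound the expectation and apply Markov's inequality.

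\textbf{Step 1 (concentration of $\wh\lambda(z^*)$).} Specializing \eqref{eq:zk-s-alg1} to $z^{(t-1)}=z^*$ gives $\wh\lambda(z^*)-\lambda^* = \frac{\sum_{i\neq j'}W_{ij'}(z_i^*\circ z_{j'}^{*-1})}{\sum_{i\neq j'}(z_i^*\circ z_{j'}^{*-1})^2}$, a centered Gaussian. The balance condition $\max_a|\{j:z_j^*=a\}|\leq(1-\alpha)p$ forces at least $\alpha p^2$ pairs to have a nonzero summand, so the denominator is $\geq \alpha p^2$ and the variance is $\leq 1/(\alpha p^2)$. Combined with $p\lambda^{*2}\to\infty$, this yields $\wh\lambda(z^*)/\lambda^*\in[1-\rho_p,1+\rho_p]$ for some $\rho_p=o(1)$ on an event $\mathcal{E}$ of probability at least $1-\tfrac{1}{2}\exp(-\sqrt{p\lambda^{*2}})$.

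\textbf{Step 2 (reduction).} For each $(j,b)$ with $b\neq z_j^*$, introduce $v_{j,b}\in\mathbb{R}^p$ with $v_{j,b,i}=(z_i^*\circ z_j^{*-1})-(z_i^*\circ b^{-1})$, so that $\nu_j(\wh B(z^*),z_j^*)-\nu_j(\wh B(z^*),b)=\wh\lambda(z^*)v_{j,b}$ and $\Delta_j(z_j^*,b)^2=\lambda^{*2}\|v_{j,b}\|^2$. Setting $c=(b^{-1}-z_j^{*-1})\bmod k$ and $N_g=|\{i:z_i^*\circ z_j^{*-1}=g\}|$, a direct calculation gives $\|v_{j,b}\|^2=c^2(p-M_c)+(k-c)^2 M_c$ with $M_c=\sum_{g\geq k-c}N_g$, hence $\|v_{j,b}\|^2\geq p$ and $\Delta_j(z_j^*,b)^2\geq p\lambda^{*2}$. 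On $\mathcal{E}$, dividing the original indicator by $\wh\lambda(z^*)>0$ yields the pointwise inclusion into $\{\iprod{W_j}{v_{j,b}}\leq -\tfrac{1-\delta_p'}{2}\lambda^*\|v_{j,b}\|^2\}$ with $\delta_p'=(\delta_p+\rho_p)/(1+\rho_p)=o(1)$. Let $\xi'(\delta_p)$ be the analogue of $\xi_{\rm ideal}(\delta_p)$ with this dominating event; then $\xi_{\rm ideal}(\delta_p)\indc{\mathcal{E}}\leq \xi'(\delta_p)$, and each indicator in $\xi'(\delta_p)$ depends only on $W_j$.

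\textbf{Step 3 (expectation and Markov).} Since $\iprod{W_j}{v_{j,b}}$ is centered Gaussian with variance $\|v_{j,b}\|^2-v_{j,b,j}^2$ and $v_{j,b,j}^2\leq k^2=o(\|v_{j,b}\|^2)$, the standard tail bound gives $\mathbb{P}(\text{dominating event}) \leq \exp(-\tfrac{(1-\delta_p'')^2}{8}\Delta_j(z_j^*,b)^2)$ with $\delta_p''=o(1)$. Summing $\Delta_j(z_j^*,b)^2\exp(-(1-\delta_p'')^2\Delta_j(z_j^*,b)^2/8)$ over the $p(k-1)$ pairs and using that $x\mapsto xe^{-x/8}$ is eventually decreasing, the polynomial prefactor is absorbed into an $(1+o(1))$ in the exponent — valid because $p\lambda^{*2}\to\infty$ and $k=o(\sqrt{p})$ imply $p\lambda^{*2}/\log(pk\lambda^{*2})\to\infty$. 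This produces $\mathbb{E}\xi'(\delta_p)\leq p\exp(-(1+o(1))p\lambda^{*2}/8)$. Markov's inequality at the threshold $p\exp(-(1+o(1))p\lambda^{*2}/8+\sqrt{p\lambda^{*2}})$ bounds $\mathbb{P}(\xi'(\delta_p)>t)$ by $\tfrac12\exp(-\sqrt{p\lambda^{*2}})$, and combining with $\mathbb{P}(\mathcal{E}^c)$ via $\xi_{\rm ideal}(\delta_p)\indc{\mathcal{E}}\leq \xi'(\delta_p)$ completes the proof.

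\textbf{Main obstacle.} The principal subtlety is the dependence between $\wh\lambda(z^*)$ and $\epsilon_j=W_j$, since both involve the entries $\{W_{ij}\}_i$. My scheme sidesteps this by retaining $\indc{\mathcal{E}}$ inside the dominating functional $\xi'(\delta_p)$, which makes the inclusion of events valid pointwise on $\mathcal{E}$ without any independence argument. Beyond that, the most delicate bookkeeping step is tracking the many $(1+o(1))$ factors (from the Gaussian tail, from $\wh\lambda/\lambda^*$, from the polynomial prefactors, and from the slack $\delta_p$) while preserving the precise constant $1/8$ in the exponent.
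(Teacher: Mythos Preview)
Your proof is correct and reaches the same conclusion by a slightly different route. For this lemma the paper bounds each tail probability
\[
\mathbb{P}\Big(\wh\lambda(z^*)\,\iprod{W_j}{v_{j,b}}\leq -\tfrac{1-\delta}{2}\lambda^{*2}\|v_{j,b}\|^2\Big)
\]
individually: it splits off a main piece with $\lambda^*$ in place of $\wh\lambda(z^*)$ (standard Gaussian tail) and a cross piece with $\wh\lambda(z^*)-\lambda^*$, controlling the latter by conditioning on $|\wh\lambda(z^*)-\lambda^*|\leq C\sqrt{x}/p$ and then optimizing the cutoff $x=\bar\delta\,p\lambda^{*2}\|v_{j,b}\|$. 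The assumption $p/k^2\to\infty$ enters precisely here, so that $\bar\delta\gtrsim\|v_{j,b}\|/p\leq k/\sqrt{p}$ can still be taken $o(1)$. You instead fix one global event $\mathcal{E}=\{\wh\lambda(z^*)/\lambda^*\in[1-\rho_p,1+\rho_p]\}$, dominate the indicators pointwise on $\mathcal{E}$, and apply Markov to the simpler functional $\xi'$ that no longer involves $\wh\lambda(z^*)$. This is exactly the template the paper itself adopts for Lemmas~\ref{lem:ranking-ideal} and~\ref{lem:regression-ideal}; it is cleaner here and, notably, does not actually need $p/k^2\to\infty$ for the Gaussian tail step, since the crude variance bound $\mathrm{Var}(\iprod{W_j}{v_{j,b}})\leq\|v_{j,b}\|^2$ already yields $\exp(-(1-\delta_p')^2\Delta_j(z_j^*,b)^2/8)$.

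One minor caveat: your assertion that $k=o(\sqrt{p})$ together with $p\lambda^{*2}\to\infty$ implies $\log(pk\lambda^{*2})=o(p\lambda^{*2})$ is not literally a consequence of the stated hypotheses (it would additionally require $\log p=o(p\lambda^{*2})$). The paper makes the identical leap when it absorbs the factor $(k-1)$ and the weight $\Delta_j^2$ into the $(1+o(1))$ in the exponent, so this is a shared technicality rather than a gap specific to your argument.
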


Thus, under the conditions $p/k^2\rightarrow\infty$ and (\ref{eq:snr-zk}), Conditions A, B, C and D hold simultaneously.

\subsubsection{Convergence}

With the help of Lemma \ref{lem:error-zk} and Lemma \ref{lem:ideal-zk}, we can specialize Theorem \ref{thm:main} into the following result.
\begin{thm}\label{thm:zk-con-alg}
Assume $p/k^2\rightarrow\infty$, $\frac{p\lambda^{*2}}{k^4}\rightarrow\infty$, and $\max_{a\in\Zk}\sum_{j=1}^p\indc{z_j^*=a}\leq (1-\alpha)p$ for some constant $\alpha>0$. Suppose $z^{(0)}$ satisfies
\begin{equation}
\ell(z^{(0)},z^*) = o\left(\frac{p^2\lambda^{*2}}{k^6}\right),\label{eq:ini-cond-zk}
\end{equation}
with probability at least $1-\eta$. Then, we have
$$\ell(z^{(t)},z^*)\leq p\exp\left(-(1+o(1))\frac{p\lambda^{*2}}{8}\right) + \frac{1}{2}\ell(z^{(t-1)},z^*)\quad\text{ for all }t\geq 1,$$
with probability at least $1-\eta-\exp(-\sqrt{p\lambda^{*2}})-e^{-p}$.
\end{thm}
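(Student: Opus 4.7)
The plan is to derive Theorem \ref{thm:zk-con-alg} as a direct application of the abstract convergence result in Theorem \ref{thm:main}, using Lemma \ref{lem:error-zk} and Lemma \ref{lem:ideal-zk} to verify the five abstract conditions. The main task is to choose $\tau$ and $\delta$ in a single coordinated way so that Conditions A, B, C, D, and E all hold simultaneously with high probability.

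First, I would pick $\tau = \tau_p$ to be any sequence satisfying $\tau_p = o(p^2\lambda^{*2}/k^6)$ that still dominates the initialization error threshold in (\ref{eq:ini-cond-zk}); for instance take $\tau_p$ to be a vanishing multiple of $p^2\lambda^{*2}/k^6$ but larger than the $o(p^2\lambda^{*2}/k^6)$ upper bound controlling $\ell(z^{(0)},z^*)$. With this choice Condition E holds with probability at least $1-\eta$. Next, I would pick any $\delta_p=o(1)$. Plugging $\tau_p$ into the three error bounds in Lemma \ref{lem:error-zk}, the right-hand sides of (\ref{eq:error-zk1}), (\ref{eq:error-zk2}), (\ref{eq:error-zk3}) are $O(k^4/(p\lambda^{*2}))$, $O(\tau_p k^6/(p^2\lambda^{*2}))$, and $O(\sqrt{k^2/(p\lambda^{*2})})$ respectively; under $p\lambda^{*2}/k^4\to\infty$ and $\tau_p=o(p^2\lambda^{*2}/k^6)$, all three tend to zero, hence each can be made smaller than $\delta_p^2/256$ or $\delta_p/4$ for $p$ large enough. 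Taking $C'=1$ in Lemma \ref{lem:error-zk} yields Conditions A, B, C with probability at least $1-e^{-p}$.

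For Condition D, the assumption $\max_a\sum_j\indc{z_j^*=a}\le(1-\alpha)p$ and the signal-to-noise condition (\ref{eq:snr-zk}) imply the hypotheses of Lemma \ref{lem:ideal-zk} (in particular $p\lambda^{*2}\to\infty$ and $p/k^2\to\infty$). Thus with probability at least $1-\exp(-\sqrt{p\lambda^{*2}})$,
\[
\xi_{\rm ideal}(\delta_p)\le p\exp\!\left(-(1+o(1))\frac{p\lambda^{*2}}{8}\right).
\]
Since $\tau_p$ is a polynomial-in-$p$ quantity (of order $p^2\lambda^{*2}/k^6$ up to a vanishing factor) while $\xi_{\rm ideal}(\delta_p)$ decays faster than any polynomial in $1/p$ under $p\lambda^{*2}/k^4\to\infty$, the bound $\xi_{\rm ideal}(\delta_p)\le\tau_p/4$ holds for large $p$, giving Condition D.

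With all five conditions verified, Theorem \ref{thm:main} yields
\[
\ell(z^{(t)},z^*)\le 2\xi_{\rm ideal}(\delta_p)+\tfrac{1}{2}\ell(z^{(t-1)},z^*)\quad\text{for all }t\ge1
\]
with probability at least $1-\eta-\exp(-\sqrt{p\lambda^{*2}})-e^{-p}$. Plugging in the Lemma \ref{lem:ideal-zk} bound on $\xi_{\rm ideal}(\delta_p)$ and absorbing the factor $2$ into the $1+o(1)$ exponent produces the stated inequality. The only real delicacy I anticipate is bookkeeping: verifying that a single choice of $\tau_p$ simultaneously (i) exceeds the initialization error, (ii) is small enough to kill the $\tau k^6/(p^2\lambda^{*2})$ term in (\ref{eq:error-zk2}), and (iii) dominates $\xi_{\rm ideal}(\delta_p)$ by a factor of $4$. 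All three are compatible because the gap between the initialization scale $p^2\lambda^{*2}/k^6$ and the ideal error scale $p\exp(-p\lambda^{*2}/8)$ is exponentially large under (\ref{eq:snr-zk}), so essentially any $\tau_p$ strictly between these two scales works.
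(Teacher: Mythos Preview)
Your proposal is correct and follows exactly the route the paper takes: the paper explicitly states that Theorem \ref{thm:zk-con-alg} is a direct consequence of Theorem \ref{thm:main} via Lemmas \ref{lem:error-zk} and \ref{lem:ideal-zk}, and omits the details. One small wording issue: you cannot pick ``any'' $\delta_p=o(1)$ and then hope the error bounds fall below $\delta_p^2/256$; rather, since the right-hand sides of (\ref{eq:error-zk1})--(\ref{eq:error-zk3}) are $o(1)$ under the stated assumptions, you should \emph{define} $\delta_p$ to be (a constant times) the maximum of their square roots and the last one, which is itself $o(1)$---but this is a trivial reordering and does not affect the argument.
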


According to the definition of the loss in (\ref{eq:loss-zk-sync}), we have the inequality
\begin{align}\label{eqn:zk_h_l_connection}
\ell(z,z^*)\geq p^2\lambda^{*2}\frac{1}{p}\sum_{j=1}^p\indc{z_j\neq z_j^*}.
\end{align}
This immediately implies the following corollary for the Hamming loss.
\begin{corollary}\label{cor:zk-con-alg}
Assume $p/k^2\rightarrow\infty$, $\frac{p\lambda^{*2}}{k^4}\rightarrow\infty$ and $\max_{a\in\Zk}\sum_{j=1}^p\indc{z_j^*=a}\leq (1-\alpha)p$ for some constant $\alpha>0$. Suppose $z^{(0)}$ satisfies (\ref{eq:ini-cond-zk}) with probability at least $1-\eta$. Then, we have
\begin{equation}
\frac{1}{p}\sum_{j=1}^p\indc{z_j^{(t)}\neq z_j^*} \leq \exp\left(-(1+o(1))\frac{p\lambda^{*2}}{8}\right) + 2^{-t}\quad\text{ for all }t\geq 1,
\end{equation}
with probability at least $1-\eta-\exp(-\sqrt{p\lambda^{*2}})-e^{-p}$.
\end{corollary}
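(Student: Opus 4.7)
The plan is to derive the Hamming bound as a direct corollary of Theorem~\ref{thm:zk-con-alg} combined with the elementary inequality \eqref{eqn:zk_h_l_connection}, so no new probabilistic estimates are needed. First I would unroll the linear recursion furnished by Theorem~\ref{thm:zk-con-alg}. Writing $A = p\exp\!\left(-(1+o(1))\tfrac{p\lambda^{*2}}{8}\right)$ for the ``ideal-error'' term, iterating
\[
\ell(z^{(t)},z^*) \leq A + \tfrac{1}{2}\ell(z^{(t-1)},z^*)
\]
gives, by a geometric sum, the absolute bound
\[
\ell(z^{(t)},z^*) \leq 2A + 2^{-t}\ell(z^{(0)},z^*)
\]
on the same event of probability at least $1-\eta-\exp(-\sqrt{p\lambda^{*2}})-e^{-p}$ provided by Theorem~\ref{thm:zk-con-alg}.

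Next I would divide by $p^2\lambda^{*2}$ and apply \eqref{eqn:zk_h_l_connection} to convert the $\ell$-loss into a Hamming bound:
\[
\frac{1}{p}\sum_{j=1}^p \indc{z_j^{(t)}\neq z_j^*} \leq \frac{\ell(z^{(t)},z^*)}{p^2\lambda^{*2}} \leq \frac{2A}{p^2\lambda^{*2}} + 2^{-t}\frac{\ell(z^{(0)},z^*)}{p^2\lambda^{*2}}.
\]
For the first term, $\tfrac{2A}{p^2\lambda^{*2}} = \tfrac{2}{p\lambda^{*2}}\exp\!\left(-(1+o(1))\tfrac{p\lambda^{*2}}{8}\right)$, and since the signal-to-noise condition $p\lambda^{*2}/k^4\to\infty$ implies in particular $p\lambda^{*2}\to\infty$, the polynomial prefactor $2/(p\lambda^{*2})$ can be absorbed into the $(1+o(1))$ inside the exponent, yielding $\exp\!\left(-(1+o(1))\tfrac{p\lambda^{*2}}{8}\right)$. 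For the second term, the initialization assumption \eqref{eq:ini-cond-zk} gives $\ell(z^{(0)},z^*)/(p^2\lambda^{*2}) = o(k^{-6}) = o(1)$, so $2^{-t} \ell(z^{(0)},z^*)/(p^2\lambda^{*2}) \leq 2^{-t}$ for all sufficiently large $p$.

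Adding the two contributions produces the advertised bound
\[
\frac{1}{p}\sum_{j=1}^p \indc{z_j^{(t)}\neq z_j^*} \leq \exp\!\left(-(1+o(1))\tfrac{p\lambda^{*2}}{8}\right) + 2^{-t},
\]
and the stated probability is inherited unchanged from Theorem~\ref{thm:zk-con-alg}. There is no serious obstacle here; the only care required is the routine verification that the polynomial prefactor $1/(p\lambda^{*2})$ can be absorbed into the exponent under the stipulated signal-to-noise condition, and that the factor $o(k^{-6})$ from the initialization is harmless so that the geometric $2^{-t}$ term controls the transient contribution of $\ell(z^{(0)},z^*)$.
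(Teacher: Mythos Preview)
Your proposal is correct and follows exactly the route the paper indicates: the paper states that inequality \eqref{eqn:zk_h_l_connection} ``immediately implies'' the corollary from Theorem~\ref{thm:zk-con-alg}, and you have simply spelled out that immediate implication in full detail (unrolling the recursion, dividing by $p^2\lambda^{*2}$, and absorbing the polynomial prefactor into the exponent).
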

By the property of the Hamming loss, the algorithmic error $2^{-t}$ is negligible after $\ceil{3\log p}$ iterations, and we have
$$\frac{1}{p}\sum_{j=1}^p\indc{z_j^{(t)}\neq z_j^*} \leq \exp\left(-(1+o(1))\frac{p\lambda^{*2}}{8}\right)\quad\text{ for all }t\geq 3\log p.$$
Thus, the minimax rate is achieved given that the initialization condition (\ref{eq:ini-cond-zk}) is satisfied.

\subsubsection{Initialization}

Unlike the $\mathbb{Z}_2$ synchronization setting, the vector $z^*\in\mathbb{R}^p$ does not correspond to any eigenvector of $\mathbb{E}Y$. However, we observe that the columns of $\mathbb{E}Y$ has only $k$ possibilities, and the $k$ different vectors that each column can take are well separated. We thus propose the following initialization algorithm based on a spectral clustering step.

\begin{enumerate}
\item Apply the spectral clustering algorithm (\ref{eq:low-rank-spectral})-(\ref{eq:k-means-relax-approx}) to $Y$ and obtain the column clustering label vector $\bar{z}\in\{0,1,.\cdots, k-1\}^p$.
\item For any $l\in\{1,2,\cdots, k-1\}$, compute $\bar{Y}_l=\frac{1}{|\bar{\mathcal{Z}}_{l0}|}\sum_{(i,j)\in\bar{\Z}_{l0}}Y_{ij}$, where $\bar{\Z}_{ab}=\{(i,j)\in[p]\times[p]:\bar{z}_i=a,\bar{z}_j=b\}$ for any $a,b\in[k]$.
\item Sort $|\bar{Y}_1|,\cdots,|\bar{Y}_{k-1}|$ into the order statistics $|\bar{Y}|_{(1)}\leq \cdots\leq |\bar{Y}|_{(k-1)}$. Let $\wh{\pi}$ be a permutation of the labels $l\in\{1,2,\cdots, k-1\}$ so that $|\bar{Y}|_{(\wh{\pi}(l))}=|\bar{Y}_l|$ for all $l\in\{1,2,\cdots,k-1\}$.
\item Output the estimator $z_j^{(0)}=\wh{\pi}(\bar{z}_j)$ for $j\in[p]$.
\end{enumerate}

Let us give some intuitions why the above algorithm works. By Proposition \ref{prop:ini-clust}, it is clear that there exists some label permutation $\pi$ such that $\pi(\bar{z}_j)$ recovers the underlying true label $z_j^*$. This is the purpose of Step 1. We then use Steps 2-4 to recover this unknown label permutation $\pi$. Under an appropriate signal-to-noise ratio condition, we can show that $\max_{l\in\{1,\cdots,k-1\}}|\bar{Y}_l-\lambda^*(\pi(l)\circ \pi(0)^{-1})|=o(|\lambda^*|)$. This error bound immediately implies
$$
\max_{l\in\{1,\cdots,k-1\}}||\bar{Y}_l|-|\lambda^*|(\pi(l)\circ \pi(0)^{-1})| =o(|\lambda^*|).
$$
By the fact that $\min_{a\neq b}||\lambda^*|(\pi(a)\circ \pi(0)^{-1})-|\lambda^*|(\pi(b)\circ \pi(0)^{-1})|\geq |\lambda^*|$, we can deduce the fact that the order of $\{|\bar{Y}_l|\}_{1\leq l\leq k-1}$ perfectly preserves that of $\{\pi(l)\circ \pi(0)^{-1}\}_{1\leq l\leq k-1}$. Therefore, the unknown label permutation $\pi$ can be recovered via sorting $\{|\bar{Y}_l|\}_{1\leq l\leq k-1}$.

\begin{proposition}\label{prop:ini}
Assume $\min_{a\in\Zk}\sum_{j=1}^p\indc{z_j^*=a}\geq\frac{\alpha p}{k}$ for some constant $\alpha>0$ and $\frac{p\lambda^{*2}}{(M+1)k^3}\rightarrow\infty$. For any constant $C'>0$, there exists a constant $C>0$ only depending on $\alpha$ and $C'$ such that
$$\min_{a\in\Zk}\ell(z^{(0)},z^*\circ a^{-1})\leq C(M+1)kp,$$
with probability at least $1-e^{-C'p}$. We have used the notation $z^*\circ a^{-1}$ for the vector $\{z_i^*\circ a^{-1}\}_{i\in[p]}$.
\end{proposition}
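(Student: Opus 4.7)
The plan is to reduce the initialization to an application of Proposition~\ref{prop:ini-clust}. View the columns $Y_1,\dots,Y_p\in\mathbb{R}^p$ as Gaussian mixture data with $k$ clusters and latent labels $z_j^*$, where the cluster centers are $\mu_a^*=\lambda^*(z_i^*\circ a^{-1})_{i\in[p]}$. Since for any $a\neq b$ the map $z\mapsto z\circ a^{-1}$ is a bijection on $\Zk$, we have $(z_i^*\circ a^{-1})\neq(z_i^*\circ b^{-1})$ for every $i$, so $\|\mu_a^*-\mu_b^*\|^2\geq p\lambda^{*2}$. The cluster-balance assumption is given, and the SNR condition of Proposition~\ref{prop:ini-clust} with $d=p$ reduces to $p\lambda^{*2}/((M+1)k^2)\to\infty$, which is implied by our hypothesis $p\lambda^{*2}/((M+1)k^3)\to\infty$. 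Applying the proposition to the output $\bar z$ of Step~1 yields a permutation $\pi\in\Pi_k$ with $\sum_{j}\|\mu_{\pi(\bar z_j)}^*-\mu_{z_j^*}^*\|^2\lesssim (M+1)kp$ on an event $\mathcal{E}_1$ of probability at least $1-e^{-C'p}$. In particular, writing $S=\{j:\pi(\bar z_j)\neq z_j^*\}$, the separation $\|\mu_a^*-\mu_b^*\|^2\geq p\lambda^{*2}$ gives $|S|\lesssim(M+1)k/\lambda^{*2}$, which is $o(p/k^2)$ under the SNR condition; consequently $|\bar Z_a|\asymp p/k$ for all $a\in\Zk$ on $\mathcal{E}_1$.

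Next I would prove that Steps~2--4 recover the shifted permutation correctly, i.e.\ $\hat\pi(l)=\pi(l)\circ\pi(0)^{-1}$ for all $l\in\{1,\dots,k-1\}$, on a high-probability subevent $\mathcal{E}\subset\mathcal{E}_1$. For fixed $l$, decompose $\bar Y_l-\lambda^*(\pi(l)\circ\pi(0)^{-1})$ into a bias part (from pairs $(i,j)\in\bar Z_l\times\bar Z_0$ with $i\in S$ or $j\in S$) and a noise part (the average of i.i.d.\ standard Gaussians $W_{ij}$ over a random rectangle). The bias part is bounded by $\frac{1}{|\bar Z_l||\bar Z_0|}\cdot 2|S|(p/k)\cdot(k-1)|\lambda^*|\lesssim |S|k^2|\lambda^*|/p\lesssim(M+1)k^3/(p|\lambda^*|)=o(|\lambda^*|)$, the last equality being exactly the SNR assumption. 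The noise part is handled by comparing $\bar Z_l,\bar Z_0$ to the deterministic oracle sets $A_l=\{j:z_j^*=\pi(l)\}$, $A_0=\{j:z_j^*=\pi(0)\}$, whose symmetric differences with $\bar Z_l,\bar Z_0$ have cardinality at most $|S|$; standard Gaussian concentration (uniform over the $k$ choices of $l$) then gives $\max_l|\text{noise}_l|\lesssim k/\sqrt{p}$ (say) with probability at least $1-e^{-c p}$, which is again $o(|\lambda^*|)$. Combining, $\max_l\bigl||\bar Y_l|-|\lambda^*|(\pi(l)\circ\pi(0)^{-1})\bigr|=o(|\lambda^*|)$. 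Since the values $\{\pi(l)\circ\pi(0)^{-1}:l\in\{1,\dots,k-1\}\}$ form a permutation of $\{1,\dots,k-1\}$, the sorted values $|\lambda^*|,2|\lambda^*|,\dots,(k-1)|\lambda^*|$ have consecutive gaps $|\lambda^*|$, so the ranks of $|\bar Y_l|$ coincide with those of $|\lambda^*|(\pi(l)\circ\pi(0)^{-1})$, giving $\hat\pi(l)=\pi(l)\circ\pi(0)^{-1}$.

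Setting $a=\pi(0)$ and using the convention $\hat\pi(0)=0$ (which matches $\pi(0)\circ\pi(0)^{-1}=0$), one then checks that $z_j^{(0)}=\hat\pi(\bar z_j)=\pi(\bar z_j)\circ a^{-1}$, equivalently $z_j^{(0)}\circ a=\pi(\bar z_j)$. A short computation based on the identity $\lambda^*\bigl((z_i^*\circ a^{-1})\circ b^{-1}\bigr)=\lambda^*\bigl(z_i^*\circ(b\circ a)^{-1}\bigr)$ then gives
\[
\ell\bigl(z^{(0)},z^*\circ a^{-1}\bigr)=\sum_{j=1}^p\bigl\|\mu^*_{z_j^{(0)}\circ a}-\mu^*_{z_j^*}\bigr\|^2=\sum_{j=1}^p\bigl\|\mu^*_{\pi(\bar z_j)}-\mu^*_{z_j^*}\bigr\|^2,
\]
which is precisely the clustering loss bounded by $C(M+1)kp$ in Paragraph~1. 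Taking the minimum over $a\in\Zk$ only improves the bound. The main technical obstacle will be the noise control in the sorting step: because $\bar z$ is a complicated function of $Y$, the rectangles $\bar Z_l\times\bar Z_0$ are entangled with the noise $W$, so the oracle-replacement argument, together with the fact that on $\mathcal{E}_1$ we have explicit cardinality control on the symmetric differences, is essential to decouple this dependence and permit a clean Gaussian tail bound uniform in $l$.
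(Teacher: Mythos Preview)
Your proposal is correct and follows essentially the same route as the paper: reduce to Proposition~\ref{prop:ini-clust} for the clustering step, convert the clustering error into a Hamming bound $|S|\lesssim(M+1)k/\lambda^{*2}$, show $\max_l|\bar Y_l-\lambda^*(\pi(l)\circ\pi(0)^{-1})|=o(|\lambda^*|)$ by a bias--noise split, deduce exact recovery of the shifted permutation by sorting, and then collapse the loss via the identity $\ell(z^{(0)},z^*\circ\pi(0)^{-1})=\sum_j\|\theta_{\pi(\bar z_j)}-\theta_{z_j^*}\|^2$.

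The one methodological difference worth flagging is the noise control. The paper does not do an oracle-replacement: it simply takes a uniform bound over \emph{all} $k^p$ clustering configurations,
\[
\max_{\bar z\in\{0,\dots,k-1\}^p}\max_l\Bigl|\tfrac{1}{\sqrt{|\bar{\mathcal Z}_{l0}|}}\sum_{(i,j)\in\bar{\mathcal Z}_{l0}}W_{ij}\Bigr|\lesssim\sqrt{p\log k},
\]
which after dividing by $|\bar{\mathcal Z}_{l0}|^{1/2}\gtrsim p/k$ gives the noise $\lesssim k\sqrt{\log k}/\sqrt{p}$ with probability $1-e^{-C'p}$ in one stroke. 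Your oracle-plus-correction scheme can be made to work too, but note that the ``deterministic'' sets $A_l=\{j:z_j^*=\pi(l)\}$ are only deterministic as a \emph{collection} (since $\pi$ is data-dependent), so you must union-bound over the $k^2$ pairs of true clusters; and for the correction you still need a union bound over all subsets of size at most $|S|$ to decouple the random rectangle from $W$, which works under the SNR hypothesis but is more bookkeeping than the paper's single $k^p$-union bound. Either way the noise is $o(|\lambda^*|)$, so the conclusion is unaffected.
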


Proposition \ref{prop:ini} shows that $z^{(0)}$ achieves the rate $O((M+1)kp)$ for estimating $z^*$ up to a group multiplication. This uncertainty cannot be avoided since $z_i\circ z_j^{-1}=(z_i\circ a^{-1})\circ (z_j\circ a^{-1})^{-1}$. The factor $M$ in the bound comes from the computation of the $M$-approximation of the $k$-means objective, and one can take $M=O(\log k)$ when the $k$-means++ algorithm is used for the approximation.  In order that (\ref{eq:ini-cond-zk}) is satisfied, we thus require
\begin{equation}
\frac{p\lambda^{*2}}{(M+1)k^7}\rightarrow \infty,\label{eq:stronger-snr-zk}
\end{equation}
a condition that implies (\ref{eq:snr-zk}). Hence, according to Corollary \ref{cor:zk-con-alg}, the iterative algorithm initialized by spectral clustering converges to the minimax error with a linear rate under the condition (\ref{eq:stronger-snr-zk}).

\subsection{Permutation Synchronization} \label{sec:perm}

Permutation synchronization is a problem first proposed by \cite{pachauri2013solving} in computer vision as an approach to align multiple images from observations of pairwise similarities. Polynomial algorithms for this problem are mostly based on convex relaxation \citep{zhou2015multi,yan2015multi,chen2014near,ling2020near} with theoretical guarantees in the form of exact recovery \citep{chen2014near} and polynomial convergence rate in Frobenius norm \citep{ling2020near}. In this section, we will derive the minimax rate of the problem with exponential convergence and show the optimal rate can be achieved by Algorithm \ref{alg:general} under some signal-to-noise ratio condition.

Let $\Pi_d$ be the set of permutations on the set $[d]=\{1,\cdots, d\}$. The set of permutation matrices is defined by $\mathcal{P}_d=\{(e_{\pi(1)},\cdots, e_{\pi(d)}):\pi\in\Pi_d\}$, where $e_i\in\mathbb{R}^d$ is the $i$th canonical vector of $\mathbb{R}^d$. In permutation synchronization, we observe $Y_{ij}=\lambda^*Z_i^*Z_j^{*T}+W_{ij}\in\mathbb{R}^{d\times d}$ for $1\leq i<j\leq p$ with $\lambda^*\in\mathbb{R}$, $Z_1^*,\cdots,Z_p^*\in\mathcal{P}_d$, and $W_{ij}$'s are independent error matrices with i.i.d. entries following $\mathn(0,1)$.

We first present the minimax lower bound for the problem.

\begin{thm}\label{thm:lower-per}
If $p\lambda^{*2}\rightarrow\infty$, we have
$$\inf_{\wh{Z}}\sup_{Z^*}\mathbb{E}\min_{U\in\mathcal{P}_d}\frac{1}{p}\sum_{j=1}^p\indc{\wh{Z}_j\neq Z_j^*U^T}\geq \exp\left(-\frac{(1+o(1))p\lambda^{*2}}{2}\right).$$
Otherwise if $p\lambda^{*2}=O(1)$, we then have
$$\inf_{\wh{Z}}\sup_{Z^*}\mathbb{E}\min_{U\in\mathcal{P}_d}\frac{1}{p}\sum_{j=1}^p\indc{\wh{Z}_j\neq Z_j^*U^T}\geq c,$$
for some constant $c>0$.
\end{thm}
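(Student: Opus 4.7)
The plan is to prove this lower bound via per-coordinate Bayes testing, paralleling the proof of Theorem~\ref{thm:lower-z2} for $\mathbb{Z}_2$ synchronization, which exhibits the same rate and the same type of global ambiguity in its loss. Since the Hamming-type loss decomposes over the coordinates $j\in[p]$, the core of the argument is to lower bound the per-coordinate Bayes testing error and then aggregate.

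First I would specify the prior: fix $Z_1^*=I_d$ and draw $Z_2^*,\ldots,Z_p^*$ i.i.d.\ uniform on $\{I_d,T\}$ for a fixed transposition $T\in\mathcal{P}_d$ (so $\|I_d-T\|_F^2=4$). The minimax risk is bounded below by the Bayes risk for this prior. For each $j\geq 2$, reveal $\{Z_i^*\}_{i\neq j}$ to the estimator; this can only decrease the risk. Using orthogonality of the $Z_i^*$'s (so that $Z_i^{*T}W_{ij}$ is again i.i.d.\ standard Gaussian), the sufficient statistic for $Z_j^*$ takes the form
\[
\wt{T}_j=\frac{1}{p-1}\Bigl(\sum_{i<j}Z_i^{*T}Y_{ij}+\sum_{k>j}Z_k^{*T}Y_{jk}^T\Bigr),
\]
with mean $\lambda^*Z_j^{*T}$ and i.i.d.\ $\mathcal{N}(0,1/(p-1))$ entries. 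Deciding between $Z_j^*=I_d$ and $Z_j^*=T$ then becomes a two-point Gaussian test with Frobenius separation $2\lambda^*$ and scalar noise $1/\sqrt{p-1}$; its Bayes error equals $\Phi(-\lambda^*\sqrt{p-1})$. Under $p\lambda^{*2}\to\infty$, the Mills ratio yields $\Phi(-\lambda^*\sqrt{p-1})\geq \exp(-(1+o(1))p\lambda^{*2}/2)$, and under $p\lambda^{*2}=O(1)$ this quantity is bounded below by a positive constant.

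The central technical obstacle is handling the global $\min_{U\in\mathcal{P}_d}$ in the loss, which is the permutation-valued analog of the $\pm$ ambiguity in Theorem~\ref{thm:lower-z2}. The resolution has two parts. First, projecting each $\wh{Z}_j$ onto $\{I_d,T\}$ does not worsen the risk beyond a constant factor, because for any $U\notin\{I_d,T\}$ every $Z_j^*U^T$ lies outside $\{I_d,T\}$ and so mismatches a projected estimator at every $j\geq 2$, forcing the optimal $U$ to lie in the two-element set $\{I_d,T\}$. Second, the residual binary ambiguity between $U=I_d$ and $U=T$ corresponds to the bitwise-complement identification $\omega\leftrightarrow\bar\omega$ in the $\{0,1\}^{p-1}$ parametrization $\omega_j=\indc{Z_j^*=T}$; the symmetry-and-concentration argument used in the proof of Theorem~\ref{thm:lower-z2} transfers verbatim and shows that taking the minimum over the two alignments only loses a $(1+o(1))$ factor in the exponent.

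Aggregating the per-coordinate Bayes error bound over $j=2,\ldots,p$ and dividing by $p$ produces the claimed $\exp(-(1+o(1))p\lambda^{*2}/2)$ in the high-SNR regime and a positive constant in the bounded-SNR regime. The hardest part will be making the ambiguity-handling argument fully rigorous: a naive union bound over $U\in\{I_d,T\}$ doubles the target error, which must be absorbed into the $(1+o(1))$ via a concentration statement showing that the per-coordinate error indicators cannot be systematically anti-correlated across the two alignments for a good estimator. This is precisely the point where the argument must mimic the $\mathbb{Z}_2$ analysis rather than use a cruder Fano-type reduction.
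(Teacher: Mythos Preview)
Your per-coordinate two-point testing calculation is correct and matches the paper exactly: the Bayes error for distinguishing $Z_j^*=I_d$ from $Z_j^*=T$ (with $\fnorm{I_d-T}^2=4$) given all other $Z_i^*$'s is $\Phi(-\lambda^*\sqrt{p-1})$, which gives the right exponent in both regimes.

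The difference lies in how the global ambiguity $\min_{U\in\mathcal P_d}$ is eliminated. The paper does \emph{not} use a one-anchor-plus-concentration argument. Instead it fixes the \emph{first half} of the coordinates, taking
\[
\mathcal Z=\bigl\{Z:\ Z_j=I_d\text{ for }j\le p/2,\ Z_j\in\{I_d,\bar U\}\text{ for }j>p/2\bigr\},
\]
with $\bar U$ a single transposition. With $p/2$ anchors, the minimizing $U$ is pinned to $I_d$ (any other $U$ already costs order $1$ on the fixed half), so the $\min_U$ disappears for free and one is left with the clean per-coordinate Bayes bound summed over $j>p/2$. The resulting factor $1/2$ in front of the sum is harmlessly absorbed into the $o(1)$ in the exponent. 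This is the same device used in the MRA lower bound (Theorem~\ref{thm:mra-lower}), and the paper explicitly says ``by the same argument that leads to~(\ref{eq:metroidvania})''.

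Your route---fix only $Z_1^*$ and then argue that the residual $U\in\{I_d,T\}$ ambiguity costs only a constant factor via concentration---is substantially harder than you suggest and is not what the paper does. Two specific issues: (i) Theorem~\ref{thm:lower-z2} is quoted from \cite{fei2020achieving} and is not proved in this paper, so ``transfers verbatim'' is not a self-contained justification. (ii) The difficulty is not a union-bound doubling. After projecting to $\{I_d,T\}$ you face two losses $L_{I_d}$ and $L_T$ with $L_{I_d}+L_T\approx 1$, and you must lower bound $\mathbb E\min(L_{I_d},L_T)$. Knowing only that every estimator has $\mathbb E L_{I_d}\ge\epsilon$ is insufficient: an estimator could in principle have $L_{I_d}\in\{0,1\}$ with probabilities $1-\epsilon,\epsilon$, giving $\min(L_{I_d},L_T)=0$ a.s. Ruling this out requires a genuine concentration argument for the Bayes-optimal estimator (or a quotient-space reduction), which you have not supplied. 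The paper's half-fixing trick sidesteps this entirely; I recommend you adopt it.
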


Computationally efficient algorithms to recover the permutation matrices $Z_1^*,\cdots,Z_p^*$ with minimax error are unknown in the literature. The problem is hard even when the dimension of the permutation $d$ is a constant. We will show our general iterative algorithm leads to a solution of this open problem whenever the signal-to-noise ratio conditoin $\frac{p\lambda^{*2}}{d^2}\rightarrow\infty$ is satisfied. We first put the problem into our general framework by organizing all the observations $\{Y_{ij}\}$ into a single matrix $Y=\lambda^*Z^*Z^{*T}+W\in\mathbb{R}^{pd\times pd}$. Here, $Z^{*T}=(Z_1^{*T},\cdots,Z_p^{*T})\in\mathbb{R}^{d\times pd}$ is a matrix by concatenating the $p$ permutation matrices together. For each $W_{ij}$, it can be viewed as the $(i,j)$th block of $W$. We have $W_{ij}=W_{ji}^T$ to be independent standard Gaussian matrices for all $1\leq i<j\leq p$ and $W_{ii}=0$ for all $i\in[p]$. We identify $z^*$ with $Z^*$ and define $\mathcal{B}_{Z^*}=\{\lambda Z^*:\lambda\in\mathbb{R}\}$ as the space of model parameter. Then, we can write $Y=Z^*(B^*)^T+W$ with $B^*\in\mathcal{B}_{Z^*}$. This is the same strategy that has been used for $\mathbb{Z}_2$ synchronization. To derive an iterative algorithm, let $T_j=Y_j$ and the definition of the matrix $Y_j\in\mathbb{R}^{pd\times d}$ is given by $Y_j^T=(Y_{1j}^T,\cdots,Y_{pj}^T)$. We then have $\nu_j(B^*,U)=\mu_j(B^*,U)=B^*U^T$ for any $U\in\mathcal{P}_d$. The iterative algorithm is
\begin{eqnarray}
\label{eq:alg-per1} B^{(t)} &=& \argmin_{B=\lambda Z^{(t-1)}:\lambda\in\mathbb{R}}\fnorm{Y-Z^{(t-1)}B^T}^2, \\
\label{eq:alg-per2} Z_j^{(t)} &=& \argmin_{U\in\mathcal{P}_d}\fnorm{Y_j-B^{(t)}U^T}^2.
\end{eqnarray}
The computation of (\ref{eq:alg-per1}) is a one-dimensional optimization problem, and its solution is given by $B^{(t)}=\wh{\lambda}(Z^{(t-1)})Z^{(t-1)}$, with $\wh{\lambda}(Z)=\frac{\iprod{Y}{ZZ^T}}{p^2d^2}$. For (\ref{eq:alg-per2}), we have the equivalent form
$$Z_j^{(t)}=\argmax_{U\in\mathcal{P}_d}\iprod{Y_j^TB^{(t)}}{U},$$
which can be solved by the Kuhn-Munkres algorithm \citep{kuhn1955hungarian} with $O(d^3)$ complexity.

\subsubsection{Conditions}

To analyze the algorithmic convergence of (\ref{eq:alg-per1})-(\ref{eq:alg-per2}), we note that
$$\Delta_j(U,V)^2=\fnorm{B^*(U-V)^T}^2=p\lambda^{*2}\fnorm{U-V}^2,$$
for any $U,V\in\mathcal{P}_d$.
Therefore, the natural loss function of the problem is
$$\ell(Z,Z^*)=p\lambda^{*2}\sum_{j=1}^p\fnorm{Z_j-Z_j^*}^2.$$
To write down the error terms, we introduce the notation $\wh{B}(Z)=\wh{\lambda}(Z)Z$.
The error terms are
\begin{eqnarray*}
F_j(U,V;Z) &=& \iprod{\epsilon_j}{(\wh{B}(Z^*)-\wh{B}(Z))(U-V)^T}, \\
G_j(U,V;Z) &=& \iprod{\wh{B}(Z^*)-\wh{B}(Z)}{B^*(I_d-U^TV)}, \\
H_j(U,V) &=& \iprod{B^*-\wh{B}(Z^*)}{B^*(I_d-U^TV)}.
\end{eqnarray*}
Here $\epsilon_j\in\mathbb{R}^{pd\times d}$ is an error matrix defined by $\epsilon_j^T=(W_{1j}^T,\cdots,W_{pj}^T)$. The error terms are controlled by the following lemma.

\begin{lemma}\label{lem:error-per}
For any $C'>0$, there exists a constant $C>0$ only depending on $C'$ such that
\begin{eqnarray}
\nonumber && \max_{\{Z:\ell(Z,Z^*)\leq\tau\}} \sum_{j=1}^p\max_{U\neq Z_j^*}\frac{F_j(Z_j^*,U;Z)^2\norm{\mu_j(B^*,U) - \mu_j(B^*,Z_j^*)}^2}{\Delta_j(Z_j^*,U)^4\ell(Z,Z^*)} \\
\label{eq:error-per1} &\leq& C\left(\frac{d}{p\lambda^{*2}} + \frac{1}{p^2\lambda^{*4}}\right), \\
\nonumber && \max_{\{Z:\ell(Z,Z^*)\leq\tau\}}\frac{\tau}{4\Delta_{\min}^2|T|+\tau}\sum_{j\in T}\max_{U\neq Z_j}\frac{G_j(Z_j^*,U;Z)^2\norm{\mu_j(B^*,U) - \mu_j(B^*,Z_j^*)}^2}{\Delta_j(Z_j^*,U)^4\ell(Z,Z^*)}  \\
\label{eq:error-per2} &\leq& C\frac{\tau\left(\lambda^{*2}+\frac{1}{pd}\right)}{(p\lambda^{*2})^2},
\end{eqnarray}
and
\begin{equation}
\max_{j\in[p]}\max_{U\neq Z_j^*} \frac{\abs{H_j(Z_j^*,U)}}{\Delta_j(Z_j^*,U)^2}\leq C\sqrt{\frac{d}{p\lambda^{*2}}},\label{eq:error-per3}
\end{equation}
with probability at least $1-e^{-C'pd}$.
\end{lemma}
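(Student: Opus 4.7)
The plan is to parallel the argument used for $\mathbb{Z}_2$ synchronization (Lemma \ref{lem:error-z2}), exploiting the factorized form $\wh{B}(Z) = \wh{\lambda}(Z) Z$. The key identity is
\[
\wh{B}(Z^*) - \wh{B}(Z) = \wh{\lambda}(Z^*)(Z^* - Z) + \br{\wh{\lambda}(Z^*) - \wh{\lambda}(Z)} Z,
\]
together with $B^* - \wh{B}(Z^*) = (\lambda^* - \wh{\lambda}(Z^*)) Z^*$. Using the closed form of $\wh{\lambda}(Z)$ as a linear functional of $Y = \lambda^* Z^* Z^{*T} + W$ and the identity $Z^{*T} Z^* = p I_d$, a direct computation shows that $\wh{\lambda}(Z^*) - \lambda^*$ is a centered scalar Gaussian of variance of order $1/p$, while $\wh{\lambda}(Z^*) - \wh{\lambda}(Z)$ splits into a deterministic ``signal'' piece controlled by $\ell(Z, Z^*)$ (up to normalization) and a Gaussian ``noise'' piece whose variance is controlled by $\fnorm{Z^* Z^{*T} - ZZ^T}^2$, hence by $\ell(Z, Z^*)/\lambda^{*2}$ up to polynomial factors in $p$ and $d$.

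Given these preliminary estimates, the bounds on $H_j$ and $G_j$ follow by essentially deterministic algebra combined with scalar Gaussian tails. For $H_j$, a single union bound over $j \in [p]$ and $U \in \mathcal{P}_d$ (with $\log\abs{\mathcal{P}_d} = \log(d!) = O(d \log d)$ absorbed into the constant $C'$) yields (\ref{eq:error-per3}); the denominator $\Delta_j(Z_j^*, U)^2 = p\lambda^{*2}\fnorm{Z_j^* - U}^2 \geq 2 p\lambda^{*2}$ gives the scaling $\sqrt{d/(p\lambda^{*2})}$. For $G_j$, substituting the decomposition above and applying Cauchy-Schwarz produces a signal piece proportional to $\ell(Z,Z^*)$ that delivers the $\tau(\lambda^{*2}+1/(pd))/(p\lambda^{*2})^2$ contribution and a Gaussian piece to be controlled uniformly over $Z$ in the $\tau$-ball; the $T$-refinement factor $\tau/(4\Delta_{\min}^2\abs{T}+\tau)$ is handled as in the proof of Lemma \ref{lem:error-z2} by trading off $\abs{T} \leq h(Z,Z^*) \leq \ell(Z,Z^*)/\Delta_{\min}^2$.

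The main obstacle is (\ref{eq:error-per1}), since $\wh{B}(Z) - \wh{B}(Z^*)$ is coupled with $\epsilon_j$ through $W$. After the identity above, $F_j$ splits into a Gaussian linear form $\wh{\lambda}(Z^*)\iprod{\epsilon_j}{(Z^* - Z)(U-V)^T}$, whose variance is bounded by $\lambda^{*2}\fnorm{(Z^*-Z)(U-V)^T}^2 \lesssim \lambda^{*2} h(Z, Z^*)\fnorm{U-V}^2$ (using $\opnorm{Z_i^* - Z_i} \leq 2$ for permutation matrices), plus a coupling term $(\wh{\lambda}(Z^*) - \wh{\lambda}(Z))\iprod{\epsilon_j}{Z(U-V)^T}$ that I would treat by Cauchy-Schwarz, combining the scalar bound on $\abs{\wh{\lambda}(Z^*) - \wh{\lambda}(Z)}$ with the Gaussian linear form $\iprod{\epsilon_j}{Z(U-V)^T}$ of variance $\lesssim p\fnorm{U-V}^2$. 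Both pieces require a union bound over $U \in \mathcal{P}_d$ and, for the first, over the restricted ball $\sth{Z: \ell(Z, Z^*) \leq \tau}$, exploiting that any such $Z$ differs from $Z^*$ in at most $O(\tau/(p\lambda^{*2}))$ block coordinates. Summing over $j$ and normalizing by $\Delta_j(Z_j^*, U)^4 = p^2\lambda^{*4}\fnorm{Z_j^* - U}^4$ yields the two-term rate $d/(p\lambda^{*2}) + 1/(p^2\lambda^{*4})$, in which the first summand arises from the $\log(d!) = O(d \log d)$ union-bound cost for permutations and the second from the $\wh{\lambda}$ coupling. All concentration events are intersected on a single event of probability at least $1 - e^{-C'pd}$.
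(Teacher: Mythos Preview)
Your plan diverges from the paper's proof in a way that creates a genuine gap in the $F$-term analysis, and it misidentifies the origin of the rate $d/(p\lambda^{*2})$.

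The paper does \emph{not} take a single union bound over $U\in\mathcal{P}_d$ or over $Z$ in the $\tau$-ball. Instead it works on the single event $\{\|W\|\le C\sqrt{pd}\}$ (Lemma~\ref{lem:large-w}), which already has probability $\ge 1-e^{-C'pd}$, and then every bound is \emph{deterministic}. Two devices make this work. First, since $ZZ^T-Z^*Z^{*T}$ has rank at most $2d$, one gets $|\iprod{W}{ZZ^T-Z^*Z^{*T}}|\le\sqrt{2d}\,\|W\|\,\fnorm{ZZ^T-Z^*Z^{*T}}$ uniformly in $Z$, which yields
\[
\fnorm{\wh B(Z)-\wh B(Z^*)}\;\lesssim\;\Big(\lambda^*+\tfrac{1}{\sqrt{pd}}\Big)\fnorm{Z-Z^*}
\]
with no union bound over $Z$. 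Second, for the $F$-term the $\max_U$ is removed \emph{algebraically} by Cauchy--Schwarz,
\[
\frac{|\langle Z_j^*-U,\,\epsilon_j^T(\wh B(Z^*)-\wh B(Z))\rangle|^2}{\fnorm{Z_j^*-U}^2}\;\le\;\fnorm{\epsilon_j^T(\wh B(Z^*)-\wh B(Z))}^2,
\]
after which $\sum_j\fnorm{\epsilon_j^T M}^2=\fnorm{W^TM}^2\le\|W\|^2\fnorm{M}^2$ (using $\sum_j\epsilon_j\epsilon_j^T=WW^T$). The factor $d$ in $d/(p\lambda^{*2})$ thus comes from $\|W\|^2\asymp pd$, not from a $\log(d!)$ union-bound cost as you write.

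Your union-bound route does not close. To make the $F$-bound uniform over $\{Z:\ell(Z,Z^*)\le\tau\}$ with no restriction on $\tau$ (note the right-hand side of (\ref{eq:error-per1}) is $\tau$-free and the lemma's constant $C$ cannot depend on $\tau$), you would effectively need uniformity over all $Z\in\mathcal{P}_d^p$, whose log-cardinality is $\asymp p\,d\log d$; combined with your $\log(d!)$ cost for the inner $\max_U$, the Gaussian tails you invoke would have to be taken at level $\asymp\sqrt{pd\log d}$, contaminating the bound by an extra $\log d$ that is absent from the stated rate and that the probability budget $1-e^{-C'pd}$ cannot absorb. You also do not address the dependence between $\epsilon_j$ and $\wh\lambda(Z^*)-\wh\lambda(Z)$ (both are functionals of the same $W$), which makes the ``Gaussian linear form'' picture for the two pieces of $F_j$ informal at best. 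For $H_j$ and $G_j$ your scalar-Gaussian idea is essentially fine (indeed $|H_j|/\Delta_j^2$ reduces to $|\lambda^*-\wh\lambda(Z^*)|/(2\lambda^*)$, so no union bound over $j,U$ is even needed there), but for $F_j$ you need either the Cauchy--Schwarz elimination of $\max_U$ together with the operator-norm event, or a genuinely different argument that avoids the $\log d$ losses.
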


From the bounds (\ref{eq:error-per1})-(\ref{eq:error-per3}), we can see that a sufficient condition under which Conditions A, B and C hold is $\tau=o(p^2\lambda^{*2})$ and $\frac{p\lambda^{*2}}{d}\rightarrow\infty$.

Next, we need to bound $\xi_{\rm ideal}$ in Condition D. This is given by the following lemma.
\begin{lemma}\label{lem:ideal-per}
Assume $\frac{p\lambda^{*2}}{d\log d}\rightarrow\infty$. Then, for any sequence $\delta_p=o(1)$, we have
\begin{eqnarray*}
\xi_{\rm ideal}(\delta_p)\leq p\exp\left(-\frac{1+o(1)}{2}p\lambda^{*2}\right),
\end{eqnarray*}
with probability at least $1-\exp(-\sqrt{p\lambda^{*2}})$.
\end{lemma}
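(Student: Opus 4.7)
The strategy parallels the proofs of Lemma \ref{lem:ideal-z2} and Lemma \ref{lem:ideal-zk}: isolate a high-probability event on which $\widehat{B}(Z^*)$ closely approximates $B^*$, reduce each indicator in the definition of $\xi_{\rm ideal}(\delta_p)$ to a one-dimensional Gaussian tail event, union-bound over the finite group $\mathcal{P}_d$, and then apply Markov's inequality.

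First, I would analyze the continuous-parameter estimator $\widehat{B}(Z^*)=\widehat{\lambda}(Z^*)Z^*$. A direct computation from \eqref{eq:alg-per1} gives $\widehat{\lambda}(Z^*) = \lambda^* + \iprod{W}{Z^*Z^{*T}}/\fnorm{Z^*Z^{*T}}^2$, where $\iprod{W}{Z^*Z^{*T}}$ is a centered Gaussian whose variance scales only polynomially in $p$ and $d$. Choosing a slowly vanishing $\eta_p=o(1)$, the event $\mathcal{A}=\{|\widehat{\lambda}(Z^*)-\lambda^*|\leq \eta_p|\lambda^*|\}$ holds with probability at least $1-\exp(-\sqrt{p\lambda^{*2}})$ under the SNR hypothesis; we may assume $\lambda^*>0$ without loss of generality.

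Next I would treat the inner product that drives each indicator. For each $j\in[p]$ and $U\in\mathcal{P}_d\setminus\{Z_j^*\}$, define
$$A_{j,U} \;=\; \iprod{\epsilon_j}{Z^*(Z_j^*-U)^T} \;=\; \sum_{i\neq j}\iprod{W_{ij}}{Z_i^*(Z_j^*-U)^T}.$$
The $i=j$ block vanishes because $W_{jj}=0$; the remaining blocks $\{W_{ij}\}_{i\neq j}$ are mutually independent with i.i.d.\ $\mathcal{N}(0,1)$ entries, and each $Z_i^*$ is orthogonal. Hence $A_{j,U}$ is centered Gaussian with variance $(p-1)\fnorm{Z_j^*-U}^2$. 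On $\mathcal{A}$, the defining event of $\xi_{\rm ideal}(\delta_p)$ at the pair $(j,U)$ forces
$$A_{j,U}\;\leq\;-\frac{(1-\delta_p)\,p\lambda^*}{2(1+\eta_p)}\,\fnorm{Z_j^*-U}^2,$$
and a standard Gaussian tail bound gives
$$\Prob(E_{j,U}\cap\mathcal{A})\;\leq\;\exp\br{-(1+o(1))\frac{p\lambda^{*2}\,\fnorm{Z_j^*-U}^2}{8}},$$
in which the $o(1)$ collects $\delta_p$, $\eta_p$, and the $p/(p-1)$ factor.

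The combinatorial step then organizes the sum over $U$ by Frobenius distance to $Z_j^*$. Writing $U=Z_j^*P$ with $P\in\mathcal{P}_d$ having $m$ non-fixed points gives $\fnorm{Z_j^*-U}^2=\fnorm{I-P}^2=2m$, and at most $\binom{d}{m}m!\leq d^m$ such $P$ exist. Taking expectations yields
$$\Expect\sbr{\xi_{\rm ideal}(\delta_p)\,\indc{\mathcal{A}}}\;\leq\;2p\cdot p\lambda^{*2}\sum_{m=2}^{d} m\,d^m\exp\br{-(1+o(1))\frac{p\lambda^{*2}m}{4}}.$$
The hypothesis $p\lambda^{*2}/(d\log d)\to\infty$ ensures that $d^m$ is absorbed into the exponential for every $m\geq 2$ and that the sum is dominated by $m=2$, giving the overall bound $p^2\lambda^{*2}d^2\exp(-(1+o(1))p\lambda^{*2}/2)$. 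A final application of Markov's inequality at the level $p\exp(-(1+o(1))p\lambda^{*2}/2)$, combined with a union bound against $\mathcal{A}^c$, delivers the stated conclusion. The main obstacle is exactly the combinatorial sum: one must ensure that the factor $|\mathcal{P}_d|=d!$ does not destroy the exponential decay, which is the precise role played by the $d\log d$ factor in the SNR hypothesis. A secondary bookkeeping difficulty is coordinating the various $o(1)$ terms (from $\delta_p$, $\eta_p$, the variance discrepancy $(p-1)/p$, and the polynomial prefactor $p^2\lambda^{*2}d^2$) so that the final exponent reads $(1+o(1))p\lambda^{*2}/2$ as claimed.
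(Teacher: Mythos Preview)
Your proposal is correct and follows essentially the same strategy as the paper's proof: reduce each indicator to a Gaussian tail event, absorb the union over $\mathcal{P}_d$ using the SNR hypothesis $p\lambda^{*2}/(d\log d)\to\infty$, and finish with Markov's inequality. The only structural difference is that the paper handles the nuisance $\widehat{\lambda}(Z^*)-\lambda^*$ via a $\bar{\delta}$-splitting inside each $(j,U)$ probability (mirroring its proofs for $\mathbb{Z}_2$ and $\Zk$ synchronization), whereas you condition once on a global event $\mathcal{A}$; both devices yield the same exponential bound, and your explicit stratification by the number $m$ of non-fixed points makes transparent why the $d\log d$ factor is needed.
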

Thus, under the condition $\frac{p\lambda^{*2}}{d\log d}\rightarrow\infty$, Conditions A, B, C and D hold simultaneously.

\subsubsection{Convergence}

With the help of Lemma \ref{lem:error-per} and Lemma \ref{lem:ideal-per}, we can specialize Theorem \ref{thm:main} into the following result.
\begin{thm}\label{thm:per-con-alg}
Assume $\frac{p\lambda^{*2}}{d\log d}\rightarrow\infty$. Suppose $Z^{(0)}$ satisfies
\begin{equation}
\ell(Z^{(0)},Z^*)=o(p^2\lambda^{*2}),\label{eq:ini-con-per}
\end{equation}
with probability at least $1-\eta$. Then, we have
$$\ell(Z^{(t)},Z^*)\leq p\exp\left(-(1+o(1))\frac{p\lambda^{*2}}{2}\right) + \frac{1}{2}\ell(Z^{(t-1)},Z^*)\quad\text{ for all }t\geq 1,$$
with probability at least $1-\eta-\exp(-\sqrt{p\lambda^{*2}})-e^{-pd}$.
\end{thm}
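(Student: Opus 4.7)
The plan is to deduce Theorem \ref{thm:per-con-alg} as a direct specialization of the abstract convergence result Theorem \ref{thm:main}, using Lemma \ref{lem:error-per} to verify Conditions A, B, C and Lemma \ref{lem:ideal-per} to verify Condition D, while Condition E is handed to us by the hypothesis on $Z^{(0)}$. The only real work is to choose the free parameters $(\tau,\delta)$ consistently so that every condition fires with the desired probability and error rate.

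First, I would set $\tau$ equal to (a constant multiple of) the initialization bound $o(p^2\lambda^{*2})$ guaranteed for $\ell(Z^{(0)},Z^*)$; this immediately gives Condition E with $\eta_5=\eta$. Plugging this $\tau$ into Lemma \ref{lem:error-per}, the bound (\ref{eq:error-per1}) is $O(d/(p\lambda^{*2})+1/(p^2\lambda^{*4}))=o(1)$ under the hypothesis $p\lambda^{*2}/(d\log d)\to\infty$, the bound (\ref{eq:error-per2}) is $O(\tau(\lambda^{*2}+1/(pd))/(p\lambda^{*2})^2)=o(1)$ since $\tau/(p^2\lambda^{*2})=o(1)$ and $1/(p^3\lambda^{*4}d)=o(1)$, and (\ref{eq:error-per3}) is $O(\sqrt{d/(p\lambda^{*2})})=o(1)$. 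Hence I can pick a sequence $\delta=\delta_p=o(1)$ decaying slowly enough that the right-hand sides of all three inequalities sit below $\delta_p^2/256$ and $\delta_p/4$ respectively, which verifies Conditions A, B, C with $\eta_1,\eta_2,\eta_3\leq e^{-C'pd}$.

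Next, for the same $\delta_p=o(1)$, Lemma \ref{lem:ideal-per} yields
\[
\xi_{\rm ideal}(\delta_p)\leq p\exp\!\left(-(1+o(1))\tfrac{p\lambda^{*2}}{2}\right),
\]
with probability at least $1-\exp(-\sqrt{p\lambda^{*2}})$. Because $\tau=o(p^2\lambda^{*2})$ while $p\exp(-(1+o(1))p\lambda^{*2}/2)$ is much smaller than any $o(p^2\lambda^{*2})$, the inequality $\xi_{\rm ideal}(\delta_p)\leq \tau/4$ holds deterministically on the good event, verifying Condition D with $\eta_4\leq\exp(-\sqrt{p\lambda^{*2}})$.

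With Conditions A--E all in force on an event of probability at least $1-\eta-\exp(-\sqrt{p\lambda^{*2}})-e^{-pd}$ (after absorbing the $e^{-C'pd}$ terms into $e^{-pd}$ by choosing $C'$ appropriately), Theorem \ref{thm:main} delivers
\[
\ell(Z^{(t)},Z^*)\leq 2\xi_{\rm ideal}(\delta_p)+\tfrac{1}{2}\ell(Z^{(t-1)},Z^*)\quad\text{for all }t\geq 1,
\]
and substituting the bound on $\xi_{\rm ideal}(\delta_p)$ gives exactly the stated recursion. The only place that requires genuine care is confirming that the three error bounds in Lemma \ref{lem:error-per} can simultaneously be driven below $\delta_p^2/256$ with the \emph{same} $\delta_p$; this is the main obstacle, but the hypothesis $p\lambda^{*2}/(d\log d)\to\infty$ dominates every term that appears, so such a $\delta_p$ exists by a standard diagonalization over the sequences appearing in (\ref{eq:error-per1})--(\ref{eq:error-per3}).
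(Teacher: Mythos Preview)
Your proposal is correct and is exactly the approach the paper takes: the paper explicitly omits the proof, stating that Theorem \ref{thm:per-con-alg} is a direct consequence of Theorem \ref{thm:main} together with Lemma \ref{lem:error-per} and Lemma \ref{lem:ideal-per}. Your verification that the three error bounds in Lemma \ref{lem:error-per} are all $o(1)$ under $\tau=o(p^2\lambda^{*2})$ and $p\lambda^{*2}/(d\log d)\to\infty$, and hence can be placed below $\delta_p^2/256$ and $\delta_p/4$ for a common $\delta_p=o(1)$, is precisely the missing routine detail.
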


According to the definition of the loss $\ell(Z,Z^*)$, we have the inequality
$$\ell(Z,Z^*)\geq 4p^2\lambda^{*2}\frac{1}{p}\sum_{j=1}^p\indc{Z_j\neq Z_j^*}.$$
This immediately implies the following corollary for the Hamming loss.
\begin{corollary}\label{cor:per-con-alg}
Assume $\frac{p\lambda^{*2}}{d\log d}\rightarrow\infty$. Suppose $z^{(0)}$ satisfies (\ref{eq:ini-con-per}) with probability at least $1-\eta$. Then, we have
\begin{equation}
\frac{1}{p}\sum_{j=1}^p\indc{Z_j^{(t)}\neq Z_j^*} \leq \exp\left(-(1+o(1))\frac{p\lambda^{*2}}{2}\right) + 2^{-t}\quad\text{ for all }t\geq 1,
\end{equation}
with probability at least $1-\eta-\exp(-\sqrt{p\lambda^{*2}})-e^{-pd}$.
\end{corollary}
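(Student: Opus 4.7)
The plan is to derive Corollary \ref{cor:per-con-alg} directly from Theorem \ref{thm:per-con-alg} by iterating the one-step recursion for $\ell(Z^{(t)},Z^*)$ and then converting the resulting bound into a Hamming-loss bound via the inequality $\ell(Z,Z^*)\geq 4p^2\lambda^{*2}\cdot \tfrac{1}{p}\sum_{j=1}^p\indc{Z_j\neq Z_j^*}$ stated immediately before the corollary. The argument is elementary book-keeping; the only points that warrant care are absorbing a polynomial factor into the $(1+o(1))$ exponent and handling the contribution of the initialization.

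First, I would unroll the one-step recursion of Theorem \ref{thm:per-con-alg}. Writing $A := p\exp(-(1+o(1))p\lambda^{*2}/2)$, a straightforward induction on $t$ gives
\[
\ell(Z^{(t)},Z^*) \;\leq\; A\sum_{s=0}^{t-1}2^{-s} + 2^{-t}\ell(Z^{(0)},Z^*) \;\leq\; 2A + 2^{-t}\ell(Z^{(0)},Z^*),
\]
on the same event of probability at least $1-\eta-\exp(-\sqrt{p\lambda^{*2}})-e^{-pd}$ on which Theorem \ref{thm:per-con-alg} applies.

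Next, I would divide both sides by $4p^2\lambda^{*2}$ and invoke the stated inequality between $\ell(\cdot,Z^*)$ and the normalized Hamming loss to obtain
\[
\frac{1}{p}\sum_{j=1}^p\indc{Z_j^{(t)}\neq Z_j^*} \;\leq\; \frac{1}{2p\lambda^{*2}}\exp\!\left(-(1+o(1))\frac{p\lambda^{*2}}{2}\right) + 2^{-t}\cdot \frac{\ell(Z^{(0)},Z^*)}{4p^2\lambda^{*2}}.
\]
Since $p\lambda^{*2}\to\infty$, we have $\log(2p\lambda^{*2}) = o(p\lambda^{*2})$, so the prefactor $(2p\lambda^{*2})^{-1}$ can be absorbed into the $(1+o(1))$ term in the exponent without changing its form. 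For the second term, the initialization assumption $\ell(Z^{(0)},Z^*)=o(p^2\lambda^{*2})$ implies $\ell(Z^{(0)},Z^*)/(4p^2\lambda^{*2})\leq 1$ for $p$ large enough, yielding the target bound $2^{-t}$.

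There is no substantive obstacle here: unlike the analyses leading up to Theorem \ref{thm:per-con-alg}, which require controlling the error terms $F_j$, $G_j$, $H_j$ and the ideal error $\xi_{\rm ideal}$, the corollary is purely a conversion step that turns the linear convergence of $\ell(Z^{(t)},Z^*)$ into a bound on the Hamming distance and identifies the residual $2^{-t}$ factor with the algorithmic contribution of the initialization. The mildest point to check is the exponent calculation in the first term, which remains valid because the signal-to-noise condition $p\lambda^{*2}/(d\log d)\to\infty$ in particular gives $p\lambda^{*2}\to\infty$.
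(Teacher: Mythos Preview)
Your proposal is correct and matches the paper's intended argument: the paper states the inequality $\ell(Z,Z^*)\geq 4p^2\lambda^{*2}\cdot\tfrac{1}{p}\sum_j\indc{Z_j\neq Z_j^*}$ and then declares the corollary an immediate consequence of Theorem~\ref{thm:per-con-alg}, omitting the proof. Your unrolling of the recursion, division by $4p^2\lambda^{*2}$, absorption of the polynomial prefactor into the $(1+o(1))$ exponent, and use of the initialization condition to cap the $2^{-t}$ term are exactly the bookkeeping steps that justify this implication.
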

By the property of the Hamming loss, the algorithmic error $2^{-t}$ is negligible after $\ceil{3\log p}$ iterations, and we have
$$\frac{1}{p}\sum_{j=1}^p\indc{Z_j^{(t)}\neq Z_j^*} \leq \exp\left(-(1+o(1))\frac{p\lambda^{*2}}{2}\right)\quad\text{ for all }t\geq 3\log p.$$
Thus, the minimax rate is achieved given that the initialization condition (\ref{eq:ini-con-per}) is satisfied.

\subsubsection{Initialization}

Since $Y=\lambda^*Z^*Z^{*T}+W$, the matrix $\mathbb{E}Y\in\mathbb{R}^{pd\times pd}$ has a low rank structure, and thus we can recover the information of $Z^*$ via eigenvalue decomposition. Let $\wh{U}\in\mathbb{R}^{pd\times d}$ collect the leading eigenvectors of $Y$. We use the notation $\wh{U}_j\in\mathbb{R}^{d\times d}$ for the $j$th block of $\wh{U}$ so that $\wh{U}^T=(\wh{U}_1,\cdots,\wh{U}_p)$. For each $j\in[p]$, find $Z_j^{(0)}=\argmin_{V\in\mathcal{P}_d}\fnorm{\sqrt{p}\wh{U}_j-V}^2$. Again, this optimization is equivalent to
$$Z_j^{(0)}=\argmax_{V\in\mathcal{P}_d}\iprod{\wh{U}_j}{V},$$
and can be solved by the Kuhn-Munkres algorithm \citep{kuhn1955hungarian} with $O(d^3)$ complexity.
The statistical guarantee of $Z^{(0)}$ is given by the following proposition.

\begin{proposition}\label{prop:ini-per}
For any constant $C'>0$, there exists a constant $C>0$ only depending on $C$ such that
$$\min_{U\in\mathcal{P}_d}\ell(Z^{(0)},Z^*U^T)\leq Cpd^2,$$
with probability at least $1-e^{-C'pd}$.
\end{proposition}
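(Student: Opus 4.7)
The plan is a two-step argument: spectral concentration via Davis--Kahan, followed by a per-block rounding analysis. Since each $Z_j^*$ is a permutation, $Z^{*T}Z^* = pI_d$, so the signal matrix $\mathbb{E}Y = \lambda^* Z^* Z^{*T}$ has rank $d$ with all nonzero eigenvalues equal to $p\lambda^*$ and top-$d$ eigenspace $\mathrm{col}(Z^*)$, with orthonormal basis $Z^*/\sqrt{p}$. A standard Gaussian Wigner tail bound gives $\|W\|_{\mathrm{op}}\leq C\sqrt{pd}$ with probability at least $1-e^{-C'pd}$, and the Davis--Kahan $\sin\Theta$ theorem (Yu--Wang--Samworth form) then yields an orthogonal matrix $Q^*\in O(d)$ with
\[
\fnorm{\sqrt{p}\,\wh U - Z^* Q^*}^2 \;\lesssim\; \frac{d\,\|W\|_{\mathrm{op}}^2}{p\lambda^{*2}} \;\lesssim\; \frac{d^2}{\lambda^{*2}}.
\]

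Next I would analyze the rounding step. Let $V^* = \argmin_{V\in\mathcal{P}_d}\fnorm{Q^*-V}^2$. Because $V\mapsto Z_j^{*T}V$ is a bijection on $\mathcal{P}_d$, a short calculation shows that the nearest permutation to $Z_j^*Q^*$ is exactly $Z_j^*V^*$, with the same alignment $V^*$ for every block $j$; in particular, in the noiseless limit the algorithm returns $Z_j^{(0)}=Z_j^*V^*$ identically, so $\ell(Z^{(0)},Z^*(V^*)^T)=0$. For the noisy case, combining the blockwise optimality $\fnorm{\sqrt{p}\,\wh U_j-Z_j^{(0)}}\leq \fnorm{\sqrt{p}\,\wh U_j-Z_j^*V^*}$ with the triangle inequality gives
\[
\sum_j \fnorm{Z_j^{(0)}-Z_j^*V^*}^2 \;\leq\; 4\sum_j \fnorm{\sqrt{p}\,\wh U_j - Z_j^*V^*}^2 \;\lesssim\; \frac{d^2}{\lambda^{*2}} + p\fnorm{Q^*-V^*}^2.
\]
Taking $U=(V^*)^T$ and multiplying by $p\lambda^{*2}$ then bounds $\ell(Z^{(0)},Z^*U^T)$ by $Cpd^2 + p^2\lambda^{*2}\fnorm{Q^*-V^*}^2$.

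The main obstacle is controlling $p\fnorm{Q^*-V^*}^2$. Since $Q^*$ is a priori an arbitrary orthogonal matrix, the naive Frobenius bound $\fnorm{Q^*-V^*}^2\leq 2d$ only gives $\ell\lesssim pd^2+p^2d\lambda^{*2}$, which matches the claimed $Cpd^2$ in the weak-SNR regime $p\lambda^{*2}\lesssim d$ but is loose otherwise. To close this gap one should exploit that, up to higher-order corrections, $Q^* \approx \tfrac{1}{\sqrt{p}}Z^{*T}\wh U$, so $Q^*$ inherits the block-permutation structure of $Z^*/\sqrt{p}$ and itself lies within Frobenius distance $O(d/\sqrt{p\lambda^{*2}})$ of some permutation. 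The orthonormality identity $\sum_j(\sqrt{p}\,\wh U_j)^T(\sqrt{p}\,\wh U_j)=pI_d$ is the key algebraic tool here: expanding via $\sqrt{p}\,\wh U_j = Z_j^*Q^* + E_j$ it forces $\sum_j Z_j^{*T}E_j$ to be of second order in the perturbation, which in turn pins $Q^*$ close to a permutation. Combining the two contributions then yields $\sum_j \fnorm{Z_j^{(0)}-Z_j^*V^*}^2 \lesssim d^2/\lambda^{*2}$ and hence $\ell \leq Cpd^2$.
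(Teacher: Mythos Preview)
Your argument breaks down in the last paragraph. The Davis--Kahan alignment $Q^*\in\mathcal{O}(d)$ need not be close to any permutation: it is fixed by how the eigendecomposition picks a basis inside the (degenerate) top eigenspace of $Y$ and can be an arbitrary orthogonal matrix. The orthonormality identity you invoke, expanded as you suggest, only yields $Q^{*T}M+M^TQ^*=-\sum_jE_j^TE_j$ with $M=\sum_jZ_j^{*T}E_j$; this constrains the symmetric part of $Q^{*T}M$, not $Q^*$ itself, and holds identically for every orthogonal $Q^*$. The approximation $Q^*\approx p^{-1/2}Z^{*T}\wh U$ is likewise correct but circular, since nothing forces $p^{-1/2}Z^{*T}\wh U$ to lie near $\mathcal{P}_d$. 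So the term $p^2\lambda^{*2}\fnorm{Q^*-V^*}^2$ remains uncontrolled, and your bound stays at $Cpd^2+p^2d\lambda^{*2}$.

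The paper does not try to show the orthogonal alignment is close to a permutation. After the Davis--Kahan and rounding steps give $\sum_j\fnorm{Z_j^{(0)}-Z_j^*O}^2\lesssim d^2/\lambda^{*2}$, it passes to Gram matrices: writing
\[
Z^{(0)}Z^{(0)T}-Z^*Z^{*T}=Z^{(0)}(Z^{(0)}-Z^*O)^T+(Z^{(0)}-Z^*O)O^TZ^{*T}
\]
and using $\|Z^{(0)}\|=\|Z^*\|=\sqrt p$ gives $\fnorm{Z^{(0)}Z^{(0)T}-Z^*Z^{*T}}^2\leq 4p\sum_j\fnorm{Z_j^{(0)}-Z_j^*O}^2\lesssim pd^2/\lambda^{*2}$. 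The point is the blockwise identity
\[
\fnorm{Z^{(0)}Z^{(0)T}-Z^*Z^{*T}}^2=\sum_{i=1}^p\sum_{j=1}^p\fnorm{Z_i^{(0)}-Z_i^*\,(Z_j^{*T}Z_j^{(0)})}^2,
\]
so averaging over $j$ produces some $\bar j$ with $\sum_i\fnorm{Z_i^{(0)}-Z_i^*\,(Z_{\bar j}^{*T}Z_{\bar j}^{(0)})}^2\lesssim d^2/\lambda^{*2}$, and now $U^T=Z_{\bar j}^{*T}Z_{\bar j}^{(0)}\in\mathcal{P}_d$ is a genuine permutation. This Gram-matrix-plus-pigeonhole device is the missing ingredient in your approach: it manufactures a permutation alignment out of the orthogonal one without ever asking $O$ (your $Q^*$) to be near $\mathcal{P}_d$.
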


Proposition \ref{prop:ini-per} show that $Z^{(0)}$ achieves the rate $O(pd^2)$ for estimating $Z^*$ up to a global permutation. This uncertainty cannot be avoided since $Z_iZ_j^T=Z_iU^TUZ_j^T$. In order that the initialization condition (\ref{eq:ini-con-per}) is satisfied, we thus require $\frac{p\lambda^{*2}}{d^2}\rightarrow\infty$, which implies the condition for algorithmic convergence $\frac{p\lambda^{*2}}{d\log d}\rightarrow\infty$. By Corollary \ref{cor:per-con-alg}, we can thus conclude that the iterative algorithm initialized by the spectral method converges to the minimax error with a linear rate under the condition $\frac{p\lambda^{*2}}{d^2}\rightarrow\infty$.

\section{Discussion}\label{sec:disc}

In this paper, we show that a number of different discrete structure recovery problems can be unified into a single framework, and a general iterative algorithm is proved to achieve the optimal statistical error rate for each problem. In addition to all the examples covered by the paper, we expect our framework will lead to applications in many other statistical models, thanks to the flexibility of the general structured linear models (\ref{eq:SLM}). However, it is also worthwhile to note some important limitations of our proposed framework in the end of the paper. We compile four major points, listed below.

\begin{enumerate}

\item \emph{Parameter estimation.} While our iterative algorithm is designed for estimating the discrete structure $z^*$, it also outputs an estimator for the continuous model parameter. A natural question is whether this estimator enjoys any statistical optimality for estimating $B^*$. The answer to this question is a very clear no. As a concrete example, it is known that the $k$-means algorithm leads to  sub-optimal parameter estimation due to the bias resulted from the clustering error \citep{lu2016statistical}. Instead, for parameter estimation, one should use the EM algorithm for the Gaussian mixture model \citep{dwivedi2018singularity,wu2019randomly}. This phenomenon is also known as the Neyman-Scott paradox in linear mixed models. Basically, for optimal global parameter estimation, one should integrate out the local latent variables instead of optimizing over them. In general, the iterative algorithm proposed in the paper is only statistically optimal for recovering the discrete structure $z^*$.

\item \emph{Models with extremely weak SNR.} The examples that we analyze in the paper all exhibit reasonable signal-to-noise ratio behaviors. There are other examples that fit perfectly in the framework of structured linear models, but do not lead to convergent iterative algorithms. Consider a shuffled regression problem with independent observations $x_i\sim\mathn(0,I_d)$ and $y_i|x_{z_i^*}\sim\mathn(x_{z_i^*}^T\beta^*, 1)$ for $i=1,\cdots,n$. In this model, the vector $z^*$ is a label permutation that links the covariates to the response. To recover $z^*$, the quantity $\|\beta^*\|^2$ serves as the signal strength of the problem. It was proved by \cite{pananjady2017linear} that the recovery of $z^*$ is only possible under the signal-to-noise ratio condition $\|\beta^*\|^2\geq e^{O(n)}$. The problem has such a weak signal strength, and our analysis of the error terms $F_j(a,b;z)$, $G_j(a,b;z)$ and $H_j(a,b)$ simply breaks down. In fact, statistical recovery of $z^*$ in polynomial time under the condition $\|\beta^*\|^2\geq e^{O(n)}$ still remains an open problem in the literature.

\item \emph{Models with non-local discrete structure.} Consider a changepoint problem $Y\sim\mathn(\theta^*,I_n)$ with $\theta^*$ having a piecewise constant structure. In other words, there exists $z_2^*,\cdots,z_n^*\in\{0,1\}$ such that $\theta_i^*=\theta_{i-1}^*$ for all $z_i^*=1$. This model can be easily written as a structured linear model, but there is no suitable iterative algorithm to recover the changepoint structure encoded in $z_2^*,\cdots,z_n^*$. The reason is the lack of local statistic $T_i$ that is sufficient for $z_i^*$. The changepoint structure is a discrete but non-local structure. The amount of information for $z_i^*$ is dependent on the locations of the previous and the next changepoints, which are further determined by other $z_i$'s. Our iterative algorithm is not suitable for recovering such non-local discrete structures. An appropriate algorithm for this problem is dynamic programming \citep{friedrich2008complexity}.

\item \emph{Link function.} Though the Gaussianity assumption in the paper can all be relaxed to sub-Gaussian errors, we still require both the structured linear model (\ref{eq:SLM}) and the local statistic (\ref{eq:local-T-E}) to have additive error structures. This requirement is coherent with the iterative algorithm, since both iteration steps (\ref{eqn:local_B}) and (\ref{eqn:local_testing}) are least-squares optimization. Problems such as variable selection in generalized linear models and ranking in Bradley-Terry-Luce model \citep{bradley1952rank,luce2012individual} involve link functions that are not identity. This poses new challenges in the error analysis in addition to the modification of the iterative algorithm.

\end{enumerate}

While some of the listed points may be addressed by appropriate extensions of our framework, others may require a fundamentally different approach to the problem.
We hope the above discussion not only highlights the critical features of our proposed framework, but also leads to potential future research projects in discrete structure recovery.

\section{Proofs}\label{sec:pf}

\subsection{Proof of Theorem \ref{thm:main}}

Suppose $\ell(z^{(t-1)},z^*)\leq\tau$, and we will show $\ell(z^{(t)},z^*)\leq 2\xi_{\rm ideal}(\delta) + \frac{1}{2}\ell(z^{(t-1)},z^*)$. By the definition of the loss (\ref{eq:loss}), we have
\begin{eqnarray}
\nonumber \ell(z^{(t)},z^*) &=& \sum_{j=1}^p\|\mu_j(B^*,z_j^{(t)})-\mu_j(B^*,z_j^*)\|^2 \\
\label{eq:pf-er-Sc} &=& \sum_{j=1}^p\sum_{b\in[k]\backslash\{z_j^*\}}\|\mu_j(B^*,b)-\mu_j(B^*,z_j^*)\|^2\indc{z_j^{(t)}=b}.
\end{eqnarray}
To bound (\ref{eq:pf-er-Sc}), we have
\begin{eqnarray}
\nonumber && \indc{z_j^{(t)}=b} \\
\label{eq:basic1} &\leq& \indc{\|T_j-\nu_j(\wh{B}(z^{(t-1)}),b)\|^2 \leq \|T_j-\nu_j(\wh{B}(z^{(t-1)}),z_j^*)\|^2} \\
\label{eq:basic2} &=& \indc{\iprod{\epsilon_j}{\nu_j(\wh{B}(z^*),z_j^*)-\nu_j(\wh{B}(z^*),b)} \leq -\frac{1}{2}\Delta_j(z_j^*,b)^2 + F_j(z_j^*,b;z^{(t-1)})+G_j(z_j^*,b;z^{(t-1)}) + H_j(z_j^*,b)} \\
\label{eq:basic3} &\leq& \indc{\iprod{\epsilon_j}{\nu_j(\wh{B}(z^*),z_j^*)-\nu_j(\wh{B}(z^*),b)} \leq -\frac{1-\delta}{2}\Delta_j(z_j^*,b)^2} \\
\nonumber && + \indc{\frac{\delta}{2}\Delta_j(z_j^*,b)^2 \leq  F_j(z_j^*,b;z^{(t-1)})+G_j(z_j^*,b;z^{(t-1)}) + H_j(z_j^*,b)}  \\
\label{eq:basic4} &\leq& \indc{\iprod{\epsilon_j}{\nu_j(\wh{B}(z^*),z_j^*)-\nu_j(\wh{B}(z^*),b)} \leq -\frac{1-\delta}{2}\Delta_j(z_j^*,b)^2}  \\
\nonumber && + \indc{\frac{\delta}{4}\Delta_j(z_j^*,b)^2 \leq   F_j(z_j^*,b;z^{(t-1)})+G_j(z_j^*,b;z^{(t-1)})} \\
\label{eq:basic5} &\leq& \indc{\iprod{\epsilon_j}{\nu_j(\wh{B}(z^*),z_j^*)-\nu_j(\wh{B}(z^*),b)} \leq -\frac{1-\delta}{2}\Delta_j(z_j^*,b)^2}  \\
\nonumber && + \frac{32F_j(z_j^*,b;z^{(t-1)})^2}{\delta^2\Delta_j(z_j^*,b)^4} + \frac{32G_j(z_j^*,b;z^{(t-1)})^2}{\delta^2\Delta_j(z_j^*,b)^4}.
\end{eqnarray}
The inequality (\ref{eq:basic1}) is due to the definition that $z_j^{(t)}=\argmin_{a\in[k]}\|T_j-\nu_j(\wh{B}(z^{(t-1)}),a)\|^2$. Then, the equality (\ref{eq:basic2}) uses the equivalence between (\ref{eq:error:a-b}) and (\ref{eq:decomposition}). The inequality (\ref{eq:basic3}) uses a union bound, and (\ref{eq:basic4}) applies Condition C. Finally, (\ref{eq:basic5}) follows Markov's inequality.

Apply the bound (\ref{eq:basic5}) to (\ref{eq:pf-er-Sc}), and then $\ell(z^{(t)},z^*)$ can be bounded by
\begin{eqnarray}
\nonumber && \sum_{j=1}^p\sum_{b\in[k]\backslash\{z_j^*\}}\|\mu_j(B^*,b)-\mu_j(B^*,z_j^*)\|^2\indc{\iprod{\epsilon_j}{\nu_j(\wh{B}(z^*),z_j^*)-\nu_j(\wh{B}(z^*),b)} \leq -\frac{1-\delta}{2}\Delta_j(z_j^*,b)^2} \\
\nonumber && + \sum_{j=1}^p\sum_{b\in[k]\backslash\{z_j^*\}}\|\mu_j(B^*,b)-\mu_j(B^*,z_j^*)\|^2\indc{z_j^{(t)}=b}\frac{32F_j(z_j^*,b;z^{(t-1)})^2}{\delta^2\Delta_j(z_j^*,b)^4} \\
\nonumber && + \sum_{j=1}^p\sum_{b\in[k]\backslash\{z_j^*\}}\|\mu_j(B^*,b)-\mu_j(B^*,z_j^*)\|^2\indc{z_j^{(t)}=b}\frac{32G_j(z_j^*,b;z^{(t-1)})^2}{\delta^2\Delta_j(z_j^*,b)^4} \\
\nonumber &\leq& \xi_{\rm ideal}(\delta)  + \sum_{j=1}^p\max_{b\in[k]\backslash\{z_j^*\}}\|\mu_j(B^*,b)-\mu_j(B^*,z_j^*)\|^2\frac{32F_j(z_j^*,b;z^{(t-1)})^2}{\delta^2\Delta_j(z_j^*,b)^4} \\
&& + \sum_{j=1}^p\indc{z_j^{(t)}\neq z_j^*}\max_{b\in[k]\backslash\{z_j^*\}}\|\mu_j(B^*,b)-\mu_j(B^*,z_j^*)\|^2\frac{32G_j(z_j^*,b;z^{(t-1)})^2}{\delta^2\Delta_j(z_j^*,b)^4} \\
\label{eq:basic6} &\leq& \xi_{\rm ideal}(\delta) + \frac{1}{8}\ell(z^{(t-1)},z^*) + \frac{4\Delta_{\min}^2h(z^{(t)},z^*)+\tau}{8\tau}\ell(z^{(t-1)},z^*) \\
\label{eq:basic7} &\leq& \xi_{\rm ideal}(\delta) + \frac{1}{2}\Delta_{\min}^2h(z^{(t)},z^*) + \frac{1}{4}\ell(z^{(t-1)},z^*) \\
\label{eq:basic8} &\leq& \xi_{\rm ideal}(\delta) + \frac{1}{2}\ell(z^{(t)},z^*) + \frac{1}{4}\ell(z^{(t-1)},z^*),
\end{eqnarray}
where we have used Conditions A and B in (\ref{eq:basic6}). The inequality (\ref{eq:basic7}) uses the condition $\ell(z^{(t-1)},z^*)\leq\tau$, and (\ref{eq:basic8}) is by (\ref{eq:l-to-hamming}). To summarize, we have obtained
$$\ell(z^{(t)},z^*)\leq \xi_{\rm ideal}(\delta) + \frac{1}{2}\ell(z^{(t)},z^*) + \frac{1}{4}\ell(z^{(t-1)},z^*),$$
which can be rearranged into
$$\ell(z^{(t)},z^*)\leq 2\xi_{\rm ideal}(\delta) + \frac{1}{2}\ell(z^{(t-1)},z^*).$$

To prove the conclusion of Theorem \ref{thm:main}, we use a mathematical induction argument. First, Condition D asserts that $\ell(z^{(0)},z^*)\leq \tau$. This leads to $\ell(z^{(1)},z^*)\leq 2\xi_{\rm ideal}(\delta) + \frac{1}{2}\ell(z^{(0)},z^*)\leq \tau$, together with Condition C that $\xi_{\rm ideal}(\delta)\leq \frac{1}{4}\tau$. Suppose $\ell(z^{(t-1)},z^*)\leq\tau$, we then have $\ell(z^{(t)},z^*)\leq 2\xi_{\rm ideal}(\delta) + \frac{1}{2}\ell(z^{(t-1)},z^*)\leq \tau$. Hence, $\ell(z^{(t-1)},z^*)\leq\tau$ for all $t\geq 1$, which implies that $\ell(z^{(t)},z^*)\leq 2\xi_{\rm ideal}(\delta) + \frac{1}{2}\ell(z^{(t-1)},z^*)$ for all $t\geq 1$, and the proof is complete.

\subsection{Proofs in Section \ref{sec:clustering}}

In this section, we present the proofs of Lemma \ref{lem:kmeans-error}, Lemma \ref{lem:kmeans-ideal} and Proposition \ref{prop:ini-clust}. The conclusions of Theorem \ref{thm:main-kmeans} and Corollary \ref{cor:main-kmeans} are direct consequences of Theorem \ref{thm:main}, and thus their proofs are omitted. We first list some technical lemmas.  The following $\chi^2$ tail probability is Lemma 1 of \cite{laurent2000adaptive}.

\begin{lemma}\label{prop:chisq}
For any $x>0$, we have
\begin{eqnarray*}
\mathbb{P}\left(\chi_d^2\geq d+2\sqrt{dx}+2x\right) &\leq& e^{-x}, \\
\mathbb{P}\left(\chi_d^2\leq d-2\sqrt{dx}\right) &\leq& e^{-x}.
\end{eqnarray*}
\end{lemma}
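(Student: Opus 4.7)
The plan is to use the classical Chernoff (moment generating function) method for sub-exponential sums, since $\chi_d^2 = \sum_{i=1}^d Z_i^2$ for i.i.d.\ $Z_i \sim \mathcal{N}(0,1)$ and each $Z_i^2 - 1$ is sub-exponential. First I would compute the cumulant generating function $\psi(\lambda) = \log \mathbb{E}\, e^{\lambda(Z_1^2 - 1)} = -\tfrac{1}{2}\log(1-2\lambda) - \lambda$, valid for $\lambda < 1/2$. Using the elementary inequality $-\tfrac{1}{2}\log(1-2\lambda) - \lambda \leq \tfrac{\lambda^2}{1-2\lambda}$ on $0 \leq \lambda < 1/2$, I would get $\log \mathbb{E}\, e^{\lambda(\chi_d^2 - d)} \leq \tfrac{d\lambda^2}{1-2\lambda}$.

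For the upper tail, the Chernoff bound gives, for every $t > 0$ and $\lambda \in [0,1/2)$,
\begin{equation*}
\mathbb{P}(\chi_d^2 - d \geq t) \leq \exp\!\left(-\lambda t + \tfrac{d\lambda^2}{1-2\lambda}\right).
\end{equation*}
I would then plug in $t = 2\sqrt{dx} + 2x$ and make the explicit choice $\lambda = \tfrac{\sqrt{x}}{\sqrt{d} + \sqrt{x}}\cdot \tfrac{1}{2}$ (equivalently, $\tfrac{\lambda}{1-2\lambda}$ proportional to $\sqrt{x/d}$), which after a short algebraic simplification produces the exponent $-x$, yielding the stated bound $e^{-x}$.

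For the lower tail I would repeat the argument with the opposite sign. For $\lambda \in (0, \infty)$, the symmetric computation gives $\log \mathbb{E}\, e^{-\lambda(\chi_d^2 - d)} = -\tfrac{1}{2}\log(1+2\lambda) + \lambda \leq \lambda^2 d$ (this one-sided bound is cleaner since $1+2\lambda \geq 1$), and so
\begin{equation*}
\mathbb{P}(\chi_d^2 - d \leq -t) \leq \exp(-\lambda t + d\lambda^2).
\end{equation*}
Optimizing in $\lambda$ at $t = 2\sqrt{dx}$ gives $\lambda = \sqrt{x/d}$ and the exponent $-x$, hence the second inequality.

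I expect the only delicate part to be verifying the elementary analytic inequality $-\tfrac{1}{2}\log(1-2\lambda) - \lambda \leq \tfrac{\lambda^2}{1-2\lambda}$ on $[0,1/2)$ and choosing the tuning parameter $\lambda$ that balances the two terms to land exactly on $e^{-x}$; the rest is a routine application of the Chernoff method. Since the result is quoted verbatim from Laurent and Massart (2000), an alternative route is to simply cite their Lemma 1, but the MGF derivation above is short and self-contained.
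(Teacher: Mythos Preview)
The paper does not give a proof of this lemma at all; it simply cites it as Lemma~1 of Laurent and Massart (2000). Your proposal is essentially a reconstruction of that original proof via the Chernoff/MGF method, so there is nothing to compare on the paper's side.

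Your plan is correct in structure, but there is one small slip worth flagging. In the upper-tail step, the explicit tuning parameter you wrote, $\lambda=\tfrac{1}{2}\cdot\tfrac{\sqrt{x}}{\sqrt{d}+\sqrt{x}}$, does \emph{not} produce the exponent $-x$: with that choice one gets $\lambda t = x$ but the residual term $\tfrac{d\lambda^{2}}{1-2\lambda}=\tfrac{\sqrt{d}\,x}{4(\sqrt{d}+\sqrt{x})}$ is strictly positive, so the bound comes out as $e^{-cx}$ with $c<1$. The choice that makes the algebra close exactly is to set $\tfrac{\lambda}{1-2\lambda}=\sqrt{x/d}$, i.e.\ $\lambda=\tfrac{\sqrt{x}}{\sqrt{d}+2\sqrt{x}}$; then both terms combine to give precisely $-x$. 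The lower-tail part of your argument, with $\lambda=\sqrt{x/d}$ and the clean quadratic bound $d\lambda^{2}$, is correct as written.
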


\begin{lemma}
Consider i.i.d. random vectors $\epsilon_1,\dotsc,\epsilon_p\sim \mathn(0,I_d)$ and some $z^*\in[k]^p$ and $k\in[p]$. Then, for any constant $C'>0$, there exists some constant $C>0$ only depending on $C'$ such that
\begin{eqnarray}
\label{eq:esp-km1} \max_{a\in[k]}\left\|\frac{\sum_{j=1}^p\indc{z_j^*=a}\epsilon_j}{\sqrt{\sum_{j=1}^p\indc{z_j^*=a}}}\right\| &\leq& C\sqrt{d+\log p}, \\
\label{eq:esp-km2} \max_{T\subset[p]}\left\|\frac{1}{\sqrt{|T|}}\sum_{j\in T}\epsilon_j\right\| &\leq& C\sqrt{d+p}, \\
\label{eq:esp-km3} \max_{a\in[k]}\frac{1}{d+\sum_{j=1}^p\indc{z_j^*=a}}\left\|\sum_{j=1}^p\indc{z_j^*=a}\epsilon_j\epsilon_j^T\right\|&\leq& C,
\end{eqnarray}
with probability at least $1-p^{-C'}$. We have used the convention that $0/0=0$.
\end{lemma}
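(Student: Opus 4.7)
The plan is to reduce each of the three bounds to a chi-squared tail inequality via Lemma \ref{prop:chisq}, combined with an appropriate union bound. The common structure is: identify the relevant linear or bilinear functional of the Gaussian vectors, recognize its distribution as a multiple of $\chi^2$, apply the tail bound with a concentration parameter $x$ calibrated to defeat the union-bound entropy, and finally simplify $d+2\sqrt{dx}+2x$ via AM--GM to $\lesssim d+x$.

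For (\ref{eq:esp-km1}), I set $n_a=\sum_j\indc{z_j^*=a}$ and note that, conditional on $z^*$, the sum $\sum_j\indc{z_j^*=a}\epsilon_j$ is distributed as $\mathn(0,n_a I_d)$ when $n_a\geq 1$ (the case $n_a=0$ being trivial via $0/0=0$). The squared normalized quantity is therefore $\chi^2_d$. Taking $x=(C'+1)\log p$ in Lemma \ref{prop:chisq} yields an upper bound of order $d+\log p$ with failure probability at most $p^{-C'-1}$, and a union bound over the at most $k\leq p$ values of $a$ finishes the argument.

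For (\ref{eq:esp-km2}), the same observation gives $\sum_{j\in T}\epsilon_j/\sqrt{|T|}\sim\mathn(0,I_d)$ with squared norm $\chi^2_d$ for each fixed subset $T$. The subtlety is that the union bound now runs over $2^p$ subsets, so $x$ must beat the entropy $p\log 2$. I would take $x=cp$ with $c$ large enough that $2^pe^{-cp}\leq p^{-C'}$, which gives $\chi^2_d\leq d+2\sqrt{cdp}+2cp\lesssim d+p$ uniformly in $T$ and hence the claimed bound $C\sqrt{d+p}$.

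For (\ref{eq:esp-km3}), my plan is the standard $\epsilon$-net argument for the operator norm of the Wishart-type matrix $M_a=\sum_{j:z_j^*=a}\epsilon_j\epsilon_j^T$. Writing $\|M_a\|=\sup_{v\in S^{d-1}}\sum_{j:z_j^*=a}\langle v,\epsilon_j\rangle^2$ and using a $1/2$-net $\mathcal{N}$ of $S^{d-1}$ with $|\mathcal{N}|\leq 6^d$, one has $\|M_a\|\leq 4\max_{v\in\mathcal{N}}\sum_{j:z_j^*=a}\langle v,\epsilon_j\rangle^2$. For each fixed $v$ the inner sum is $\chi^2_{n_a}$; applying Lemma \ref{prop:chisq} with $x=c_1(d+\log p)$ and union bounding over $\mathcal{N}$ and $a\in[k]$ produces $\|M_a\|\lesssim n_a+d+\log p$ with probability at least $1-p^{-C'}$ for $c_1$ large enough, and dividing by $d+n_a$ yields the claim in the regime where this lemma is invoked (where $d+n_a\gtrsim\log p$, as will be the case in the clustering application with $n_a\asymp p/k$). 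This last bound is the main obstacle: unlike the first two, which concern fixed linear functionals of the $\epsilon_j$'s, controlling the operator norm requires uniform control over all unit directions, forcing the $\epsilon$-net argument whose $6^d$ entropy must be carefully balanced against the $\chi^2_{n_a}$ tail.
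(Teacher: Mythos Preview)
Your argument is correct and matches the paper's own: for \eqref{eq:esp-km1} the paper gives exactly your $\chi^2_d$ tail plus union over $a\in[k]$, while for \eqref{eq:esp-km2} and \eqref{eq:esp-km3} it simply cites Lemmas~A.1 and~A.2 of \cite{lu2016statistical} (with an added union bound over $a$ for the latter), whose proofs are precisely the $2^p$-subset union bound and the $\epsilon$-net Wishart argument you spell out. Your observation about the residual $\log p$ overhead in \eqref{eq:esp-km3} is accurate and is elided in the paper by deferring to the citation; it is immaterial in the only downstream use (Lemma~\ref{lem:kmeans-error}).
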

\begin{proof}
By %Lemma 1 of \cite{laurent2000adaptive}
Lemma \ref{prop:chisq}, we have $\mathbb{P}(\chi_d^2\geq d+2\sqrt{xd}+2x)\leq e^{-x}$. Then, a union bound argument leads to (\ref{eq:esp-km1}). The inequalities (\ref{eq:esp-km2}) and (\ref{eq:esp-km3}) are Lemmas A.1 and A.2 in \cite{lu2016statistical}. We need to slightly extend Lemma A.2 in \cite{lu2016statistical}, but this can be done by a standard union bound argument.
\end{proof}

With the two lemmas above, we are ready to state the proofs of Lemma \ref{lem:kmeans-error} and Lemma \ref{lem:kmeans-ideal}.
\begin{proof}[Proof of Lemma \ref{lem:kmeans-error}]
We write $\epsilon_j=Y_j-\theta_{z_j^*}$ and consider the event that the three inequalities (\ref{eq:esp-km1})-(\ref{eq:esp-km3}) hold.
For any $z\in[k]^p$ such that $\ell(z,z^*)\leq\tau\leq \frac{\Delta_{\min}^2\alpha p}{2k}$, we have
\begin{eqnarray*}
\sum_{j=1}^p\indc{z_j=a} &\geq& \sum_{j=1}^p\indc{z_j^*=a} - \sum_{j=1}^p\indc{z_j\neq z_j^*} \\
&\geq& \sum_{j=1}^p\indc{z_j^*=a} - \frac{\ell(z,z^*)}{\Delta_{\min}^2} \\
&\geq& \frac{\alpha p}{k} - \frac{\alpha p}{2k} \\
&=& \frac{\alpha p}{2k},
\end{eqnarray*}
which implies
\begin{equation}
\min_{a\in[k]}\sum_{j=1}^p\indc{z_j=a} \geq \frac{\alpha p}{2k}. \label{eq:small-size-z}
\end{equation}
We then introduce more notation. We write $\theta_a(z)=\mathbb{E}\wh{\theta}_a(z)$ and
$$\bar{\epsilon}_a(z)=\frac{\sum_{j=1}^p\indc{z_j=a}\epsilon_j}{\sum_{j=1}^p\indc{z_j=a}}.$$
We first derive bounds for $\max_{a\in[k]}\|\wh{\theta}_a(z^*)-\theta_a^*\|$, $\max_{a\in[k]}\|\theta_a(z)-\theta_a(z^*)\|$ and $\max_{a\in[k]}\|\bar{\epsilon}_a(z)-\bar{\epsilon}_a(z^*)\|$. By (\ref{eq:esp-km1}) and (\ref{eq:small-size-z}), we have
\begin{eqnarray}
\nonumber \max_{a\in[k]}\|\wh{\theta}_a(z^*)-\theta_a^*\| &=& \max_{a\in[k]}\left\|\frac{\sum_{j=1}^p\indc{z_j^*=a}\epsilon_j}{\sum_{j=1}^p\indc{z_j^*=a}}\right\| \\
\nonumber &\leq& \sqrt{\frac{k}{\alpha p}}\max_{a\in[k]}\left\|\frac{\sum_{j=1}^p\indc{z_j^*=a}\epsilon_j}{\sqrt{\sum_{j=1}^p\indc{z_j^*=a}}}\right\| \\
\label{eq:useful-1} &\lesssim& \sqrt{\frac{k(d+\log p)}{p}}.
\end{eqnarray}
By (\ref{eq:small-size-z}), we have
\begin{eqnarray}
\nonumber \max_{a\in[k]}\|\theta_a(z)-\theta_a(z^*)\| &=& \left\|\frac{1}{\sum_{j=1}^p\indc{z_j=a}}\sum_{j=1}^p\sum_{b\in[k]\backslash\{a\}}\indc{z_j=a,z_j^*=b}(\theta_b^*-\theta_a^*)\right\| \\
\nonumber &\leq& \frac{2k}{\alpha p}\sum_{j=1}^p\sum_{b\in[k]\backslash\{a\}}\|\theta_b^*-\theta_a^*\|\indc{z_j=a,z_j^*=b} \\
\label{eq:useful-2} &\leq& \frac{2k}{\alpha p\Delta_{\min}}\ell(z,z^*).
\end{eqnarray}
By (\ref{eq:small-size-z}), we have
\begin{eqnarray*}
&& \max_{a\in[k]}\|\bar{\epsilon}_a(z)-\bar{\epsilon}_a(z^*)\| \\
&=& \max_{a\in[k]}\left\|\frac{\sum_{j=1}^p\indc{z_j=a}\epsilon_j}{\sum_{j=1}^p\indc{z_j=a}}-\frac{\sum_{j=1}^p\indc{z_j^*=a}\epsilon_j}{\sum_{j=1}^p\indc{z_j^*=a}}\right\| \\
&\leq& \max_{a\in[k]}\left\|\frac{\sum_{j=1}^p\indc{z_j=a}\epsilon_j}{\sum_{j=1}^p\indc{z_j=a}}-\frac{\sum_{j=1}^p\indc{z_j^*=a}\epsilon_j}{\sum_{j=1}^p\indc{z_j=a}}\right\| + \max_{a\in[k]}\left\|\frac{\sum_{j=1}^p\indc{z_j^*=a}\epsilon_j}{\sum_{j=1}^p\indc{z_j=a}}-\frac{\sum_{j=1}^p\indc{z_j^*=a}\epsilon_j}{\sum_{j=1}^p\indc{z_j^*=a}}\right\| \\
&\leq& \frac{2k}{\alpha p}\max_{a\in[k]}\left\|\sum_{j=1}^p(\indc{z_j=a}-\indc{z_j^*=a})\epsilon_j\right\| \\
&& + \frac{2k}{\alpha p}\sqrt{\frac{k}{\alpha p}}\left|\sum_{j=1}^p\indc{z_j=a}-\sum_{j=1}^p\indc{z_j^*=a}\right|\max_{a\in[k]}\left\|\frac{\sum_{j=1}^p\indc{z_j^*=a}\epsilon_j}{\sqrt{\sum_{j=1}^p\indc{z_j^*=a}}}\right\|,
\end{eqnarray*}
where the first term in the above bound can be bounded by
\begin{eqnarray*}
&& \frac{2k}{\alpha p}\max_{a\in[k]}\left\|\sum_{j=1}^p\indc{z_j=a,z_j^*\neq a}\epsilon_j\right\| + \frac{2k}{\alpha p}\max_{a\in[k]}\left\|\sum_{j=1}^p\indc{z_j^*=a,z_j\neq a}\epsilon_j\right\| \\
&\lesssim& \frac{k\sqrt{d+p}}{p}\sqrt{\frac{\ell(z,z^*)}{\Delta^2_{\min}}},
\end{eqnarray*}
because of the facts that $\max_{a\in[k]}\sum_{j=1}^p\indc{z_j=a,z_j^*\neq a}\leq \frac{\ell(z,z^*)}{\Delta^2_{\min}}$, $\max_{a\in[k]}\sum_{j=1}^p\indc{z_j^*=a,z_j\neq a}\leq \frac{\ell(z,z^*)}{\Delta^2_{\min}}$, and the inequality (\ref{eq:esp-km2}), and the second term can be bounded by
\begin{eqnarray*}
&& \frac{2k}{\alpha p}\sqrt{\frac{k}{\alpha p}}\max_{a\in[k]}\left\|\frac{\sum_{j=1}^p\indc{z_j^*=a}\epsilon_j}{\sqrt{\sum_{j=1}^p\indc{z_j^*=a}}}\right\|\left(\max_{a\in[k]}\sum_{j=1}^p\indc{z_j=a,z_j^*\neq a}+\max_{a\in[k]}\sum_{j=1}^p\indc{z_j^*=a,z_j\neq a}\right) \\
&\leq& \frac{4k}{\alpha p}\sqrt{\frac{k}{\alpha p}}\frac{\ell(z,z^*)}{\Delta^2_{\min}}\max_{a\in[k]}\left\|\frac{\sum_{j=1}^p\indc{z_j^*=a}\epsilon_j}{\sqrt{\sum_{j=1}^p\indc{z_j^*=a}}}\right\| \\
&\lesssim& \frac{k\sqrt{k}\ell(z,z^*)\sqrt{d+\log p}}{p\sqrt{p}\Delta^2_{\min}}.
\end{eqnarray*}
Under the condition that $\ell(z,z^*)\leq\tau\leq \frac{\Delta_{\min}^2\alpha p}{2k}$, we have
\begin{eqnarray}
\nonumber && \max_{a\in[k]}\|\bar{\epsilon}_a(z)-\bar{\epsilon}_a(z^*)\| \\
\nonumber &\lesssim& \frac{k\sqrt{d+p}}{p}\sqrt{\frac{\ell(z,z^*)}{\Delta^2_{\min}}} + \frac{k\sqrt{k}\ell(z,z^*)\sqrt{d+\log p}}{p\sqrt{p}\Delta^2_{\min}} \\
\label{eq:useful-3} &\lesssim& \frac{k\sqrt{d+p}}{p}\sqrt{\frac{\ell(z,z^*)}{\Delta^2_{\min}}}.
\end{eqnarray}
Combining the two bounds (\ref{eq:useful-2}) and (\ref{eq:useful-3}) and using triangle inequality, we also have
\begin{eqnarray}
\nonumber && \max_{a\in[k]}\|\wh{\theta}_a(z)-\wh{\theta}_a(z^*)\| \\
\nonumber &\leq& \max_{a\in[k]}\|\theta_a(z)-\theta_a(z^*)\| + \max_{a\in[k]}\|\bar{\epsilon}_a(z)-\bar{\epsilon}_a(z^*)\| \\
\label{eq:useful-4} &\lesssim& \frac{k}{p\Delta_{\min}}\ell(z,z^*) + \frac{k\sqrt{d+p}}{p\Delta_{\min}}\sqrt{\ell(z,z^*)}.
\end{eqnarray}

Now we proceed to prove (\ref{eq:kmeans-error1})-(\ref{eq:kmeans-error3}). For (\ref{eq:kmeans-error1}), we have
\begin{eqnarray*}
&& \sum_{j=1}^p\max_{b\in[k]\backslash\{z_j^*\}}\frac{F_j(z_j^*,b;z)^2\|\mu_j(B^*,b)-\mu_j(B^*,z_j^*)\|^2}{\Delta_j(z_j^*,b)^4\ell(z,z^*)} \\
&\leq& \sum_{j=1}^p\sum_{b=1}^k\frac{\left|\iprod{\epsilon_j}{\wh{\theta}_{z_j^*}(z^*)-\wh{\theta}_{z_j^*}(z)-\wh{\theta}_b(z^*)+\wh{\theta}_b(z)}\right|^2}{\|\theta_{z_j^*}^*-\theta_b^*\|^2\ell(z,z^*)} \\
&\leq& \sum_{b=1}^k\sum_{a\in[k]\backslash\{b\}}\sum_{j=1}^p\indc{z_j^*=a}\frac{\left|\iprod{\epsilon_j}{\wh{\theta}_{a}(z^*)-\wh{\theta}_{a}(z)-\wh{\theta}_b(z^*)+\wh{\theta}_b(z)}\right|^2}{\|\theta_{a}^*-\theta_b^*\|^2\ell(z,z^*)} \\
&\leq& \sum_{b=1}^k\sum_{a\in[k]\backslash\{b\}}\frac{\left\|\wh{\theta}_{a}(z^*)-\wh{\theta}_{a}(z)-\wh{\theta}_b(z^*)+\wh{\theta}_b(z)\right\|^2}{\|\theta_{a}^*-\theta_b^*\|^2\ell(z,z^*)}\left\|\sum_{j=1}^p\indc{z_j^*=a}\epsilon_j\epsilon_j^T\right\| \\
&\lesssim& \frac{k^2(kd/p+1)}{\Delta_{\min}^2}\left(1+\frac{k(d/p+1)}{\Delta_{\min}^2}\right)
\end{eqnarray*}
where we have used (\ref{eq:esp-km3}), (\ref{eq:useful-4}) and the condition that $\ell(z,z^*)\leq\tau\leq \frac{\Delta_{\min}^2\alpha p}{2k}$. Next, for (\ref{eq:kmeans-error2}), we have
\begin{eqnarray*}
|G_j(a,b;z)| &\leq&  \frac{1}{2}\|\wh{\theta}_a(z)-\wh{\theta}_a(z^*)\|^2 + \frac{1}{2}\|\wh{\theta}_b(z)-\wh{\theta}_b(z^*)\|^2 \\
&& + \|\wh{\theta}_a(z^*)-\theta_a^*\|\|\wh{\theta}_a(z)-\wh{\theta}_a(z^*)\| + \|\wh{\theta}_a(z^*)-\theta_b^*\|\|\wh{\theta}_b(z)-\wh{\theta}_b(z^*)\| \\
&\leq& \max_{a\in[k]}\|\wh{\theta}_a(z)-\wh{\theta}_a(z^*)\|^2 + 2\left(\max_{a\in[k]}\|\wh{\theta}_a(z^*)-\theta_a^*\|\right)\left(\max_{a\in[k]}\|\wh{\theta}_a(z)-\wh{\theta}_a(z^*)\|\right) \\
&& + \|\theta_a^*-\theta_b^*\|\left(\max_{a\in[k]}\|\wh{\theta}_a(z)-\wh{\theta}_a(z^*)\|\right).
\end{eqnarray*}
This implies for any subset $T\subset[p]$, we have
\begin{eqnarray*}
&& \frac{\tau}{4\Delta_{\min}^2|T|+\tau}\sum_{j\in T}\max_{b\in[k]\backslash\{z_j^*\}}\frac{G_j(z_j^*,b;z)^2\|\mu_j(B^*,b)-\mu_j(B^*,z_j^*)\|^2}{\Delta_j(z_j^*,b)^4\ell(z,z^*)} \\
&\leq& \frac{\tau}{4\Delta_{\min}^2|T|}\sum_{j\in T}\frac{3\max_{a\in[k]}\|\wh{\theta}_a(z)-\wh{\theta}_a(z^*)\|^4}{\Delta_{\min}^2\ell(z,z^*)} \\
&& + \frac{\tau}{4\Delta_{\min}^2|T|}\sum_{j\in T}\frac{12\left(\max_{a\in[k]}\|\wh{\theta}_a(z^*)-\theta_a^*\|^2\right)\left(\max_{a\in[k]}\|\wh{\theta}_a(z)-\wh{\theta}_a(z^*)\|^2\right)}{\Delta_{\min}^2\ell(z,z^*)} \\
&& + \frac{\tau}{4\Delta_{\min}^2|T|}\sum_{j\in T}\frac{3\max_{a\in[k]}\|\wh{\theta}_a(z)-\wh{\theta}_a(z^*)\|^2}{\ell(z,z^*)} \\
&=& \frac{3\tau\max_{a\in[k]}\|\wh{\theta}_a(z)-\wh{\theta}_a(z^*)\|^4}{4\Delta_{\min}^4\ell(z,z^*)} + \frac{3\tau\max_{a\in[k]}\|\wh{\theta}_a(z)-\wh{\theta}_a(z^*)\|^2}{4\Delta_{\min}^2\ell(z,z^*)} \\
&& + \frac{3\tau\left(\max_{a\in[k]}\|\wh{\theta}_a(z^*)-\theta_a^*\|^2\right)\left(\max_{a\in[k]}\|\wh{\theta}_a(z)-\wh{\theta}_a(z^*)\|^2\right)}{\Delta_{\min}^4\ell(z,z^*)} \\
&\lesssim& \frac{k\tau}{p\Delta_{\min}^2} + \frac{k(d+p)}{p\Delta_{\min}^2} + \frac{k^2(d+p)^2}{p^2\Delta_{\min}^4},
\end{eqnarray*}
where we have used (\ref{eq:useful-1}), (\ref{eq:useful-4}), and the condition that $\ell(z,z^*)\leq\tau\leq \frac{\Delta_{\min}^2\alpha p}{2k}$.
Finally, for (\ref{eq:kmeans-error3}), the bound (\ref{eq:useful-1}) leads to
\begin{eqnarray*}
\frac{|H_j(a,b)|}{\Delta_j(a,b)^2} &\leq& \frac{\frac{1}{2}\|\wh{\theta}_a(z^*)-\theta_a^*\|^2+\frac{1}{2}\|\wh{\theta}_b(z^*)-\theta_b^*\|^2+\|\theta_a^*-\theta_b^*\|\|\wh{\theta}_b(z^*)-\theta_b^*\|}{\|\theta_a^*-\theta_b^*\|^2} \\
&\lesssim& \frac{k(d+\log p)}{p\Delta_{\min}^2} + \sqrt{\frac{k(d+\log p)}{p\Delta_{\min}^2} }.
\end{eqnarray*}
By taking maximum, we have obtained (\ref{eq:kmeans-error1})-(\ref{eq:kmeans-error3}).
The proof is complete.
\end{proof}

\begin{proof}[Proof of Lemma \ref{lem:kmeans-ideal}]
Note that
\begin{eqnarray*}
&& \mathbb{P}\left(\iprod{\epsilon_j}{\wh{\theta}_a(z^*)-\wh{\theta}_b(z^*)}\leq -\frac{1-\delta}{2}\|\theta_a^*-\theta_b^*\|^2\right) \\
&\leq& \mathbb{P}\left(\iprod{\epsilon_j}{\theta_a^*-\theta_b^*}\leq -\frac{1-\delta-\bar{\delta}}{2}\|\theta_a^*-\theta_b^*\|^2\right) \\
&& + \mathbb{P}\left(\iprod{\epsilon_j}{\wh{\theta}_a(z^*)-\theta_a^*}\leq -\frac{\bar{\delta}}{4}\|\theta_a^*-\theta_b^*\|^2\right) \\
&& + \mathbb{P}\left(-\iprod{\epsilon_j}{\wh{\theta}_b(z^*)-\theta_b^*}\leq -\frac{\bar{\delta}}{4}\|\theta_a^*-\theta_b^*\|^2\right),
\end{eqnarray*}
where $\bar{\delta}=\bar{\delta}_p$ is some sequence to be chosen later,
and we need to bound the three terms on the right hand side of the above inequality respectively. For the first term, a standard Gaussian tail bound gives
$$\mathbb{P}\left(\iprod{\epsilon_j}{\theta_a^*-\theta_b^*}\leq -\frac{1-\delta-\bar{\delta}}{2}\|\theta_a^*-\theta_b^*\|^2\right)\leq \exp\left(-\frac{(1-\delta-\bar{\delta})^2}{8}\|\theta_a^*-\theta_b^*\|^2\right).$$
To bound the second term, we note that
$$\iprod{\epsilon_j}{\wh{\theta}_a(z^*)-\theta_a^*} = \frac{\indc{z_j^*=a}\|\epsilon_j\|^2}{\sum_{l=1}^p\indc{z_l^*=a}} + \frac{\sum_{l\in[p]\backslash\{j\}}\indc{z_l^*=a}\epsilon_j^T\epsilon_l}{\sum_{l=1}^p\indc{z_l^*=a}}\geq \frac{\sum_{l\in[p]\backslash\{j\}}\indc{z_l^*=a}\epsilon_j^T\epsilon_l}{\sum_{l=1}^p\indc{z_l^*=a}}.$$
This implies
\begin{eqnarray*}
\nonumber && \mathbb{P}\left(\iprod{\epsilon_j}{\wh{\theta}_a(z^*)-\theta_a^*}\leq -\frac{\bar{\delta}}{4}\|\theta_a^*-\theta_b^*\|^2\right) \\
&\leq& \mathbb{P}\left(\frac{\sum_{l\in[p]\backslash\{j\}}\indc{z_l^*=a}\epsilon_j^T\epsilon_l}{\sum_{l=1}^p\indc{z_l^*=a}}\leq-\frac{\bar{\delta}}{4}\|\theta_a^*-\theta_b^*\|^2\right) \\
&\leq& \mathbb{P}\left(\frac{\sum_{l\in[p]\backslash\{j\}}\indc{z_l^*=a}\epsilon_j^T\epsilon_l}{\sum_{l=1}^p\indc{z_l^*=a}}\leq-\frac{\bar{\delta}}{4}\|\theta_a^*-\theta_b^*\|^2\Bigg|\|\epsilon_l\|^2<d+2\sqrt{xd}+2x\right) \\
&& + \mathbb{P}\left(\|\epsilon_l\|^2>d+2\sqrt{xd}+2x\right) \\
&\leq& \mathbb{E}\left(\exp\left(-\frac{\bar{\delta}^2\|\theta_a^*-\theta_b^*\|^4\sum_{l=1}^p\indc{z_l^*=a}}{32\|\epsilon_l\|^2}\right)\Bigg|\|\epsilon_l\|^2<d+2\sqrt{xd}+2x\right) \\
&& + \mathbb{P}\left(\|\epsilon_l\|^2>d+2\sqrt{xd}+2x\right) \\
&\leq& \exp\left(-\frac{\bar{\delta}^2\|\theta_a^*-\theta_b^*\|^4\alpha p}{32k\left(d+2\sqrt{xd}+2x\right)}\right) + \exp(-x).
\end{eqnarray*}
Choosing $x=\bar{\delta}\|\theta_a^*-\theta_b^*\|^2\sqrt{\alpha p/k}$, we have
\begin{eqnarray}
\nonumber && \mathbb{P}\left(\iprod{\epsilon_j}{\wh{\theta}_a(z^*)-\theta_a^*}\leq -\frac{\bar{\delta}}{4}\|\theta_a^*-\theta_b^*\|^2\right) \\
\label{eq:felix} &\leq& \exp\left(-C\frac{\bar{\delta}^2\|\theta_a^*-\theta_b^*\|^4p}{kd}\right) + \exp\left(-C\frac{\bar{\delta}\|\theta_a^*-\theta_b^*\|^2\sqrt{p}}{\sqrt{k}}\right).
\end{eqnarray}
To bound the third term, we note that
$$-\iprod{\epsilon_j}{\wh{\theta}_b(z^*)-\theta_b^*} = -\frac{\indc{z_j^*=b}\|\epsilon_j\|^2}{\sum_{l=1}^p\indc{z_l^*=b}} - \frac{\sum_{l\in[p]\backslash\{j\}}\indc{z_l^*=b}\epsilon_j^T\epsilon_l}{\sum_{l=1}^p\indc{z_l^*=b}},$$
and we then have
\begin{eqnarray*}
\nonumber && \mathbb{P}\left(-\iprod{\epsilon_j}{\wh{\theta}_b(z^*)-\theta_b^*}\leq -\frac{\bar{\delta}}{4}\|\theta_a^*-\theta_b^*\|^2\right) \\
&\leq& \mathbb{P}\left(-\frac{\indc{z_j^*=b}\|\epsilon_j\|^2}{\sum_{l=1}^p\indc{z_l^*=b}} \leq -\frac{\bar{\delta}}{8}\|\theta_a^*-\theta_b^*\|^2\right) \\
&& + \mathbb{P}\left(- \frac{\sum_{l\in[p]\backslash\{j\}}\indc{z_l^*=b}\epsilon_j^T\epsilon_l}{\sum_{l=1}^p\indc{z_l^*=b}} \leq -\frac{\bar{\delta}}{8}\|\theta_a^*-\theta_b^*\|^2\right).
\end{eqnarray*}
The second term on the right hand side of the above inequality can be bounded in the same way as (\ref{eq:felix}). For the first term, we have
\begin{eqnarray*}
&& \mathbb{P}\left(-\frac{\indc{z_j^*=b}\|\epsilon_j\|^2}{\sum_{l=1}^p\indc{z_l^*=b}} \leq -\frac{\bar{\delta}}{8}\|\theta_a^*-\theta_b^*\|^2\right) \\
&\leq& \mathbb{P}\left(\|\epsilon_j\|^2 > \frac{\bar{\delta}}{8}\|\theta_a^*-\theta_b^*\|^2\frac{\alpha p}{k}\right) \\
&\leq& \exp\left(-C\bar{\delta}\|\theta_a^*-\theta_b^*\|^2\frac{p}{k}\right),
\end{eqnarray*}
under the condition $\frac{\Delta_{\min}^2}{\log k+ kd/p}\rightarrow\infty$. Combining the bounds above, we have
\begin{eqnarray*}
&& \mathbb{P}\left(\iprod{\epsilon_j}{\wh{\theta}_a(z^*)-\wh{\theta}_b(z^*)}\leq -\frac{1-\delta}{2}\|\theta_a^*-\theta_b^*\|^2\right) \\
&\leq& \exp\left(-\frac{(1-\delta-\bar{\delta})^2}{8}\|\theta_a^*-\theta_b^*\|^2\right) + \exp\left(-C\bar{\delta}\|\theta_a^*-\theta_b^*\|^2\frac{p}{k}\right) \\
&& + 2\exp\left(-C\frac{\bar{\delta}^2\|\theta_a^*-\theta_b^*\|^4p}{kd}\right) + 2\exp\left(-C\frac{\bar{\delta}\|\theta_a^*-\theta_b^*\|^2\sqrt{p}}{\sqrt{k}}\right) \\
&\leq& 6\exp\left(-\frac{(1-\delta-\bar{\delta})^2}{8}\|\theta_a^*-\theta_b^*\|^2\right),
\end{eqnarray*}
where the last inequality above is obtained under the condition that $\frac{\Delta_{\min}^2}{\log k+ kd/p}\rightarrow\infty$ and $p/k\rightarrow\infty$, so that we can choose some $\bar{\delta}=\bar{\delta}_p=o(1)$ that is slowly diverging to zero.

Now we are ready to bound $\xi_{\rm ideal}(\delta)$. We first bound its expectation. We have
\begin{eqnarray*}
\mathbb{E}\xi_{\rm ideal}(\delta) &=& \sum_{j=1}^p\sum_{b\in[k]\backslash\{z_j^*\}}\|\theta_b^*-\theta_{z_j^*}^*\|^2\mathbb{P}\left(\iprod{\epsilon_j}{\wh{\theta}_{z_j^*}(z^*)-\wh{\theta}_b(z^*)}\leq -\frac{1-\delta}{2}\|\theta_{z_j^*}^*-\theta_b^*\|^2\right) \\
&\leq& 6\sum_{j=1}^p\sum_{b\in[k]\backslash\{z_j^*\}}\|\theta_b^*-\theta_{z_j^*}^*\|^2\exp\left(-\frac{(1-\delta-\bar{\delta})^2}{8}\|\theta_{z_j^*}^*-\theta_b^*\|^2\right).
\end{eqnarray*}
With $\delta=\delta_p=o(1)$, we then have
$$\mathbb{E}\xi_{\rm ideal}(\delta_p)\leq \sum_{j=1}^p\sum_{b\in[k]\backslash\{z_j^*\}}\exp\left(-(1+o(1))\frac{\|\theta_{z_j^*}^*-\theta_b^*\|^2}{8}\right)\leq p\exp\left(-(1+o(1))\frac{\Delta_{\min}^2}{8}\right),$$
under the condition that $\frac{\Delta_{\min}^2}{\log k+ kd/p}\rightarrow\infty$. Finally, by Markov's inequality, we have
$$\mathbb{P}\left(\xi_{\rm ideal}(\delta_p) > \mathbb{E}\xi_{\rm ideal}(\delta_p)\exp\left(\Delta_{\min}\right)\right) \leq \exp\left(-\Delta_{\min}\right).$$
In other words, with probability at least $1-\exp\left(-\Delta_{\min}\right)$, we have
$$\xi_{\rm ideal}(\delta_p)\leq \mathbb{E}\xi_{\rm ideal}(\delta_p)\exp\left(\Delta_{\min}\right).$$
By the fact that $\Delta_{\min}\rightarrow\infty$, we have
$$\mathbb{E}\xi_{\rm ideal}(\delta_p)\exp\left(\Delta_{\min}\right)\leq p\exp\left(-(1+o(1))\frac{\Delta_{\min}^2}{8}\right),$$
and thus the proof is complete.
\end{proof}

Finally, we prove Proposition \ref{prop:ini-clust}.
\begin{proof}[Proof of Proposition \ref{prop:ini-clust}] We divide the proof into three steps.

~\\
\emph{Step 1.}
Define $\wh{P} = \wh U\wh U^T Y \in\mathr^{d\times p}$ with $\wh P_j$ being the $j$th column of $\wh P$. Since $\wh P_j = \wh U \wh\mu_j$ for all $j\in[p]$, we  have $\|\wh P_j - \wh P_{j'}\| = \|\wh\mu_j - \wh\mu_{j'}\|$ for all $j,j'\in[p]$. This implies
\begin{align*}
\min_{\substack{\theta_1,\dotsc,\theta_k\in\mathbb{R}^k\\ z\in[k]^p}}\sum_{j=1}^p\|\wh{P}_j-\theta_{z_j}\|^2 = \min_{\substack{\beta_1,\dotsc,\beta_k\in\mathbb{R}^k\\ z\in[k]^p}}\sum_{j=1}^p\|\wh{\mu}_j-\beta_{z_j}\|^2.
\end{align*}
Similarly, define $\theta^{(0)}_a = \wh U \beta^{(0)}_a$ for all $a\in[k]$, we have 
\begin{align*}
\sum_{j=1}^p\|\wh{P}_j-\theta^{(0)}_{z_j^{(0)}}\|^2 = \sum_{j=1}^p\| \wh U \wh\mu_j-\wh U \beta^{(0)}_{z_j^{(0)}}\|^2  = \sum_{j=1}^p\|\wh{\mu}_j-\beta^{(0)}_{z_j^{(0)}}\|^2.
\end{align*}
Thus, (\ref{eq:k-means-relax-approx}) leads to
\begin{align}\label{eq:k-means-relax-approx-new}
\sum_{j=1}^p\|\wh{P}_j-\theta^{(0)}_{z_j^{(0)}}\|^2 \leq M\min_{\substack{\theta_1,\dotsc,\theta_k\in\mathbb{R}^k\\ z\in[k]^p}}\sum_{j=1}^p\|\wh{P}_j-\theta_{z_j}\|^2.
\end{align}
That is, any $z^{(0)}\in[k]^p$ that satisfies (\ref{eq:k-means-relax-approx}) with some $\beta^{(0)}_1,\ldots \beta^{(0)}_k$ also satisfies (\ref{eq:k-means-relax-approx-new}) with some $\theta^{(0)}_1,\ldots \theta^{(0)}_k$.

~\\
\emph{Step 2.}
It is sufficient to study any  $\theta_1^{(0)},\dotsc,\theta_k^{(0)}\in\mathbb{R}^d$ and $z^{(0)}\in[k]^p$ that satisfies  (\ref{eq:k-means-relax-approx-new}). %We are going to establish an upper bound in term of $\misc(\cdot,\cdot)$ distance, 
%This step mainly follows the proof of Lemma 3.1 in \nb{...}.
Let us define $P^*=\mathbb{E}Y$, and we have $P _j^*=\theta^*_{z_j^*}$ according to the model assumption.
We first give an error bound for $\fnorm{\wh{P }-P ^*}^2$. Since $\wh P$ is the rank-$k$ approximation of $Y$, we have $\fnorm{Y-\wh{P }}^2\leq\fnorm{Y-P ^*}^2$, which implies that $\fnorm{\wh{P }-P ^*}^2\leq 4\max_{\{A\in\mathbb{R}^{d\times p}:\fnorm{A}\leq 1,\rank(A)\leq 2k\}}|\iprod{A}{Y-P ^*}|^2$. Use a standard random matrix theory result \citep{vershynin2010introduction}, we have $\opnorm{Y-P ^*}^2\lesssim p+d$ with probability at least $1-e^{-C'(d+p)}$. For any $A$ such that $\fnorm{A}\leq 1$ and $\rank(A)\leq 2k$, its singular value decomposition can be written as $A=\sum_{l=1}^{2k}d_l u_lv_l^T$, where $\sum_{l=1}^{2k}d_l^2\leq 1$. Thus, we have
$$|\iprod{A}{Y-P ^*}|^2 = \left|\sum_{l=1}^{2k}d_lu_l^T(Y-P ^*)v_l\right|^2\leq \sum_{l=1}^{2k}\left|u_l^T(Y-P ^*)v_l\right|^2\leq 2k\opnorm{Y-P ^*}^2\lesssim k(p+d).$$
Taking maximum over $A$, we have $\fnorm{\wh{P }-P ^*}^2\lesssim k(p+d)$ with probability at least $1-e^{-C'(d+p)}$.

By (\ref{eq:k-means-relax-approx-new}), we have
\begin{align*}
\sum_{j=1}^p\|\wh{P}_j-\theta^{(0)}_{z_j^{(0)}}\|^2 \leq M\fnorm{\wh{P }-P ^*}^2 \lesssim Mk(p+d),
\end{align*}
and as a consequence, 
\begin{align}\label{eqn:clustering_pre_1}
\sum_{j=1}^p\|\theta^*_{z^*_j}-\theta^{(0)}_{z_j^{(0)}}\|^2 \leq 2\sum_{j=1}^p \br{\|\wh{P}_j-\theta^{(0)}_{z_j^{(0)}}\|^2 + \|\theta^*_{z^*_j}-\wh{P}_j\|^2} \lesssim (M+1)k(p+d).
\end{align}
Define
\begin{align*}
S = \cbr{j\in[p] : \|\theta^*_{z^*_j}-\theta^{(0)}_{z_j^{(0)}}\| \geq \frac{\Delta_{\min}}{2}},
\end{align*}
and we have
\begin{align*}
\abs{S} \leq  \frac{\sum_{j=1}^p\|\theta^*_{z^*_j}-\theta^{(0)}_{z_j^{(0)}}\|^2}{\br{ \frac{\Delta_{\min}}{2}}^2} \lesssim \frac{(M+1)k(p+d)}{\Delta_{\min}^2}.
\end{align*}
We are now going to show that all the data points in $S^\complement$ are all correctly clustered. We define
$$
\mathcal{C}_a=\left\{j\in[p]:z^*_j=a,j\in S^\complement\right\},
$$
for all $a\in[k]$.
Under the assumption $\Delta_{\min}^2/((M+1)k^2(1+d/p))\rightarrow\infty$, we have 
\begin{align}\label{eqn:clustering_pre_4}
\abs{S} =o(p/k).
\end{align}
We have the following arguments:
\begin{itemize}
\item For each $a\in[k]$, $\mathcal{C}_a$ cannot be empty, as
\begin{align}\label{eqn:clustering_pre_2}
|\mathcal{C}_a|\geq |\{j\in[p]:z^*_j=a\}| - |S| \geq \frac{ |\{j\in[p]:z^*_j=a\}|}{2} \geq \frac{\alpha p}{2k}.
\end{align}
\item For each pair $a,b\in[k],a\neq b$, there cannot exist some $j\in \mathcal{C}_a,j'\in \mathcal{C}_b$ such that $z^{(0)}_j=z^{(0)}_{j'}$. Otherwise $ \theta^{(0)}_{ z^{(0)}_j} = \theta^{(0)}_{ z^{(0)}_{j'}}$ would imply
\begin{align*}
\norm{\theta^*_a - \theta^*_b} = \norm{\theta^*_{z^*_{j}} - \theta^*_{z^*_{j'}}} \leq \norm{\theta^*_{z^*_{j}} - \theta^{(0)}_{ z^{(0)}_j} } + \norm{\theta^{(0)}_{ z^{(0)}_j} -\theta^{(0)}_{ z^{(0)}_{j'}}} +  \norm{\theta^{(0)}_{ z^{(0)}_{j'}}- \theta^*_{z^*_{j'}}}< \Delta_{\min},
\end{align*}
contradicting the definition of $\Delta_{\min}$.
\end{itemize}
Since $z^{(0)}_j$ can only take values in $[k]$, we conclude that $\{z^{(0)}_j:j\in \mathcal{C}_a\}$ contains only one and different element for all $a\in[k]$. That is, there exists a permutation $\pi_0\in\Pi_k$, such that
\begin{align}\label{eqn:clustering_pre_3}
z^{(0)}_j = \pi_0(z_j^*),
\end{align}
for all $j\in S^\complement$.

~\\
\emph{Step 3.} The last step is to establish an upper bound for $\ell(\pi_0^{-1} \circ z^{(0)}, z^*) $. By (\ref{eqn:clustering_pre_1}), (\ref{eqn:clustering_pre_2}) and (\ref{eqn:clustering_pre_3}), we have
\begin{align*}
\norm{\theta^*_a - \theta^{(0)}_{\pi_0(a)}}^2 = \frac{\sum_{j\in \mathcal{C}_a} \norm{\theta^*_{z_j^*} - \theta^{(0)}_{z_j^{(0)}}}^2 }{\abs{\mathcal{C}_a}} \leq \frac{\sum_{j=1}^p\norm{\theta^*_{z^*_j}-\theta^{(0)}_{z_j^{(0)}}}^2 }{\abs{\mathcal{C}_a}} \lesssim (M+1)k^2\br{1+ \frac{d}{p}},
\end{align*}
for all $a\in[k]$. As a result, together with (\ref{eqn:clustering_pre_1}), (\ref{eqn:clustering_pre_4}) and (\ref{eqn:clustering_pre_3}), we have
\begin{align*}
\ell\br{\pi_0^{-1} \circ z^{(0)},z^*}  &= \sum_{j\in[p]} \norm{\theta^*_{z^*_j} - \theta^*_{\pi_0^{-1}\br{ z^{(0)}_j}}}^2  = \sum_{j\in[p]} \norm{\theta^*_{z^*_j} - \theta^*_{\pi_0^{-1}\br{ z^{(0)}_j}}}^2 \indc{z^*_j \neq \pi_0^{-1}\br{ z^{(0)}_j}}\\
&  \leq  2\sum_{j\in[p]}  \br{ \norm{\theta^*_{z^*_j} - \theta^{(0)}_{z^{(0)}_j}}^2 + \norm{\theta^{(0)}_{z^{(0)}_j}- \theta^*_{\pi_0^{-1}\br{z^{(0)}_j}}}^2} \indc{z^*_j \neq \pi_0^{-1}\br{ z^{(0)}_j}} \\
& \leq 2\sum_{j\in[p]}   \norm{\theta^*_{z^*_j} - \theta^{(0)}_{z^{(0)}_j}}^2 +  \max_{a\in[k]}\norm{  \theta^{(0)}_a - \theta^*_{\pi_0^{-1}(a)}}^2\sum_{j=1}^p \indc{z^*_j \neq \pi_0^{-1}\br{ z^{(0)}_j}}\\
& \leq 2\sum_{j\in[p]}   \norm{\theta^*_{z^*_j} - \theta^{(0)}_{z^{(0)}_j}}^2 + \abs{S} \max_{a\in[k]}\norm{  \theta^{(0)}_{\pi_0(a)} - \theta^*_{a}}^2 \\
& \lesssim (M+1)k\br{p+ d}.
\end{align*}
The proof is complete.
\end{proof}

\subsection{Proofs in Section \ref{sec:rank-results}}\label{sec:proof_ranking}

This section collects the proofs of Lemma \ref{lem:rank-error}, Lemma \ref{lem:ranking-ideal}, and Proposition \ref{prop:initial-rank}. The conclusions of Theorem \ref{thm:rank-main} and Corollary \ref{cor:rank-main} are direct consequences of Theorem \ref{thm:main}, and thus we omit their proofs. We first need the following technical lemma.

\begin{lemma}
Consider i.i.d. random variables $w_{ij}\sim \mathcal{N}(0,1)$ for $1\leq i\neq j\leq p$. Then, for any constant $C'>0$, there exists some constant $C>0$ only depending on $C'$ such that
\begin{eqnarray}
%\label{eq:w-ranking1} \left|\frac{\sum_{1\leq i\neq j\leq p}(z^*_i-z^*_j)w_{ij}}{\sqrt{\sum_{1\leq i\neq j\leq p}(z^*_i-z^*_j)^2}}\right| &\leq& C\sqrt{p}, \\
\label{eq:w-ranking2} \max_{a\in\mathbb{R}^p}\left|\frac{\sum_{1\leq i\neq j\leq p}(a_i-a_j)w_{ij}}{\sqrt{\sum_{1\leq i\neq j\leq p}(a_i-a_j)^2}}\right| &\leq& C\sqrt{p}, \\
\label{eq:w-ranking3} \sum_{j=1}^p\left(\frac{1}{\sqrt{2(p-1)}}\sum_{i\in[p]\backslash\{j\}}(w_{ji}-w_{ij})\right)^2 &\leq& Cp, \\
\label{eq:w-ranking4} \max_{j\in[p]}\left|\frac{1}{\sqrt{2(p-1)}}\sum_{i\in[p]\backslash\{j\}}(w_{ji}-w_{ij})\right| &\leq& C\sqrt{\log p},
\end{eqnarray}
with probability at least $1-(C'p)^{-1}$. We have used the convention that $0/0=0$.
\end{lemma}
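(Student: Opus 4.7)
The three bounds all concern linear or quadratic functionals in the independent Gaussian family $\{w_{ij}\}_{i\neq j}$, so the plan is to reduce each to a standard Gaussian concentration statement and then combine via a union bound with probability at least $1-(C'p)^{-1}$.

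For (\ref{eq:w-ranking2}), I would recognize the ratio as a supremum of a linear functional over a unit sphere in a subspace. Writing $v_{ij}(a)=a_i-a_j$, the map $a\mapsto v(a)$ from $\mathbb{R}^p$ into $\mathbb{R}^{p(p-1)}$ has image a linear subspace $V$ of dimension $p-1$ (the kernel consists of constant vectors). Hence
$$
\max_{a\in\mathbb{R}^p}\frac{|\langle v(a),w\rangle|}{\|v(a)\|}=\sup_{v\in V\setminus\{0\}}\frac{|\langle v,w\rangle|}{\|v\|}=\|P_Vw\|,
$$
and $\|P_Vw\|^2$ is $\chi^2_{p-1}$. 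Lemma~\ref{prop:chisq} with $x=(C'+1)\log p$ gives $\|P_Vw\|^2\leq Cp$ with probability at least $1-p^{-(C'+1)}$.

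For (\ref{eq:w-ranking3}), I would write the sum of squares as a quadratic form $w^\top Aw$ in the independent Gaussian vector $w\in\mathbb{R}^{p(p-1)}$, where $A$ is a symmetric positive semidefinite matrix. A direct computation shows $\mathbb{E}\sum_{j=1}^p\xi_j^2=p$ where $\xi_j=(2(p-1))^{-1/2}\sum_{i\neq j}(w_{ji}-w_{ij})$, and that $\mathrm{tr}(A^2)$ and $\|A\|_{\mathrm{op}}$ are $O(1)$. A Hanson--Wright style inequality (or, equivalently, observing that $\sum_j\xi_j^2$ can be expressed as a bounded linear combination of at most a few independent $\chi^2$ variables arising from the eigendecomposition of $A$) then yields the bound $Cp$ with probability at least $1-p^{-(C'+1)}$. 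For (\ref{eq:w-ranking4}), since each $\xi_j$ is standard Gaussian, a union bound over $j\in[p]$ together with the standard tail $\mathbb{P}(|\xi_j|\geq t)\leq 2e^{-t^2/2}$ gives $\max_j|\xi_j|\leq C\sqrt{\log p}$ with probability at least $1-p^{-(C'+1)}$.

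Adjusting constants and using the union bound across (\ref{eq:w-ranking2})--(\ref{eq:w-ranking4}) will yield the claimed probability $1-(C'p)^{-1}$. The only nontrivial step is (\ref{eq:w-ranking3}), for which the main care will be to verify that the spectrum of the quadratic form $A$ is well behaved (in particular, $\|A\|_{\mathrm{op}}=O(1/p)$ is expected, since each $\xi_j^2$ puts weight only on a few coordinates of $w$), so that a Hanson--Wright deviation bound gives concentration around the mean at rate $\sqrt{p\log p}$, well within $Cp$. The other two bounds are routine.
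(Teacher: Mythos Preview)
Your proposal is correct and, for two of the three bounds, takes a cleaner route than the paper.

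For (\ref{eq:w-ranking2}), the paper embeds the set $\{(a_i-a_j)_{i\neq j}:a\in\mathbb{R}^p,\ \sum_{i\neq j}(a_i-a_j)^2\leq 1\}$ into the set of rank-$2$ matrices with Frobenius norm at most $1$, invokes a covering-number bound of size $e^{O(p)}$ for low-rank matrices, and finishes with an $\epsilon$-net argument. Your observation that the image $\{v(a):a\in\mathbb{R}^p\}$ is a $(p-1)$-dimensional linear subspace, so that the supremum equals $\|P_V w\|$ with $\|P_V w\|^2\sim\chi^2_{p-1}$, is more direct and bypasses the covering machinery entirely.

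For (\ref{eq:w-ranking3}), the paper computes the mean ($=p$) and variance ($=2p$) of $\sum_j r_j^2$ by hand and applies Chebyshev's inequality; this is precisely what produces the $(C'p)^{-1}$ probability in the statement. Your Hanson--Wright route is sharper, but one parenthetical claim needs correction: $\|A\|=O(1/p)$ is false. Each $\xi_j$ depends on $2(p-1)$ coordinates of $w$, not ``a few''. Writing $\xi=Mw$, a short computation gives $MM^T=\frac{p}{p-1}I_p-\frac{1}{p-1}\mathbf{1}\mathbf{1}^T$, whose nonzero eigenvalues are all $\frac{p}{p-1}$; hence $\|A\|=\frac{p}{p-1}=O(1)$ and $\fnorm{A}^2=\frac{p^2}{p-1}=O(p)$. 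Hanson--Wright with these inputs still delivers exponential concentration around $p$, so your conclusion survives. Even more simply, this eigenvalue computation shows $\sum_j\xi_j^2\dist\frac{p}{p-1}\chi^2_{p-1}$, so Lemma~\ref{prop:chisq} alone already suffices and gives a far better probability than the paper's Chebyshev bound.

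For (\ref{eq:w-ranking4}) the two approaches coincide.
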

\begin{proof}
To bound the first inequality, we define
$$\mathcal{A}=\left\{A=\{a_{ij}\}_{(i,j)\in[p]^2}:a_{ij}=a_i-a_j\text{ for some }a\in\mathbb{R}^p, \fnorm{A}\leq 1\right\},$$
and
$$\mathcal{B}=\left\{B=\{b_{ij}\}_{(i,j)\in[p]^2}: \rank(B)\leq 2, \fnorm{B}\leq 1\right\}.$$ Then, we have $\mathcal{A}\subset\mathcal{B}$, and
$$\max_{a\in\mathbb{R}^p}\left|\frac{\sum_{1\leq i\neq j\leq p}(a_i-a_j)w_{ij}}{\sqrt{\sum_{1\leq i\neq j\leq p}(a_i-a_j)^2}}\right| = \max_{A\in\mathcal{A}}|\iprod{A}{W}|.$$
By Lemma 3.1 of \cite{candes2011tight}, the covering number of the low-rank set $\mathcal{B}$ is bounded by $e^{O(p)}$, which further implies the same covering number bound for $\mathcal{A}$ by the fact that $\mathcal{A}\subset\mathcal{B}$. In other words, there exists $A_1,\dotsc,A_m\in\mathcal{A}$, such that $m\leq e^{C_1p}$, and for any $A\in\mathcal{A}$, $\min_{1\leq l\leq m}\fnorm{A_l-A}\leq 1/2$. Let us choose any $A\in\mathcal{A}$, and then let $A_l$ be the matrix in the covering set that satisfies $\fnorm{A_l-A}\leq 1/2$. We then have
$$|\iprod{A}{W}|\leq \fnorm{A-A_l}\left|\iprod{\frac{A-A_l}{\fnorm{A-A_l}}}{W}\right|+|\iprod{A_l}{W}|\leq \frac{1}{2}\max_{A\in\mathcal{A}}|\iprod{A}{W}| + |\iprod{A_l}{W}|,$$
which implies
$$\max_{A\in\mathcal{A}}|\iprod{A}{W}|\leq \frac{1}{2}\max_{A\in\mathcal{A}}|\iprod{A}{W}| + \max_{1\leq l\leq m}|\iprod{A_l}{W}|.$$
After rearrangement, we get $\max_{A\in\mathcal{A}}|\iprod{A}{W}|\leq 2\max_{1\leq l\leq m}|\iprod{A_l}{W}|$. Then, the conclusion follows by a standard union bound argument. 

For the second inequality, we use the notation $r_j=\frac{1}{\sqrt{2(p-1)}}\sum_{i\in[p]\backslash\{j\}}(w_{ji}-w_{ij})$. It is clear that $r_j \sim \mathn(0,1)$ for all $j\in[p]$, and thus we have $\mathbb{E}\left(\sum_{j=1}^pr_j^2\right)=p$. We then calculate the variance. We have
$$\Var\left(\sum_{j=1}^pr_j^2\right)=\sum_{j=1}^p\sum_{l=1}^p\mathbb{E}(r_j^2-1)(r_l^2-1).$$
For $j=l$, we get $\mathbb{E}(r_j^2-1)^2=2$. For $j\neq l$, we have $\mathbb{E}(r_j^2-1)(r_l^2-1)=\mathbb{E}r_j^2r_l^2-1$, and
$$\mathbb{E}r_j^2r_l^2=\frac{1}{4(p-1)^2}\mathbb{E}\left(\sum_{i\in[p]\backslash\{l\}}(w_{ji}-w_{ij})+(w_{jl}-w_{lj})\right)^2\left(\sum_{i\in[p]\backslash\{j\}}(w_{li}-w_{il})+(w_{lj}-w_{jl})\right)^2.$$
Since the three terms $\sum_{i\in[p]\backslash\{l\}}(w_{ji}-w_{ij})$, $\sum_{i\in[p]\backslash\{j\}}(w_{li}-w_{il})$ and $(w_{jl}-w_{lj})$ are independent, we can expand the above display and calculate the expectation of each term in the expansion, and we get
$$\mathbb{E}r_j^2r_l^2=\frac{4(p-2)^2+4+8(p-2)}{4(p-1)^2}=1.$$
Therefore, $\Var\left(\sum_{j=1}^pr_j^2\right)=2p$, and the desired conclusion is obtained by Chebyshev's inequality. Finally, the last inequality is a direct consequence of a union bound argument.
\end{proof}

Now we are ready to state the proofs of Lemma \ref{lem:rank-error} and Lemma \ref{lem:ranking-ideal}. Note that under the setting of approximate ranking, the error terms are
\begin{eqnarray*}
F_j(a,b;z) &=& \epsilon_j\frac{2p}{\sqrt{2(p-1)}}(\wh{\beta}(z^*)-\wh{\beta}(z))(a-b), \\
G_j(a,b;z) &=& \frac{p^2}{p-1}\left(\beta^*\left(a-\frac{1}{p}\sum_{j=1}^pz_j^*\right)-\wh{\beta}(z)\left(a-\frac{p+1}{2}\right)\right)^2 \\
&& - \frac{p^2}{p-1}\left(\beta^*\left(a-\frac{1}{p}\sum_{j=1}^pz_j^*\right)-\wh{\beta}(z^*)\left(a-\frac{p+1}{2}\right)\right)^2 \\
&& -\frac{p^2}{p-1}\left(\beta^*\left(a-\frac{1}{p}\sum_{j=1}^pz_j^*\right)-\wh{\beta}(z)\left(b-\frac{p+1}{2}\right)\right)^2 \\
&& +\frac{p^2}{p-1}\left(\beta^*\left(a-\frac{1}{p}\sum_{j=1}^pz_j^*\right)-\wh{\beta}(z^*)\left(b-\frac{p+1}{2}\right)\right)^2, \\
H_j(a,b) &=& \frac{p^2}{p-1}\left(\beta^*\left(a-\frac{1}{p}\sum_{j=1}^pz_j^*\right)-\wh{\beta}(z^*)\left(a-\frac{p+1}{2}\right)\right)^2 \\
&& - \frac{p^2}{p-1}\left(\beta^*\left(a-\frac{1}{p}\sum_{j=1}^pz_j^*\right)-\beta^*\left(a-\frac{p+1}{2}\right)\right)^2 \\
&& -\frac{p^2}{p-1}\left(\beta^*\left(a-\frac{1}{p}\sum_{j=1}^pz_j^*\right)-\wh{\beta}(z^*)\left(b-\frac{p+1}{2}\right)\right)^2 \\
&& +\frac{p^2}{p-1}\left(\beta^*\left(a-\frac{1}{p}\sum_{j=1}^pz_j^*\right)-\beta^*\left(b-\frac{p+1}{2}\right)\right)^2.
\end{eqnarray*}
\begin{proof}[Proof of Lemma \ref{lem:rank-error}]
For any $z\in[p]^p$ such that $\ell(z,z^*)\leq \tau= o(p^2(\beta^*)^2)$, we have
\begin{equation}
\sum_{1\leq i\neq j\leq p}(z_i-z_i^*-z_j+z_j^*)^2 \leq 4(p-1)\sum_{j=1}^p(z_j-z_j^*)^2 \leq \frac{2}{(\beta^*)^2}\ell(z,z^*)=o(p^2). \label{eq:harry}
\end{equation}
For any $z^*\in\mathcal{R}$, we have $\sum_{1\leq i\neq j\leq p}(z_i^*-z_j^*)^2 \leq p^4$. Moreover, by the definition of $\mathcal{R}$, there exists a $\wt{z}\in\Pi_p$ such that $\|z^*-\wt{z}\|^2\leq c_p$. This implies
\begin{equation}
\left(\frac{1}{p}\sum_{j=1}^pz_j^*-\frac{p-1}{2}\right)^2 = \left(\frac{1}{p}\sum_{j=1}^pz_j^*-\sum_{j=1}^p\wt{z}_j\right)^2 \leq \frac{1}{p}\|z^*-\wt{z}\|^2 \leq \frac{c_p}{p}=o(1), \label{eq:1st-m-z}
\end{equation}
and
$$
\left|\sum_{j=1}^p(z_j^*)^2-\sum_{j=1}^pj^2\right| = \left|\|z^*\|^2-\|\wt{z}\|^2\right| \leq \|z^*-\wt{z}\|(\|z^*\|+\|\wt{z}\|) \lesssim c_p^{1/2}p^{1.5} = o(p^2).
$$
Thus,
\begin{eqnarray}
\nonumber  \sum_{1\leq i\neq j\leq p}(z_i^*-z_j^*)^2  &=&  2p\sum_{j=1}^p(z_j^*)^2 - 2\left(\sum_{j=1}^pz_j^*\right)^2 \\
\nonumber &\geq& 2p\left(\sum_{j=1}^pj^2-o(p^2)\right) - (1+o(1))2\left(\sum_{j=1}^p j\right)^2 \\
\label{eq:p4/12} &\geq& \frac{p^4}{12}.
\end{eqnarray}
%We also have $\sum_{1\leq i\neq j\leq p}(z_i^*-z_j^*)^2\geq p^4/12$ for any $z^*\in\mathcal{R}$.
Therefore,
\begin{eqnarray}
\nonumber \sum_{1\leq i\neq j\leq p}(z_i-z_j)^2 &\geq& \frac{1}{2}\sum_{1\leq i\neq j\leq p}(z_i^*-z_j^*)^2 - \sum_{1\leq i\neq j\leq p}(z_i-z_i^*-z_j+z_j^*)^2 \\
\nonumber &\geq& \frac{p^4}{24} - o(p^2) \\
\label{eq:small-size-z2} &\geq& \frac{p^4}{25},
\end{eqnarray}
where the last inequality assumes $p$ is sufficiently large.
We then introduce more notations. We define
$$\beta(z)=\beta^*\frac{\sum_{1\leq i\neq j\leq p}(z_i-z_j)(z_i^*-z_j^*)}{\sum_{1\leq i\neq j\leq p}(z_i-z_j)^2},$$
and
$$\bar{w}(z)=\frac{\sum_{1\leq i\neq j\leq p}(z_i-z_j)w_{ij}}{\sum_{1\leq i\neq j\leq p}(z_i-z_j)^2}.$$
We write $w_{ij}=Y_{ij}-\beta^*(z_i^*-z_j^*)$ so that $\epsilon_j=\frac{1}{\sqrt{2(p-1)}}\sum_{i\in[p]\backslash\{j\}}(w_{ji}-w_{ij})$. We consider the event that the three inequalities (\ref{eq:w-ranking2})-(\ref{eq:w-ranking4}) hold.
We first derive bounds for $|\wh{\beta}(z^*)-\beta^*|$, $|\beta(z)-\beta^*|$ and $|\bar{w}(z)-\bar{w}(z^*)|$. By (\ref{eq:w-ranking2}), we have
\begin{equation}
|\wh{\beta}(z^*)-\beta^*|=\left|\frac{\sum_{1\leq i\neq j\leq p}(z_i^*-z_j^*)w_{ij}}{\sum_{1\leq i\neq j\leq p}(z_i^*-z_j^*)^2}\right|\lesssim p^{-1.5}. \label{eq:helpful-1}
\end{equation}
By (\ref{eq:harry}) and (\ref{eq:small-size-z2}), we have
\begin{eqnarray}
\nonumber |\beta(z)-\beta^*| &=& \left|\beta^*\frac{\sum_{1\leq i\neq j\leq p}(z_i-z_j)(z_i^*-z_j^*-z_i+z_j)}{\sum_{1\leq i\neq j\leq p}(z_i-z_j)^2}\right| \\
\nonumber &\leq& \frac{25|\beta^*|}{p^4}\sqrt{\sum_{1\leq i\neq j\leq p}(z_i-z_j)^2}\sqrt{\sum_{1\leq i\neq j\leq p}(z_i^*-z_j^*-z_i+z_j)^2} \\
\nonumber &\leq& \frac{25|\beta^*|}{p^2}\sqrt{\frac{2}{(\beta^*)^2}\ell(z,z^*)} \\
\label{eq:helpful-2} &\lesssim& \frac{\sqrt{\ell(z,z^*)}}{p^2}.
\end{eqnarray}
Next, we bound $|\bar{w}(z)-\bar{w}(z^*)|$. We have
\begin{eqnarray*}
|\bar{w}(z)-\bar{w}(z^*)| &\leq& \left|\frac{\sum_{1\leq i\neq j\leq p}(z_i-z_j-z_i^*+z_j^*)w_{ij}}{\sum_{1\leq i\neq j\leq p}(z_i-z_j)^2}\right| \\
&& + \left|\frac{\sum_{1\leq i\neq j\leq p}(z_i-z_j)^2-\sum_{1\leq i\neq j\leq p}(z^*_i-z^*_j)^2}{\sum_{1\leq i\neq j\leq p}(z_i-z_j)^2\sqrt{\sum_{1\leq i\neq j\leq p}(z_i^*-z_j^*)^2}}\right|\left|\frac{\sum_{1\leq i\neq j\leq p}(z^*_i-z^*_j)w_{ij}}{\sqrt{\sum_{1\leq i\neq j\leq p}(z^*_i-z^*_j)^2}}\right|.
\end{eqnarray*}
We bound the two terms on the right hand side of the above inequality separately. The first term can be bounded by
\begin{eqnarray*}
&& \frac{\sqrt{\sum_{1\leq i\neq j\leq p}(z_i-z_i^*-z_j+z_j^*)^2}}{\sum_{1\leq i\neq j\leq p}(z_i-z_j)^2}\left|\frac{\sum_{1\leq i\neq j\leq p}(z_i-z_j-z_i^*+z_j^*)w_{ij}}{\sqrt{\sum_{1\leq i\neq j\leq p}(z_i-z_i^*-z_j+z_j^*)^2}}\right| \\
&\leq& \frac{25}{p^4}\sqrt{\frac{2}{(\beta^*)^2}\ell(z,z^*)} \left|\frac{\sum_{1\leq i\neq j\leq p}(z_i-z_j-z_i^*+z_j^*)w_{ij}}{\sqrt{\sum_{1\leq i\neq j\leq p}(z_i-z_i^*-z_j+z_j^*)^2}}\right| \\
&\lesssim& \frac{\sqrt{p\ell(z,z^*)}}{|\beta^*|p^4},
\end{eqnarray*}
where we have used the inequalities (\ref{eq:w-ranking2}), (\ref{eq:harry}), and (\ref{eq:small-size-z2}). By (\ref{eq:w-ranking2}) and (\ref{eq:small-size-z2}), the second term can be bounded by
\begin{eqnarray*}
&& C_1p^{-5.5}\left|\sum_{1\leq i\neq j\leq p}(z_i-z_j)^2-\sum_{1\leq i\neq j\leq p}(z^*_i-z^*_j)^2\right| \\
&\leq& C_1p^{-5.5}\left|\sum_{1\leq i\neq j\leq p}(z_i-z_j)(z_i^*-z_j^*-z_i+z_j)\right| \\
&& + C_1p^{-5.5}\left|\sum_{1\leq i\neq j\leq p}(z_i^*-z_j^*)(z_i^*-z_j^*-z_i+z_j)\right| \\
&\leq& C_1p^{-5.5}\left(\sqrt{\sum_{1\leq i\neq j\leq p}(z_i-z_j)^2}+\sqrt{\sum_{1\leq i\neq j\leq p}(z_i^*-z_j^*)^2}\right)\sqrt{\sum_{1\leq i\neq j\leq p}(z_i-z_i^*-z_j+z_j^*)^2} \\
&\lesssim& \frac{\sqrt{p\ell(z,z^*)}}{|\beta^*|p^4},
\end{eqnarray*}
where we have used (\ref{eq:harry}) in the last inequality. Combining the two bounds, we obtain
\begin{equation}
|\bar{w}(z)-\bar{w}(z^*)| \lesssim  \frac{\sqrt{p\ell(z,z^*)}}{|\beta^*|p^4}. \label{eq:helpful-3} 
\end{equation}
From (\ref{eq:helpful-1}), (\ref{eq:helpful-2}) and (\ref{eq:helpful-3}), we can further derive
\begin{eqnarray}
\label{eq:helpful-4}  |\wh{\beta}(z)-\wh{\beta}(z^*)| &\leq& |\beta(z)-\beta^*| + |\bar{w}(z)-\bar{w}(z^*)| \lesssim \frac{\sqrt{\ell(z,z^*)}}{p^2},
\end{eqnarray}
under the condition $p(\beta^*)^2\geq 1$.

We are ready to prove (\ref{eq:rank-error1})-(\ref{eq:rank-error3}). Recall that $\epsilon_j=\frac{1}{\sqrt{2(p-1)}}\sum_{i\in[p]\backslash\{j\}}(w_{ji}-w_{ij})$, and we have
\begin{eqnarray}
\nonumber \sum_{j=1}^p\epsilon_j^2 &\leq& \sum_{j=1}^p\left(\frac{1}{\sqrt{2(p-1)}}\sum_{i\in[p]\backslash\{j\}}(w_{ji}-w_{ij})\right)^2 \\
\label{eq:ugly} &\lesssim& p,
\end{eqnarray}
by (\ref{eq:w-ranking3}). Moreover, from (\ref{eqn:rank_delta_def}), since $z^*\in\mathcal{R}$, we have
\begin{equation}
\Delta_j(a,b)^2=(1+o(1))\frac{2p^2(\beta^*)^2}{p-1}(a-b)^2.\label{eq:even-delta}
\end{equation}
Thus,
\begin{eqnarray*}
&& \sum_{j=1}^p\max_{b\in[k]\backslash\{z_j^*\}}\frac{F_j(z_j^*,b;z)^2\|\mu_j(B^*,b)-\mu_j(B^*,z_j^*)\|^2}{\Delta_j(z_j^*,b)^4\ell(z,z^*)} \\
&=& \frac{|\wh{\beta}(z)-\wh{\beta}(z^*)|^2}{|\beta^*|^2\ell(z,z^*)}\sum_{j=1}^p \epsilon_j^2 \\
&\lesssim& \frac{1}{p^2},
\end{eqnarray*}
where we have used (\ref{eq:helpful-4}), (\ref{eq:ugly}), (\ref{eq:even-delta}) and the condition $p(\beta^*)^2\geq 1$ in the last inequality. Taking maximum, we obtain (\ref{eq:rank-error1}). For (\ref{eq:rank-error2}), we note that
\begin{eqnarray*}
|G_j(a,b;z)| &\leq& \frac{p^2}{p-1}\left|\left(a-\frac{p+1}{2}\right)^2-\left(b-\frac{p+1}{2}\right)^2\right||\wh{\beta}(z)-\wh{\beta}(z^*)|^2 \\
&& + \frac{2p^2}{p-1}\left(a-\frac{p+1}{2}\right)^2|\wh{\beta}(z)-\wh{\beta}(z^*)||\wh{\beta}(z^*)-\beta^*| \\
&& + \frac{2p^2}{p-1}\left|b-\frac{p+1}{2}\right|\left|\wh{\beta}(z)-\wh{\beta}(z^*)\right|\left|\left(b-\frac{p+1}{2}\right)\wh{\beta}(z^*)-\left(a-\frac{p+1}{2}\right)\beta^*\right| \\
&& + \frac{2p^3}{p-1}|\beta^*||\wh{\beta}(z)-\wh{\beta}(z^*)|\left|\frac{1}{p}\sum_{j=1}^pz_j^*-\frac{p+1}{2}\right| \\
&\leq& \frac{p^4}{p-1}|\wh{\beta}(z)-\wh{\beta}(z^*)|^2 + \frac{4p^4}{p-1}|\wh{\beta}(z)-\wh{\beta}(z^*)||\wh{\beta}(z^*)-\beta^*| \\
&& + \frac{4p^3}{p-1}|a-b||\beta^*||\wh{\beta}(z)-\wh{\beta}(z^*)|.
\end{eqnarray*}
Therefore, for any subset $T\subset[p]$, we have
\begin{eqnarray*}
&& \frac{\tau}{4\Delta_{\min}^2|T|+\tau}\sum_{j\in T}\max_{b\in[k]\backslash\{z_j^*\}}\frac{G_j(z_j^*,b;z)^2\|\mu_j(B^*,b)-\mu_j(B^*,z_j^*)\|^2}{\Delta_j(z_j^*,b)^4\ell(z,z^*)} \\
&\lesssim& \frac{\tau p^4|\wh{\beta}(z)-\wh{\beta}(z^*)|^4}{|\beta^*|^4\ell(z,z^*)} + \frac{\tau p^4|\wh{\beta}(z)-\wh{\beta}(z^*)|^2|\wh{\beta}(z^*)-\beta^*|^2}{|\beta^*|^4\ell(z,z^*)} + \frac{\tau p^2|\wh{\beta}(z)-\wh{\beta}(z^*)|^2}{|\beta^*|^2\ell(z,z^*)} \\
&\lesssim& \frac{\tau}{p^2|\beta^*|^2} + \frac{1}{p|\beta^*|^2},
\end{eqnarray*}
where we have used (\ref{eq:helpful-1}), (\ref{eq:helpful-4}), and $\ell(z,z^*)\leq \tau =o(p^2(\beta^*)^2)$. Taking maximum, we thus obtain (\ref{eq:rank-error2}). Finally, for (\ref{eq:rank-error3}), we have
\begin{eqnarray*}
\frac{|H_j(a,b)|}{\Delta_j(a,b)^2} &\leq& \frac{1}{2|\beta^*|^2}(\wh{\beta}(z^*)-\beta^*)^2\left(a-\frac{p+1}{2}\right)^2 + \frac{1}{2|\beta^*|^2}(\wh{\beta}(z^*)-\beta^*)^2\left(b-\frac{p+1}{2}\right)^2 \\
&& +2p\frac{|\wh{\beta}(z^*)-\beta^*|}{|\beta^*|} \\
&\lesssim& \frac{1}{\sqrt{p}|\beta^*|},
\end{eqnarray*}
where we have used (\ref{eq:helpful-1}). We thus obtain (\ref{eq:rank-error3}) by taking maximum. The proof is complete.
\end{proof}

\begin{proof}[Proof of Lemma \ref{lem:ranking-ideal}]
By (\ref{eq:even-delta}), there exists some $\delta'=\delta_p'=o(1)$, such that
\begin{eqnarray*}
&& \indc{\iprod{\epsilon_j}{\nu_j(\wh{B}(z^*),z_j^*)-\nu_j(\wh{B}(z^*),b)} \leq -\frac{1-\delta}{2}\Delta_j(z_j^*,b)^2} \\
&\leq& \indc{\epsilon_j\frac{2p}{\sqrt{2(p-1)}}\wh{\beta}(z^*)(z_j^*-b)\leq -\frac{1-\delta-\delta'}{2}\frac{2p^2(\beta^*)^2}{p-1}(z_j^*-b)^2} \\
&\leq& \indc{\epsilon_j\frac{2p}{\sqrt{2(p-1)}}\beta^*(z_j^*-b)\leq -\frac{1-\delta-\delta'-\bar{\delta}}{2}\frac{2p^2(\beta^*)^2}{p-1}(z_j^*-b)^2} \\
&& + \indc{\epsilon_j\frac{2p}{\sqrt{2(p-1)}}(\wh{\beta}(z^*)-\beta^*)(z_j^*-b)\leq -\frac{\bar{\delta}}{2}\frac{2p^2(\beta^*)^2}{p-1}(z_j^*-b)^2}.
\end{eqnarray*}
By (\ref{eq:w-ranking4}), (\ref{eq:helpful-1}), and $p(\beta^*)^2\rightarrow\infty$, we have
\begin{eqnarray*}
\max_{j\in[p]}\frac{\left|\epsilon_j\frac{2p}{\sqrt{2(p-1)}}(\wh{\beta}(z^*)-\beta^*)(z_j^*-b)\right|}{\frac{2p^2(\beta^*)^2}{p-1}(z_j^*-b)^2} = o\left(\frac{\sqrt{\log p}}{p}\right),
\end{eqnarray*}
with probability at least $1-p^{-1}$. Therefore, we can set $\bar{\delta}=\bar{\delta}_p$ for some sequence $\bar{\delta}_p\rightarrow 0$ and $\bar{\delta}_p\gtrsim \frac{\sqrt{\log p}}{p}$, and then
$$\indc{\epsilon_j\frac{2p}{\sqrt{2(p-1)}}(\wh{\beta}(z^*)-\beta^*)(z_j^*-b)\leq -\frac{\bar{\delta}}{2}\frac{2p^2(\beta^*)^2}{p-1}(z_j^*-b)^2}=0,$$
for all $j\in[p]$ with probability at least $1-p^{-1}$. This immediately implies that $\xi_{\rm ideal}(\delta_p)\leq \wt{\xi}_{\rm ideal}(\delta_p+\delta_p'+\bar{\delta}_p)$ with high probability, where
$$\wt{\xi}_{\rm ideal}(\delta_p+\delta_p'+\bar{\delta}_p)= \frac{2p^2(\beta^*)^2}{p-1}\sum_{j=1}^p\sum_{b\in[p]\backslash\{z_j^*\}}(z_j^*-b)^2 \indc{\epsilon_j\frac{2p}{\sqrt{2(p-1)}}\beta^*(z_j^*-b)\leq -\frac{1-\delta_p-\delta_p'-\bar{\delta}_p}{2}\frac{2p^2(\beta^*)^2}{p-1}(z_j^*-b)^2}.$$
A standard Gaussian tail bound implies
\begin{eqnarray*}
&& \mathbb{E}\wt{\xi}_{\rm ideal}(\delta_p+\delta_p'+\bar{\delta}_p) \\
&=& \frac{2p^2(\beta^*)^2}{p-1}\sum_{j=1}^p\sum_{b\in[p]\backslash\{z_j^*\}}(z_j^*-b)^2\mathbb{P}\left(\mathn(0,1)\leq -\frac{1-\delta_p-\delta_p'-\bar{\delta}_p}{2}\sqrt{\frac{2p^2(\beta^*)^2}{p-1}(z_j^*-b)^2}\right) \\
&\leq& \sum_{j=1}^p\sum_{l=1}^{\infty}\frac{4p^2(\beta^*)^2}{p-1}l^2\exp\left(-\left(\frac{1-\delta_p-\delta_p'-\bar{\delta}_p}{2}\right)^2\frac{p^2(\beta^*)^2}{p-1}l^2\right) \\
&\leq& p\exp\left(-(1+o(1))\frac{p(\beta^*)^2}{4}\right),
\end{eqnarray*}
where we have used the conditions $p(\beta^*)^2\rightarrow\infty$ and $\delta_p+\delta_p'+\bar{\delta}_p=o(1)$ in the last inequality. Finally, by Markov's inequality, with probability at least $1-\exp\left(-\sqrt{p(\beta^*)^2}\right)$, we have
\begin{align*}
\wt{\xi}_{\rm ideal}(\delta_p+\delta_p'+\bar{\delta}_p)& \leq \mathbb{E}\wt{\xi}_{\rm ideal}(\delta_p+\delta_p'+\bar{\delta}_p)\exp\left(\sqrt{p(\beta^*)^2}\right)\\
& \leq p\exp\left(-(1+o(1))\frac{p(\beta^*)^2}{4}\right),
\end{align*}
as $p(\beta^*)^2\rightarrow\infty$. Since $\xi_{\rm ideal}(\delta_p)\leq \wt{\xi}_{\rm ideal}(\delta_p+\delta_p'+\bar{\delta}_p)$,  the proof is complete.
\end{proof}

Finally, we state the proof of Proposition \ref{prop:initial-rank}.

\begin{proof}[Proof of Proposition \ref{prop:initial-rank}]
Note that we have the following fact. Consider  any  $x=(x_1,x_2,\ldots, x_m)^T \in\mathr^m$.
Let $ y = \argmin_{z\in\Pi_p} \sum_{i=1}^p(x_i - z_i)^2$.  
Then for any pair $(i,j)$ such that $x_i <x_j$, we must have $y_i < y_j$. Otherwise if $y_i > y_j$,  since $(\br{  x_i - y_i}^2 + \br{  x_j - y_j}^2) - (\br{  x_i - y_j}^2 + \br{  x_j - y_i}^2) = -2(x_i -x_j)(y_i - y_j) >0$, we can always  swap $y_i$ and $y_j$ to make $\sum_{i=1}^p( x_i - y_i)^2$ strictly smaller. This indicates that $y$ preserves the order of $x$.

As a result,
since a linear transformation does not change the rank, we can write $z^{(0)}$ as
\begin{equation}
z^{(0)}=\argmin_{z\in\Pi_p}\sum_{j=1}^p\left(\frac{\sqrt{2(p-1)}}{2p\beta^*}T_j+\frac{p+1}{2}-z_j\right)^2.\label{eq:sort-basic}
\end{equation}
Since $z^*\in\mathcal{R}$, there exists some $\wt{z}\in\Pi_p$ such that $L_2(\wt{z},z^*)=o(1)$. 
By (\ref{eq:sort-basic}),
$$\sum_{j=1}^p\left(\frac{\sqrt{2(p-1)}}{2p\beta^*}T_j+\frac{p+1}{2}-z_j^{(0)}\right)^2\leq \sum_{j=1}^p\left(\frac{\sqrt{2(p-1)}}{2p\beta^*}T_j+\frac{p+1}{2}-\wt{z}_j\right)^2.$$
We then have
\begin{align*}
&  L_2(z^{(0)}, \wt{z}) = p^{-1}\sum_{j=1}^p(z_j^{(0)}-\wt{z}_j)^2 \\
&\leq 2p^{-1}\sum_{j=1}^p\left(\frac{\sqrt{2(p-1)}}{2p\beta^*}T_j+\frac{p+1}{2}-z_j^{(0)}\right)^2 + 2p^{-1}\sum_{j=1}^p\left(\frac{\sqrt{2(p-1)}}{2p\beta^*}T_j+\frac{p+1}{2}-\wt{z}_j\right)^2 \\
&\leq 4p^{-1}\sum_{j=1}^p\left(\frac{\sqrt{2(p-1)}}{2p\beta^*}T_j+\frac{p+1}{2}-\wt{z}_j\right)^2 \\
&\leq 12p^{-1}\sum_{j=1}^p\left(\frac{\sqrt{2(p-1)}}{2p\beta^*}T_j+\frac{1}{p}\sum_{j=1}^pz_j^*-z_j^*\right)^2 + 12p^{-1}\sum_{j=1}^p(\wt{z}_j-z_j^*)^2 + 12\left(\frac{1}{p}\sum_{j=1}^pz_j^*-\frac{p+1}{2}\right)^2\\
& = \frac{6\br{p-1}}{p^3\beta^{*2}} \sum_{j=1}^p \epsilon_j^2 + 12L_2(\wt{z},z^*) + 12\left(\frac{1}{p}\sum_{j=1}^pz_j^*-\frac{p+1}{2}\right)^2,
\end{align*}
where the last equation is due to the fact that $T_j = \mu_j(B^*,z^*_j) + \epsilon_j$.
By (\ref{eq:ugly}), (\ref{eq:1st-m-z}), and $L_2(\wt{z},z^*)=o(1)$, we have
$$L_2(z^{(0)},\wt{z})\lesssim \frac{1}{p(\beta^*)^2} + o(1),$$
with probability at least $1-p^{-1}$. By $L(z^{(0)},z^*)\leq 2L(z^{(0)},\wt{z})+2L_2(\wt{z},z^*)$, we have
$$L_2(z^{(0)},z^*)\lesssim \frac{1}{p(\beta^*)^2} + o(1),$$
with high probability. When $p(\beta^*)^2\rightarrow\infty$, we clearly have $L_2(z^{(0)},z^*)=o(1)$. When $p(\beta^*)^2=O(1)$, we have $L_2(z^{(0)},z^*)\lesssim \min\left(p^2, \frac{1}{p(\beta^*)^2}\right)$, where $L_2(z^{(0)},z^*)\lesssim p^2$ is by the definition of the loss.
\end{proof}

\subsection{Proofs in Section \ref{sec:regression}}

In this section, we will prove results presented in Section \ref{sec:regression}. Most efforts will be devoted to the proofs of Lemma \ref{lem:regression-error}, Lemma \ref{lem:regression-ideal}, and Proposition \ref{prop:reg_init}. With these results established, the conclusions of Theorem \ref{thm:lower-regression}, Theorem \ref{thm:regression-iter}, and Corollary \ref{cor:regression-iter} easily follow.

Let us introduce some more notation to facilitate the proofs. Given some vector $v\in\mathbb{R}^d$, some matrix $A\in\mathbb{R}^{d'\times d}$ and some set $S\subset[d]$, we use $v_S\in\mathbb{R}^{|S|}$ for the sub-vector $(v_j: j\in S)$ and $A_S\in\mathbb{R}^{d'\times |S|}$ for the sub-matrix $(A_{ij}:i\in[d'],j\in S)$. We denote $\text{span}(A)$ to be the space spanned by the columns of $A$. 
For any $j\in[d]$, we denote $[v]_j = v_j$ to be the $j$th coordinate of $v$.
We also write $\phi_S:[d]\rightarrow [|S|]$ for the map that satisfies $v_j=[v_S]_{\phi_S(j)}$. The domain of the map $\phi_S$ can also be extended to sets so that for any $S'\subset S$, we can write $v_{S'}=[v_S]_{\phi_S(S')}$.
For any $j\in S$, we write $S_{-j}=S\backslash\{j\}$.
We use $I_{d}$ for the $d\times d$ identity matrix, and sometimes just write $I$ for simplicity if the dimension is clear from the context. Given any square matrix $A\in\mathbb{R}^{d\times d}$, we use $\text{diag}\{A\}$ for the diagonal matrix whose diagonal entries are identical to those of $A$. For two random elements $X$ and $Y$,  we write $X \dist Y$ if their distributions are identical, and $X\perp Y$ if they are independent of each other. 

We first state and prove three technical lemmas.

\begin{lemma}\label{lem:random-events-reg}
Assume $s\log p \leq n$. Consider a random matrix $X\in\mathbb{R}^{n\times p}$ with i.i.d. entries $X_{ij}\sim\mathcal{N}(0,1)$, an independent $w\sim \mathcal{N}(0,I_n)$, some $S^*\subset [p]$ satisfying $|S^*| = s$, and some $\beta^*\in\mathr^p$. For any $S\subset[p]$, denote $P_S = X_S\br{X^T_S X_S}^{-1}X_S^T$ to be the projection matrix onto the subspace $\text{span}(X_S)$. We also use the notation $P_j=X_jX_j^T/\|X_j\|^2$, where $X_j$ represents the $j$th column of $X$. Then, for any constants $C_0,C'>0$, there exists some constant $C>0$ only depending on $C_0,C'$ such that
\begin{align}
& \max_{j \in [p]} \abs{\norm{X_j }^2 - n} \leq C\sqrt{n\log p}   \leq n/2 \label{eqn:re0},  \\
&  \max_{S\subset[p]:\abs{S}\leq 2C_0s} \norm{\br{X_S^TX_S}^{-1}} \leq \frac{C}{n}  \label{eqn:re0_1}, \\
& \max_{S,T\subset[p]:S\cap T = \emptyset, \abs{S},\abs{T}\leq 2C_0s } \norm{\br{X_S^TX_S}^{-1}X_S^TX_T}^2  \leq C \frac{s\log p}{n}, \label{eqn:re1}\\
&\max_{S,T\subset[p]: S\cap T = \emptyset,\abs{S},\abs{T} \leq  2C_0s } \frac{1}{\abs{T}}\norm{\br{X_T^T(I-P_S)X_T }^{-1}  X_T^T (I-P_S) w }^2 \leq \frac{C \log p}{n}, \label{eqn:re2}\\
&  \max_{S\subset[p]:\abs{S} \leq  2C_0s }\max_{j  \in S^*\cap S^\complement} \abs{X_j ^T P_S X_j } \leq C s\log p,  \label{eqn:re3_0} \\
&\max_{S,T\subset[p]: S\cap T = \emptyset,\abs{S},\abs{T} \leq  2C_0s }\norm{  X_{T}^T (I-P_S) X_{T}  - \text{diag}\cbr{X_{T}^T (I-P_S) X_{T}  }  }^2 \leq  Cns\log p, \label{eqn:re3}\\
& \max_{S\subset[p]:\abs{S}\leq  2C_0s } \frac{1}{\abs{S^*\cap S^\complement}+ \abs{S^{*\complement}\cap S}}\sum_{j \in S^*\cap S^\complement} \br{X_j ^TP_S w }^2 \leq C  s\log^2 p, \label{eqn:re4}\\
%&\max_{j \in S^*}\br{X_j ^TP_{S^*_{-j }} X_j  \br{X_j ^T\br{I -P_{S^*_{-j }} }X_j }^{-1}X_j ^T\br{I - P_{S^*_{-j }}}  w }^2 \leq  C\frac{s^2\log^2 p }{n},   \label{eqn:re5}\\
%&\max_{j \in S^*}\br{X_j ^TP_{S^*_{-j }}  w }^2\leq Cs\log p, \label{eqn:re6}\\
& \max_{j \notin S^*}\br{X_j ^T P_{S^*} w }^2 \leq C s\log p, \label{eqn:re7}\\
&\max_{S,T\subset[p]: S\cap T = \emptyset,\abs{S},\abs{T} \leq  2C_0s } \frac{1}{\abs{T}}\norm{\br{X_T^T(I-P_S)X_T }^{-1}  X_T^T P_S w }^2 \leq \frac{C s\log p}{n^2}, \label{eqn:re8_0}\\
& \max_{S,T\subset[p]:S\cap T = \emptyset, \abs{S},\abs{T}\leq  2C_0s  }\norm{X_{T }^T (I-P_S) X_{T }  - (n-\abs{S})I_{\abs{T}} }^2 \leq  Cns\log p,\label{eqn:re8}\\
&  \max_{S,T\subset[p]:S\cap T = \emptyset, \abs{S},\abs{T}\leq  2C_0s  } \norm{\br{X_{T }^T (I-P_S) X_{T }}^{-1}} \leq \frac{C}{n}, \label{eqn:re9}\\
&\max_{S\subset[p]:\abs{S}\leq  2C_0s } \frac{1}{\abs{S}} \norm{X_S^T w }^2 \leq Cn\log p, \label{eqn:re10} \\
& \max_{S,T\subset[p]:S\cap T = \emptyset, \abs{S}\leq  2C_0s } \frac{1}{ \norm{ \beta^*_{S^\complement \cap S^*}}^2} \frac{1}{\abs{T}\vee s} \norm{X_T^T(I -P_S)X_{S^\complement \cap S^*} \beta^*_{S^\complement \cap S^*}}^2 \leq C n\log p, \label{eqn:re11}\\
&\max_{S,T\subset[p]: T\cap \br{S\cup S^*} = \emptyset, \abs{S} \leq  2C_0s }\frac{1}{\abs{S^*\cap S^\complement}+\abs{S^{*\complement}\cap S}}\frac{1}{\abs{T}}\norm{X_T^T(P_{S^*}-P_S) w }^2 \leq C \log^2 p, \label{eqn:re12}\\
& \max_{j \in S^*}  \abs{\norm{X_j }^{-1} X_j ^T X_{S^*_{-j }} \br{X_{S^*_{-j }}^T \br{I - P_j }X_{S^*_{-j }}}^{-1} X_{S^*_{-j }}^{T} \br{I - P_j } w  } \leq \sqrt{\frac{Cs
\log^2 p}{n}}, \label{eqn:re13}
\end{align}
with probability at least $1-\ebr{-C'\log p}$. We have used the convention that 0/0 = 0.
\end{lemma}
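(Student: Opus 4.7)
\textbf{Proof proposal for Lemma \ref{lem:random-events-reg}.} The plan is to establish all fifteen inequalities as independent applications of standard Gaussian concentration (chi-squared tails of Lemma \ref{prop:chisq}, Gaussian linear form bounds, and random matrix singular value estimates), combined with a union bound over all subsets $S,T\subset[p]$ with $|S|,|T|\leq 2C_0 s$. The number of such pairs is at most $\binom{p}{2C_0 s}^2 \leq e^{C_1 s\log p}$, and since we assume $s\log p\leq n$, the deviation levels $t \asymp s\log p$ inside each concentration bound will be absorbed without inflating the constants.

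The first group of bounds -- (\ref{eqn:re0}), (\ref{eqn:re0_1}), (\ref{eqn:re1}), (\ref{eqn:re3_0}), (\ref{eqn:re3}), (\ref{eqn:re8}), (\ref{eqn:re9}), (\ref{eqn:re11}) -- involves only the design $X$. For each fixed $S$, Lemma \ref{prop:chisq} gives (\ref{eqn:re0}) at once. The near-isometry statements (\ref{eqn:re8}), (\ref{eqn:re9}), and (\ref{eqn:re0_1}) follow from the fact that conditionally on $X_S$, the matrix $(I-P_S)X_T$ has the law of an $(n-|S|)\times |T|$ matrix with i.i.d.\ $\mathcal{N}(0,1)$ entries, so its extreme singular values are $\sqrt{n-|S|}\pm O(\sqrt{|T|}+\sqrt{t})$ with probability $1-e^{-ct}$; picking $t\asymp s\log p$ and union bounding yields the uniform estimates. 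Bound (\ref{eqn:re1}) follows by writing $(X_S^T X_S)^{-1}X_S^T X_T$ and bounding each factor using (\ref{eqn:re0_1}) together with a standard $\chi^2$-type bound on $\|X_S^T X_T\|$. Bound (\ref{eqn:re3}) uses the fact that off-diagonals of $X_T^T(I-P_S)X_T$ are, conditionally, inner products of independent Gaussian vectors. Bound (\ref{eqn:re3_0}) follows from $X_j^T P_S X_j = \|P_S X_j\|^2 \leq \|X_S(X_S^T X_S)^{-1}X_S^T X_j\|^2$ and (\ref{eqn:re1}). Bound (\ref{eqn:re11}) treats $X_{S^c\cap S^*}\beta^*_{S^c\cap S^*}/\|\beta^*_{S^c\cap S^*}\|$ as a unit Gaussian vector and applies a sub-Gaussian maximal inequality.

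The second group -- (\ref{eqn:re2}), (\ref{eqn:re4}), (\ref{eqn:re7}), (\ref{eqn:re8_0}), (\ref{eqn:re10}), (\ref{eqn:re12}) -- involves both $X$ and $w$, and is handled by conditioning on $X$. Each is either a linear form in $w$ whose variance is controlled by the first group, or a quadratic form in $w$ handled by a Hanson--Wright or chi-squared bound. For example, in (\ref{eqn:re2}), conditionally on $X_S$ and $X_T$ the vector $(X_T^T(I-P_S)X_T)^{-1}X_T^T(I-P_S)w$ is Gaussian with covariance $(X_T^T(I-P_S)X_T)^{-1}$, whose operator norm is bounded by (\ref{eqn:re9}); the squared norm is then controlled by a $\chi^2_{|T|}$ tail, and the union bound absorbs the $e^{Cs\log p}$ penalty. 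Inequalities (\ref{eqn:re10}) and (\ref{eqn:re7}) are elementary Gaussian maxima given the already-controlled norms of $X_j$ and of $P_{S^*}$, and (\ref{eqn:re12}) is analogous after writing $P_{S^*}-P_S$ as a low-rank operator bounded in operator norm.

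The leave-one-out bound (\ref{eqn:re13}) is the most delicate but reduces similarly: conditioning on the columns $\{X_l : l\in S^*_{-j}\}$ and on $X_j$, the expression is a linear form in $w$ with conditional variance
\[
\|X_j\|^{-2}\, X_j^T X_{S^*_{-j}}\bigl(X_{S^*_{-j}}^T(I-P_j)X_{S^*_{-j}}\bigr)^{-1} X_{S^*_{-j}}^T X_j,
\]
which is bounded by $\|X_j\|^{-2}\cdot(Cs\log p)\cdot (C/n)$ via (\ref{eqn:re3_0}), (\ref{eqn:re9}), and (\ref{eqn:re0}); a Gaussian tail and union bound over $j\in S^*$ complete the argument. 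The main obstacle is purely organizational: each inequality requires slightly different conditioning and a careful identification of which factor is deterministic (given a conditioning step) and which is the concentrating piece, and one must consistently verify that the union-bound cost $e^{Cs\log p}$ is strictly dominated by the concentration exponent, for which the hypothesis $s\log p\leq n$ is used repeatedly.
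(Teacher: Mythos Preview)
Your overall strategy---Gaussian concentration, conditioning, random-matrix singular-value bounds, and a union bound over $(S,T)$---is exactly the paper's, and your treatment of most individual items (including the leave-one-out bound (\ref{eqn:re13})) matches theirs closely.

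There is, however, a genuine gap in the union-bound step. You propose a single union bound over all $(S,T)$ with $|S|,|T|\le 2C_0s$, costing $e^{C_1 s\log p}$, and claim this is always absorbed by the concentration exponent. That fails for the bounds whose \emph{normalization} involves a size that can be much smaller than $s$: (\ref{eqn:re2}), (\ref{eqn:re8_0}), (\ref{eqn:re10}) are normalized by $|T|$ or $|S|$, and (\ref{eqn:re4}), (\ref{eqn:re12}) by the symmetric-difference size $|S^*\cap S^\complement|+|S^{*\complement}\cap S|$. In each of these the relevant concentrating piece is (essentially) a $\chi^2_m$ variable with $m$ equal to that small size, and the tail at level $Cm\log p$ gives only $e^{-cm\log p}$, which does \emph{not} beat $e^{C_1 s\log p}$ when $m\ll s$. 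The paper's fix is to stratify: sum over $m$, note there are at most $\binom{p}{m}$ relevant sets of that size, and use $\binom{p}{m}e^{-cm\log p}\le p^{-(c-1)m}$, so the sum over $m\ge1$ is $O(p^{-c'})$. For (\ref{eqn:re12}) this also requires the rank observation you only hint at: one writes $P_{S^*}-P_S=P_{S,1}-P_{S,2}$ with $\mathrm{rank}(P_{S,1})\le|S^*\cap S^\complement|$ and $\mathrm{rank}(P_{S,2})\le|S^{*\complement}\cap S|$, which is what makes the $\chi^2$ degree match the symmetric-difference size rather than $s$. Once you incorporate this stratified union bound, your proof goes through and coincides with the paper's.
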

\begin{proof}
We first present a fact that will be used repeatedly in the proof. For two independent $\xi_1,\xi_2\sim\mathn(0,I_d)$, we have $\xi_1^T \xi_2 = \norm{\xi_1} (\norm{\xi_1}^{-1}\xi_1 )^T\xi_2  \dist \norm{\xi_1} \zeta$, where $\zeta \sim \mathn(0,1)$ and $\zeta \perp \xi_1$. Throughout the proof, we will use $c,c',c_1,c_2,\ldots$ as generic constants whose values may change from place to place. We refer to Lemma \ref{prop:chisq} for the $\chi^2$ tail probability bound.

\emph{Equation (\ref{eqn:re0}):} We have $\norm{X_j }^2 \sim \chi^2_n$. Then the $\chi^2$ tail bound and a union bound argument over $j \in[p]$ lead to the desired bound.

\emph{Equation (\ref{eqn:re0_1}):} It is sufficient to study the smallest eigenvalue of $X_S^TX_S$. For a fixed $S$ and $\theta\in\mathr^{\abs{S}}$ such that $\norm{\theta}=1$, we have $\theta^T X_S^TX_S\theta \sim \chi^2_{n}$. Thus $\pbr{\abs{\theta^T X_S^TX_S\theta-n} \leq \frac{n}{2}} \leq 2\ebr{-n/16}$. By a standard $\epsilon$-net argument \citep{vershynin2010introduction}, we can obtain
\begin{align*}
\pbr{\min_{\theta\in\mathr^{\abs{S}}:\norm{\theta}=1} \theta^T X_S^TX_S\theta \leq c_1n} \leq c_2^{\abs{S}} \ebr{-n/16}.
\end{align*}
We then take a union bound over $S$ to obtain
\begin{align*}
\pbr{\min_{S\subset[p]: \abs{S}\leq  2C_0s }\min_{\theta\in\mathr^{\abs{S}}:\norm{\theta}=1} \theta^T X_S^TX_S\theta \leq c_1n} \leq 2\binom{p}{ 2C_0s }c_2^{\abs{S}} \ebr{-n/16}.
\end{align*}
Thus
\begin{align*}
\pbr{\max_{S\subset[p]: \abs{S}\leq  2C_0s } \norm{\br{ X_S^TX_S}^{-1}} \geq \frac{1}{c_1 n}}  \leq \ebr{-c_3 n},
\end{align*}
for some constant $c_3>0$.

\emph{Equation (\ref{eqn:re1}):} %Consider some integer $0 \leq m\leq 2s$ and $\abs{S} = m$.
Conditioning on $X_S$, we have $\br{X_S^TX_S}^{-1}X_S^TX_T \dist \br{X_S^TX_S}^{-\frac{1}{2}} \zeta$, where $\zeta \sim \mathn(0,I_{\abs{S}})$. Thus $\norm{\br{X_S^TX_S}^{-1}X_S^TX_T}^2 \dist \zeta^T\br{X_S^TX_S}^{-1} \zeta \leq \norm{\br{X_S^TX_S}^{-1}} \norm{\zeta}^2$. We have 
\begin{align*}
\pbr{\norm{\br{X_S^TX_S}^{-1}X_S^TX_T}^2  \geq 4c\norm{\br{X_S^TX_S}^{-1}} s\log p \;\Bigg|\; X_S} \leq \ebr{- cs \log p}.
\end{align*}
A union bound over $T$ gives 
\begin{align*}
\pbr{\max_{T\subset[p]:\abs{T}\leq  2C_0s }\norm{\br{X_S^TX_S}^{-1}X_S^TX_T}^2  \geq 4c\norm{\br{X_S^TX_S}^{-1}} s\log p \;\Bigg|\; X_S} \leq  \binom{p}{2C_0s}\ebr{- c s \log p}.
\end{align*}
Consequently, for a fixed $S$,
\begin{align*}
& \pbr{\max_{T\subset[p]:\abs{T}\leq  2C_0s }\norm{\br{X_S^TX_S}^{-1}X_S^TX_T}^2  \geq \frac{4c s\log p }{c_1 n} }  \\
& \leq \binom{p}{2C_0s}\ebr{- c s \log p} + \pbr{\norm{\br{X_S^TX_S}^{-1}} \geq \frac{1}{c_1 n}}.
\end{align*}
Using the result established above when proving (\ref{eqn:re0_1}), together with a union bound over $S$, we have
\begin{align*}
&\pbr{\max_{S,T\subset[p]:S\cap T = \emptyset,\abs{S},\abs{T}\leq  2C_0s }\norm{\br{X_S^TX_S}^{-1}X_S^TX_T}^2  \geq \frac{4cs\log p }{c_1 n} }  \\
&\leq \binom{p}{ 2C_0s } \binom{p}{ 2C_0s }\ebr{- c s \log p} + \pbr{\max_{S\subset[p]:\abs{S}\leq  2C_0s }\norm{\br{X_S^TX_S}^{-1}} \geq \frac{1}{c_1 n}}\\
& \leq \binom{p}{ 2C_0s } \binom{p}{ 2C_0s }\ebr{- c s \log p} + \ebr{-c_2 n}\\
& \leq \ebr{-c_3s\log p}.
\end{align*}

\emph{Equation (\ref{eqn:re2}):} The proof of  (\ref{eqn:re2}) is very similar to that of  (\ref{eqn:re1}). Conditioning on $X_S,X_T$, we have $\br{X_T^T(I-P_S)X_T }^{-1}  X_T^T (I-P_S) w \dist \br{X_T^T(I-P_S)X_T }^{-1}\zeta$ where $\zeta \sim \mathn(0,I_{\abs{T}})$. Consequently, $\norm{\br{X_T^T(I-P_S)X_T }^{-1}  X_T^T (I-P_S) w }^2$ is stochastically dominated by $\norm{\br{X_T^T(I-P_S)X_T }^{-1}}\norm{\zeta}^2$. Similar to the proof of  (\ref{eqn:re1}), we have
\begin{align*}
&\pbr{\max_{S,T\subset[p]:S\cap T = \emptyset,\abs{S},\abs{T}\leq  2C_0s } \norm{ \frac{1}{\abs{T}}\br{X_T^T(I-P_S)X_T }^{-1}  X_T^T (I-P_S) w }^2 \geq \frac{4c\log p}{c_1 n}}\\
& \leq  \sum_{S\subset [p]:\abs{S}\leq  2C_0s } \sum_{m=0}^{ 2C_0s } \sum_{T\subset [p]:\abs{T}= m } \pbr{\chi^2_m \geq 4c m\log p} \\
&\quad  + \pbr{\max_{S,T\subset[p]:S\cap T = \emptyset,\abs{S},\abs{T}\leq  2C_0s } \norm{\br{X_T^T(I-P_S)X_T }^{-1}} \geq \frac{1}{c_1 n}}\\
&\leq \binom{p	}{m} \sum_{m=0}^{ 2C_0s } \binom{p}{ 2C_0s } \ebr{-cm\log p} + \ebr{-cn}\\
& \leq \ebr{-c'\log p},
\end{align*}
where in the second to the last inequality, we use  (\ref{eqn:re9}), which will be proved later.

%One main difference is that, we first fix some integer  $0\leq m \leq 2s$ and consider all $T\subset [p]$ such that $\abs{T}= m$. And we take a union bound of $m$. In this way, we are able to obtain the $1/\abs{T}$ factor in Equation (\ref{eqn:re2}).

\emph{Equation (\ref{eqn:re3_0}):} First we fix some $S$. Then $X_j ^T P_S X_j $ is stochastically dominated by a $\chi^2_{\abs{S}}$. We have $\pbr{X_j ^T P_S X_j  \geq 4cs\log p} \leq \ebr{-cs\log p}$. A union bound over $S$ and $j $ leads to the desired result.

\emph{Equation (\ref{eqn:re3}):} For any pair $\br{S,T}$, we have
\begin{align*}
&\norm{  X_{T}^T (I-P_S) X_{T}  - \text{diag}\cbr{X_{T}^T (I-P_S) X_{T}  }  }\\
&\leq \norm{  X_{T}^T (I-P_S) X_{T} - \br{n-\abs{S}}I } + \norm{ \br{n-\abs{S}} I   - \text{diag}\cbr{X_{T}^T (I-P_S) X_{T}  } }.
\end{align*}
The first term can be controlled by  (\ref{eqn:re8}), to be proved later. For the second term, we have
\begin{align*}
 \norm{ \br{n-\abs{S}} I   - \text{diag}\cbr{X_{T}^T (I-P_S) X_{T}  } } & =\max_{j \in T} \abs{X_j ^T \br{I-P_S}X_j  - \br{n-\abs{S}}} \\
 &\leq \max_{j \in T} \abs{\norm{X_j }^2 -n} + \max_{j \in T}  \abs{X_j ^T P_S X_j  - \abs{S}},
\end{align*}
which can be bounded by  (\ref{eqn:re0}) and (\ref{eqn:re3_0}). Combining the two terms together gives the desired result.

\emph{Equation (\ref{eqn:re4}):} For a fixed $S$ and any $j \in S^*\cap S^\complement$, using the fact we give at the beginning of the proof, we have $X_j ^TP_S w \dist \norm{P_S w} \xi_j $ where $\xi_j  \sim \mathn(0,1)$ and $\xi_j \perp \norm{P_S w}$. Since $\xi_j $ only depends on $ X_j^T( \norm{P_S w}^{-1} P_S w )$, we have the independence among $\{\xi_j \}_{j \in S^*\cap S^\complement}$. As a result, we have $\sum_{j \in S^*\cap S^\complement} \br{X_j ^TP_S w }^2 \dist \zeta\xi$, where $\zeta \sim \chi^2_{\abs{S}}$, $\xi \sim \chi^2_{\abs{S^*\cap S^\complement}}$ and $\zeta \perp \xi$. Similar arguments will also be used later to prove (\ref{eqn:re7})-(\ref{eqn:re8_0}) and (\ref{eqn:re10})-(\ref{eqn:re13}) and will be omitted there. Then
\begin{align*}
& \pbr{\sum_{j \in S^*\cap S^\complement} \br{X_j ^TP_S w }^2 \geq 16c^2s\log^2 p \br{\abs{S^*\cap S^\complement} + \abs{S^{*\complement}\cap S}}}\\
& \leq \pbr{\zeta \geq 4cs\log p} + \pbr{\xi \geq 4c\br{\abs{S^*\cap S^\complement} + \abs{S^{*\complement}\cap S}} \log p}\\
&\leq \ebr{-cs \log p} + \ebr{ - c\br{\abs{S^*\cap S^\complement} + \abs{S^{*\complement}\cap S}} \log p}.
\end{align*}
After applying union bound,  we get
\begin{align*}
&\pbr{\max_{S\subset [p]:\abs{S}\leq  2C_0s } \frac{1}{\abs{S^*\cap S^\complement} + \abs{S^{*\complement}\cap S}}\sum_{j \in S^*\cap S^\complement} \br{X_j ^TP_S w }^2 \geq 16c^2s\log^2 p } \\
&\leq \sum_{m=0}^{ 2C_0s } \binom{p}{m} \br{\ebr{-cs \log p} + \ebr{ - cm \log p}} \\
&\leq \ebr{-c'\log p}.
\end{align*}

%\emph{Equation (\ref{eqn:re5}):} For each $j \in S^*$, note that $X_j ^TP_{S^*_{-j }} X_j  \sim\chi^2_{s-1}$, $X_j ^T({I -P_{S^*_{-j }} })X_j  \sim \chi^2_{n-s+1}$ and $({X_j ^T({I - P_{S^*_{-j }}})  w })^2$ is stochastically dominated by $\xi\zeta$ where $\xi\sim\chi^2_n$, $\zeta \sim\chi^2_1$ and $\xi\perp \zeta$. We get the desired result by the $\chi^2$ tail bound and a union bound over $j \in S^*$.

%\emph{Equation (\ref{eqn:re6}):} For each $j \in S^*$, we have $({X_j ^TP_{S^*_{-j }}  w })^2$ stochastically dominated by $\xi\zeta$ where $\xi\sim\chi^2_s$, $\zeta \sim\chi^2_1$ and $\xi\perp \zeta$. We get the desired result by the $\chi^2$ tail bound and a union bound over $j \in S^*$.

\emph{Equation (\ref{eqn:re7}):} For each $j \notin S^*$, we have $({X_j ^TP_{S^*}  w })^2$ stochastically dominated by $\xi\zeta$ where $\xi\sim\chi^2_s$, $\zeta \sim\chi^2_1$ and $\xi\perp \zeta$. We get the desired result by the $\chi^2$ tail bound and a union bound over $j \notin S^*$.

%Its proof is almost identical to that of  (\ref{eqn:re6}) and thus is omitted.

\emph{Equation (\ref{eqn:re8_0}):} By  (\ref{eqn:re9}) to be proved later, it is sufficient to establish 
\begin{align*}
\pbr{\max_{S,T\subset[p]: S\cap T = \emptyset,\abs{S},\abs{T} \leq  2C_0s } \frac{1}{\abs{T}}\norm{ X_T^T P_S w }^2 \geq c s\log p} \leq \ebr{-c'\log p}.
\end{align*}
Note that for any fixed $S,T$, we have $\norm{ X_T^T P_S w }^2$ stochastically dominated by $\xi\zeta$ where $\xi \sim \chi^2_{ 2C_0s }$, $\zeta \sim \chi^2_{\abs{T}}$ and $\xi\perp \zeta$. Then we have $\pbr{\norm{ X_T^T P_S w }^2 \geq c^2s\abs{T}\log p} \leq \pbr{\zeta \geq c\abs{T}} + \pbr{\xi \geq cs\log p}$ which can be controlled by the $\chi^2$ tail bound. A union bound is then sufficient to complete the proof. 

\emph{Equation (\ref{eqn:re8}):} For any fixed $S,T$, and any $\theta \in\mathr^{\abs{T}}$ such that $\norm{\theta}=1$, we have 
\begin{align*}
\theta^T \br{X_{T }^T (I-P_S) X_{T }  } \theta \sim \chi^2_{n - \abs{S}},
\end{align*}
and $\theta^T(n-\abs{S})I_{\abs{T}} \theta = \br{n - \abs{S}}$.
By a standard $\epsilon$-net argument \citep{vershynin2010introduction}, the $\chi^2$ tail bound, and a union bound over $S,T$, we conclude its proof.

\emph{Equation (\ref{eqn:re9}):} Its proof is similar to that of  (\ref{eqn:re0_1}). We can show  $$\pbr{\max_{S,T\subset[p]:S\cap T = \emptyset, \abs{S},\abs{T}\leq  2C_0s  } \norm{\br{X_{T }^T (I-P_S) X_{T }}^{-1}} \geq \frac{1}{c n}} \leq \ebr{-c'n}$$ for some $c,c'$. Its proof is omitted here.

\emph{Equation (\ref{eqn:re10}):} We have $\norm{X_S^T w }^2 \dist \xi\zeta$ where $\xi\sim\chi^2_n$, $\zeta \sim\chi^2_{\abs{S}}$ and $\xi\perp \zeta$. Thus,
\begin{align*}
\pbr{\norm{X_S^T w }^2 \geq c^2 n\abs{S}\log p} \leq \pbr{\xi \geq cn} + \pbr{\zeta \geq c\abs{S}\log p}.
\end{align*}
A union bound over integers $0\leq m\leq  2C_0s $ and over all sets $\cbr{S\subset [p]:\abs{S}=m}$ leads to the desired result.

\emph{Equation (\ref{eqn:re11}):} For a fixed pair $S,T$, we have $ \norm{ \beta^*_{S^\complement \cap S^*}}^{-1}X_{S^\complement \cap S^*} \beta^*_{S^\complement \cap S^*} \sim \mathn(0,I_n)$, and consequently $ \norm{ \beta^*_{S^\complement \cap S^*}}^{-2}\norm{X_T^T(I -P_S)X_{S^\complement \cap S^*} \beta^*_{S^\complement \cap S^*}} $ is stochastically dominated by $\xi \zeta$ where $\xi \sim \chi^2_n$, $\zeta\sim \chi^2_{\abs{T}}$ and $\xi\perp \zeta$. Note that $\xi$ only depends on $S^\complement\cap S^*$ and $\zeta$ only depends on $T$. For a fixed $S$, in order to take a union bound over $T$, we add a subscript to $\zeta$ as in $\zeta_T$ to make the dependence explicit.
We have
\begin{align*}
&\pbr{\max_{T\subset[p]} \frac{1}{\abs{T}\vee s}\norm{ \beta^*_{S^\complement \cap S^*}}^{-2}  \norm{X_T^T(I -P_S)X_{S^\complement \cap S^*} \beta^*_{S^\complement \cap S^*}}^2 \geq 16c^2n\log p }\\
& \leq \pbr{\xi \geq 4cn } + \pbr{ \max_{T\subset[p]} \frac{1}{\abs{T}\vee s} \zeta_T \geq 4c\log p}\\
& \leq \ebr{-cn}  + \sum_{m=0}^{p} \sum_{T\subset[p]:\abs{T}=m}\ebr{-c\br{m\wedge s}\log p}\\
& \leq \ebr{-c's\log p}.
\end{align*}
The proof is completed by an additional union bound argument over $S$.
%\begin{align*}
%& \pbr{\max_{S,T\subset[p]:S\cap T = \emptyset, \abs{S}\leq 2s} \norm{ \beta^*_{S^C \cap S^*}}^{-2} \frac{1}{\abs{T}} \norm{X_T^T(I -P_S)X_{S^C \cap S^*} \beta^*_{S^C \cap S^*}}^2 \geq 16c^2 n\log p} \\
% & \leq  \sum_{S\subset[p]: \abs{S}\leq 2s} \sum_{m=0}^{2s}\sum_{T\subset[p]: \abs{T}=m} \ebr{\ebr{-cn}  +\ebr{-cm\log p}}\\
% & \leq \ebr{-c'\log p}.
%\end{align*}

\emph{Equation (\ref{eqn:re12}):} Consider a fixed pair $S,T$. For any $x\in\mathr^n$,  we have  $\br{P_{S^*}-P_S}x = P_{S,1}x - P_{S,2}x$, where $ P_{S,1}$ is the projection matrix onto the space $\text{span}(X_{S^*}) \setminus \br{\text{span}(X_{S^*}) \cap \text{span}(X_{S})}$, and $P_{S,2}$  is the projection matrix onto the space $\text{span}(X_{S}) \setminus \br{\text{span}(X_{S^*}) \cap \text{span}(X_{S})}$. Then we have
\begin{align*}
\norm{X_T^T(P_{S^*}-P_S) w }^2 & = \norm{X_T^T P_{S,1} w  - X_T^T P_{S,2} w }^2 \leq 2 \br{\norm{X_T^T P_{S,1} w }^2 + \norm{X_T^T P_{S,2} w }^2 }.
\end{align*}
Note that $\text{span}(X_{S^*\cap S}) \subset \text{span}(X_{S^*}) \cap \text{span}(X_{S})$, and thus the rank of $P_{S,1}$ is bounded by $\abs{S^*\cap S^\complement}$. Hence, $ \norm{X_T^T P_{S,1} w }^2  $ is stochastically dominated by $\xi\zeta$ where $\xi \sim \chi^2_{\abs{S^*\cap S^\complement}+\abs{S^{*\complement}\cap S}}  $, $\zeta \sim\chi^2_{\abs{T}}$ and $\xi\perp\zeta$. Note that $\xi$ only depends on $S$ and $\zeta$ only depends on $T$. For a fixed $S$, in order to take a union bound over $T$, we add a subscript to $\zeta$ as in $\zeta_T$ to make the dependence explicit. We have
\begin{align*}
&\pbr{\max_{T\subset [p]:T\cap \br{S\cup S^*} =\emptyset}\frac{1}{\abs{T}\vee s}\norm{X_T^T P_{S,1} w }^2  \geq 16c^2 \br{\abs{S^*\cap S^\complement}+\abs{S^{*\complement}\cap S}}\log^2 p} \\
& \leq \pbr{ \xi \geq 4c \br{\abs{S^*\cap S^\complement}+\abs{S^{*\complement}\cap S}} \log p} + \pbr{\max_{T\subset [p]}\frac{1}{\abs{T}\vee s} \zeta_T \geq cs\log p}\\
& \leq \ebr{-c \br{\abs{S^*\cap S^\complement}+\abs{S^{*\complement}\cap S}} \log p}   + \sum_{m=0}^{p} \sum_{T\subset[p]:\abs{T}=m}\ebr{-c\br{m\wedge s}\log p}\\
& \leq  \ebr{-c \br{\abs{S^*\cap S^\complement}+\abs{S^{*\complement}\cap S}} \log p}  + \ebr{-c's\log p}.
\end{align*}
Then we take a union bound of $S$.
\begin{align*}
&\pbr{\max_{S\subset [p]:\abs{S}\leq  2C_0s }\max_{T\subset [p]:T\cap \br{S\cup S^*} =\emptyset} \frac{1}{\abs{S^*\cap S^\complement}+\abs{S^{*\complement}\cap S}}\frac{1}{\abs{T}\vee s}\norm{X_T^T P_{S,1} w }^2  \geq 16c^2 \log^2 p} \\
& \leq \sum_{m'=0}^{ 2C_0s }\sum_{S\subset [p] :\abs{S^*\cap S^\complement}+\abs{S^{*\complement}\cap S}=m' } \br{ \ebr{-c m' \log p}  + \ebr{-c's\log p}}\\
& \leq \ebr{-c''\log p}.
\end{align*}
A similar result holds for the term related to $P_{S,2}$. Putting them together, we complete the proof.

%Thus
%\begin{align*}
%&\pbr{\norm{X_T^T P_{S,1}\epsilon}^2  \geq 16c^2 \abs{T} \br{\abs{S^*\cap S^C}+\abs{S^{*C}\cap S}}\log^2 p} \\
%& \leq \pbr{ \xi \geq 4c \br{\abs{S^*\cap S^C}+\abs{S^{*C}\cap S}} \log p } + \pbr{\zeta \geq  4c \abs{T}\log p}\\
%& \leq \ebr{-c \br{\abs{S^*\cap S^C}+\abs{S^{*C}\cap S}} \log p} + \ebr{-c\abs{T}\log p}.
%\end{align*}
%
%By union bound and tail probability of 

%And similarly rank of $P_{S,2}$ is smaller than $\abs{S^{*C}\cap S}$
%
%\begin{align*}
%\frac{1}{\abs{S^*\cap S^C}+\abs{S^{*C}\cap S}}\frac{1}{\abs{T}}\norm{X_T^T(P_{S^*}-P_S)\epsilon}^2 \leq C \log p
%\end{align*}

\emph{Equation (\ref{eqn:re13}):} Define $B_j  =  \norm{X_j }^{-1} X_j ^TX_{S^*_{-j }} \br{X_{S^*_{-j }}^T \br{I - P_j }X_{S^*_{-j }}}^{-1}  X_{S^*_{-j }}^T X_j  \norm{X_j }^{-1}$ for all $j \in S^*$.  Note that 
$\norm{X_j }^{-1} X_j ^T X_{S^*_{-j }} \br{X_{S^*_{-j }}^T \br{I - P_j }X_{S^*_{-j }}}^{-1} X_{S^*_{-j }}^{T} \br{I - P_j } w$ is identically distributed by  $\sqrt{B_j }\xi_j
$ with $\xi_j \sim\mathn(0,1)$ and $\xi_j  \perp B_j $. Here we have the subscript for both $\xi_j$ and $B_j $ to make their dependence on $j $ explicit. Then,
\begin{align*}
& \pbr{\norm{X_j }^{-1} X_j ^T X_{S^*_{-j }} \br{X_{S^*_{-j }}^T \br{I - P_j }X_{S^*_{-j }}}^{-1} X_{S^*_{-j }}^{T} \br{I - P_j } w  \geq \sqrt{\frac{c^2 s\log^2 p}{n}}  } \\
&\leq  \pbr{\xi_j   \geq  \sqrt{c\log p}} + \pbr{B_j  \geq \frac{cs\log p}{n}},
\end{align*}
and thus
\begin{align*}
& \pbr{\max_{j \in S^*}\norm{X_j }^{-1} X_j ^T X_{S^*_{-j }} \br{X_{S^*_{-j }}^T \br{I - P_j }X_{S^*_{-j }}}^{-1} X_{S^*_{-j }}^{T} \br{I - P_j } w  \geq \sqrt{\frac{c^2 s\log^2 p}{n}}  } \\
& \leq  \pbr{\max_{j \in S^*}\xi_j   \geq  \sqrt{c\log p}}  + \pbr{\max_{j \in S^*}B_j  \geq \frac{cs\log p}{n}}.
\end{align*}
The first term can be easily bounded by $s\ebr{-2^{-1}c\log p} \leq \ebr{-c'\log p}$. For the second term, we have
\begin{align*}
B_j  \leq \norm{\br{X_{S^*_{-j }}^T \br{I - P_j }X_{S^*_{-j }}}^{-1}}\norm{X^T_{S^*_{-j }} X_j \norm{X_j }^{-1}}^2,
\end{align*}
for all $j\in S^*$.
By a similar analysis as in (\ref{eqn:re9}), we can show $ \max_{j \in S^*}\norm{\br{X_{S^*_{-j }}^T \br{I - P_j }X_{S^*_{-j }}}^{-1}} \leq c_1/n$ with probability at least $1- \ebr{-c_2n}$. Note that $\norm{X^T_{S^*_{-j }} X_j \norm{X_j }^{-1}}^2 \sim \chi^2_{s-1}$. Easily we can show $\max_{j \in S^*} \norm{X^T_{S^*_{-j }} X_j \norm{X_j }^{-1}}^2 \leq 4c_3 s\log p$ with probability at least $1-\ebr{-c_4s\log p}$. As a result,
\begin{align*}
\pbr{\max_{j \in S^*}B_j  \geq \frac{cs\log p}{n}} \leq \ebr{-c_2n} + \ebr{-c_4s\log p},
\end{align*}
which completes the proof.
\end{proof}

%\begin{lemma}\label{lem:lambda_diff}
%Under the assumption that $\lim_{p\rightarrow\infty} s/p <1/2$, Equation (\ref{eqn:lambda_def}) is equivalent to assume
%\begin{align}\label{eqn:lambda_def_2}
% \frac{n\lambda^2 -  2\log \frac{p-s}{s}}{ \sqrt{\log \frac{p-s}{s}}} \rightarrow\infty.
%\end{align}
%Assuming $\limsup s/p <1/2$, Equation (\ref{eqn:re0}) and Equation (\ref{eqn:lambda_def}) hold, we have
%\begin{align*}
%&\min_{i\in{p}} \sqrt{n} \br{\lambda - t(X_i)} \rightarrow\infty\\
%\text{and }&\max_{i\in[p]}\frac{\lambda}{\lambda - t(X_i)} = o\br{\sqrt{\log \frac{p-s}{s}}}.
%\end{align*} 	
%\end{lemma}

\begin{lemma}\label{prop:reg_rate_simplify}
Define
\begin{align}\label{eqn:tilde_psi_def}
\wt \psi(n,p,s,\lambda,\delta,C)=&s\mathbb{P}\left(\epsilon>(1-\delta)\|\zeta\|(\lambda-t(\zeta)) \; \& \: \abs{\norm{\zeta}^2 -n} \leq C\sqrt{n\log p}\right)\nonumber \\
&\quad +(p-s)\mathbb{P}\left(\epsilon>(1-\delta)\|\zeta\|t(\zeta)    \; \& \: \abs{\norm{\zeta}^2 -n} \leq C\sqrt{n\log p}  \right),
\end{align}
where $\epsilon\sim \mathcal{N}(0,1)$, $\zeta\sim \mathcal{N}(0,I_n)$, and they are independent of each other.
Assume $s\log p \leq n$, $\limsup s/p <1/2$, and  $\snr\rightarrow\infty$. For any $\delta \leq 1/\log p$ and any constant $C>0$, we have
$$\wt{\psi}(n,p,s,\lambda,\delta,C) = s\ebr{- \frac{(1+o(1))\snr^2}{2}}.$$
\end{lemma}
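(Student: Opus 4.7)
My plan is to establish $\wt\psi = s\,e^{-(1+o(1))\snr^2/2}$ by bounding the two probability terms of $\wt\psi$ separately, using the concentration $\|\zeta\|^2 \in [n - C\sqrt{n\log p},\, n + C\sqrt{n\log p}]$ enforced by the truncation and standard Gaussian tail bounds on $\epsilon$. Set $A = \lambda\sqrt{n}/2$ and $B = \log((p-s)/s)/(\lambda\sqrt{n})$ so that $\snr = A - B$, and let $\rho = \|\zeta\|/\sqrt{n}$, so $\rho^2 \in [1-c, 1+c]$ with $c = C\sqrt{\log p/n}$ on the truncation event. A direct computation gives $\|\zeta\|(\lambda - t(\zeta)) = \rho A - B/\rho$ and $\|\zeta\|t(\zeta) = \rho A + B/\rho$, together with the key algebraic identity
\[
(A+B)^2 - \snr^2 \;=\; 4AB \;=\; 2\log\tfrac{p-s}{s},
\]
equivalent to $(p-s)\,e^{-(A+B)^2/2} = s\,e^{-\snr^2/2}$. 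This identity reflects that the threshold $t(\cdot)$ is Bayes-optimal and is precisely what produces the common rate $s\,e^{-\snr^2/2}$ out of both terms.

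For the upper bound, since $u(\rho) := \rho A - B/\rho$ is strictly increasing, on the truncation event $u(\rho) \geq \sqrt{1-c}\,A - B/\sqrt{1-c} = \snr - \tfrac{c}{2}(A+B) + O(c^2)(A+B)$, and $\Phi^c(a) \leq e^{-a^2/2}$ yields $\wt\psi_1 \leq s\exp(-(1-\delta)^2\snr^2(1 - O(c(A+B)/\snr))/2)$. For $\wt\psi_2$, I would Taylor-expand $v(\rho)^2 - (A+B)^2 = (\rho^2-1)A^2 + (1/\rho^2 - 1)B^2$ to show $\min_{\rho^2 \in [1-c, 1+c]} v(\rho)^2 \geq (A+B)^2(1 - O(c))$, using the refinement $v(\sqrt{B/A})^2 = (A+B)^2 - \snr^2$ whenever the interior minimizer $\sqrt{B/A}$ falls inside the truncation window. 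Combining with the algebraic identity above gives $\wt\psi_2 \leq s\,e^{-\snr^2/2}\cdot e^{O(\delta + c)(A+B)^2 + O(c)\snr(A+B)}$. The matching lower bound follows in the same spirit: restrict the integral to an inner window of $\rho$ near $1$ that carries constant $\chi^2_n$-mass, then apply the Mill's-ratio lower bound $\Phi^c(x) \geq \tfrac{x}{1+x^2}\phi(x)$ to obtain a matching exponential factor, the polynomial prefactors being absorbed into the $o(1)$ in the exponent because $\snr \to \infty$.

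The main obstacle is then verifying that the error $(\delta + c)(A+B)^2 + c\snr(A+B)$ is $o(\snr^2)$. Using $(A+B)^2 = \snr^2 + 2\log((p-s)/s) \leq \snr^2 + 2\log p$ and $\snr \leq A+B$, this reduces to $(\delta + c)\log p = o(\snr^2)$; the term $\delta\log p \leq 1$ is immediate since $\snr^2 \to \infty$, so the delicate piece is $c\log p = (\log p)^{3/2}/\sqrt{n}$. I would handle it by a case split: when $B/A$ is bounded away from $1$, one has $\snr \asymp A = \lambda\sqrt{n}/2$, and $c\log p/\snr^2$ is easily controlled via $s\log p \leq n$; in the delicate regime $B/A \to 1$, parametrize $\lambda^2 n = 2L + 2\gamma$ with $L = \log((p-s)/s)$ and $\gamma \to \infty$, so that $\snr^2 \sim \gamma^2/(2L)$, and the hypothesis $\snr \to \infty$ forces $\gamma \gg \sqrt L$, which combined with $s\log p \leq n$ and $\limsup s/p < 1/2$ closes the estimate. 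This case analysis, driven by the geometry of the minimizer of $v(\rho)$ relative to the truncation window, is the technical heart of the proof.
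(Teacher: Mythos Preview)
Your approach mirrors the paper's proof almost exactly. The paper introduces $m(u)=u(\lambda-t(u))$ (your $u(\rho)=\rho A-B/\rho$), uses the identity $(ut(u))^2=2\log\tfrac{p-s}{s}+m(u)^2$ (your $v(\rho)^2=u(\rho)^2+4AB$, i.e.\ $(A+B)^2=\snr^2+2\log\tfrac{p-s}{s}$ at $\rho=1$), and controls both terms of $\wt\psi$ via the monotonicity of $m$ on the truncation window together with the two-sided Mill's bound $\tfrac{1}{2x}g(x)\le G(x)\le \tfrac{1}{x}g(x)$; for the lower bound it restricts to the window, which carries $\chi^2_n$-mass at least $1/2$, just as you propose.

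Where you go beyond the paper is in trying to justify the final step $c(A+B)=o(\snr)$ (equivalently, the paper's assertion that $m(u)=(1+o(1))\snr$ uniformly on the window). Your case split does not close this under the stated hypothesis $s\log p\le n$. Take $s=1$, $n=\log p$, and $\lambda$ tuned so that $\snr\asymp\log\log p$: then $c=C\sqrt{\log p/n}=C$ is a fixed constant while $(A+B)/\snr\asymp\sqrt{\log p}/\log\log p\to\infty$; your parametrization gives $\gamma\asymp\sqrt{\log p}\,\log\log p\gg\sqrt L$, yet $c(A+B)/\snr\to\infty$. In this situation the interior minimizer $\sqrt{B/A}$ of $v$ lies inside the window and $m(u_{\min})<0$, so neither your refinement $v(\rho)^2\ge(A+B)^2-\snr^2$ nor the paper's inequality $m(u)^2\ge m(u_{\min})^2$ delivers the required exponent without also exploiting the $\chi^2_n$ tail of $\|\zeta\|^2$. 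The paper glosses over exactly the same point by deferring to the computation in Lemma~\ref{prop:useful-reg}; both arguments do go through once $c=o(1)$, in particular under the stronger condition $s(\log p)^4=o(n)$ used downstream in Theorem~\ref{thm:regression-iter}.
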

\begin{proof}
Throughout the proof, we use $g(x)$ and $G(x)$ for the density and survival functions of $\mathcal{N}(0,1)$. A standard Gaussian tail analysis gives
\begin{align}\label{eqn:gaussian_tail_bound}
\frac{1}{2x}g(x) \leq G(x) \leq \frac{1}{x}g(x),
\end{align}
for all $x\geq 2$. With a slight abuse of notation, we also use the notation
\begin{align}\label{eqn:t_u_def}
t(u)= \frac{\lambda}{2} + \frac{\log \frac{p-s}{s}}{\lambda u^2},
\end{align}
for all $u>0$.
We first focus on deriving an upper bound for $\wt \psi(n,p,s,\lambda,\delta,C)$.
For any $u>0$, we define
\begin{align*}
m(u) = u \br{\lambda - \frac{\log \frac{p-s}{s}}{\lambda u^2}} = \lambda u - \frac{\log \frac{p-s}{s}}{\lambda u}.
\end{align*}
Recall the definition of $t(u)$ in   (\ref{eqn:t_u_def}). Define $u_{\min} = \sqrt{n- C\sqrt{n\log p}}$ and $u_{\max} = \sqrt{n+C\sqrt{n\log p}}$, and $U=[u_{\min}, u_{\max}]$.
Since $u\br{\lambda -t(u)}$ is an increasing function of $u>0$. We have
\begin{align*}
m(u_{\min}) \leq u\br{\lambda -t(u)} \leq m(u_{\max}),
\end{align*}
for all $u\in U$.
%For any , we have $u\br{\lambda -t(u)} \geq u_{\min}\br{\frac{\lambda}{2} - \frac{\log \frac{p-s}{s}}{\lambda u_{\min}^2}} >0$ where . 
This gives
\begin{align*}
&s\pbr{\frac{\epsilon}{1-\delta} \geq \norm{\zeta} \br{\lambda - t(\zeta)}\;\&\; \abs{\norm{\zeta} -n}\leq C\sqrt{n\log p}}  \\
&\leq s\pbr{\frac{\epsilon}{1-\delta} \geq m(u_{\min}) } \\
&\leq  \frac{s}{\sqrt{2\pi} \br{1-\delta}m(u_{\min}) } \ebr{- \frac{1}{2} \br{1-\delta}^2m^2(u_{\min}) },
\end{align*}
where the last inequality is by   (\ref{eqn:gaussian_tail_bound}). In addition, we have the identity $\br{ut(u) }^2 = 2\log \frac{p-s}{s} + m^2(u)$. This leads to
\begin{align*}
2\log \frac{p-s}{s} + m^2(u_{\min}) \leq \br{ut(u) }^2 \leq 2\log \frac{p-s}{s} + m^2(u_{\max}),
\end{align*}
for all $u\in U$.
As a result, using   (\ref{eqn:gaussian_tail_bound}), %and the fact $ut(u)\geq \sqrt{2\log \frac{p-s}{s}}$,
we have 
\begin{align*}
& \br{p-s}\pbr{\frac{\epsilon}{1-\delta} \geq \norm{\zeta}t(\zeta)\;\&\; \abs{\norm{\zeta} -n}\leq C\sqrt{n\log p}} \\
& = \br{p-s} \E_{u^2\sim \chi^2_n} \left[ \pbr{\frac{\epsilon}{1-\delta} \geq |u| t(u)\Big | u} \indc{\abs{u^2 -n} \leq C\sqrt{n\log p}}\right]\\
& \leq \frac{p-s}{\sqrt{2\pi }}\E_{u^2\sim \chi^2_n} \left[ \frac{1}{ut(u)}\ebr{-\frac{1}{2}\br{1-\delta}^2 \br{ut(u) }^2 }\indc{\abs{u^2 -n} \leq C\sqrt{n\log p}}\right]\\
& \leq \frac{p-s}{\sqrt{2\pi }}\E_{u^2\sim \chi^2_n} \left[  \frac{1}{\min_{u \in U} u t(u)}\ebr{-\frac{1}{2}\br{1-\delta}^2 \br{2\log \frac{p-s}{s}  + m^2(u_{\min})} }\indc{\abs{u^2 -n} \leq C\sqrt{n\log p}}\right]\\
& \leq  \frac{p-s}{\sqrt{2\pi } \min_{u \in U} u t(u)} \ebr{-\br{1-\delta}^2 \log \frac{p-s}{s} -\frac{1}{2}\br{1-\delta}^2 m^2(u_{\min}) }\\
&  = \frac{s}{\sqrt{2\pi } \min_{u \in U} u t(u) } \ebr{\br{2\delta -\delta^2} \log \frac{p-s}{s} -\frac{1}{2}\br{1-\delta}^2 m^2(u_{\min}) }.
\end{align*}
Combining the above results together, we have
\begin{align*}
\wt \psi(n,p,s,\lambda,\delta,C) &\leq   \frac{s}{\sqrt{2\pi} \br{1-\delta}m(u_{\min}) } \ebr{- \frac{1}{2} \br{1-\delta}^2m^2(u_{\min}) } \\
& \quad + \frac{s}{\sqrt{2\pi } \min_{u \in U} u t(u) } \ebr{\br{2\delta -\delta^2} \log \frac{p-s}{s} -\frac{1}{2}\br{1-\delta}^2 m^2(u_{\max}) }.
\end{align*}

%It is very similar to establish the result for $\psi'(n,p,s,\lambda)$. 

Now we derivative a lower bound for $\wt \psi(n,p,s,\lambda,\delta,C)$. Note that $\pbr{ \abs{\norm{\zeta} -n}\leq C\sqrt{n\log p}}\geq 1/2$. We therefore have
\begin{align*}
& s\pbr{\epsilon \geq \norm{\zeta} \br{\lambda - t(\zeta)}\;\&\; \abs{\norm{\zeta} -n}\leq C\sqrt{n\log p}} \\
& \geq \frac{s}{2} \pbr{\epsilon \geq m(u_{\max})} \\
& \geq  \frac{s}{4\sqrt{2\pi}  m(u_{\max})} \ebr{- \frac{1}{2} m^2(u_{\max})},
\end{align*}
and
\begin{align*}
& \br{p-s}\pbr{\epsilon \geq \norm{\zeta}t(\zeta)\;\&\; \abs{\norm{\zeta} -n}\leq C\sqrt{n\log p}} \\
& \geq  \frac{s}{4\sqrt{2\pi }\max_{u\in U} ut(u)} \ebr{ \br{2\delta -\delta^2} \log \frac{p-s}{s} -\frac{1}{2}\br{1-\delta}^2 m^2(u_{\max}) }.
\end{align*}
Consequently,
\begin{align*}
\wt \psi(n,p,s,\lambda,\delta,C) & \geq  \frac{s}{4\sqrt{2\pi}  m(u_{\max})} \ebr{- \frac{1}{2} m^2(u_{\max})} \\
&\quad +  \frac{s}{4\sqrt{2\pi }\max_{u\in U} ut(u)} \ebr{ \br{2\delta -\delta^2} \log \frac{p-s}{s} -\frac{1}{2}\br{1-\delta}^2 m^2(u_{\max}) }.
\end{align*}

Since $\delta \leq 1/\log p$ and $\snr\rightarrow\infty$, with the same arguments used in the proof of Lemma \ref{prop:useful-reg}, we can show for all $u\in U$, we have $\frac{\lambda u}{2} - \frac{\log \frac{p-s}{s}}{\lambda u} =\br{1+o(1)} \snr$ and  $\frac{\lambda u}{2} + \frac{\log \frac{p-s}{s}}{\lambda u} \rightarrow \infty$.
This leads to $\wt \psi(n,p,s,\lambda,\delta,C) =  s\ebr{- \frac{(1+o(1))\snr^2}{2}}$ as desired, which completes the proof.
\end{proof}

\begin{lemma}\label{prop:useful-reg}
Consider some $\beta^*\in\mathbb{R}^p$ that satisfies either $|\beta_j^*|\geq\lambda$ or $\beta_j^*=0$ for all $j\in[p]$.
Assume $\limsup {s}/{p}<\frac{1}{2}$ and $\snr\rightarrow\infty$. Then, for i.i.d. $X_1,\cdots,X_p\sim \mathcal{N}(0,I_n)$, we have
\begin{eqnarray*}
\min_{j\in[p]}\sqrt{n}||\beta_j^*|-t(X_j)| &>& 1, \\
\max_{j\in[p]}\frac{|\beta_j^*|}{||\beta_j^*|-t(X_j)|} &\leq& \sqrt{\log p},
\end{eqnarray*}
with probability at least $1-e^{-p}$.
\end{lemma}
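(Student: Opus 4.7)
I would combine a $\chi^2$ concentration step for the column norms of $X$ with a purely algebraic analysis exploiting the definition of $\snr$. The concentration step uses Lemma \ref{prop:chisq} together with a union bound over the $p$ columns, which should give that, on an event of probability at least $1-e^{-p}$, the normalized quantities $v_j := \|X_j\|^2/n$ satisfy $\max_{j\in[p]}|v_j-1| \leq \eta_{n,p}$ for some $\eta_{n,p}=o(1)$ that is small relative to the deterministic gap $1-\theta$, where $\theta := 2\log\tfrac{p-s}{s}/(\lambda^2 n)$.

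For the first claim I would split into the two permitted cases. When $\beta_j^* = 0$, one has $\sqrt n\,t(X_j) \geq \lambda\sqrt n/2$ deterministically, and since $\limsup s/p < 1/2$ makes $\log\tfrac{p-s}{s}>0$, the condition $\snr\to\infty$ forces $\lambda\sqrt n > 2\,\snr\to\infty$, giving $\sqrt n\,t(X_j)>1$ with no concentration needed. When $|\beta_j^*|\geq\lambda$, using the clean identity
\[
\sqrt n\bigl(\lambda - t(X_j)\bigr) \;=\; \snr + g\cdot\frac{v_j-1}{v_j}, \qquad g := \log\tfrac{p-s}{s}/(\lambda\sqrt n),
\]
I would show the right-hand side is $>1$: when $v_j\geq 1$ it is $\geq \snr$, and when $v_j<1$ it is at least $\snr - g\eta_{n,p}/(1-\eta_{n,p})$, which exceeds $1$ once $g\cdot\eta_{n,p}=o(\snr)$.

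For the second claim, $\beta_j^*=0$ is trivial, and for $|\beta_j^*|\geq\lambda$ the monotonicity of $x\mapsto x/(x-c)$ on $(c,\infty)$ gives
\[
\frac{|\beta_j^*|}{|\beta_j^*|-t(X_j)} \;\leq\; \frac{\lambda}{\lambda - t(X_j)} \;=\; \frac{2v_j}{v_j-\theta}.
\]
The key algebraic observation is the identity $\snr^2 = K(\gamma-2)^2/(4\gamma)$, with $K := \log\tfrac{p-s}{s}$ and $\gamma := \lambda^2 n/K$. Combined with $K\leq \log p$, this forces $\snr\to\infty$ to imply either $\gamma\to\infty$ (so that $2\gamma/(\gamma-2)\to 2$) or $\gamma$ bounded with $\gamma-2 \gg 1/\sqrt{\log p}$ (so that $2\gamma/(\gamma-2) = 2 + 4/(\gamma-2) = o(\sqrt{\log p})$). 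In either regime $2/(1-\theta) \leq (1+o(1))\sqrt{\log p}$, and combining with the concentration $v_j = 1+o(1)$ of Step~1 delivers the bound $\sqrt{\log p}$.

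The main technical obstacle will be balancing the concentration tolerance $\eta_{n,p}$ against the deterministic gap $1-\theta = 2\,\snr/(\lambda\sqrt n)$: since $g$ can be as large as $\lambda\sqrt n/2$, I need $\eta_{n,p} = o\bigl(\snr/(\lambda\sqrt n)\bigr)$ for the perturbation to be negligible. Verifying that such a tolerance is attainable at probability level $1-e^{-p}$ via $\chi^2$ concentration is the core quantitative point, and it implicitly constrains the regime (essentially $n$ large relative to $p$) in which the lemma is usable.
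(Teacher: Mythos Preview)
Your approach is essentially the paper's: the paper first shows $\snr\to\infty$ is equivalent to $(n\lambda^2-2K)/\sqrt{K}\to\infty$ with $K=\log\tfrac{p-s}{s}$, then parametrizes $n\lambda^2=2K+A\sqrt{K}$ with $A\to\infty$ and carries out the same case split and the same monotonicity reduction $|\beta_j^*|/(|\beta_j^*|-t(X_j))\leq\lambda/(\lambda-t(X_j))$. Your parametrization via $\gamma=n\lambda^2/K$ and the identity $\snr^2=K(\gamma-2)^2/(4\gamma)$ is algebraically the same thing (indeed $\gamma-2=A/\sqrt K$), and arguably cleaner. Your closing concern about the probability level is well-taken and in fact exposes a slip in the paper: the paper's own proof works on the event $\max_j\bigl|\|X_j\|^2-n\bigr|\leq C\sqrt{n\log p}$ from (\ref{eqn:re0}), which carries probability $1-p^{-C'}$, not $1-e^{-p}$. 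With that tolerance $\eta_{n,p}=C\sqrt{\log p/n}$ (small under the standing assumption $s\log p\leq n$ of Section~\ref{sec:regression}), the balancing you worry about---$g\,\eta_{n,p}=o(\snr)$---goes through via exactly the computation the paper gives; there is no need to push for the stronger $1-e^{-p}$ level, and you need not impose $p=o(n)$.
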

\begin{proof}  %[Proof of Proposition \ref{prop:useful-reg}]
We first show that under the assumption that $\limsup s/p <1/2$,  the condition $\snr\rightarrow\infty$ is equivalent to
\begin{align}\label{eqn:lambda_def_2}
 \frac{n\lambda^2 -  2\log \frac{p-s}{s}}{ \sqrt{\log \frac{p-s}{s}}} \rightarrow\infty.
\end{align} 
%the equivalence between   (\ref{eqn:lambda_def}) and   (\ref{eqn:lambda_def_2}). 
A direct calculation gives
\begin{align*}
\br{\frac{\lambda \sqrt{n}}{2} - \frac{\log \frac{p-s}{s}}{\lambda \sqrt{n}} }^2 = \frac{\br{n\lambda^2 -2\log \frac{p-s}{s}}^2}{4n\lambda^2} = \br{\frac{n\lambda^2 -2\log \frac{p-s}{s}}{\sqrt{\log \frac{p-s}{s}}}}^2 \frac{\log \frac{p-s}{s}}{4n\lambda^2}.
\end{align*}
If  $\snr\rightarrow\infty$ holds, we have $n\lambda^2 \geq 2\log \frac{p-s}{s}$, which leads to   (\ref{eqn:lambda_def_2}).  For the other direction, if   (\ref{eqn:lambda_def_2}) holds, there exists some $A\rightarrow\infty$ such that $n\lambda^2 =  2\log \frac{p-s}{s} + A\sqrt{\log \frac{p-s}{s}}$. By the above identity, we have
\begin{align*}
\br{\frac{\lambda \sqrt{n}}{2} - \frac{\log \frac{p-s}{s}}{\lambda \sqrt{n}} }^2 = A^2 \frac{\log \frac{p-s}{s}}{2\br{ 2\log \frac{p-s}{s} + A\sqrt{\log \frac{p-s}{s}}}} \rightarrow\infty.
\end{align*}
Thus we have shown that $\snr\rightarrow\infty$ and (\ref{eqn:lambda_def_2}) are equivalent.

Now we are going to prove the proposition under the high-probability event (\ref{eqn:re0}). Note that for any $j\in[p]$ such that $\beta_j^* =0$, we have $\sqrt{n}|\beta_j^*|-t(X_j)| = \sqrt{n}t(X_j) \geq \sqrt{n}\lambda/2 \rightarrow\infty $ by (\ref{eqn:lambda_def_2}) and $|\beta_j^*|/||\beta_j^*|-t(X_j)| =0$. Thus, we only need to consider the remaining $j\in[p]$ such that $\beta^*_j \neq 0$. It is sufficient to prove $\min_{j\in[p]:z_j^*\neq 0} \sqrt{n}\br{\lambda -t(X_j)} >1$ and $\max_{j\in[p]:z_j^*\neq 0} \frac{\lambda}{\lambda -t(X_j)} \leq \sqrt{\log p}$.
Consider any $j\in[p]$ such that $z^*_j \neq 0$. We have
\begin{align*}
\sqrt{n} \br{\lambda - t(X_j)} & = \frac{\sqrt{n}}{2\lambda} \br{\lambda^2 - 2\frac{\log \frac{p-s}{s}}{\norm{X_j}^2}} \geq \frac{\sqrt{n}}{2\lambda} \br{\lambda^2 - 2\frac{\log \frac{p-s}{s}}{n-C\sqrt{n\log p}}} ,
\end{align*}
where the last inequality is by   (\ref{eqn:re0}). By  (\ref{eqn:lambda_def_2}), there exists an $A\rightarrow\infty$, such that
\begin{align*}
n\lambda^2 =  2\log \frac{p-s}{s} + A\sqrt{\log \frac{p-s}{s}}.
\end{align*}
Then, we have
\begin{align*}
\sqrt{n} \br{\lambda - t(X_j)} &\geq \frac{1}{2\sqrt{n}\lambda } \br{ 2\log \frac{p-s}{s} + A\sqrt{\log \frac{p-s}{s}}-\frac{2\log \frac{p-s}{s}}{1-C\sqrt{\frac{\log p}{n}}} } \\
& \geq  \frac{1}{2\sqrt{n}\lambda } \br{ A\sqrt{\log \frac{p-s}{s}} - C'\sqrt{\frac{\log p}{n}}\log \frac{p-s}{s}}\\
& \geq \frac{C''A\sqrt{\log \frac{p-s}{s}} }{\sqrt{n}\lambda},
\end{align*}
for some constants $C',C''>0$.  Starting from here, first we have 
\begin{align*}
\sqrt{n} \br{\lambda - t(X_j)}  \geq C''\sqrt{\frac{A^2 \log \frac{p-s}{s}}{n\lambda^2}} =  C'' \sqrt{\frac{A^2 \log \frac{p-s}{s}}{ 2\log \frac{p-s}{s} + A\sqrt{\log \frac{p-s}{s}}}}\rightarrow\infty,
\end{align*}
as $A\rightarrow\infty$. Second, we have 
\begin{align*}
\frac{\lambda}{\lambda - t(X_j)}  &= \frac{\sqrt{n}\lambda}{\sqrt{n}\br{\lambda - t(X_j)}} \leq   \frac{\sqrt{n}\lambda}{\frac{C''A\sqrt{\log \frac{p-s}{s}} }{\sqrt{n}\lambda}} = \frac{n\lambda^2}{C'' A \sqrt{\log \frac{p-s}{s}}} \\
& =  \frac{ 2\log \frac{p-s}{s} + A\sqrt{\log \frac{p-s}{s}}}{C'' A \sqrt{\log \frac{p-s}{s}}} \leq o\br{\sqrt{\log \frac{p-s}{s}}}.
\end{align*}
Hence, the proof is complete.
\end{proof}

Now we are ready to prove Theorem \ref{thm:lower-regression}, Theorem \ref{thm:regression-iter}, and Corollary \ref{cor:regression-iter}.
\begin{proof}[Proof of Theorem \ref{thm:lower-regression}]
By \cite{ndaoud2018optimal}, we have
$$\inf_{\wh{z}}\sup_{z^*\in\mathcal{Z}_s}\sup_{\beta^*\in\mathcal{B}_{z^*,\lambda}}\mathbb{E}\h_{(s)}(\wh{z},z^*)\geq \frac{1}{2s}\psi(n,p,s,\lambda,0)-4e^{-s/8},$$
where $\psi(n,p,s,\lambda,0)$ is defined in (\ref{eqn:psi_def}). By Lemma \ref{prop:reg_rate_simplify},
$$\frac{1}{2s}\psi(n,p,s,\lambda,0)\geq \frac{1}{2s}\wt{\psi}(n,p,s,\lambda,0,C)=\exp\left(-\frac{(1+o(1))\snr^2}{2}\right),$$
and we obtain the desired conclusion.
\end{proof}

\begin{proof}[Proof of Theorem \ref{thm:regression-iter}]
The condition of Theorem \ref{thm:regression-iter} allows us to apply Lemma \ref{prop:useful-reg} to the conclusion of Lemma \ref{lem:regression-error}. This implies that the right hand sides of (\ref{eq:regression-error-1}) and (\ref{eq:regression-error-2}) can be bounded by $o((\log p)^{-1})$, which then implies Conditions A-C hold with some $\delta=o((\log p)^{-1})$. Then, the desired conclusion is a special case of Theorem \ref{thm:main}.
\end{proof}

\begin{proof}[Proof of Corollary \ref{cor:regression-iter}]
By (\ref{eq:relation-H_s-l}) and (\ref{eqn:re0}), we have
$$\h_{(s)}(z,z^*)\leq \frac{\ell(z,z^*)}{s\lambda^2\min_{j\in[p]}\|X_j\|^2}\leq \frac{2\ell(z,z^*)}{sn\lambda^2},$$
with high probability. Then, the conclusion is a direct consequence of Theorem \ref{thm:regression-iter}.
\end{proof}

Finally, we present the proofs of Lemma \ref{lem:regression-error}, Lemma \ref{lem:regression-ideal}, and Proposition \ref{prop:reg_init}.

\begin{proof}[Proof of Lemma \ref{lem:regression-error}]
The proof will be established under the high-probability events (\ref{eqn:re0})-(\ref{eqn:re13}).
First we present  a few important quantities closely related to $\ell(z,z^*)$. By $h(z,z^*) \leq \frac{\ell(z,z^*)}{\lambda^2 \min_j \norm{X_j}^2}$ and (\ref{eqn:re0}), we have
\begin{align}
h(z,z^*) \leq \frac{2\ell(z,z^*)}{n\lambda^2}.\label{eqn:reg_h_explicit}
\end{align}
By the definition of $\ell(z,z^*)$, it is obvious
\begin{align}
%\norm{\beta^*_{S^* \cap S^\mathit{c}}}^2 
\sum_{j=1}^p \beta^{*2}_j\indc{\abs{z_j} \neq \abs{z_j^*}}\leq  \frac{\ell(z,z^*)}{\min_j \norm{X_j}^2} \leq \frac{2\ell(z,z^*)}{n},\label{eqn:reg_beta_s}
\end{align}
where we have used  (\ref{eqn:re0}) again.
For any $z\in\cbr{0,1,-1}^p$ such that $\ell(z,z^*)\leq \tau \leq  C_0sn\lambda^2$, (\ref{eqn:reg_h_explicit}) implies 
\begin{align}\label{eqn:h_z_2s}
h(z,z^*)\leq 2C_0s.
\end{align}
We will first prove the easier conclusion (\ref{eq:regression-error-2}) and then prove (\ref{eq:regression-error-1}).

\paragraph{Proof of (\ref{eq:regression-error-2}).}
According to the definition, we have
\begin{align*}
\abs{\frac{1}{\norm{X_j}}\sum_{l\in[p]\backslash\{j\}}\left(\wh{\beta}_l(z^*)-\beta^*\right)X_j^TX_l} & = \abs{ \frac{1}{\norm{X_j} }X_j^T X \wh \beta(z^*)-  \frac{1}{\norm{X_j} }X_j^T  X \beta^* - \norm{X_j} \br{\wh \beta_j(z^*) - \beta_j^*}}\\
& =\abs{ \frac{1}{\norm{X_j} }X_j^T \br{X \wh \beta(z^*)-   X \beta^*} - \norm{X_j} \br{\wh \beta_j(z^*) - \beta_j^*}}.
\end{align*}
By the fact that $\wh \beta_{S^*}(z^*) = \beta_{S^*} + \br{X_{S^*}^TX_{S^*}}^{-1}X_{S^*}^T w $ and $\wh \beta_{S^{*\complement}}(z^*)=0$, we have 
\begin{align}\label{eqn:reg_54}
\abs{\frac{1}{\norm{X_j}}\sum_{l\in[p]\backslash\{j\}}\left(\wh{\beta}_l(z^*)-\beta^*\right)X_j^TX_l}=\begin{cases}
\frac{1}{\norm{X_j}} X_j^T  w  - \norm{X_j}\sbr{\br{X_{S^*}^T X_{S^*}}^{-1} X_{S^*}^T w }_{\phi_{S^*}(j)} & j\in S^*\\
\frac{1}{\norm{X_j}}X_j^T P_{S^*} w & j\notin S^*.
\end{cases}
\end{align}
\begin{itemize}
\item We first consider $j\notin S^{*}$. By (\ref{eqn:re0}),  (\ref{eqn:re7}) and (\ref{eqn:reg_54}), we have
\begin{align*}
\max_{j\notin S^*} \abs{\frac{1}{\norm{X_j}}\sum_{l\in[p]\backslash\{j\}}\left(\wh{\beta}_l(z^*)-\beta^*\right)X_j^TX_l}  \leq \sqrt{\frac{2Cs\log p}{n}}.
\end{align*}
\item Next, we consider $j\in S^{*}$. Writing $X_{S^*}$ into a block matrix form $X_{S^*}=(X_j,X_{S^*_{-j}})$ , we have a block matrix inverse formula
\begin{align}\label{eqn:block_matrix_B}
\br{X_{S^*}^T X_{S^*}}^{-1} = \begin{pmatrix}
\norm{X_j}^2 & X_j^T X_{S^*_{-j}}   \\
X_{S^*_{-j}}^T X_j & X_{S^*_{-j}}^TX_{S^*_{-j}}
\end{pmatrix}^{-1} = \begin{pmatrix}
B_{11} & B_{12}\\
B_{21} & B_{22}
\end{pmatrix},
\end{align}
with
\begin{align*}
&B_{11} = \norm{X_j}^{-2} + \norm{X_j}^{-2}  X_j^T X_{S^*_{-j}} \br{X_{S^*_{-j}}^T \br{I - P_j}X_{S^*_{-j}}}^{-1} X_{S^*_{-j}}^{T}X_j \norm{X_j}^{-2},\\
& B_{12}   = -\norm{X_j}^{-2}  X_j^T X_{S^*_{-j}} \br{X_{S^*_{-j}}^T \br{I - P_j}X_{S^*_{-j}}}^{-1},
%& B_{21} = -\br{X_{S^*_{-j}}^T \br{I - P_j}X_{S^*_{-j}}}^{-1} X_{S^*_{-j}}^{T}X_j \norm{X_j}^{-2},\\
%\text{and }& B_{22} = \br{X_{S^*_{-j}}^T \br{I - P_j}X_{S^*_{-j}}}^{-1}.
\end{align*}
and the explicit expressions of $B_{21},B_{22}$ not displayed since they are irrelevant to our proof.
%where $P_j$ is the projection matrix onto $\text{span}(X_j)$.
We have $[\br{X_{S^*}^T X_{S^*}}^{-1} X_{S^*}^T w ]_{\phi_{S^*}(j)} = B_{11} X_j^T w + B_{12}X_{S^*_{-j}}^Tw$. From (\ref{eqn:reg_54}),
some algebra leads to 
\begin{align}
&\max_{j\in S^*} \abs{\frac{1}{\norm{X_j}}\sum_{l\in[p]\backslash\{j\}}\left(\wh{\beta}_l(z^*)-\beta^*\right)X_j^TX_l} \nonumber\\
& =\max_{j\in S^*} \abs{\frac{1}{\norm{X_j}} X_j^T  w  - \norm{X_j}\sbr{\br{X_{S^*}^T X_{S^*}}^{-1} X_{S^*}^T w }_{\phi_{S^*}(j)}} \nonumber \\
 & = \max_{j\in S^*}  \abs{\norm{X_j}^{-1} X_j^T X_{S^*_{-j}} \br{X_{S^*_{-j}}^T \br{I - P_j}X_{S^*_{-j}}}^{-1} X_{S^*_{-j}}^{T} \br{I - P_j} w  }\nonumber \\
 &\leq \sqrt{\frac{Cs\log^2 p}{n}},\label{eqn:eqn_54}
\end{align}
where the last inequality is by   (\ref{eqn:re13}).
\end{itemize}

Combining the two cases, we have
\begin{align*}
\max_{j\in[p]} \abs{\frac{1}{\norm{X_j}}\sum_{l\in[p]\backslash\{j\}}\left(\wh{\beta}_l(z^*)-\beta^*\right)X_j^TX_l} \leq \sqrt{\frac{2Cs\log^2 p}{n}}.
\end{align*}
Using (\ref{eqn:reg_delta}),  we get $\abs{\Delta_j(z_j^*,b)^2} \geq 4t(X_j) \abs{\abs{\beta_j^*} -  t(X_j)}\norm{X_j}^2$ for all $j\in[p]$. Then,
\begin{align*}
\max_{j\in[p]}\abs{\frac{H_{j}(z_j^*,b)}{\Delta_j(z_j^*,b)^2}} &\leq \max_{j\in[p]}\frac{4 \norm{X_j} t(X_j) \abs{{\norm{X_j}}^{-1}\sum_{l\in[p]\backslash\{j\}}\left(\wh{\beta}_l(z^*)-\beta^*\right)X_j^TX_l}}{4t(X_j)\abs{\abs{\beta_j^*} -  t(X_j)}\norm{X_j}^2}  \\
& \leq  \max_{j\in[p]}\frac{ \abs{{\norm{X_j}}^{-1}\sum_{l\in[p]\backslash\{j\}}\left(\wh{\beta}_l(z^*)-\beta^*\right)X_j^TX_l}}{\abs{\abs{\beta_j^*} -  t(X_j)}\norm{X_j}} \\
& \leq  \sqrt{\frac{2Cs\log^2p}{n}}  \frac{1}{\min_{j\in[p]}\abs{\abs{\beta_j^*} -  t(X_j)} \norm{X_j}} \\
& \leq \sqrt{\frac{4Cs\log^2p}{n}}\frac{1}{\min_{j\in[p]}\sqrt{n}\abs{\abs{\beta_j^*} -  t(X_j)}},
\end{align*}
where in the last  inequality we use   (\ref{eqn:re0}).

%\newpage
%By   (\ref{eqn:reg_delta}) and Lemma \ref{lem:lambda_diff}, we have $\Delta_i(z_i^*,b)^2 \geq 4t(X_i) \br{\lambda -  t(X_i)}\norm{X_i}^2$ for all $i\in[p]$, and 
%\begin{align*}
%\max_{i\in[p]}\frac{\abs{H'_{i}(z_i^*,b;z)}}{\Delta_i(z_i^*,b)^2} &\leq \max_{i\in[p]}\frac{4 \norm{X_i} t(X_i)\abs{\nu_i (\wh{B}(z^*),b)-\nu_i (B^*,b)}}{4t(X_i) \br{\lambda - t(X_i)}\norm{X_i}^2} \leq  \frac{\max_{i\in[p]}\abs{\nu_i (\wh{B}(z^*),b)-\nu_i (B^*,b)}}{ \min_{i\in[p]}\br{\lambda - t(X_i)}\norm{X_i}}\\
%& \leq  \sqrt{\frac{2Cs\log s}{n}} \frac{\sqrt{2}}{\min_{i\in[p]} \sqrt{n} \br{\lambda - t(X_i)}} =o(1),
%\end{align*}
%where in the last two inequality we use   (\ref{eqn:re0}) and Lemma \ref{lem:lambda_diff}.
%Hence,
%\begin{align*}
%\max_{i\in[p]}\frac{\abs{H'_{i}(z_i^*,b;z)}}{\Delta_i(z_i^*,b)^2} \leq \sqrt{\frac{2Cs\log p}{n}} \frac{1}{\br{\abs{\beta^*_i} - t(X_i)}\norm{X_i}} \leq \sqrt{\frac{2Cs\log p}{n}} \frac{2}{\br{\abs{\beta^*_i} - t(X_i)}n},
%\end{align*}
%where the last inequality is by   (\ref{eqn:re0}).

%We have
%\begin{align*}
%\br{I - D \br{X_{S^*}^T X_{S^*}}^{-1}} X_{S^*}^T\epsilon \Big| X_{S^*}  &\sim \mathn\br{0, \br{X_{S^*}^T X_{S^*} - D}\br{I -  \br{X_{S^*}^T X_{S^*}}^{-1}D}} \\ 
%& \sim\mathn\br{0, \br{X_{S^*}^T X_{S^*} - D} - \br{I - D \br{X_{S^*}^T X_{S^*}}^{-1}}D}.
%\end{align*}  
%We want to work out the marginal distributions.

\paragraph{Proof of (\ref{eq:regression-error-1}).}
By (\ref{eqn:reg_mu_diff}) and (\ref{eqn:reg_delta}), we have 
\begin{align*}
&\frac{G_j(z_j^*,b;z)^2\|\mu_j(B^*,b)-\mu_j(B^*,z_j^*)\|^2}{\Delta_j(z_j^*,b)^4\ell(z,z^*)} \\
& \leq  \frac{\br{4\|X_j \|t(X_j )\br{{\norm{X_j}}^{-1}\sum_{l\in[p]\backslash\{j\}}\left(\wh{\beta}_l(z)-\wh{\beta}_l(z^*)\right)X_j^TX_l}}^2  \br{4\abs{\beta^*_j}^2 \norm{X_j}^2 \indc{z^*_j = \pm 1} + \lambda^2\norm{X_j}^2 \indc{z^*_j = 0} }}{\br{\br{4t(X_j) \br{\abs{\beta^*_j} - t(X_j)}\norm{X_j}^2}^2\indc{z^*_j = \pm 1}    + \br{4t(X_j)^2\norm{X_j}^2}^2 \indc{z^*_j = 0}  } \ell(z,z^*)}\\
& = \frac{\br{{\norm{X_j}}^{-1}\sum_{l\in[p]\backslash\{j\}}\left(\wh{\beta}_l(z)-\wh{\beta}_l(z^*)\right)X_j^TX_l}^2}{\ell(z,z^*)} \br{ 4\br{\frac{\abs{\beta^*_j}}{\abs{\beta^*_j} - t(X_j)}}^2 \indc{z^*_j = \pm 1} + \br{\frac{\lambda}{t(X_j)}}^2\indc{z^*_j = 0}}\\
& \leq  \frac{\br{{\norm{X_j}}^{-1}\sum_{l\in[p]\backslash\{j\}}\left(\wh{\beta}_l(z)-\wh{\beta}_l(z^*)\right)X_j^TX_l}^2}{\ell(z,z^*)}  \max\cbr{4\br{\frac{\abs{\beta^*_j}}{\abs{\beta^*_j} - t(X_j)}}^2,\br{\frac{\lambda}{t(X_j)}}^2}.
\end{align*}
%From Lemma \ref{lem:lambda_diff}, we have $\lambda / t(X_j) \leq 1, \forall i\in[n]$, and also
%\begin{align*}
%  \max_{i\in[p]} \frac{\abs{\beta^*_i}}{\abs{\beta^*_i} - t(X_i)} \leq   \max_{i\in[p]}\frac{\lambda}{\lambda - t(X_i)} \leq \sqrt{\log \frac{p-s}{s}}.
%\end{align*}
%This gives
%\begin{align*}
%\frac{H_i''(z_i^*,b;z)^2\|\mu_j(B^*,b)-\mu_j(B^*,z_i^*)\|^2}{\Delta_i(z_i^*,b)^4\ell(z,z^*)} \leq \br{4\log \frac{p-s}{s} } \max_{i\in[p]}\frac{\br{\nu_i (\wh{B}(z),b)-\nu_i (\wh{B}(z^*),b)}^2}{\ell(z,z^*)}.
%\end{align*}
%\begin{align*}
%& = 2(a-b)\|X_i \|t(X_i )\br{ \nu_i (\wh{B}(z^*),b) -\nu_i (B^*,b)} + 2(a-b)\|X_i \|t(X_i ) \br{\nu_i (\wh{B}(z),b) - \nu_i (\wh{B}(z^*),b)} .
%\end{align*}
%So it is about upper bounding
%\begin{align*}
%%& \nu_i (\wh{B}(z^*),b) -\nu_i (B^*,b), \\
%& \nu_i (\wh{B}(z),b) - \nu_i (\wh{B}(z^*),b).
%\end{align*}
Define
$$\wt \beta_j (z) =   \|X_j \|^{-2} X_j ^T \br{Y - \sum_{l\in[p]\backslash\{j\}}X_l \wh \beta_l(z)}.$$
We then have
\begin{align*}
& {\norm{X_j}}^{-1}\sum_{l\in[p]\backslash\{j\}}\left(\wh{\beta}_l(z)-\wh{\beta}_l(z^*)\right)X_j^TX_l \\
 =&  -\|X_j \|  \br{ \|X_j \|^{-2} X_j ^T \br{Y - \sum_{l\in[p]\backslash\{j\}}X_l \wh \beta_l(z)} - \|X_j \|^{-2} X_j ^T\br{Y -\sum_{l\in[p]\backslash\{j\}} X_l\wh \beta_l(z^*) } } \\
 =&-\|X_j \| \br{\wt \beta_j (z) - \wt \beta_j (z^*) }.
\end{align*}
Hence, %for any $b\in\cbr{0,1,-1}$, 
we have
\begin{align*}
&\max_{b\in\cbr{-1,0,1}\setminus \{ z^*_j\}}\frac{G_j(z_j^*,b;z)^2\|\mu_j(B^*,b)-\mu_j(B^*,z_j^*)\|^2}{\Delta_j(z_j^*,b)^4\ell(z,z^*)} \\
& \leq \max\cbr{4\br{\frac{\abs{\beta^*_j}}{\abs{\beta^*_j} - t(X_j)}}^2,\br{\frac{\lambda}{t(X_j)}}^2} \frac{\norm{X_j}^2 \br{\wt \beta_j (z) - \wt \beta_j (z^*)}^2}{\ell(z,z^*)}\\
& \leq\max\cbr{4\br{\frac{\abs{\beta^*_j}}{\abs{\beta^*_j} - t(X_j)}}^2,\br{\frac{\lambda}{t(X_j)}}^2} \frac{2n \br{\wt \beta_j (z) - \wt \beta_j (z^*)}^2}{\ell(z,z^*)},
\end{align*}
where the last inequality is by   (\ref{eqn:re0}). Therefore, 
\begin{align}
&\max_{T\subset [p]} \frac{1}{s+\abs{T}}  \sum_{j\in T} \max_{b\in\cbr{-1,0,1}\setminus \{ z^*_j\}} \frac{G_j(z_j^*,b;z)^2\|\mu_j(B^*,b)-\mu_j(B^*,z_j^*)\|^2}{\Delta_j(z_j^*,b)^4\ell(z,z^*)} \nonumber \\
& \leq  \max_{j\in[p]}\max\cbr{4\br{\frac{\abs{\beta^*_j}}{\abs{\beta^*_j} - t(X_j)}}^2,\br{\frac{\lambda}{t(X_j)}}^2} \frac{2n}{\ell(z,z^*)}  \max_{T\subset [p]} \frac{1}{s+\abs{T}}   \sum_{j\in T} \br{\wt \beta_j (z) - \wt \beta_j (z^*)}^2,\label{eqn:reg_proof_main}
\end{align}
which is all about studying $ \max_{T\subset [p]} \frac{1}{s+\abs{T}}   \sum_{j\in T} (\wt \beta_j (z) - \wt \beta_j (z^*))^2$.

In the following, we will focus on understanding $\wt \beta_j (z) - \wt \beta_j (z^*)$ for different $j\in[p]$.
%we need to study $\wt \beta_j (z) - \wt \beta_j (z^*)$.
Define
\begin{align*}
S^* = \cbr{j\in[p]:z^*_j  \neq 0},\quad \text{and}\quad S(z) = \cbr{j\in[p]:z_j  \neq 0}.
\end{align*}
For simplicity of notation, we just write $S$ instead of $S(z)$ from now on. Recall that $\wh \beta(z)$ is the least square estimator on the support $S$. That is,
\begin{align*}
\wh \beta_S(z)= \br{X_S^TX_S}^{T}X_S^Ty,\quad \text{and}\quad\wh \beta_{S^\complement}(z)=0.
\end{align*}
Thus, the explicit expression of   $\wt \beta_j (z)$ is given by
\begin{align*}
\wt \beta_j (z) =
\begin{cases}
\beta_j^*  + \left[\br{X_S^TX_S}^{-1}X_S^TX_{S_1} \beta^*_{S_1}\right]_{\phi_S(j)}   +\left[\br{X_S^TX_S}^{-1}X_S^Tw\right]_{\phi_S(j)} & j\in S\\
  \frac{1}{\norm{X_j}^2}X_j^T \sbr{(I -P_S)X_{S_1} \beta_{S_1}^*   + (I-P_S)w }  & j\notin S.
\end{cases}
\end{align*}
%where in spite of some abuse of notation,  $\left[\br{X_S^TX_S}^{-1}X_S^TX_{S_1} \beta^*_{S_1}\right]_{i}  $ (and $\left[\br{X_S^TX_S}^{-1}X_S^T\epsilon\right]_i$) is the corresponding coordinate in $\br{X_S^TX_S}^{-1}X_S^TX_{S_1} \beta^*_{S_1}$ (resp. $\br{X_S^TX_S}^{-1}X_S^T\epsilon$) for $\wt \beta_i(z)$. 
Similarly, we also have
\begin{align*}
\wt \beta_j(z^*) = \begin{cases}
\beta_j^*   +\left[\br{X_{S^*}^TX_{S^*}}^{-1}X_{S^*}^T w\right]_{\phi_{S^*}(j)}  & j\in S^*\\
\frac{1}{\norm{X_j}^2}X_j^T (I-P_{S^*}) w & j\notin S^*.
\end{cases}
\end{align*}
The analysis of $\wt \beta_j (z^*) - \wt \beta_j (z)$ will be studied in four different regimes.
We divide $[p]$ into four disjoint sets,
\begin{align*}
S_1 = S^{*} \cap S^\complement,\quad S_2 = S^{*} \cap S,\quad S_3 = S^{*\complement} \cap S,\quad \text{and}\quad S_4 = S^{*\complement} \cap S^\complement.
\end{align*}
Note that by   (\ref{eqn:h_z_2s}), we have
\begin{align}
\abs{S_1} + \abs{S_3} = h(z,z^*)\leq 2C_0s.\label{eqn:reg_S_1_S_3}
\end{align}
We denote $\mathsf{X}_l = X_{S_l},l=1,2,3,4$ for simplicity. We also denote $\mathsf{P}_l = \mathsf{X}_l \br{\mathsf{X}_l^T \mathsf{X}_l}^{-1}\mathsf{X}_l^T$ to be the projection matrix onto the subspace $\text{span}(\mathsf{X}_l),$ for $l=1,2,3,4$. 

~\\
\emph{(1) Regime $j\in S_1$.} In this case, we have $\wt \beta_j(z^*) = \beta_j^*   +\left[\br{X_{S^*}^TX_{S^*}}^{-1}X_{S^*}^T w \right]_{\phi_{S^*}(j)}$. We can also write
\begin{align*}
\wt \beta_j(z) & = \frac{1}{\norm{X_j}^2}X_j^T \sbr{(I -P_S)X_{S_1} \beta_{S_1}^*   + (I-P_S) w  }\\
& =  \frac{1}{\norm{X_j}^2}X_j^T (I -P_S) X_j \beta_j^* + \sum_{l\in S_1,l\neq j} \frac{1}{\norm{X_j}^2}X_j^T (I -P_S) X_l \beta_l ^* +  \frac{1}{\norm{X_j}^2}X_j^T(I-P_S) w \\
&= \beta_j^* -   \frac{1}{\norm{X_j}^2}X_j^TP_S X_j \beta_j^* + \sum_{l\in S_1,l\neq j} \frac{1}{\norm{X_j}^2}X_j^T (I -P_S) X_l \beta_l ^* +  \frac{1}{\norm{X_j}^2}X_j^T(I-P_S) w .
\end{align*}
This leads to the decomposition
\begin{align*}
\wt \beta_j(z) -\wt \beta_j(z^*)   & = -   \frac{1}{\norm{X_j}^2}X_j^TP_S X_j \beta_j^* + \sum_{l\in S_1,l\neq j} \frac{1}{\norm{X_j}^2}X_j^T (I -P_S) X_l \beta_l ^* -  \frac{1}{\norm{X_j}^2}X_j^TP_S w  \\
&\quad  + \br{\frac{1}{\norm{X_j}^2}X_j^T w- \left[\br{X_{S^*}^TX_{S^*}}^{-1}X_{S^*}^T w \right]_{\phi_{S^*}(j)}}.
\end{align*}
%Let us simplify the last two terms above,
%\begin{align*}
%&\frac{1}{\norm{X_j}^2}X_j^T(I-P_S) w   - \left[\br{X_{S^*}^TX_{S^*}}^{-1}X_{S^*}^T w \right]_{\phi_{S^*}(j)} \\
%& = \frac{1}{\norm{X_j}^2}X_j^T\br{P_{S^*} - P_S} w - \left[\br{X_{S^*}^TX_{S^*}}^{-1}X_{S^*}^T w \right]_{\phi_{S^*}(j)}\\
%& =  \frac{1}{\norm{X_j}^2}X_j^T P_{S^*} w - \left[\br{X_{S^*}^TX_{S^*}}^{-1}X_{S^*}^T w \right]_{\phi_{S^*}(j)} - \frac{1}{\norm{X_j}^2}X_j^TP_S w .
%\end{align*}
%Now we have
%\begin{align*}
%\wt \beta_j(z) -\wt \beta_j(z^*)   & = -   \frac{1}{\norm{X_j}^2}X_j^TP_S X_j \beta_j^* + \sum_{l\in S_1,l\neq j} \frac{1}{\norm{X_j}^2}X_j^T (I -P_S) X_l \beta_l ^*  - \frac{1}{\norm{X_j}^2}X_j^TP_S w \\
%&\quad +\frac{1}{\norm{X_j}^2}X_j^T P_{S^*} w - \left[\br{X_{S^*}^TX_{S^*}}^{-1}X_{S^*}^T w \right]_{\phi_{S^*}(j)} .
%\end{align*}
We will bound each term on the right hand side of the above equation. 
%The last two terms will be analyzed together.

\emph{(1.1).}
First, we have
\begin{align*}
 \abs{ -   \frac{1}{\norm{X_j}^2}X_j^TP_S X_j \beta_j^*} & \leq \frac{2}{\min_{j} \norm{X_j}^2} \max_{j\in S_1} \abs{X_j^TP_S X_j } \abs{\beta_j^*}\\
 & \leq \frac{2}{n} Cs\abs{\beta_j^*}\log p ,
\end{align*}
where the last inequality is by  (\ref{eqn:re0}) and (\ref{eqn:re3_0}). Then
\begin{align*}
\sum_{j\in S_1}\br{ -   \frac{1}{\norm{X_j}^2}X_j^TP_S X_j \beta_j^*}^2 \leq \frac{4C^2s^2 \log^2 p}{n^2} \norm{\beta^*_{S_1}}^2.
\end{align*}

%\begin{align*}
%\sum_{j\in S_1} \br{ -   \frac{1}{\norm{X_i}^2}X_i^TP_S X_i \beta_i^*}^2 &  \leq \frac{4}{n^2} \br{ \sum_{i\in S_1} \br{X_i^TP_S X_i}^2} \norm{\beta^*_{S_1}}^2\\
%& \leq \frac{4C \abs{S_1} s^2\norm{\beta^*_{S_1}}^2 }{n^2}
%\end{align*}

\emph{(1.2).} For the second term, we have a matrix representation,
\begin{align*}
& \sum_{j\in S_1} \br{\sum_{l\in S_1,l\neq j} \frac{1}{\norm{X_j}^2}X_j^T (I -P_S) X_l \beta_l ^*}^2 \\
& \leq \frac{1}{\min_{j} \norm{X_j}^4} \norm{ \br{X_{S_1}^T (I-P_S) X_{S_1}  - \text{diag}\cbr{X_{S_1}^T (I-P_S) X_{S_1}  }}\beta^*_{S_1}  }^2\\
&\leq \frac{1}{\min_{j} \norm{X_j}^4}  \norm{  X_{S_1}^T (I-P_S) X_{S_1}  - \text{diag}\cbr{X_{S_1}^T (I-P_S) X_{S_1}  }  }^2 \norm{\beta^*_{S_1}}^2\\
& \leq \frac{2}{n^2} C s\log p \norm{\beta^*_{S_1}}^2.
\end{align*}
where the last inequality is by  (\ref{eqn:re0}) and (\ref{eqn:re3}).

\emph{(1.3).} For the third term, we have
\begin{align*}
\sum_{j\in S_1} \br{- \frac{1}{\norm{X_j}^2}X_j^TP_S w }^2 &\leq   \frac{1}{\min_{j} \norm{X_j}^4}  \sum_{j\in S_1} \br{X_j^T P_S  w }^2 \\
& \leq  \frac{2}{n^2} Cs\log^2 p \br{\abs{S_1} + \abs{S_3}}\\
& =  \frac{2}{n^2} Csh(z,z^*)\log^2 p  ,
\end{align*}
where the second to the  last inequality is by  (\ref{eqn:re0}) and (\ref{eqn:re4}).

\emph{(1.4).} For the last  term, using (\ref{eqn:re0}) and (\ref{eqn:eqn_54}), we have
\begin{align*}
&\max_{j\in S^*} \abs{\frac{1}{\norm{X_j}^2}X_j^T w- \left[\br{X_{S^*}^TX_{S^*}}^{-1}X_{S^*}^T w \right]_{\phi_{S^*}(j)}}  \\
&\quad  \leq \max_{j\in S} \frac{1}{\norm{X_j}} \max_{j\in S^*} \abs{\frac{1}{\norm{X_j}}X_j^T w-  \norm{X_j}\left[\br{X_{S^*}^TX_{S^*}}^{-1}X_{S^*}^T w \right]_{\phi_{S^*}(j)}} \\
& \leq \sqrt{\frac{2}{n}}\sqrt{\frac{Cs\log^2 p}{n}}.
\end{align*}
Hence,
\begin{align*}
&\sum_{j\in S_1} \br{\frac{1}{\norm{X_j}^2}X_j^T w - \left[\br{X_{S^*}^TX_{S^*}}^{-1}X_{S^*}^T w \right]_{\phi_{S^*}(j)}}^2 \\
&\leq \abs{S_1}\max_{j\in S^*} \br{\frac{1}{\norm{X_j}^2}X_j^T w- \left[\br{X_{S^*}^TX_{S^*}}^{-1}X_{S^*}^T w \right]_{\phi_{S^*}(j)}}^2 \\
& \leq \frac{2Cs\abs{S_1} \log^2 p}{n^2}\\
&\leq \frac{2Csh(z,z^*) \log^2 p}{n^2},
\end{align*}
where the last inequality is due to (\ref{eqn:reg_S_1_S_3}).

\emph{(1.5).}
Combining the above results, we have
\begin{align*}
&\sum_{j\in S_2}\br{\wt \beta_j(z) -\wt \beta_j(z^*) }^2 \\
&\leq  4\br{\frac{4C^2s^2 \log^2 p}{n^2} \norm{\beta^*_{S_1}}^2  + \frac{2}{n^2} C s\log p \norm{\beta^*_{S_1}}^2    +   \frac{2}{n^2} Csh(z,z^*)\log^2 p     +\frac{2Csh(z,z^*) \log^2 p}{n^2}}\\
& \leq 4\br{\frac{16C^2 s^2\log^2p}{n^2} + \frac{16s\log p}{\lambda^2 n^2}} \frac{\ell(z,z^*)}{n}
\end{align*}
where the last inequality is by  (\ref{eqn:reg_h_explicit}) and (\ref{eqn:reg_beta_s}).

~\\
\emph{(2) Regime $j\in S_2$.} In this case, $\wt \beta_j(z) -\wt \beta_j(z^*)$ can be written as
$$\left[\br{X_S^TX_S}^{-1}X_S^TX_{S_1} \beta^*_{S_1}\right]_{\phi_S(j)}   +\left[\br{X_S^TX_S}^{-1}X_S^Tw\right]_{\phi_S(j)} - \left[\br{X_{S^*}^TX_{S^*}}^{-1}X_{S^*}^Tw\right]_{\phi_{S^*}(j)}.$$
We will bound the first term, and then the second and the third term will be analyzed together.

\emph{(2.1).}
For the first term, we have
\begin{align*}
\sum_{j\in S_2}\left[\br{X_S^TX_S}^{-1}X_S^TX_{S_1} \beta^*_{S_1}\right]_{\phi_S(j)}^2 & \leq \sum_{j\in S}\left[\br{X_S^TX_S}^{-1}X_S^TX_{S_1} \beta^*_{S_1}\right]_{\phi_S(j)}^2 \\
& = \norm{\br{X_S^TX_S}^{-1}X_S^TX_{S_1} \beta^*_{S_1}}^2\\
& \leq  \norm{\br{X_S^TX_S}^{-1}X_S^TX_{S_1}}^2 \norm{\beta^*_{S_1}}^2\\
&\leq C\frac{s\log p}{n}\norm{\beta^*_{S_1}}^2,
\end{align*}
where the last inequality is due to   (\ref{eqn:re1}) in Lemma \ref{lem:random-events-reg}.

\emph{(2.2).}
Note that
\begin{align*}
&\sum_{j\in S_2} \br{\left[\br{X_S^TX_S}^{-1}X_S^Tw\right]_{\phi_S(j)} - \left[\br{X_{S^*}^TX_{S^*}}^{-1}X_{S^*}^Tw\right]_{\phi_{S^*}(j)}}^2 \\
& = \norm{\left[\br{X_S^TX_S}^{-1}X_S^Tw\right]_{\phi_S(S_2)} - \left[\br{X_{S^*}^TX_{S^*}}^{-1}X_{S^*}^Tw\right]_{\phi_{S^*}(S_2)}}^2.
\end{align*}
Since $S$ is close to $S^*$, the two length-$\abs{S_2}$ vectors on the right hand side of the above equation should also be close to each other. Applying block matrix inverse formula, we have
\begin{align}\label{eqn:block}
\br{X_{S}X_{S}^T}^{-1}
=
 \begin{pmatrix}
\mathsf{X}_2^T\mathsf{X}_2 & \mathsf{X}_2^T \mathsf{X}_3\\
 \mathsf{X}_3^T \mathsf{X}_2 & \mathsf{X}_3^T\mathsf{X}_3
\end{pmatrix}^{-1} \triangleq \begin{pmatrix}
A_{11} & A_{12}\\
A_{21} & A_{22}
\end{pmatrix},
\end{align}
where 
\begin{align*}
A_{11} &= \br{\mathsf{X}_2^T \mathsf{X}_2}^{-1} + \br{\mathsf{X}_2^T \mathsf{X}_2}^{-1}\mathsf{X}_2^T \mathsf{X}_3 \br{\mathsf{X}_3^T\br{I-\mathsf{P}_2}\mathsf{X}_3}^{-1}  \mathsf{X}_3^T \mathsf{X}_2\br{\mathsf{X}_2^T\mathsf{X}_2}^{-1}, \\
A_{12} &=  - \br{\mathsf{X}_2^T\mathsf{X}_2}^{-1}\mathsf{X}_2^T \mathsf{X}_3 \br{\mathsf{X}_3^T\br{I-\mathsf{P}_2}\mathsf{X}_3 }^{-1}, \\
A_{21} &=  -  \br{\mathsf{X}_3^T(I-\mathsf{P}_2)\mathsf{X}_3}^{-1} \mathsf{X}_3^T \mathsf{X}_2 \br{\mathsf{X}_2^T\mathsf{X}_2}^{-1}, \\
A_{22} &=  \br{\mathsf{X}_3^T(I-\mathsf{P}_2)\mathsf{X}_3}^{-1}.
\end{align*}
With these notation, we have
\begin{align*}
\sbr{\br{X_S^TX_S}^{-1}X_S^T w }_{\phi_{S}(S_2)} & = A_{11}\mathsf{X}_2^T w  + A_{12}\mathsf{X}_3^T w  \\
 & =\br{\mathsf{X}_2^T\mathsf{X}_2}^{-1} \mathsf{X}_2^T w  -  \br{\mathsf{X}_2^T\mathsf{X}_2}^{-1}\mathsf{X}_2^T \mathsf{X}_3 \br{\mathsf{X}_3^T(I-\mathsf{P}_2)\mathsf{X}_3 }^{-1}  \mathsf{X}_3^T (I-\mathsf{P}_2) w,
 \end{align*}
and
 \begin{align*}
\sbr{\br{X_{S^*}^TX_{S^*}}^{-1}X_{S^*}^T w }_{\phi_{S^*}(S_2)}=\br{\mathsf{X}_2^T\mathsf{X}_2}^{-1} \mathsf{X}_2^T w  -  \br{\mathsf{X}_2^T\mathsf{X}_2}^{-1}\mathsf{X}_2^T \mathsf{X}_1 \br{\mathsf{X}_1^T(I-\mathsf{P}_2)\mathsf{X}_1 }^{-1}  \mathsf{X}_1^T (I-\mathsf{P}_2) w .
\end{align*}
Thus
\begin{align*}
&\sum_{j\in S_2} \br{\left[\br{X_S^TX_S}^{-1}X_S^Tw\right]_{\phi_S(j)} - \left[\br{X_{S^*}^TX_{S^*}}^{-1}X_{S^*}^Tw\right]_{\phi_{S^*}(j)}}^2 \\
& = \norm{\br{\mathsf{X}_2^T\mathsf{X}_2}^{-1}\mathsf{X}_2^T \mathsf{X}_1 \br{\mathsf{X}_1^T(I-\mathsf{P}_2)\mathsf{X}_1 }^{-1}  \mathsf{X}_1^T (I-\mathsf{P}_2) w  -  \br{\mathsf{X}_2^T\mathsf{X}_2}^{-1}\mathsf{X}_2^T \mathsf{X}_3 \br{\mathsf{X}_3^T(I-\mathsf{P}_2)\mathsf{X}_3 }^{-1}  \mathsf{X}_3^T (I-\mathsf{P}_2) w }^2\\
&\leq \norm{\br{\mathsf{X}_2^T\mathsf{X}_2}^{-1}\mathsf{X}_2^T \mathsf{X}_1}^2 \norm{\br{\mathsf{X}_1^T(I-\mathsf{P}_2)\mathsf{X}_1 }^{-1}  \mathsf{X}_1^T (I-\mathsf{P}_2) w }^2\\
&\quad + \norm{\br{\mathsf{X}_2^T\mathsf{X}_2}^{-1}\mathsf{X}_2^T \mathsf{X}_3}^2\norm{ \br{\mathsf{X}_3^T(I-\mathsf{P}_2)\mathsf{X}_3 }^{-1}  \mathsf{X}_3^T (I-\mathsf{P}_2) w }^2\\
& \leq \br{C\frac{s\log p}{n}} \br{C\frac{\abs{S_1}\log p}{n}  +   C\frac{\abs{S_3}\log p}{n}}\\
& = C^2 \frac{s h(z,z^*)\log^2 p}{n^2},
\end{align*}
where the second to the last inequality is due to  (\ref{eqn:re1}) and (\ref{eqn:re2}).

\emph{(2.3).}
Combining the above results, we have
\begin{align*}
\sum_{j\in S_2}\br{\wt \beta_j(z) -\wt \beta_j(z^*) }^2 &\leq  2\br{C\frac{s\log p}{n}\norm{\beta^*_{S_1}}^2 + C^2 \frac{s h(z,z^*)\log^2 p}{n^2}}\\
& \leq  \br{\frac{4Cs\log p}{n} + \frac{4C^2 s\log^2 p}{\lambda^2 n^2}} \frac{\ell(z,z^*)}{n},
\end{align*}
where the last inequality is by  (\ref{eqn:reg_h_explicit}) and (\ref{eqn:reg_beta_s}).

~\\
\emph{(3) Regime $j\in S_3$.} Since
\begin{align*}
\wt \beta_j(z) &= \beta_j^*  + \left[\br{X_S^TX_S}^{-1}X_S^TX_{S_1} \beta^*_{S_1}\right]_{\phi_S(j)}   +\left[\br{X_S^TX_S}^{-1}X_S^T w \right]_{\phi_S(j)}, \\
\wt \beta_j(z^*) &=\frac{1}{\norm{X_j}^2}X_j^T (I-P_{S^*})  w,
\end{align*}
and $\beta^*_j =0$, we can write $\wt \beta_j(z) -\wt \beta_j(z^*)$ as
\begin{align*}
&   \left[\br{X_S^TX_S}^{-1}X_S^TX_{S_1} \beta^*_{S_1}\right]_{\phi_S(j)}   +\left[\br{X_S^TX_S}^{-1}X_S^T w \right]_{\phi_S(j)}-\frac{1}{\norm{X_j}^2}X_j^T (I-P_{S^*})  w  \\
&=  \left[\br{X_S^TX_S}^{-1}X_S^TX_{S_1} \beta^*_{S_1}\right]_{\phi_S(j)}   + \frac{1}{\norm{X_j}^2} X_j^T P_{S^*} w  +   \left[\br{X_S^TX_S}^{-1}X_S^T w \right]_{\phi_S(j)} - \frac{1}{\norm{X_j}^2}X_j^T w  \\
&=  \left[\br{X_S^TX_S}^{-1}X_S^TX_{S_1} \beta^*_{S_1}\right]_{\phi_S(j)}   + \frac{1}{\norm{X_j}^2} X_j^T P_{S^*} w  +   \left[\br{X_S^TX_S}^{-1}X_S^T w \right]_{\phi_S(j)} - \frac{1}{\norm{X_j}^2}X_j^T w  .
\end{align*}
We are going to bound each term separately. The last two terms will be analyzed together.

\emph{(3.1).} Note that the first term here is identical to the first term in the regime $j\in S_2$. By the same argument, we have
\begin{align*}
\sum_{j\in S_3} \left[\br{X_S^TX_S}^{-1}X_S^TX_{S_1} \beta^*_{S_1}\right]_{\phi_S(j)}^2 \leq C\frac{s\log p}{n}\norm{\beta^*_{S_1}}^2.
\end{align*}

\emph{(3.2).} For the second term, we have
\begin{align*}
\sum_{j\in S_3}\br{\frac{1}{\norm{X_j}^2} X_j^T P_{S^*} w  }^2 & \leq \frac{1}{\min_j \norm{X_j}^4} \norm{X_3^T P_{S^*} w }^2 \leq \frac{4}{n^2} Cs \abs{S_3}\log p \leq  \frac{4}{n^2} Cs h(z,z^*)\log p,
\end{align*} 
where the second to last inequality is due to  (\ref{eqn:re0}) and (\ref{eqn:re7}).

\emph{(3.3).} For the last two terms, we can again apply block matrix inverse formula to simplify them. Using (\ref{eqn:block}), we have
%\begin{align*}
%\br{X_{S}X_{S}^T}^{-1}
%=
% \begin{pmatrix}
%\mathsf{X}_2^T\mathsf{X}_2 & \mathsf{X}_2^T \mathsf{X}_3\\
% \mathsf{X}_3^T \mathsf{X}_2 & \mathsf{X}_3^T\mathsf{X}_3
%\end{pmatrix}^{-1} = \begin{pmatrix}
%A_{11} & A_{12} \\
%A_{21} & A_{22}
%\end{pmatrix}
%\end{align*}
%where
%\begin{align*}
%A_{21} &=  -  \br{\mathsf{X}_3^T(I-\mathsf{P}_2)\mathsf{X}_3}^{-1} \mathsf{X}_3^T \mathsf{X}_2 \br{\mathsf{X}_2^T\mathsf{X}_2}^{-1}\\
%A_{22} &=  \br{\mathsf{X}_3^T(I-\mathsf{P}_2)\mathsf{X}_3}^{-1}.
%\end{align*}
%With this, we have
\begin{align*}
 \left[\br{X_S^TX_S}^{-1}X_S^T w \right]_{\phi_S(j)}  & =  \sbr{\begin{pmatrix}
A_{11} & A_{12} \\
A_{21} & A_{22}
\end{pmatrix} \begin{pmatrix}
\mathsf{X}_2^T  \\
\mathsf{X}_3^T 
\end{pmatrix}  w }_{\phi_S(j)} \\
& = \sbr{A_{21}\mathsf{X}_2^T w   + A_{22} \mathsf{X}_3^T  w }_{\phi_{S_3}(j)}\\
& =  - \sbr{ \br{\mathsf{X}_3^T(I-\mathsf{P}_2)\mathsf{X}_3}^{-1} \mathsf{X}_3^T\mathsf{P}_2  w   }_{\phi_S(j)} + \sbr{ \br{\mathsf{X}_3^T(I-\mathsf{P}_2)\mathsf{X}_3}^{-1} \mathsf{X}_3^T  w   }_{\phi_S(j)}.
\end{align*}
Then
\begin{align*}
\left[\br{X_S^TX_S}^{-1}X_S^T w \right]_{\phi_S(j)} - \frac{1}{\norm{X_j}^2}X_j^T w   & =- \sbr{ \br{\mathsf{X}_3^T(I-\mathsf{P}_2)\mathsf{X}_3}^{-1} \mathsf{X}_3^T\mathsf{P}_2  w   }_{\phi_S(j)} \\
&  \quad +  \br{\sbr{ \br{\mathsf{X}_3^T(I-\mathsf{P}_2)\mathsf{X}_3}^{-1} \mathsf{X}_3^T  w   }_{\phi_S(j)} -  \frac{1}{\norm{X_j}^2}\mathsf{X}_j^T w }.
\end{align*}
Consequently,
\begin{align*}
&\sum_{j\in S_3} \br{\left[\br{X_S^TX_S}^{-1}X_S^T w \right]_{\phi_S(j)} - \frac{1}{\norm{X_j}^2}X_j^T w  }^2 \\
& = \norm{- \br{\mathsf{X}_3^T(I-\mathsf{P}_2)\mathsf{X}_3}^{-1} \mathsf{X}_3^T\mathsf{P}_2  w  + \br{\mathsf{X}_3^T(I-\mathsf{P}_2)\mathsf{X}_3}^{-1} \mathsf{X}_3^T  w  -  D^{-1}\mathsf{X}_3^T}^2 \\
& \leq 2 \norm{ \br{\mathsf{X}_3^T(I-\mathsf{P}_2)\mathsf{X}_3}^{-1} \mathsf{X}_3^T\mathsf{P}_2  w   }^2  + 2\norm{ \br{\mathsf{X}_3^T(I-\mathsf{P}_2)\mathsf{X}_3}^{-1} \mathsf{X}_3^T  w   - D^{-1}\mathsf{X}_3^T  w }^2 \\
& \leq 2 \norm{ \br{\mathsf{X}_3^T(I-\mathsf{P}_2)\mathsf{X}_3}^{-1} \mathsf{X}_3^T\mathsf{P}_2  w   }^2  + 2\norm{ \br{\mathsf{X}_3^T(I-\mathsf{P}_2)\mathsf{X}_3}^{-1}  - D^{-1}}^2 \norm{\mathsf{X}_3^T  w }^2\\
& \leq 2 \norm{ \br{\mathsf{X}_3^T(I-\mathsf{P}_2)\mathsf{X}_3}^{-1} \mathsf{X}_3^T\mathsf{P}_2  w   }^2  + 4\norm{ \br{\mathsf{X}_3^T(I-\mathsf{P}_2)\mathsf{X}_3}^{-1}  - (n-\abs{S_2})^{-1}I_{\abs{S_3}}}^2 \norm{\mathsf{X}_3^T  w }^2\\
&\quad + 4\norm{ (n-\abs{S_2})^{-1}I_{\abs{S_3}} - D^{-1}}^2 \norm{\mathsf{X}_3^T  w }^2
\end{align*}
where $D \in\mathr^{\abs{S_3} \times \abs{S_3}}$ is a diagonal matrix with diagonal entries $\{1/\norm{X_j}^2\}_{j\in S_3}$ and off-diagonal entries being 0. 
By   (\ref{eqn:re8_0}), we have
\begin{align*}
\norm{ \br{\mathsf{X}_3^T(I-\mathsf{P}_2)\mathsf{X}_3}^{-1} \mathsf{X}_3^T\mathsf{P}_2  w   }^2 \leq Cn^{-2}s\abs{S_3} \log p.
\end{align*}
By   (\ref{eqn:re8}), we have
\begin{align*}
\norm{\mathsf{X}_3^T(I-\mathsf{P}_2)\mathsf{X}_3 - \br{n-\abs{S_2}}I_{\abs{S_3}}  }^2 \leq Cns\log p.
\end{align*}
Together with   (\ref{eqn:re9}), we have 
\begin{align*}
& \norm{\br{\mathsf{X}_3^T(I-\mathsf{P}_2)\mathsf{X}_3}^{-1} - (n-\abs{S_2})^{-1}I_{\abs{S_3}}}^2 \\
& \leq \norm{\br{\mathsf{X}_3^T(I-\mathsf{P}_2)\mathsf{X}_3}^{-1} }^2 \norm{I_{\abs{S_3}}  - \frac{\mathsf{X}_3^T(I-\mathsf{P}_2)\mathsf{X}_3}{n-\abs{S_2}}}^2\\
&\leq  \frac{2C^3s\log p}{n^3},
\end{align*}
where we have used the assumption $\abs{S_3}\leq 2C_0s$.
By   (\ref{eqn:re0}), we have
\begin{align*}
\norm{(n-\abs{S_2})^{-1}I_{\abs{S_3}} - D^{-1}}^2 & \leq \max_{j\in [p]} \abs{\frac{1}{n- \abs{S_2}}  -\frac{1}{\norm{X_j}^2} }^2\\
&\leq  \max\cbr{\abs{\frac{1}{n}  - \frac{1}{n -C\sqrt{n\log p}}}^2, \abs{\frac{1}{n-2C_0s} - \frac{1}{n}}^2}\\
&\leq \frac{2C\log p}{n^3} + \frac{8s^2}{n^4}.
\end{align*}
By   (\ref{eqn:re10}), we have
\begin{align*}
\norm{X^T_3 w }^2& \leq Cn\abs{S_3}\log p.
\end{align*}
As a consequence, we have
\begin{align*}
&\sum_{j\in S_3} \br{\left[\br{X_S^TX_S}^{-1}X_S^T w \right]_{\phi_S(j)} - \frac{1}{\norm{X_j}^2}X_j^T w  }^2 \\
& \leq 2Cn^{-2}s\abs{S_3} \log p +   \br{\frac{4C^3s\log p}{n^3} + \frac{8s^2}{n^4}} Cn\abs{S_3}\log p\\
& \leq 8C^3\frac{s\log p}{n^2}\abs{S_3} \log p\\
& \leq 8C^3\frac{s\log p}{n^2}h(z,z^*) \log p .
\end{align*}

%\begin{align*}
% &\norm{\br{X_3^T(I-P_2)X_3}^{-1} } \leq \frac{2}{n}\\
% \text{and }& \norm{I_{\abs{S_3}\times\abs{S_3}} - \frac{X_3^T(I-P_2)X_3}{n-\abs{S_2}}}^2 \lesssim \frac{s^*\log p}{n}.
%\end{align*}
%Then

\emph{(3.4).} Combining the above results, we have
\begin{align*}
&\sum_{j\in S_3}\br{\wt \beta_j(z) -\wt \beta_j(z^*) }^2 \\
&\leq  3\br{C\frac{s\log p}{n}\norm{\beta^*_{S_1}}^2 +  \frac{4}{n^2} Cs h(z,z^*)\log p  +8C^3\frac{s\log p}{n^2}h(z,z^*) \log p}\\
&\leq 3\br{\frac{2Cs\log p}{n} + \frac{32C^3 s\log^2 p}{\lambda^2n^2}} \frac{\ell(z,z^*)}{n},
\end{align*}
where the last inequality is by  (\ref{eqn:reg_h_explicit}) and (\ref{eqn:reg_beta_s}).

~\\
\emph{(4) Regime $j\in S_4$.}
In this case, we have
\begin{align*}
\wt \beta_j(z) -\wt \beta_j(z^*)   & =  \frac{1}{\norm{X_j}^2}X_j^T (I -P_S)X_{S_1} \beta_{S_1}^*   +  \frac{1}{\norm{X_j}^2}X_j^T(P_{S^*}-P_S) w  .
\end{align*}
Then,
\begin{align*}
&\max_{T\subset  S_4} \frac{1}{\abs{T} \vee s} \sum_{j \in T} \br{\wt \beta_j(z) -\wt \beta_j(z^*) }^2 \\
&\leq  \frac{1}{\min_j \norm{X_j}^4} \br{\max_{T\subset  S_4} \frac{1}{\abs{T} \vee s}\sum_{j \in T} \br{X_j^T (I -P_S)X_{S_1} \beta_{S_1}^* }^2 +  \max_{T\subset  S_4} \frac{1}{\abs{T} \vee s}\sum_{j \in T} \br{X_j^T(P_{S^*}-P_S) w }^2} \\
& =  \frac{1}{\min_j \norm{X_j}^4} \br{\max_{T\subset  S_4}\frac{1}{\abs{T} \vee s} \norm{X_T^T(I -P_S)X_{S_1} \beta_{S_1}^*}^2  + \max_{T\subset  S_4} \frac{1}{\abs{T} \vee s} \norm{ X_T^T(P_{S^*}-P_S) w }^2   }\\
& \leq \frac{4}{n^2} \br{Cn \log p \norm{\beta^*_{S_1}}^2 + C \br{\abs{S_1} + \abs{S_3}}\log^2 p} \\
& =  \frac{4}{n^2} \br{Cn \log p \norm{\beta^*_{S_1}}^2 + C h(z,z^*)\log^2 p}.
\end{align*}
where the last inequality is by  (\ref{eqn:re0}), (\ref{eqn:re11}) and (\ref{eqn:re12}). Then by  (\ref{eqn:reg_h_explicit}) and (\ref{eqn:reg_beta_s}), we have 
\begin{align*}
&\max_{T\subset  S_4} \frac{1}{\abs{T} \vee s} \sum_{j \in T} \br{\wt \beta_j(z) -\wt \beta_j(z^*) }^2 \leq \br{\frac{8C\log p }{n} + \frac{8C \log^2 p}{\lambda^2 n^2}} \frac{\ell(z,z^*)}{n}.
\end{align*}

~\\
\emph{(5) Combining the bounds.}
Now we are ready to combine the bounds obtained in the four regimes. Let $T\subset [p]$ be any set. We have 
\begin{align*}
& \sum_{j \in T} \br{\wt \beta_j(z) -\wt \beta_j(z^*) }^2\\
& \leq   \sum_{j \in S_2 } \br{\wt \beta_j(z) -\wt \beta_j(z^*) }^2 +  \sum_{j \in S_1 } \br{\wt \beta_j(z) -\wt \beta_j(z^*) }^2 +  \sum_{j \in S_3 } \br{\wt \beta_j(z) -\wt \beta_j(z^*) }^2 +  \sum_{j \in S_4\cap T } \br{\wt \beta_j(z) -\wt \beta_j(z^*) }^2\\
& \leq \br{\br{\frac{4Cs\log p}{n} + \frac{4C^2 s\log^2 p}{\lambda^2 n^2}  }+ 4\br{\frac{16C^2 s^2\log^2p}{n^2} + \frac{16s\log p}{\lambda^2 n^2}} + 3\br{\frac{2Cs\log p}{n} + \frac{32C^3 s\log^2 p}{\lambda^2n^2}}} \frac{\ell(z,z^*)}{n}\\
&\quad +  \br{\frac{8C\log p }{n} + \frac{8C \log^2 p}{\lambda^2 n^2}}\br{\abs{T}   \vee s}\frac{\ell(z,z^*)}{n} \\
& \leq \br{\frac{128C^2 \log p}{n} + \frac{256 C^3 \log^2 p}{\lambda^2 n^2}} \br{s + \abs{T}} \frac{\ell(z,z^*)}{n}.
\end{align*}
Thus
\begin{align*}
\max_{T\subset [p]} \frac{1}{s+\abs{T}} \sum_{j\in T} \br{\wt \beta_j(z) -\wt \beta_j(z^*) }^2 \leq\br{\frac{128C^2 \log p}{n} + \frac{256 C^3 \log^2 p}{\lambda^2 n^2}} \frac{\ell(z,z^*)}{n}.
\end{align*}
Together with (\ref{eqn:reg_proof_main}), we have
\begin{align*}
&\max_{T\subset [p]} \frac{1}{s+\abs{T}}  \sum_{j\in T}  \max_{b\in\{-1,1,0\}\setminus \{z^*_j\}} \frac{G_j(z_j^*,b;z)^2\|\mu_j(B^*,b)-\mu_j(B^*,z_j^*)\|^2}{\Delta_j(z_j^*,b)^4\ell(z,z^*)} \\
& \leq  2 \br{\frac{128C^2 \log p}{n} + \frac{256 C^3 \log^2 p}{\lambda^2 n^2}}\max_{j\in[p]}\max\cbr{4\br{\frac{\abs{\beta^*_j}}{\abs{\beta^*_j} - t(X_j)}}^2,\br{\frac{\lambda}{t(X_j)}}^2}.
\end{align*}
%Since we assume   (\ref{eqn:lambda_def}) holds for $\lambda$, we have $n\lambda^2 \geq 2$. 
Recall that $\Delta^2_{\min} = \lambda^2 \min_{j\in[p]}\norm{X_j}^2 \geq n\lambda^2/2$. For any $T\subset[p]$, we have $\tau/(\tau + 4\Delta_{\min}^2 \abs{T}) \leq \tau /(\tau + 2n\lambda^2\abs{T}) \leq C_0/(C_0s + \abs{T})$, since $\tau \leq C_0sn\lambda^2$.
This gives us
\begin{align*}
&\max_{T\subset [p]} \frac{\tau}{\tau +4\Delta_{\min}^2 \abs{T}}  \sum_{j\in T} \max_{b\in\{-1,1,0\}\setminus \{z^*_j\}} \frac{G_j(z_j^*,b;z)^2\|\mu_j(B^*,b)-\mu_j(B^*,z_j^*)\|^2}{\Delta_j(z_j^*,b)^4\ell(z,z^*)}  \\
&\leq C' s\br{\frac{\log^2 p}{n} + \frac{\log^2 p}{\lambda^2n^2}}\max_{j\in[p]}\max\cbr{4\br{\frac{\abs{\beta^*_j}}{\abs{\beta^*_j} - t(X_j)}}^2,\br{\frac{\lambda}{t(X_j)}}^2},
\end{align*}
for some constant $C'$. The proof is complete.
\end{proof}

\begin{proof}[Proof of Lemma \ref{lem:regression-ideal}]
Recall for any $j\in[p]$, $T_j$ the local test to recover $z^*_j$ is  defined in (\ref{eqn:local_statistics_reg}). We have the decomposition $T_j  = \mu_j(B^*,z^*_j) + \epsilon_j$, where $\epsilon_j = \|X_j\|^{-1} X_j^Tw \sim \mathn(0,1)$. 
%First we are going to simplify $\|\mu_j(B^*,b)-\mu_j(B^*,z_j^*)\|^2\indc{\iprod{\epsilon_j}{\nu_j(\wh{B}(z^*),z_j^*)-\nu_j(\wh{B}(z^*),b)} \leq -\frac{1-\delta}{2}\Delta_j(z_j^*,b)^2}$. 
Since $\nu_j(\wh{B}(z^*),z_j^*)-\nu_j(\wh{B}(z^*),b) = 2(z^*_j - b)\norm{X_j}t(X_j)$, by (\ref{eqn:reg_delta}), for  any $0< \delta <1$, we have
\begin{align*}
&\indc{\iprod{\epsilon_j}{\nu_j(\wh{B}(z^*),z_j^*)-\nu_j(\wh{B}(z^*),b)} \leq -\frac{1-\delta}{2}\Delta_j(z_j^*,b)^2} \\
& \leq \begin{cases}
\indc{z_j^* \epsilon_j \leq -\br{1-\delta} \norm{X_j}\br{\abs{\beta_j^*} - t(X_j)}}   & z_j^* \neq  0 \text{ and } b\neq z_j^*  \\
\indc{-b\epsilon_j \leq -\br{1-\delta} \norm{X_j} t(X_j)}  & z_j^* = 0 \text{ and } b\neq 0.
\end{cases}
\end{align*}
Together with (\ref{eqn:reg_mu_diff}), for $b\neq z_j^*$, we have
\begin{align*}
&\|\mu_j(B^*,b)-\mu_j(B^*,z_j^*)\|^2\indc{\iprod{\epsilon_j}{\nu_j(\wh{B}(z^*),z_j^*)-\nu_j(\wh{B}(z^*),b)} \leq -\frac{1-\delta}{2}\Delta_j(z_j^*,b)^2}\\
& \leq \begin{cases}
4\abs{\beta^*_j}^2 \norm{X_j}^2 \indc{z_j^* \epsilon_j \leq -\br{1-\delta} \norm{X_j}\br{\abs{\beta_j^*} - t(X_j)}} & z_j^* \neq 0\\
4\lambda^2\norm{X_j}^2 \indc{-b\epsilon_j \leq -\br{1-\delta} \norm{X_j} t(X_j)} & z_j^* = 0.
\end{cases}
\end{align*}
As a consequence
\begin{align*}
\xi_{\rm ideal}(\delta) & \leq 8 \sum_{j\in S^*}\abs{\beta^*_j}^2\norm{X_j}^2 \indc{z_j^*\epsilon_j \leq -\br{1-\delta} \norm{X_j}\br{\abs{\beta_j^*}  - t(X_j)}} + 4 \sum_{j\notin S^*}\lambda^2\norm{X_j}^2 \indc{\abs{\epsilon_j }\geq \br{1-\delta} \norm{X_j} t(X_j)} .
\end{align*}
Define $\mathcal{F}$ to be the event that   (\ref{eqn:re0}) holds. Then by Lemma \ref{lem:random-events-reg}, we know that $\pbr{\mathcal{F}} \geq 1-p^{-C'}$. Under the event $\mathcal{F}$ and the condition that $\snr\rightarrow\infty$, we have 
\begin{align*}
\xi_{\rm ideal}(\delta) \indc{\mathcal{F}}& \leq   8 \sum_{j\in S^*}\abs{\beta^*_j}^2\norm{X_j}^2 \indc{\frac{-z^*_j\epsilon_j}{ {1-\delta}} \geq \norm{X_j}\br{\abs{\beta_j^*}  - t(X_j)}}\indc{\abs{\norm{X_j}^2 -n} \leq C\sqrt{n\log p}} \\
& \quad+ 4 \sum_{j\notin S^*}\lambda^2 \norm{X_j}^2\indc{\frac{\abs{\epsilon_j}}{ {1-\delta}} \geq  \norm{X_j} t(X_j)}\indc{\abs{\norm{X_j}^2 -n} \leq C\sqrt{n\log p}},
\end{align*}
which implies
\begin{align*}
\E\xi_{\rm ideal}(\delta) \indc{\mathcal{F}} &\leq   16n \sum_{j\in S^*}\abs{\beta^*_j}^2 \pbr{\frac{\epsilon_j}{ {1-\delta}} \geq \norm{X_j}\br{\abs{\beta_j^*}  - t(X_j)} \; \&\; \abs{\norm{X_j}^2 -n} \leq C\sqrt{n\log p}}  \\
&\quad + 16n \sum_{j\notin S^*}\lambda^2 \pbr{\frac{\epsilon_j}{ {1-\delta}} \geq  \norm{X_j} t(X_j)  \; \&\; \abs{\norm{X_j}^2 -n} \leq C\sqrt{n\log p}}.
\end{align*}
We are going to upper bound the above quantity by $\wt \psi(n,p,s,\lambda,\delta,C)$, defined in (\ref{eqn:tilde_psi_def}).
To this end, we will first show the function $f(y) = y^2\pbr{\frac{\epsilon}{1-\delta} \geq \norm{\zeta} \br{y - t(\zeta)}, \abs{\norm{\zeta}^2 - n} \leq C\sqrt{n\log p}}$ is a decreasing function of $y$ when $y\geq \lambda$ and $\lambda >0$. 
Since the function $t(\zeta)$ only depends on $\|\zeta\|$, we can also write $t(\zeta)$ as $t(\|\zeta\|)$ with a slight abuse of notation in (\ref{eqn:t_u_def}).
Define $u_{\min} = \sqrt{n- C\sqrt{n\log p}}$ and $u_{\max} = \sqrt{n+C\sqrt{n\log p}}$. Then, we have 
\begin{align*}
f(y) & = y^2\pbr{\frac{\epsilon}{1-\delta} \geq \norm{\zeta} \br{y - t(\zeta) } , \abs{\norm{\zeta}^2 - n} \leq C\sqrt{n\log p}}\\
& = y^2 \int_{u_{\min}}^{u_{\max}} p(u) G\br{\br{1-\delta} u \br{y -t(u)}} du,
\end{align*}
where $p(\cdot)$ is the density of $\norm{\zeta}$. According to the same argument used in the proof of Lemma \ref{prop:useful-reg}, it can be shown that $\min_{u\in[u_{\min},u_{\max}]}u(\lambda-t(u))\rightarrow\infty$.
Thus, $u(y-t(u))\geq u(\lambda-t(u))>0$ for $y\geq \lambda$ and $u\in[u_{\min},u_{\max}]$. Moreover, we also have $\br{1-\delta}^2 u^2 y\br{y -t(u) }\geq\br{1-\delta}^2 u^2\lambda \br{\lambda-t(u)} >2$ for $y\geq \lambda$ and $u\in[u_{\min},u_{\max}]$. Therefore,
\begin{align*}
\frac{2}{\br{1-\delta} u \br{y - t(u)} } -y\br{1-\delta}u & = \frac{2 -\br{1-\delta}^2u^2 y(y-t(u))}{\br{1-\delta} u \br{y - t(u)} }\\
& \leq  \frac{2 -\br{1-\delta}^2u^2 \lambda(\lambda-t(u))}{\br{1-\delta} u \br{y - t(u)} }\\
& < 0.
\end{align*}
This gives
\begin{align*}
f'(y) &=\int_{u_{\min}}^{u_{\max}}p(u) \br{2y  G\br{\br{1-\delta} u \br{y - t(u)}}    - y^2\br{1-\delta}u g\br{\br{1-\delta} u \br{y - t(u)}}   }  du\\
& \leq\int_{u_{\min}}^{u_{\max}} p(u)y\br{\frac{2}{\br{1-\delta} u \br{y - t(u)} } -y\br{1-\delta}u} g\br{\br{1-\delta} u \br{y - t(u)}}    du\\
&\leq 0,
\end{align*}
where we have used   (\ref{eqn:gaussian_tail_bound}). 
%A sufficient condition for $\frac{2y}{\br{1-\delta} u \br{y - t(u)} } -y^2\br{1-\delta}u$ is $\lambda^2 >\br{\frac{2}{\br{1-\delta}^2} + \log \frac{p-s}{s}}{u^2}$. This is satisfied by $\br{\frac{2}{\br{1-\delta}^2} + \log \frac{p-s}{s}}{u^2} \leq 2\br{\frac{2}{\br{1-\delta}^2} + \log \frac{p-s}{s}}n^{-1}$ and by the assumption for $\lambda$ (i.e.,   (\ref{eqn:lambda_def}). 
As a result, $f(y)$ is a decreasing function  for all $y\geq \lambda$, which implies
\begin{align*}
\E\xi_{\rm ideal}(\delta) \indc{\mathcal{F}} &\leq  16n  \sum_{j\in S^*}\lambda^2 \pbr{\frac{\epsilon_j}{ {1-\delta}} \geq \norm{X_j}\br{\lambda - t(X_j)} \; \&\; \abs{\norm{X_j}^2 -n} \leq C\sqrt{n\log p}}  \\
&\quad + 16n  \sum_{j\notin S^*}\lambda^2 \pbr{\frac{\epsilon_j}{ {1-\delta}} \geq  \norm{X_j} t(X_j)  \; \&\; \abs{\norm{X_j}^2 -n} \leq C\sqrt{n\log p}}\\
& =16n    \lambda^2  \wt \psi(n,p,s,\lambda,\delta,C).
\end{align*}
By applying Markov inequality, we have with probability at least $1-w^{-1}$,
\begin{align*}
\xi_{\rm ideal}(\delta) \indc{\mathcal{F}} \leq 16wn  \lambda^2  \wt \psi(n,p,s,\lambda,\delta,C),
\end{align*}
where $w$ is any sequence that goes to infinity.
A union bound implies
\begin{align}\label{eqn:xi_ideal_upper}
\xi_{\rm ideal}(\delta)\leq 16wn  \lambda^2  \wt \psi(n,p,s,\lambda,\delta,C)
\end{align}
holds with probability at least $1-w^{-1}-p^{-C'}$. Taking $\delta = \delta_p = o((\log p)^{-1})$ and $w=\exp(\snr)$, the desired conclusion follows an application of Lemma \ref{prop:reg_rate_simplify}. Thus, the proof is complete.
\iffalse
It is worth pointing out the connection with $\psi(n,p,s,\lambda,\delta)$ in (\ref{eqn:psi_def}). Obviously we have $ \wt \psi(n,p,s,\lambda,\delta,C) \leq \psi(n,p,s,\lambda,\delta)$ for any $C\in\mathr$. This implies
\begin{align*}
\xi_{\rm ideal}(\delta)\leq 16wn  \lambda^2   \psi(n,p,s,\lambda,\delta)
\end{align*}
holds with probability at least $1-w^{-1}-p^{-C'}$.
\fi
\end{proof}

\begin{proof}[Proof of Proposition \ref{prop:reg_init}]
By Proposition 5.1 of \citep{ndaoud2018optimal}, we have 
\begin{align*}
\norm{\wt \beta - \beta^*}^2 \leq \frac{C_1 s\log \frac{ep}{s}}{n},
\end{align*}
with probability at least $1-  2^{-C_2 s}$ for some constants $C_1, C_2 >0$, as long as $A$ is chosen to be sufficiently large. In the rest part of the proof, we assume   (\ref{eqn:re0}) holds. We divide the calculation of $\ell(\wt z,z^*)$ into three parts. First we have 
\begin{eqnarray*}
 \sum_{j=1}^p\lambda^2\|X_j\|^2\mathbb{I}\{\wt {z}_j\neq 0,z_j^*=0\}  &\leq & \sum_{j=1}^p\lambda^2\|X_j\|^2\mathbb{I}\left\{|\wt{\beta}_j |>\frac{\lambda}{2},\beta_j^*=0\right\} \\
&\leq & 4\sum_{j=1}^p\|X_j\|^2(\wt{\beta}_j -\beta_j^*)^2\mathbb{I}\left\{|\wt{\beta}_j |>\frac{\lambda}{2},\beta_j^*=0\right\} \\
&\leq & 8n\sum_{j=1}^p(\wt{\beta}_j -\beta_j^*)^2\mathbb{I}\left\{|\wt{\beta}_j |>\frac{\lambda}{2},\beta_j^*=0\right\}.
\end{eqnarray*}
Similarly, we have 
\begin{eqnarray*}
 \sum_{j=1}^p|\beta_j^*|^2\|X_j\|^2\mathbb{I}\{z_j^*\neq 0, \wt{z}_j=0\} &\leq&  \sum_{j=1}^p|\beta_j^*|^2\|X_j\|^2\mathbb{I}\left\{|\beta^*_j|\geq \lambda, |\wt{\beta}_j  |\leq\frac{\lambda}{2}\right\} \\
&\leq& 8n\sum_{j=1}^p(\wt{\beta}_j  -\beta_j^*)^2\mathbb{I}\left\{|\beta^*_j|\geq \lambda, |\wt{\beta}_j  |\leq\frac{\lambda}{2}\right\},
\end{eqnarray*}
where in the last inequality, since $|\beta^*_j|\geq \lambda$ and $|\wt{\beta}_j  |\leq\frac{\lambda}{2}$, we have
$$|\beta_j^*-\wt{\beta}_j  |\geq |\beta_j^*|-|\wt{\beta}_j  |\geq \frac{|\wt{\beta}_j  |}{2}+\frac{\lambda}{2}-\frac{\lambda}{2}=\frac{|\wt{\beta}_j  |}{2}.$$
Finally,
\begin{eqnarray*}
&& 4\sum_{j=1}^p|\beta_j^*|^2\|X_j\|^2\mathbb{I}\{\wt{z}_jz_j^*=-1\} \\
&\leq& 4\sum_{j=1}^p|\beta_j^*|^2\|X_j\|^2\mathbb{I}\left\{\beta_j^*\leq -\lambda,\wt{\beta}_j  >\frac{\lambda}{2}\right\}+ 4\sum_{j=1}^p|\beta_j^*|^2\|X_j\|^2\mathbb{I}\left\{\beta_j^*\geq \lambda,\wt{\beta}_j  <-\frac{\lambda}{2}\right\} \\
&\leq& 8n\sum_{j=1}^p(\wh{\beta}_j  -\beta_j^*)^2\mathbb{I}\left\{\beta_j^*\leq -\lambda,\wt{\beta}_j  >\frac{\lambda}{2}\right\} \\
&& + 8n\sum_{j=1}^p(\wt{\beta}_j  -\beta_j^*)^2\mathbb{I}\left\{\beta_j^*\geq \lambda,\wh{\beta}_j  <-\frac{\lambda}{2}\right\},
\end{eqnarray*}
because
when $\beta_j^*\leq -\lambda$ and $\wt{\beta}_j  >\frac{\lambda}{2}$, we have
$$|\beta_j^*-\wt{\beta}_j  |=-\beta_j^*+\wt{\beta}_j  \geq |\beta_j^*|,$$
and when $\beta_j^*\geq \lambda$ and $\wt{\beta}_j  <-\frac{\lambda}{2}$, we have
$$|\beta_j^*-\wt{\beta}_j  |=\beta_j^*-\wt{\beta}_j  \geq |\beta_j^*|.$$
Combining all of the above results together, we have 
\begin{align*}
\ell(\wt z, z^*) \leq 8n\norm{\wt \beta -\beta^*}^2 \leq 8C_1s\log \frac{ep}{s}.
\end{align*}
Under the assumption $\limsup s/p <1/2$ and $\snr\rightarrow\infty$, we have $n\lambda^2 \geq 2 \log \frac{p-s}{s} >C_3 \log \frac{ep}{s}$. Thus, there exists some constant $C_0 >0$ such that $\ell(\wt z, z^*) \leq C_0 sn\lambda^2$. A union bound with the probability that the event  (\ref{eqn:re0}) holds leads to the desired result.
\end{proof}

\subsection{Proofs in Section \ref{sec:MRA}}

In this section, we present the proofs of Theorem \ref{thm:mra-lower}, Lemma \ref{lem:error-MRA}, Lemma \ref{lem:ideal-MRA} and Proposition \ref{prop:ini-MRA}. The conclusions of Theorem \ref{thm:final-MRA} and Corollary \ref{cor:final-MRA} are direct consequences of Theorem \ref{thm:main}, and thus their proofs are omitted.

\begin{proof}[Proof of Theorem \ref{thm:mra-lower}]
Recall the definition $\Delta_{\min}=\min_{U\in\mathcal{C}_d}\|(I_d-U)\theta^*\|$. There must exists some $\bar{U}\in\mathcal{C}_d$ such that $\Delta_{\min}=\|(I_d-\bar{U})\theta^*\|$. We define the parameter space
$$\mathcal{Z}=\left\{Z: Z_j=I_d\text{ for all }1\leq j\leq p/2\text{ and }Z_j\in\{I_d,\bar{U}\}\text{ for all }p/2<j\leq p\right\}.$$
Then, we have
\begin{eqnarray}
\nonumber && \inf_{\wh{Z}}\sup_{Z^*}\mathbb{E}\min_{U\in\mathcal{C}_d}\frac{1}{p}\sum_{j=1}^p\indc{\wh{Z}_jU\neq Z_j^*} \\
\nonumber &\geq& \inf_{\wh{Z}}\sup_{Z^*\in\mathcal{Z}}\frac{1}{p}\sum_{j>p/2}\mathbb{P}(\wh{Z}_j\neq Z_j^*) \\
\nonumber &\geq& \inf_{\wh{Z}}\frac{1}{p}\sum_{j>p/2}\text{ave}_{Z_{-j}^*}\left(\frac{1}{2}\mathbb{P}_{(Z_j^*=I_d,Z_{-j}^*)}(\wh{Z}_j\neq I_d) + \frac{1}{2}\mathbb{P}_{(Z_j^*=\bar{U},Z_{-j}^*)}(\wh{Z}_j\neq \bar{U})\right) \\
\label{eq:metroidvania} &\geq& \frac{1}{p}\sum_{j>p/2}\text{ave}_{Z_{-j}^*}\inf_{\wh{Z}_j}\left(\frac{1}{2}\mathbb{P}_{(Z_j^*=I_d,Z_{-j}^*)}(\wh{Z}_j\neq I_d) + \frac{1}{2}\mathbb{P}_{(Z_j^*=\bar{U},Z_{-j}^*)}(\wh{Z}_j\neq \bar{U})\right),
\end{eqnarray}
where the operator $\text{ave}_{Z_{-j}^*}$ is with respect to the uniform measure of $Z_{-j}^*$ in the space $\mathcal{Z}$. We use the notation $Z_{-j}^*$ for $Z^*$ with $Z_j^*$ excluded. Note that the quantity
$$\inf_{\wh{Z}_j}\left(\frac{1}{2}\mathbb{P}_{(Z_j^*=I_d,Z_{-j}^*)}(\wh{Z}_j\neq I_d) + \frac{1}{2}\mathbb{P}_{(Z_j^*=\bar{U},Z_{-j}^*)}(\wh{Z}_j\neq \bar{U})\right)$$
is the optimal testing error for the hypothesis testing problem $H_0: Y_j\sim \mathn(\theta^*,I_d)$ vs $H_1:Y_j\sim \mathn(\bar{U}\theta^*,I_d)$. By Neyman-Pearson lemma, this quantity equals $\mathbb{P}(\mathn(0,1)>\Delta_{\min}/2)$. Therefore,
$$\inf_{\wh{Z}}\sup_{Z^*}\mathbb{E}\min_{U\in\mathcal{C}_d}\frac{1}{p}\sum_{j=1}^p\indc{\wh{Z}_jU\neq Z_j^*}\geq \frac{1+o(1)}{2}\mathbb{P}(\mathn(0,1)>\Delta_{\min}/2)=\exp\left(-(1+o(1))\frac{\Delta_{\min}^2}{8}\right),$$
under the condition that $\Delta_{\min}\rightarrow\infty$. Moreover, $\inf_{\wh{Z}}\sup_{Z^*}\mathbb{E}\min_{U\in\mathcal{C}_d}\frac{1}{p}\sum_{j=1}^p\indc{\wh{Z}_j\neq UZ_j^*}>0$ for some constant $c>0$ when $\Delta_{\min}=O(1)$.
\end{proof}

\begin{proof}[Proof of Lemma \ref{lem:error-MRA}]
Let us write $\wh{\theta}(Z)=\theta(Z)+\bar{\epsilon}(Z)$, where $\theta(Z)=\frac{1}{p}\sum_{j=1}^pZ_j^TZ_j^*\theta^*$ and $\bar{\epsilon}(Z)=\frac{1}{p}\sum_{j=1}^pZ_j^T\epsilon_j$. We have
\begin{eqnarray}
\nonumber \|\theta(Z)-\theta(Z^*)\| &=& \left\|\left(\frac{1}{p}\sum_{j=1}^pZ_j^TZ_j^*-I_d\right)\theta^*\right\| \\
\nonumber &\leq& \frac{1}{p}\sum_{j=1}^p\|(Z_j^TZ_j^*-I_d)\theta^*\| \\
\nonumber &\leq& \frac{1}{p\Delta_{\min}}\sum_{j=1}^p\|(Z_j^TZ_j^*-I_d)\theta^*\|^2 \\
\label{eq:haken-virus} &=& \frac{1}{p\Delta_{\min}}\ell(Z,Z^*). 
\end{eqnarray}
For $\|\bar{\epsilon}(Z)-\bar{\epsilon}(Z^*)\| = \left\|\frac{1}{p}\sum_{j=1}^p(Z_j-Z_j^*)^T\epsilon_j\right\|$, we notice that
$$\frac{1}{p}\sum_{j=1}^p(Z_j-Z_j^*)^T\epsilon_j\sim \mathn\left(0,\frac{1}{p^2}\sum_{j=1}^p(Z_j-Z_j^*)^T(Z_j-Z_j^*)\right).$$
Therefore, by Lemma \ref{prop:chisq}, we have for each fixed $Z\in\mathcal{C}_d^p$,
$$\mathbb{P}\left(\|\bar{\epsilon}(Z)-\bar{\epsilon}(Z^*)\|^2 \geq \left\|\frac{1}{p^2}\sum_{j=1}^p(Z_j-Z_j^*)^T(Z_j-Z_j^*)\right\|(d+2\sqrt{dx}+2x)\right)\leq e^{-x}.$$
With a union bound argument, we have
$$\|\bar{\epsilon}(Z)-\bar{\epsilon}(Z^*)\|\lesssim \left\|\frac{1}{p^2}\sum_{j=1}^p(Z_j-Z_j^*)^T(Z_j-Z_j^*)\right\|^{1/2}\sqrt{p\log d},$$
uniformly over all $Z\in\mathcal{C}_d^p$
with probability at least $1-e^{-C'p \log d}$.
Since
\begin{eqnarray*}
\left\|\frac{1}{p^2}\sum_{j=1}^p(Z_j-Z_j^*)^T(Z_j-Z_j^*)\right\| &\leq& \frac{1}{p^2}\sum_{j=1}^p\|Z_j-Z_j^*\|^2 \\
&\lesssim& \frac{1}{p^2}\sum_{j=1}^p\indc{Z_j\neq Z_j^*} \\
&\leq& \frac{1}{p^2\Delta_{\min}^2}\ell(Z,Z^*),
\end{eqnarray*}
we have
\begin{equation}
\|\bar{\epsilon}(Z)-\bar{\epsilon}(Z^*)\| \lesssim  \sqrt{\frac{\log d}{p\Delta^2_{\min}}}\sqrt{\ell(Z,Z^*)}. \label{eq:haken-vector} 
\end{equation}
Combine (\ref{eq:haken-virus}) and (\ref{eq:haken-vector}), and we obtain
\begin{equation}
\|\wh{\theta}(Z)-\wh{\theta}(Z^*)\| \lesssim \frac{1}{p\Delta_{\min}}\ell(Z,Z^*) +  \sqrt{\frac{\log d}{p\Delta^2_{\min}}}\sqrt{\ell(Z,Z^*)}. \label{eq:super-mario-maker}
\end{equation}

With (\ref{eq:super-mario-maker}), we are prepared to derive the bounds (\ref{eq:error-MRA1})-(\ref{eq:error-MRA3}). For (\ref{eq:error-MRA1}), we have
\begin{align*}
 & \sum_{j=1}^p\max_{U\neq Z_j^*} \frac{F_j(Z_j^*,U;Z)^2\|\mu_j(B^*,U) - \mu_j(B^*,Z_j^*)\|^2}{\Delta_j(Z_j^*,U)^4\ell(Z,Z^*)}\\
 &= \sum_{j=1}^p\max_{U\neq Z_j^*}\frac{\left|\iprod{\epsilon_j}{(Z_j^*-U)(\wh{\theta}(Z^*)-\wh{\theta}(Z))}\right|^2}{\|(Z_j^*-U)\theta^*\|^2\ell(Z,Z^*)} \\
 & \leq  \frac{1}{\Delta_{\min}^2\ell(Z,Z^*)}  \sum_{j=1}^p \max_{U\neq Z_j^*} (\wh{\theta}(Z^*)-\wh{\theta}(Z))^T  (Z_j^*-U)^T \epsilon_j \epsilon_j^T (Z_j^*-U) (\wh{\theta}(Z^*)-\wh{\theta}(Z)) \\
 & =  \frac{1}{\Delta_{\min}^2\ell(Z,Z^*)} \max_{\tilde U \in \mathcal{C}_d^p: \tilde U_j\neq Z_j^*,\forall j}  (\wh{\theta}(Z^*)-\wh{\theta}(Z))^T   \br{ \sum_{j=1}^p(Z_j^*-\tilde U_j)^T \epsilon_j \epsilon_j^T (Z_j^*-\tilde U_j) } (\wh{\theta}(Z^*)-\wh{\theta}(Z))  \\
 & \leq    \frac{\|\wh{\theta}(Z^*)-\wh{\theta}(Z)\|^2}{\Delta_{\min}^2\ell(Z,Z^*)} \max_{\tilde U \in \mathcal{C}_d^p: \tilde U_j\neq Z_j^*,\forall j}  \norm{  \sum_{j=1}^p(Z_j^*-\tilde U_j)^T \epsilon_j \epsilon_j^T (Z_j^*-\tilde U_j) }.
\end{align*}
Note that 
\begin{align*}
& \max_{\tilde U \in \mathcal{C}_d^p: \tilde U_j\neq Z_j^*,\forall j}   \norm{  \sum_{j=1}^p(Z_j^*-\tilde U_j)^T \epsilon_j \epsilon_j^T (Z_j^*-\tilde U_j) }^2  \\
 &=  \max_{\tilde U \in \mathcal{C}_d^p: \tilde U_j\neq Z_j^*,\forall j}  \norm{\br{(Z_1^*-\tilde U_1)^T \epsilon_1,\ldots, (Z_p^*-\tilde U_p)^T \epsilon_p} }^2\\
 & \leq 2\norm{\br{Z_1^T\epsilon_1,\ldots, Z_p^T \epsilon_p}}^2 + \max_{\tilde U \in \mathcal{C}_d^p: \tilde U_j\neq Z_j^*,\forall j}  \norm{\br{\tilde U_1^T \epsilon_1,\ldots, \tilde U_p^T \epsilon_p} }^2\\
 & \leq  4 \max_{\tilde U \in \mathcal{C}_d^p}  \norm{\br{\tilde U_1^T \epsilon_1,\ldots, \tilde U_p^T \epsilon_p} }^2.
\end{align*}
Note that for any fixed $\tilde U \in \mathcal{C}_d^p$, $(\tilde U_1^T \epsilon_1,\ldots, \tilde U_p^T \epsilon_p)$ each entries independently distributed from a standard normal distribution. By Lemma B.1 of \cite{loffler2019optimality}, we have $\p( \|(\tilde U_1^T \epsilon_1,\ldots, \tilde U_p^T \epsilon_p) \| \geq \sqrt{p} + \sqrt{d} + t) \leq \ebr{-t^2/2}$. By a union bound, we have 
\begin{align*}
 \max_{\tilde U \in \mathcal{C}_d^p: \tilde U_j\neq Z_j^*,\forall j}   \norm{  \sum_{j=1}^p(Z_j^*-\tilde U_j)^T \epsilon_j \epsilon_j^T (Z_j^*-\tilde U_j) }^2 \leq 64\br{d + p\log d}
\end{align*}
with probability at least $1-\ebr{-p\log d}$. Hence,
\begin{align*}
 \sum_{j=1}^p\max_{U\neq Z_j^*} \frac{F_j(Z_j^*,U;Z)^2\|\mu_j(B^*,U) - \mu_j(B^*,Z_j^*)\|^2}{\Delta_j(Z_j^*,U)^4\ell(Z,Z^*)} &\leq  64\br{d + p\log d}  \frac{\|\wh{\theta}(Z^*)-\wh{\theta}(Z)\|^2}{\Delta_{\min}^2\ell(Z,Z^*)} \\
 &\lesssim  \frac{(\log d+d/p)\log d}{\Delta_{\min}^4} +  \frac{\ell(Z,Z^*)(\log d + d/p) }{p\Delta_{\min}^4},
\end{align*}
%\begin{eqnarray*}
%&&  \sum_{j=1}^p\max_{U\neq Z_j^*} \frac{F_j(Z_j^*,U;Z)^2\|\mu_j(B^*,U) - \mu_j(B^*,Z_j^*)\|^2}{\Delta_j(Z_j^*,U)^4\ell(Z,Z^*)}\\
%&=& \sum_{j=1}^p\max_{U\neq Z_j^*}\frac{\left|\iprod{\epsilon_j}{(Z_j^*-U)(\wh{\theta}(Z^*)-\wh{\theta}(Z))}\right|^2}{\|(Z_j^*-U)\theta^*\|^2\ell(Z,Z^*)} \\
%&\lesssim & \sum_{j=1}^p\|\epsilon_j\|^2\frac{\|\wh{\theta}(Z)-\wh{\theta}(Z^*)\|^2}{\Delta_{\min}^2\ell(Z,Z^*)} \\
%&\lesssim& \frac{d\log d}{\Delta_{\min}^4} +  \frac{d\ell(Z,Z^*)}{p\Delta_{\min}^4},
%\end{eqnarray*}
where the last inequality is by Lemma \ref{prop:chisq} using the fact that $\sum_{j=1}^p\|\epsilon_j\|^2\sim\chi_{pd}^2$ and (\ref{eq:super-mario-maker}). For (\ref{eq:error-MRA2}), we have
\begin{eqnarray*}
&& \frac{\tau}{4\Delta_{\min}^2|T|}\sum_{j\in T}\max_{U\neq Z_j^*}  \frac{G_j(Z_j^*,U;Z)^2\|\mu_j(B^*,U) - \mu_j(B^*,Z_j^*)\|^2}{\Delta_j(Z_j^*,U)^4\ell(Z,Z^*)}  \\
& = & \frac{\tau}{4\Delta_{\min}^2|T|}\sum_{j\in T}\max_{U\neq Z_j^*}\frac{\left|\iprod{\wh{\theta}(Z)-\wh{\theta}(Z^*)}{(U^TZ_j^*-I_d)\theta^*}\right|^2}{\|(Z_j^*-U)\theta^*\|^2\ell(Z,Z^*)} \\
&\leq& \frac{\tau}{4\Delta_{\min}^2}\frac{\|\wh{\theta}(Z)-\wh{\theta}(Z^*)\|^2}{\ell(Z,Z^*)} \\
&\lesssim& \frac{\tau \log d}{p\Delta_{\min}^4} + \frac{\tau \ell(Z,Z^*)}{p^2\Delta_{\min}^4},
\end{eqnarray*}
where the last inequality is by (\ref{eq:super-mario-maker}). Finally, for (\ref{eq:error-MRA3}), since $\wh{\theta}(Z^*)-\theta^*=\frac{1}{p}\sum_{j=1}^pZ_j^T\epsilon_j\sim \mathn(0,p^{-1}I_d)$, we have $\|\wh{\theta}(Z^*)-\theta^*\|^2\lesssim \frac{d}{p}$ with probability at least $1-e^{-C'd}$ by Lemma \ref{prop:chisq}. Then,
\begin{eqnarray*}
\frac{|H_j(Z_j^*,U)|}{\Delta_j(Z_j^*,U)^2} = \frac{\left|\iprod{\wh{\theta}(Z^*)-\theta^*}{(U^TZ_j^*-I_d)\theta^*}\right|}{\|(Z_j^*-U)\theta^*\|^2} \leq \frac{\|\wh{\theta}(Z^*)-\theta^*\|}{\Delta_{\min}} \lesssim& \sqrt{\frac{d}{\Delta_{\min}^2p}},
\end{eqnarray*}
for any $j\in[p]$ and any $U\neq Z_j^*$.
The proof is complete.
\end{proof}

\begin{proof}[Proof of Lemma \ref{lem:ideal-MRA}]
For any $U\in\mathcal{C}_d\backslash\{Z_1^*\}$, we have
\begin{eqnarray}
\nonumber && \mathbb{P}\left(\iprod{\epsilon_1}{(Z_1^*-U)\wh{\theta}(Z^*)}\leq -\frac{1-\delta}{2}\|(Z_1^*-U)\theta^*\|^2\right) \\
\label{eq:hollow-knight} &\leq& \mathbb{P}\left(\iprod{\epsilon_1}{(Z_1^*-U)\theta^*}\leq -\frac{1-\delta-\bar{\delta}}{2}\|(Z_1^*-U)\theta^*\|^2\right) \\
\label{eq:silksong} && + \mathbb{P}\left(\iprod{\epsilon_1}{(Z_1^*-U)(\wh{\theta}(Z^*)-\theta^*)}\leq -\frac{\bar{\delta}}{2}\|(Z_1^*-U)\theta^*\|^2\right),
\end{eqnarray}
where the sequence $\bar{\delta}=\bar{\delta}_p$ is to be chosen later.
For (\ref{eq:hollow-knight}), a standard Gaussian tail bound gives
$$\mathbb{P}\left(\iprod{\epsilon_1}{(Z_1^*-U)\theta^*}\leq -\frac{1-\delta-\bar{\delta}}{2}\|(Z_1^*-U)\theta^*\|^2\right)\leq \exp\left(-\frac{(1-\delta-\bar{\delta})^2}{8}\|(Z_1^*-U)\theta^*\|^2\right).$$
Since
$$\iprod{\epsilon_1}{(Z_1^*-U)(\wh{\theta}(Z^*)-\theta^*)}=\frac{1}{p}\sum_{j=1}^p\epsilon_1^T(Z_1^*-U)Z_j^{*T}\epsilon_j,$$
we can bound (\ref{eq:silksong}) by
\begin{eqnarray}
\label{eq:silksong1} && \mathbb{P}\left(\frac{1}{p}\epsilon_1^T(I_d-UZ_1^{*T})\epsilon_1 \leq -\frac{\bar{\delta}}{4}\|(Z_1^*-U)\theta^*\|^2\right) \\
\label{eq:silksong2} && + \mathbb{P}\left(\frac{1}{p}\sum_{j=2}^p\epsilon_1^T(Z_1^*-U)Z_j^{*T}\epsilon_j\leq -\frac{\bar{\delta}}{4}\|(Z_1^*-U)\theta^*\|^2\right).
\end{eqnarray}
We first bound (\ref{eq:silksong1}) by
\begin{eqnarray*}
&& \mathbb{P}\left(\frac{1}{p}\epsilon_1^T(I_d-UZ_1^{*T})\epsilon_1 \leq -\frac{\bar{\delta}}{4}\|(Z_1^*-U)\theta^*\|^2\right) \\
&\leq& \mathbb{P}\left(2\|\epsilon_1\|^2 > \frac{\bar{\delta}p}{4}\|(Z_1^*-U)\theta^*\|^2\right) \\
&\leq& \exp\left(-C\bar{\delta}p\|(Z_1^*-U)\theta^*\|^2\right),
\end{eqnarray*}
where we use $\|I_d-UZ_1^{*T}\|\leq 2$, under the condition that $p\Delta_{\min}^2/d\rightarrow\infty$ and $\bar{\delta}$ tends to zero at a sufficiently slow rate. Note that conditional on $\epsilon_1$, $\frac{1}{p}\sum_{j=2}^p\epsilon_1^T(Z_1^*-U)Z_j^{*T}\epsilon_j \sim\mathn(0,p^{-2}(p-1)\|(Z_1^* - U)^T \epsilon_1\|^2)$, where the variance is upper bounded by $4p^{-1} \norm{\epsilon_1}^2$.
We then bound (\ref{eq:silksong2}) by
\begin{eqnarray*}
&& \mathbb{P}\left(\frac{1}{p}\sum_{j=2}^p\epsilon_1^T(Z_1^*-U)Z_j^{*T}\epsilon_j\leq -\frac{\bar{\delta}}{4}\|(Z_1^*-U)\theta^*\|^2\Big| \|\epsilon_1\|^2<d+2\sqrt{xd}+2x\right) \\
&& + \mathbb{P}\left(\|\epsilon_1\|^2\geq d+2\sqrt{xd}+2x\right) \\
&\leq& \mathbb{P}\left(\mathn(0,1)> \frac{\sqrt{p}\frac{\bar{\delta}}{4}\|(Z_1^*-U)\theta^*\|^2}{2\|\epsilon_1\|}\Big| \|\epsilon_1\|^2<d+2\sqrt{xd}+2x\right) + e^{-x} \\
&\leq& \exp\left(-\frac{p\bar{\delta}^2\|(Z_1^*-U)\theta^*\|^4}{128\left(d+2\sqrt{xd}+2x\right)}\right) + e^{-x}.
\end{eqnarray*}
Choosing $x=\bar{\delta}\|(Z_1^*-U)\theta^*\|^2\sqrt{p}$, we have
\begin{eqnarray*}
&& \mathbb{P}\left(\frac{1}{p}\sum_{j=2}^p\epsilon_1^T(Z_1^*-U)Z_j^{*T}\epsilon_j\leq -\frac{\bar{\delta}}{4}\|(Z_1^*-U)\theta^*\|^2\right) \\
&\leq& \exp\left(-C\frac{\bar{\delta}^2\|(Z_1^*-U)\theta^*\|^4p}{d + \bar{\delta}\|(Z_1^*-U)\theta^*\|^2\sqrt{p}}\right) + \exp\left(-C\bar{\delta}\|(Z_1^*-U)\theta^*\|^2\sqrt{p}\right) \\
&\leq& 2\exp\left(-\frac{(1-\delta-\bar{\delta})^2}{8}\|(Z_1^*-U)\theta^*\|^2\right),
\end{eqnarray*}
under the condition that $\sqrt{p}\Delta_{\min}^2/d\rightarrow\infty$ and $\bar{\delta}$ tends to zero at a sufficiently slow rate. Combining the above bounds, we have
$$\mathbb{P}\left(\iprod{\epsilon_1}{(Z_1^*-U)\wh{\theta}(Z^*)}\leq -\frac{1-\delta}{2}\|(Z_1^*-U)\theta^*\|^2\right)\leq 4\exp\left(-\frac{(1-\delta-\bar{\delta})^2}{8}\|(Z_1^*-U)\theta^*\|^2\right).$$
A similar bound holds for $\mathbb{P}\left(\iprod{\epsilon_j}{(Z_j^*-U)\wh{\theta}(Z^*)}\leq -\frac{1-\delta}{2}\|(Z_j^*-U)\theta^*\|^2\right)$ for each $j\in[p]$.

Now we are ready to bound $\xi_{\rm ideal}(\delta)$. We first bound its expectation. We have
\begin{eqnarray*}
\mathbb{E}\xi_{\rm ideal}(\delta) &=& \sum_{j=1}^p\sum_{U\in\mathcal{C}_d\backslash\{Z_j^*\}}\|(Z_j^*-U)\theta^*\|^2\mathbb{P}\left(\iprod{\epsilon_j}{(Z_j^*-U)\wh{\theta}(Z^*)}\leq -\frac{1-\delta}{2}\|(Z_j^*-U)\theta^*\|^2\right) \\
&\leq& 4\sum_{j=1}^p\sum_{U\in\mathcal{C}_d\backslash\{Z_j^*\}}\|(Z_j^*-U)\theta^*\|^2\exp\left(-\frac{(1-\delta-\bar{\delta})^2}{8}\|(Z_j^*-U)\theta^*\|^2\right) \\
&\leq& p\exp\left(-(1+o(1))\frac{\Delta_{\min}^2}{8}\right),
\end{eqnarray*}
where the last inequality uses the condition that $\Delta_{\min}^2/\log d\rightarrow\infty$. The desired conclusion is implied by Markov inequality.
\end{proof}

\begin{proof}[Proof of Proposition \ref{prop:ini-MRA}]
We adopt the notation $\wh{Z}_j=Z_j^{(0)}$ in the proof.
For any $j\geq 2$,
$$\|Y_1-\wh{Z}_j^TY_j\|^2\leq \|Y_1-Z_1^*Z_j^{*T}Y_j\|^2.$$
After rearrangement, we get
\begin{align*}
\|(Z_1^*-\wh{Z}_j^TZ_j^*)\theta^*\|^2 &\leq 2\left|\iprod{\wh{Z}_j^T\epsilon_j-\epsilon_1}{(Z_1^*-\wh{Z}_j^TZ_j^*)\theta^*}\right| + 2\epsilon_1^T \br{\wh{Z}_j^T - Z_1^*Z_j^{*T}}\epsilon_j\\
& \leq 2 \|(Z_1^*-\wh{Z}_j^TZ_j^*)\theta^*\| \frac{\left|\iprod{\wh{Z}_j^T\epsilon_j-\epsilon_1}{(Z_1^*-\wh{Z}_j^TZ_j^*)\theta^*}\right|}{\|(Z_1^*-\wh{Z}_j^TZ_j^*)\theta^*\|} + 2\abs{\epsilon_1^T \br{\wh{Z}_j^T - Z_1^*Z_j^{*T}}\epsilon_j}.
\end{align*}
This implies
\begin{align*}
\|(Z_1^*-\wh{Z}_j^TZ_j^*)\theta^*\| & \leq 2 \frac{\left|\iprod{\wh{Z}_j^T\epsilon_j-\epsilon_1}{(Z_1^*-\wh{Z}_j^TZ_j^*)\theta^*}\right|}{\|(Z_1^*-\wh{Z}_j^TZ_j^*)\theta^*\|}  + \sqrt{2} \sqrt{\abs{\epsilon_1^T \br{\wh{Z}_j^T - Z_1^*Z_j^{*T}}\epsilon_j}},
\end{align*}
and consequently,
\begin{align*}
\|(Z_1^*-\wh{Z}_j^TZ_j^*)\theta^*\| ^2 & \leq 8  \frac{\left|\iprod{\wh{Z}_j^T\epsilon_j-\epsilon_1}{(Z_1^*-\wh{Z}_j^TZ_j^*)\theta^*}\right|^2}{\|(Z_1^*-\wh{Z}_j^TZ_j^*)\theta^*\|^2}   + 8 \abs{\epsilon_1^T \br{\wh{Z}_j^T - Z_1^*Z_j^{*T}}\epsilon_j} \\
& \leq 16 \max_{U\in \mathcal{C}_d}   \frac{\left|\iprod{U^T\epsilon_j-\epsilon_1}{(Z_1^*-U^T Z_j^*)\theta^*}\right|^2}{2\|(Z_1^*-U^T Z_j^*)\theta^*\|^2}   + 8\max_{U\in \mathcal{C}_d} \abs{\epsilon_1^T \br{U^T - Z_1^*Z_j^{*T}}\epsilon_j}
\end{align*}
%$$\|(Z_1^*-\wh{Z}_j^TZ_j^*)\theta^*\|\leq 2\frac{\left|\iprod{\epsilon_j-\epsilon_1}{(Z_1^*-\wh{Z}_j^TZ_j^*)\theta^*}\right|}{\|(Z_1^*-\wh{Z}_j^TZ_j^*)\theta^*\|}.$$

We are going to calculate its expectation. By a standard union bound argument, we have
$$\mathbb{P}\left(\max_{U\in\mathcal{C}_d}\frac{\left|\iprod{U^T\epsilon_j-\epsilon_1}{(Z_1^*-U^TZ_j^*)\theta^*}\right|^2}{2\|(Z_1^*-U^TZ_j^*)\theta^*\|^2} > t\right)\leq 2d\exp\left(-\frac{t}{2}\right).$$
Integrating up the tail bound, we obtain
\begin{align*}
\E \max_{U\in\mathcal{C}_d}\frac{\left|\iprod{U^T\epsilon_j-\epsilon_1}{(Z_1^*-U^TZ_j^*)\theta^*}\right|^2}{2\|(Z_1^*-U^TZ_j^*)\theta^*\|^2} \leq  4\log d
\end{align*}
Note that $\abs{\epsilon_1^T U^T \epsilon_j }= \norm{\epsilon_1} \abs{ \norm{\epsilon_1} ^{-1}\epsilon_1^T U^T \epsilon_j }$ where $\norm{\epsilon_1} ^{-1}\epsilon_1^T U^T \epsilon_j  \sim \mathn(0,1)$ is independent of $ \norm{\epsilon_1}$, we have
\begin{align*}
\E \max_{U\in \mathcal{C}_d} \abs{\epsilon_1^T \br{U^T - Z_1^*Z_j^{*T}}\epsilon_j} & \leq  2 \E \max_{U\in \mathcal{C}_d} \abs{\epsilon_1^T U^T \epsilon_j}  \leq  \E \norm{\epsilon_1} \E \max_{U\in \mathcal{C}_d}  \frac{\abs{\epsilon_1^T U^T \epsilon_j}}{ \norm{\epsilon_1} } \lesssim \sqrt{d\log d},
\end{align*}
where in the last inequality we integrate up the tail bound of $\p(\max_{U\in \mathcal{C}_d}  \abs{\epsilon_1^T U^T \epsilon_j}/ \norm{\epsilon_1}   >t )\leq 2d\ebr{-t^2/2}$. Hence,
\begin{align*}
\E \|(Z_1^*-\wh{Z}_j^TZ_j^*)\theta^*\| ^2 \lesssim \E \max_{U\in\mathcal{C}_d}\frac{\left|\iprod{U^T\epsilon_j-\epsilon_1}{(Z_1^*-U^TZ_j^*)\theta^*}\right|^2}{2\|(Z_1^*-U^TZ_j^*)\theta^*\|^2}  +  \E \max_{U\in \mathcal{C}_d} \abs{\epsilon_1^T \br{U^T - Z_1^*Z_j^{*T}}\epsilon_j}  & \lesssim \sqrt{d\log d}.
\end{align*}
%Therefore,
%$$\sum_{j=1}^p\|(Z_1^*-\wh{Z}_j^TZ_j^*)\theta^*\|^2 \leq 8\sum_{j=2}^p\max_{U\in\mathcal{C}_d}\frac{\left|\iprod{\epsilon_j-\epsilon_1}{(Z_1^*-U^TZ_j^*)\theta^*}\right|^2}{2\|(Z_1^*-U^TZ_j^*)\theta^*\|^2}.$$
%By a standard union bound argument, we have
%$$\mathbb{P}\left(\max_{U\in\mathcal{C}_d}\frac{\left|\iprod{\epsilon_j-\epsilon_1}{(Z_1^*-U^TZ_j^*)\theta^*}\right|^2}{2\|(Z_1^*-U^TZ_j^*)\theta^*\|^2} > t\right)\leq 2d\exp\left(-\frac{t}{2}\right).$$
%Integrating up the tail bound, we obtain
Then,
$$\sum_{j=1}^p\mathbb{E}\|(\wh{Z}_jZ_1^*-Z_j^*)\theta^*\|^2=\sum_{j=1}^p\mathbb{E}\|(Z_1^*-\wh{Z}_j^TZ_j^*)\theta^*\|^2\lesssim p \sqrt{d\log d}.$$
The desired conclusion is implied by Markov inequality.
\end{proof}

\subsection{Proofs in Section \ref{sec:z2}}

In this section, we present the proofs of Lemma \ref{lem:error-z2}, Lemma \ref{lem:ideal-z2} and Proposition \ref{prop:ini-z2}. The conclusions of Theorem \ref{thm:z2-alg} and Corollary \ref{cor:alg-z2} are direct consequences of Theorem \ref{thm:main}, and their proofs are omitted. We first need a technical lemma that bound the operator nomr of a Gaussian random matrix. The following lemma is a standard result that can be found in \cite{vershynin2010introduction}.

\begin{lemma}\label{lem:gw-r-m}
Consider a matrix $W=(w_{ij})\in\mathbb{R}^{p\times p}$ with $W_{ij}=W_{ji}\sim\mathn(0,1)$ independently for all $i<j$ and $W_{ii}=0$ for all $i$. There exists some constant $C>0$, such that
$$\mathbb{P}\left(\|W\| > C\sqrt{p +x}\right)\leq e^{-x},$$
for any $x>0$.
\end{lemma}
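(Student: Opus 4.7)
The plan is to prove this via a standard $\varepsilon$-net / discretization argument for the operator norm of a symmetric Gaussian random matrix. Since $W$ is symmetric, $\|W\| = \sup_{u \in S^{p-1}} |u^T W u|$, so it suffices to control the quadratic form uniformly over the unit sphere.

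First, I would fix a $\tfrac{1}{4}$-net $\mathcal{N}$ of the unit sphere $S^{p-1}$ in $\mathbb{R}^p$. A standard volume argument yields $|\mathcal{N}| \le 9^p$. A standard approximation lemma then gives
\begin{equation*}
\|W\| \;\le\; 2 \max_{u \in \mathcal{N}} |u^T W u|,
\end{equation*}
so the problem reduces to a union bound over $\mathcal{N}$.

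Next, for each fixed $u \in S^{p-1}$, the quadratic form $u^T W u = 2\sum_{i<j} u_i u_j W_{ij}$ is a centered Gaussian with variance $4 \sum_{i<j} u_i^2 u_j^2 \le 2\|u\|^4 = 2$. Hence by a standard Gaussian tail bound,
\begin{equation*}
\mathbb{P}\bigl(|u^T W u| > t\bigr) \;\le\; 2 \exp\!\left(-\tfrac{t^2}{4}\right).
\end{equation*}
Applying the union bound over $\mathcal{N}$ with $t = C_0\sqrt{p+x}/2$ for a sufficiently large absolute constant $C_0$, we get
\begin{equation*}
\mathbb{P}\Bigl(\max_{u \in \mathcal{N}} |u^T W u| > C_0\sqrt{p+x}/2\Bigr) \;\le\; 2 \cdot 9^p \exp\!\left(-\tfrac{C_0^2(p+x)}{16}\right) \;\le\; e^{-x},
\end{equation*}
provided $C_0$ is chosen large enough that $C_0^2/16 \ge \log 9 + 1$. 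Combining with the net approximation inequality yields $\|W\| \le C_0\sqrt{p+x}$ with the required probability, which is the claim with $C = C_0$.

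I expect no substantial obstacle; the argument is entirely routine and is exactly the proof found in \cite{vershynin2010introduction}. The only care required is tracking the constant in the variance computation (accounting for symmetry, which multiplies the off-diagonal variance by $4$ in the quadratic form, and the absence of diagonal terms, which only helps), and choosing the net constant and $C_0$ so that the exponential decay dominates the $9^p$ covering cardinality.
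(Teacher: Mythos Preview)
Your proof is correct and is precisely the standard $\varepsilon$-net argument from \cite{vershynin2010introduction} that the paper cites; the paper does not spell out the details but simply refers to this source, so your approach matches it exactly.
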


\begin{proof}[Proof of Lemma \ref{lem:error-z2}]
We can write
$$\wh{\beta}(z)=\frac{z^T(\lambda^* z^*(z^*)^T+W)z}{p^2}z=\frac{\lambda^*|z^Tz^*|^2}{p^2}z+\frac{z^TWz}{p^2}z,$$
where $W$ is a symmetric matrix with $W_{ij}\sim\mathn(0,1)$ on the off-diagonal and $W_{ii}=0$ on the diagonal. Then, we have
$$\wh{\beta}(z^*)=\lambda^*z^*+\frac{z^{*T}Wz^*}{p^2}z^*=\beta^*+\frac{z^{*T}Wz^*}{p^2}z^*.$$
Using triangle inequality, we have
\begin{equation}
\|\wh{\beta}(z)-\wh{\beta}(z^*)\| \leq \left\|\frac{\lambda^*|z^Tz^*|^2}{p^2}z-\beta^*\right\| + \left\|\frac{z^TWz}{p^2}z-\frac{z^{*T}Wz^*}{p^2}z^*\right\|. \label{eq:triangle-joycon}
\end{equation}
We will bound the two terms on the right hand side of (\ref{eq:triangle-joycon}) separately. The first term can be bounded as
\begin{eqnarray}
\nonumber \left\|\frac{\lambda^*|z^Tz^*|^2}{p^2}z-\beta^*\right\| &\leq& \lambda^*\left|\frac{|z^Tz^*|^2}{p^2}-1\right|\|z\| + \lambda^*\|z-z^*\| \\
\label{eq:splatoon} &\leq& \lambda^*\frac{\|z-z^*\|^2}{\sqrt{p}} + \lambda^*\|z-z^*\| \\
&\leq& 3\lambda^*\|z-z^*\|,
\nonumber \end{eqnarray}
where the inequality (\ref{eq:splatoon}) is by
\begin{equation}
|p^2-|z^Tz^*|^2|=(p-|z^Tz^*|)(p+|z^Tz^*|)\leq 2p(p-|z^Tz^*|)\leq p\|z-z^*\|^2. \label{eq:pro-con111}
\end{equation}
For the second term of (\ref{eq:triangle-joycon}), we have
\begin{eqnarray}
\nonumber && \left\|\frac{z^TWz}{p^2}z-\frac{z^{*T}Wz^*}{p^2}z^*\right\| \\
\nonumber &\leq& \frac{|\iprod{W}{zz^T-z^*(z^*)^T}|}{p^2}\|z\| + \frac{|z^{*T}Wz^*|}{p^2}\|z-z^*\| \\
\label{eq:ringcon1} &\leq& \sqrt{2}\|W\|\fnorm{zz^T-z^*z^{*T}}\frac{\|z\|}{p^2} + \frac{|z^{*T}Wz^*|}{p^2}\|z-z^*\| \\
\label{eq:ringcon2} &\leq& \frac{3\|W\|}{p}\|z-z^*\| \\
\label{eq:ringcon3} &\lesssim& \frac{\|z-z^*\|}{\sqrt{p}}.
\end{eqnarray}
The inequality (\ref{eq:ringcon1}) is by applying SVD to the rank-$2$ matrix $zz^T-z^*z^{*T}=\sum_{l=1}^2d_lu_lu_l^T$ so that $|\iprod{W}{zz^T-z^*(z^*)^T}|$ is bounded by
$$\sum_{l=1}^2|d_l||u_l^TWu_l| \leq \|W\|(|d_1|+|d_2|)\leq \sqrt{2}\|W\|\sqrt{d_1^2+d_2^2}=\sqrt{2}\|W\|\fnorm{zz^T-z^*(z^*)^T}.$$
To obtain (\ref{eq:ringcon2}), we have used $\fnorm{zz^T-z^*(z^*)^T}^2=2|p^2-|z^Tz^*|^2|\leq p\|z-z^*\|^2$ according to (\ref{eq:pro-con111}) and $|z^{*T}Wz^*|\leq p\|W\|$. Finally, (\ref{eq:ringcon3}) is by Lemma \ref{lem:gw-r-m}.
Combining the two bounds for the two terms on the right hand side of (\ref{eq:triangle-joycon}), we have
\begin{equation}
\|\wh{\beta}(z)-\wh{\beta}(z^*)\| \lesssim \left(\lambda^*+p^{-1/2}\right)\|z-z^*\|. \label{eq:steady-body}
\end{equation}

Now we are ready to bound (\ref{eq:error-z21})-(\ref{eq:error-z23}). For (\ref{eq:error-z21}), we have
\begin{align*}
&\sum_{j=1}^p  \max_{b\in\{-1,1\}\setminus\{z_j^*\}} \frac{F_j(z_j^*,b;z)^2 \norm{\mu_j(B^*,b)-\mu_j(B^*,z_j^*)}^2}{\Delta_j(z_j^*,b)^4 \ell(z,z^*)} \\
& = \sum_{j=1}^p \frac{F_j(z_j^*,-z_j^*;z)^2 \norm{\mu_j(B^*,-z_j^*)-\mu_j(B^*,z_j^*)}^2}{\Delta_j(z_j^*,-z_j^*)^4 \ell(z,z^*)}\\
&\lesssim  \sum_{j=1}^p\frac{\left|\iprod{\epsilon_j}{\wh{\beta}(z)-\wh{\beta}(z^*)}\right|^2}{\|\beta^*\|^2\ell(z,z^*)}\\
& \leq  \frac{\|\wh{\beta}(z)-\wh{\beta}(z^*)\|^2}{\lambda^{*2}p\ell(z,z^*)}\left\|\sum_{j=1}^p\epsilon_j\epsilon_j^T\right\| \\
&\lesssim \frac{1}{p\lambda^{*2}} + \frac{1}{p^2\lambda^{*4}},
\end{align*}
%\begin{eqnarray*}
%\sum_{j=1}^p\frac{\left|\iprod{\epsilon_j}{\wh{\beta}(z)-\wh{\beta}(z^*)}\right|^2}{\|\beta^*\|^2\ell(z,z^*)} &\leq& \frac{\|\wh{\beta}(z)-\wh{\beta}(z^*)\|^2}{\lambda^{*2}p\ell(z,z^*)}\left\|\sum_{j=1}^p\epsilon_j\epsilon_j^T\right\| \\
%&\lesssim& \frac{1}{p\lambda^{*2}} + \frac{1}{p^2\lambda^{*4}},
%\end{eqnarray*}
where the last inequality is by (\ref{eq:z2_loss_identity}), (\ref{eq:steady-body}) and (\ref{eq:esp-km3}). For (\ref{eq:error-z22}), for any $T\subset [p]$, we have
\begin{align*}
& \frac{\tau}{4\Delta_{\min}^2|T|} \sum_{j\in T}  \max_{b\in\{-1,1\}\setminus\{z_j^*\}} \frac{G_j(z_j^*,b;z)^2 \norm{\mu_j(B^*,b)-\mu_j(B^*,z_j^*)}^2}{\Delta_j(z_j^*,b)^4 \ell(z,z^*)} \\
& = \frac{\tau}{4\Delta_{\min}^2|T|}  \sum_{j\in T} \frac{G_j(z_j^*,-z_j^*;z)^2 \norm{\mu_j(B^*,-z_j^*)-\mu_j(B^*,z_j^*)}^2}{\Delta_j(z_j^*,-z_j^*)^4 \ell(z,z^*)}\\
& \lesssim \frac{\tau}{4\Delta_{\min}^2|T|}\sum_{j\in T}\frac{\left|\iprod{\beta^*}{\wh{\beta}(z)-\wh{\beta}(z^*)}\right|^2}{\|\beta^*\|^2\ell(z,z^*)} \\
&\lesssim \frac{\tau\|\wh{\beta}(z)-\wh{\beta}(z^*)\|^2}{\|\beta^*\|^2\ell(z,z^*)} \\
&\lesssim \frac{\tau}{\lambda^{*2}p^2} + \frac{\tau}{\lambda^{*4}p^3},
\end{align*}
%\begin{eqnarray*}
%\frac{\tau}{4\Delta_{\min}^2|T|}\sum_{j\in T}\frac{\left|\iprod{\beta^*}{\wh{\beta}(z)-\wh{\beta}(z^*)}\right|^2}{\|\beta^*\|^2\ell(z,z^*)} &\lesssim& \frac{\tau\|\wh{\beta}(z)-\wh{\beta}(z^*)\|^2}{\|\beta^*\|^2\ell(z,z^*)} \\
%&\lesssim& \frac{\tau}{\lambda^{*2}p^2} + \frac{\tau}{\lambda^{*4}p^3},
%\end{eqnarray*}
where the last inequality is by (\ref{eq:steady-body}). Finally, for (\ref{eq:error-z23}), since $\|\wh{\beta}(z^*)-\beta^*\|=\frac{|z^{*T}Wz^*|}{p^2}\|z^*\|\lesssim 1$ by Lemma \ref{lem:gw-r-m}, we have
\begin{eqnarray*}
\max_{b\neq z_j^*} \frac{\abs{H_j(z_j^*,b)}}{\Delta_j(z_j^*,b)^2} = \frac{\abs{H_j(z_j^*,-z_j^*)}}{\Delta_j(z_j^*,-z_j^*)^2} = \frac{\left|\iprod{\beta^*}{\wh{\beta}(z^*)-\beta^*}\right|}{2\|\beta^*\|^2} &\leq& \frac{1}{2}\frac{\|\wh{\beta}(z^*)-\beta^*\|}{\|\beta^*\|} \lesssim \frac{1}{\sqrt{p\lambda^{*2}}},
\end{eqnarray*}
for any $j\in[p]$.
The proof is complete.
\end{proof}

\begin{proof}[Proof of Lemma \ref{lem:ideal-z2}]
Without loss of generality, assume $z_1^*=1$, and then for $b\neq z_1^*$, we have
\begin{eqnarray}
\nonumber && \mathbb{P}\left(\iprod{W_1}{\wh{\beta}(z^*)(z_1^*-b)}\leq -\frac{1-\delta}{2}4\|\beta^*\|^2\right) \\
\nonumber &=& \mathbb{P}\left(\iprod{W_1}{\wh{\beta}(z^*)}\leq (1-\delta)p\lambda^{*2}\right) \\
\label{eq:cuphead1} &\leq& \mathbb{P}\left(\iprod{W_1}{\beta^*}\leq -(1-\delta-\delta')p\lambda^{*2}\right) \\
\label{eq:cuphead2}&& + \mathbb{P}\left(\iprod{W_1}{\wh{\beta}(z^*)-\beta^*}\leq -\delta'p\lambda^{*2}\right),
\end{eqnarray}
for some sequence $\delta'=\delta'_p$ to be determined later. For (\ref{eq:cuphead1}), a standard Gaussian tail bound gives
\begin{eqnarray*}
&& \mathbb{P}\left(\iprod{W_1}{\beta^*}\leq -(1-\delta-\delta')p\lambda^{*2}\right) \\
&\leq& \mathbb{P}\left(\mathn(0,p\lambda^{*2})>(1-\delta-\delta')p\lambda^{*2}\right) \\
&\leq& \exp\left(-\frac{(1-\delta-\bar{\delta})^2p\lambda^{*2}}{2}\right).
\end{eqnarray*}
For (\ref{eq:cuphead2}), note that we have
$$\wh{\beta}(z^*)-\beta^*=\frac{z^{*T}Wz^*}{p^2}z^*,$$
and thus
\begin{eqnarray*}
&& \mathbb{P}\left(\iprod{W_1}{\wh{\beta}(z^*)-\beta^*}\leq -\bar{\delta}p\lambda^{*2}\right) \\
&=& \mathbb{P}\left(\frac{z^{*T}Wz^*}{p^2}\iprod{W_1}{z^*}\leq-\bar{\delta}p\lambda^{*2}\right) \\
&\leq& \mathbb{P}\left(\|W\||\iprod{W_1}{z^*}|\geq \bar{\delta} p^2\lambda^{*2}\right) \\
&\leq& \mathbb{P}\left(\|W\||\iprod{W_1}{z^*}|\geq \bar{\delta} p^2\lambda^{*2}, \|W\|\leq C\sqrt{p+x}\right) + \mathbb{P}(\|W\|>C\sqrt{p+x}) \\
&\leq& \mathbb{P}\left(C\sqrt{p+x}|\iprod{W_1}{z^*}|\geq \bar{\delta}p^2\lambda^{*2}\right) + e^{-x} \\
&\leq& 2\exp\left(-C'\frac{\bar{\delta}^2p^3\lambda^{*4}}{p+x}\right) + e^{-x}.
\end{eqnarray*}
Take $x=\bar{\delta}p^{3/2}\lambda^{*2}$, and we have
\begin{eqnarray*}
&& \mathbb{P}\left(\iprod{W_1}{\wh{\beta}(z^*)-\beta^*}\leq -\bar{\delta}p\lambda^{*2}\right) \\
&\leq& 2\exp\left(-C'\bar{\delta}^2p^2\lambda^{*4}\right) + 3\exp\left(-C'\bar{\delta}p^{3/2}\lambda^{*2}\right) \\
&\leq& 5\exp\left(-\frac{(1-\delta-\bar{\delta})^2p\lambda^{*2}}{2}\right),
\end{eqnarray*}
where the last inequality uses the condition that $p\lambda^{*2}\rightarrow\infty$ and the fact that $\bar{\delta}$ tends to zero at a sufficiently slow rate. Combining the above bounds, we obtain
$$\mathbb{P}\left(\iprod{W_1}{\wh{\beta}(z^*)(z_1^*-b)}\leq -\frac{1-\delta}{2}4\|\beta^*\|^2\right)\leq 6\exp\left(-\frac{(1-\delta-\bar{\delta})^2p\lambda^{*2}}{2}\right).$$
A similar bound holds for $\mathbb{P}\left(\iprod{W_j}{\wh{\beta}(z^*)(z_j^*-b)}\leq -\frac{1-\delta}{2}4\|\beta^*\|^2\right)$ for each $j\in[p]$. This implies
\begin{eqnarray*}
\mathbb{E}\xi_{\rm ideal}(\delta) &=& 4p\lambda^{*2}\sum_{j=1}^p\mathbb{P}\left(2\iprod{W_j}{\wh{\beta}(z^*)z_j^*}\leq -\frac{1-\delta}{2}4\|\beta^*\|^2\right) \\
&\leq& 24p^2\lambda^{*2}\exp\left(-\frac{(1-\delta-\bar{\delta})^2p\lambda^{*2}}{2}\right) \\
&\leq& p\exp\left(-(1+o(1))\frac{p\lambda^{*2}}{2}\right),
\end{eqnarray*}
where the last inequality uses $p\lambda^{*2}\rightarrow\infty$. The desired conclusion is implied by Markov inequality.
\end{proof}

\begin{proof}[Proof of Proposition \ref{prop:ini-z2}]
We can write $z^{(0)}=\argmin_{z\in\{-1,1\}^p}\|z-\sqrt{p}\wt{u}\|^2$. This implies
$$\|z^{(0)}-z^*\|^2\leq 2\|\sqrt{p}\wt{u}-z^*\|^2 + 2\|z^{(0)}-\sqrt{p}\wt{u}\|^2\leq 4\|\sqrt{p}\wt{u}-z^*\|^2.$$
Similarly, we also have $\|z^{(0)}+z^*\|^2\leq 4\|\sqrt{p}\wt{u}+z^*\|^2$. Thus,
\begin{equation}
\|z^{(0)}-z^*\|^2\wedge \|z^{(0)}+z^*\|^2\leq 4\left(\|\sqrt{p}\wt{u}-z^*\|^2\wedge \|\sqrt{p}\wt{u}+z^*\|^2\right).\label{eq:dirtmouth}
\end{equation}
Since $\wt{u}$ is the leading eigenvector of $Y$ and $z^*/\sqrt{p}$ is the leading eigenvector of $\lambda^*z^*z^{*T}$, by Davis-Kahan theorem, we have
$$\left\|\wt{u}-\frac{z^*}{\sqrt{p}}\right\|\wedge \left\|\wt{u}+\frac{z^*}{\sqrt{p}}\right\|\lesssim \frac{\|W\|}{p|\lambda^*|}\lesssim \frac{1}{\sqrt{p\lambda^{*2}}},$$
where the last inequality above is by Lemma \ref{lem:gw-r-m}. Finally, by (\ref{eq:dirtmouth}), we have
$$\ell(z^{(0)},z^*)\wedge\ell(z^{(0)},-z^*)\lesssim p,$$
with high probability.
\end{proof}

\subsection{Proofs in Section \ref{sec:zk}}\label{sec:proof_zk}

In this section, we present the proofs of Theorem \ref{thm:lower-zk}, Lemma \ref{lem:error-zk}, Lemma \ref{lem:ideal-zk} and Proposition \ref{prop:ini}. The conclusion of Theorem \ref{thm:zk-con-alg} and Corollary \ref{cor:zk-con-alg} are direct consequences of Theorem \ref{thm:main}, and thus their proofs are omitted. We first present a technical lemma.

\begin{lemma}\label{lem:tech-lem-zk}
Consider i.i.d. random variables $w_{ij}\sim\mathn(0,1)$ for $1\leq i\neq j\leq p$ and some $z^*\in(\Zk)^p$. For any constant $C'>0$, there exists some constant $C>0$ only depending on $C'$ such that
\begin{equation}
\max_{z\in(\Zk)^p}\left|\frac{\sum_{i\neq j}w_{ij}\left[(z_i\circ z_j^{-1})-(z_i^*\circ z_j^{*-1})\right]}{\sqrt{\sum_{i\neq j}\left[(z_i\circ z_j^{-1})-(z_i^*\circ z_j^{*-1})\right]^2}}\right| \leq C\sqrt{p\log k},
\end{equation}
with probability at least $1-e^{-C'p\log k}$.
\end{lemma}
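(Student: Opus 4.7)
The plan is to treat this as a supremum of standardized Gaussian random variables indexed by the finite set $(\Zk)^p$ and apply a union bound. First, I would fix an arbitrary $z\in(\Zk)^p$ and set $a_{ij}(z)=(z_i\circ z_j^{-1})-(z_i^*\circ z_j^{*-1})$ for $i\neq j$. Since $\{w_{ij}\}_{i\neq j}$ are i.i.d.\ $\mathn(0,1)$, the linear combination $\sum_{i\neq j} w_{ij} a_{ij}(z)$ is Gaussian with mean zero and variance $\sum_{i\neq j} a_{ij}(z)^2$. Hence, whenever the denominator is nonzero, the standardized ratio
$$\xi_z := \frac{\sum_{i\neq j} w_{ij}a_{ij}(z)}{\sqrt{\sum_{i\neq j} a_{ij}(z)^2}}$$
is exactly $\mathn(0,1)$; in the degenerate case $z=z^*$ where both numerator and denominator vanish, the convention $0/0=0$ handles the ratio trivially.

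Next, I would apply the standard Gaussian tail bound $\Prob(|\xi_z|>t)\leq 2e^{-t^2/2}$ for each fixed $z$, and then union bound over the $|(\Zk)^p|=k^p$ possible values of $z$:
$$\Prob\left(\max_{z\in(\Zk)^p}|\xi_z|>t\right)\leq 2 k^p e^{-t^2/2}=2\exp\!\left(p\log k-\tfrac{t^2}{2}\right).$$
Choosing $t=C\sqrt{p\log k}$ with $C$ sufficiently large depending on $C'$ (for instance $C\geq\sqrt{2(C'+2)}$) makes the exponent at most $-C'p\log k$, and the factor of $2$ can be absorbed for $p\log k$ large enough, giving the claimed tail probability.

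There is no serious obstacle here: the observation that each standardized ratio is exactly $\mathn(0,1)$ reduces the problem to a textbook Gaussian maximal inequality, and the $k^p$ cardinality of the index set matches the $\sqrt{p\log k}$ rate in the conclusion. The only mild subtlety is verifying that $a_{ij}(z)$ does not introduce any hidden dependence that would violate the Gaussianity of the numerator, which is immediate because $z$ and $z^*$ are deterministic (in particular independent of $\{w_{ij}\}$) and the entries $w_{ij}$ with distinct $(i,j)$ pairs are independent (note that $w_{ij}$ and $w_{ji}$ are separate random variables in $\Zk$ synchronization).
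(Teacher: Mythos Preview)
Your proposal is correct and takes essentially the same approach as the paper: the paper's proof consists of the single sentence ``The result is implied by a standard Gaussian tail bound and a union bound argument,'' which is exactly what you have spelled out.
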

\begin{proof}
The result is implied by a standard Gaussian tail bound and a union bound argument.
\end{proof}

\begin{proof}[Proof of Theorem \ref{thm:lower-zk}]
Given the similarity between $\mathbb{Z}/2\mathbb{Z}$ and $\mathbb{Z}_2$, the proof for $k=2$ is very similar to the proof of Theorem \ref{thm:lower-z2} and thus is omitted here.
We only need to consider the case $k\geq 3$. Let $\rho=o(1)$ tends to zero at a sufficiently slow rate.
Consider the parameter space
$$\mathcal{Z}=\{z:z_j=2\text{ for all }1\leq j\leq (1-\rho) p\text{ and }z_j\in\{0,1\}\text{ for all }(1-\rho)p<j\leq p\}.$$
Then, by the same argument that leads to (\ref{eq:metroidvania}), we have
\begin{eqnarray*}
&& \inf_{\wh{z}}\sup_{z^*}\mathbb{E}\min_{a\in \Zk}\left(\frac{1}{p}\sum_{j=1}^p\indc{\wh{z}_j\neq z_j^*\circ a^{-1}}\right) \\
&\geq& \frac{1}{p}\sum_{j>(1-\rho )p}\text{ave}_{z_{-j}^*}\inf_{\wh{z}_j}\left(\frac{1}{2}\mathbb{P}_{(z_j^*=1,z_{-j}^*)}(\wh{z}_j\neq 1)+\frac{1}{2}\mathbb{P}_{(z_j^*=0,z_{-j}^*)}(\wh{z}_j\neq 0)\right).
\end{eqnarray*}
Note that the quantity
$$\inf_{\wh{z}_j}\left(\frac{1}{2}\mathbb{P}_{(z_j^*=1,z_{-j}^*)}(\wh{z}_j\neq 1)+\frac{1}{2}\mathbb{P}_{(z_j^*=0,z_{-j}^*)}(\wh{z}_j\neq 0)\right)$$
is the optimal error for the hypothesis testing problem,
\begin{eqnarray*}
H_0: && Y_{ij}\sim \mathn(\lambda^*[(z_i^*-1)\text{ mod }k],1)\text{ and }Y_{ji}\sim \mathn(\lambda^*[(1-z_i^*)\text{ mod }k],1)\text{ for }i\in[p]\backslash\{j\}, \\
H_1: && Y_{ij}\sim \mathn(\lambda^*[(z_i^*-0)\text{ mod }k],1)\text{ and }Y_{ji}\sim \mathn(\lambda^*[(0-z_i^*)\text{ mod }k],1)\text{ for }i\in[p]\backslash\{j\}.
\end{eqnarray*}
Note that
\begin{align*}
\lambda^{*-1}|[(z_i^*-1)\text{ mod }k]-[(z_i^*-0)\text{ mod }k]| \begin{cases}
 = 1, \text{ if }i\leq (1-\rho)p\\
 \leq k-1,\text{ o.w.}
\end{cases}
\end{align*}
and same result holds for $|[(1-z_i^*)\text{ mod }k]-[(0-z_i^*)\text{ mod }k]|$. We have
\begin{align*}
\sum_{j\neq i} \br{\abs{\E_{H_0} Y_{ij} - \E_{H_1} Y_{ij}} + \abs{\E_{H_0} Y_{ji} - \E_{H_1} Y_{ji}}} \leq 2\lambda^*\br{(1-\rho) p + (k-1) \rho p} \leq 2\lambda^*(1+\rho k)p.
\end{align*}
%Since $|[(z_i^*-1)\text{ mod }k]-[(z_i^*-0)\text{ mod }k]|\geq 1$ and $|[(1-z_i^*)\text{ mod }k]-[(0-z_i^*)\text{ mod }k]|\geq 1$, 
By Neyman-Pearson lemma, 
the testing error can be lower bounded by $\mathbb{P}(\mathn(0,1)\geq \lambda^*(1+\rho k)p/\sqrt{2(p-1)}$. Therefore, we have
\begin{eqnarray*}
\inf_{\wh{z}}\sup_{z^*}\mathbb{E}\min_{a\in \Zk}\left(\frac{1}{p}\sum_{j=1}^p\indc{\wh{z}_j\neq z_j^*\circ a^{-1}}\right) &\geq&  \rho\mathbb{P}(\mathn(0,1)\geq \lambda^*(1+\rho k)p/\sqrt{2(p-1)}) \\
&=& \exp\left(-(1+o(1))\frac{p\lambda^{*2}}{4}\right),
\end{eqnarray*}
under the condition that $p\lambda^{*2}\rightarrow\infty$ and $\rho$  tends to zero at a sufficiently slow rate. When $p\lambda^{*2}=O(1)$, we can take $\rho =1/2$ instead and obtain a constant lower bound.
\end{proof}

Now we are ready to state the proofs of Lemma \ref{lem:error-zk} and Lemma \ref{lem:ideal-zk}. Note that under the setting of $\Zk$ synchronization, the error terms are
\begin{eqnarray*}
F_j(a,b,z) &=& \iprod{\epsilon_j}{\wh{\lambda}(z^*)\left(z^*\circ a^{-1}-z^*\circ b^{-1}\right)-\wh{\lambda}(z)\left(z\circ a^{-1}-z\circ b^{-1}\right)}, \\
G_j(a,b,z) &=& \frac{1}{2}\left(\|\lambda^*(z^*\circ a^{-1})-\wh{\lambda}(z)(z\circ a^{-1})\|^2-\|\lambda^*(z^*\circ a^{-1})-\wh{\lambda}(z^*)(z^*\circ a^{-1})\|^2\right) \\
&& -\frac{1}{2}\left(\|\lambda^*(z^*\circ a^{-1})-\wh{\lambda}(z)(z\circ b^{-1})\|^2-\|\lambda^*(z^*\circ a^{-1})-\wh{\lambda}(z^*)(z^*\circ b^{-1})\|^2\right), \\
H_j(a,b) &=& \frac{1}{2}\|\lambda^*(z^*\circ a^{-1})-\wh{\lambda}(z^*)(z^*\circ a^{-1})\|^2 \\
&& -\frac{1}{2}\left(\|\lambda^*(z^*\circ a^{-1})-\wh{\lambda}(z^*)(z^*\circ b^{-1})\|^2-\|\lambda^*(z^*\circ a^{-1})-\lambda^*(z^*\circ b^{-1})\|^2\right),
\end{eqnarray*} 
where we have used the notation $\epsilon_j=W_j$ for the $j$th column of the error matrix, and $z\circ a^{-1}$ stands for the vector $\{z_i\circ a^{-1}\}_{i\in[p]}$.

\begin{proof}[Proof of Lemma \ref{lem:error-zk}]
Throughout the proof, we define $h(z,z^*) = \sum_{j\in[p]}\indc{z_j \neq z^*_j},\forall z$. Then we have
\begin{align}\label{eqn:zk_h_square_connection}
  \sum_{i\neq j}\left[(z_i^*\circ z_j^{*-1})-(z_i\circ z_j^{-1})\right]^2 \leq k^2 \sum_{i,j} \br{\indc{z^*_i \neq z_i} + \indc{z^*_j\neq z_j}}\leq  2pk^2h(z,z^*).
\end{align}
In addition, (\ref{eqn:zk_h_l_connection}) is equivalent to $h(z,z^*)\leq \frac{1}{p\lambda^{*2}} \ell(z,z^*)$.

Under the assumption that $\max_{a\in\Zk}\sum_{j=1}^p\indc{z_j^*=a}\leq (1-\alpha)p$ where $\alpha>0$ is a constant, we have
\begin{equation}
\sum_{i\neq j}(z_i^*\circ z_j^{*-1})^2\geq \sum_{i\neq j}\indc{z_i^*\neq z_j^*}\geq \alpha p^2.
\end{equation}
Similarly, since for any $z$ such that $h(z,z^*)\leq \alpha p/2$ we have $\max_{a\in\Zk}\sum_{j=1}^p\indc{z_j=a}\leq (1-\alpha/2)p$, we also obtain
\begin{equation}
\sum_{i\neq j}(z_i\circ z_j^{-1})^2\geq \sum_{i\neq j}\indc{z_i\neq z_j}\geq \alpha p^2/2. \label{eq:link-zelda}
\end{equation}
Now we will derive a bound for $|\wh{\lambda}(z)-\wh{\lambda}(z^*)|$, where $\wh{\lambda}(z)=\frac{\sum_{i\neq j}Y_{ij}(z_i\circ z_j^{-1})}{\sum_{i\neq j}(z_i\circ z_j^{-1})^2}$. We can write $\wh{\lambda}(z)$ as $\wh{\lambda}(z)=\lambda(z)+\bar{w}(z)$, where
$$\lambda(z)=\lambda^*\frac{\sum_{i\neq j}(z_i^*\circ z_j^{*-1})(z_i\circ z_j^{-1})}{\sum_{i\neq j}(z_i\circ z_j^{-1})^2},$$
and
$$\bar{w}(z)=\frac{\sum_{i\neq j}w_{ij}(z_i\circ z_j^{-1})}{\sum_{i\neq j}(z_i\circ z_j^{-1})^2}.$$
Then,
\begin{equation}
|\wh{\lambda}(z)-\wh{\lambda}(z^*)|\leq |\lambda(z)-\lambda(z^*)| + |\bar{w}(z)-\bar{w}(z^*)|,\label{eq:zelda}
\end{equation}
and we will bound the two terms on the right hand side of the above inequality separately. For the first term of (\ref{eq:zelda}), we have
\begin{eqnarray}
\nonumber |\lambda(z)-\lambda(z^*)| &=& |\lambda^*|\frac{\left|\sum_{i\neq j}\left[(z_i^*\circ z_j^{*-1})-(z_i\circ z_j^{-1})\right](z_i\circ z_j^{-1})\right|}{\sum_{i\neq j}(z_i\circ z_j^{-1})^2} \\
\nonumber &\leq& |\lambda^*|\frac{\sqrt{\sum_{i\neq j}\left[(z_i^*\circ z_j^{*-1})-(z_i\circ z_j^{-1})\right]^2}}{\sqrt{\sum_{i\neq j}(z_i\circ z_j^{-1})^2}} \\
\nonumber &\leq& \frac{\sqrt{2}|\lambda^*|}{\sqrt{\alpha} p}\sqrt{\sum_{i\neq j}\left[(z_i^*\circ z_j^{*-1})-(z_i\circ z_j^{-1})\right]^2} \\
\label{eq:mipha} &\lesssim& \frac{|\lambda^*|k}{p}\sqrt{p h(z,z^*)},
\end{eqnarray}
where we have used (\ref{eqn:zk_h_square_connection}) and (\ref{eq:link-zelda}).
For the second term of (\ref{eq:zelda}), we have
\begin{eqnarray}
\label{eq:moblin} |\bar{w}(z)-\bar{w}(z^*)| &\leq& \left|\frac{\sum_{i\neq j}w_{ij}\left[(z_i\circ z_j^{-1})-(z_i^*\circ z_j^{*-1})\right]}{\sum_{i\neq j}(z_i\circ z_j^{-1})^2}\right| \\
\label{eq:hoplin} && + \left|\frac{1}{\sum_{i\neq j}(z_i\circ z_j^{-1})^2}-\frac{1}{\sum_{i\neq j}(z_i^*\circ z_j^{*-1})^2}\right|\left|\sum_{i\neq j}w_{ij}(z_i^*\circ z_j^{*-1})\right|.
\end{eqnarray}
We can bound (\ref{eq:moblin}) by
\begin{eqnarray*}
&& \frac{\sqrt{\sum_{i\neq j}\left[(z_i\circ z_j^{-1})-(z_i^*\circ z_j^{*-1})\right]^2}}{\sum_{i\neq j}(z_i\circ z_j^{-1})^2}\left|\frac{\sum_{i\neq j}w_{ij}\left[(z_i\circ z_j^{-1})-(z_i^*\circ z_j^{*-1})\right]}{\sqrt{\sum_{i\neq j}\left[(z_i\circ z_j^{-1})-(z_i^*\circ z_j^{*-1})\right]^2}}\right| \\
&\lesssim& \frac{\sqrt{k^2(\log k) h(z,z^*)}}{p},
\end{eqnarray*}
where we have used (\ref{eqn:zk_h_square_connection}), (\ref{eq:link-zelda}) and Lemma \ref{lem:tech-lem-zk}. For (\ref{eq:hoplin}), we have
\begin{eqnarray*}
&& \left|\frac{1}{\sum_{i\neq j}(z_i\circ z_j^{-1})^2}-\frac{1}{\sum_{i\neq j}(z_i^*\circ z_j^{*-1})^2}\right| \\
&=& \left|\frac{\sum_{i\neq j}\left[(z_i\circ z_j^{-1})^2-(z_i^*\circ z_j^{*-1})^2\right]}{\left(\sum_{i\neq j}(z_i\circ z_j^{-1})^2\right)\left(\sum_{i\neq j}(z_i^*\circ z_j^{*-1})^2\right)}\right| \\
&\lesssim& \frac{k^2h(z,z^*)}{p^3},
\end{eqnarray*}
and thus
$$\left|\frac{1}{\sum_{i\neq j}(z_i\circ z_j^{-1})^2}-\frac{1}{\sum_{i\neq j}(z_i^*\circ z_j^{*-1})^2}\right|\left|\sum_{i\neq j}w_{ij}(z_i^*\circ z_j^{*-1})\right|\lesssim \frac{k^3\sqrt{\log p}h(z,z^*)}{p^2},$$
with probability at least $1-p^{-C'}$
by $\sum_{i\neq j}(z_i^*\circ z_j^{*-1})^2\lesssim k^2p^2$ and a standard Gaussian tail bound. Therefore, we can bound (\ref{eq:moblin}) by $\frac{\sqrt{k^2(\log k) h(z,z^*)}}{p}+\frac{k^3\sqrt{\log p}h(z,z^*)}{p^2}$ up to a constant. Together with (\ref{eq:mipha}), we have
\begin{equation}
|\wh{\lambda}(z)-\wh{\lambda}(z^*)| \lesssim \frac{|\lambda^*|k}{p}\sqrt{p h(z,z^*)} + \frac{\sqrt{k^2(\log k) h(z,z^*)}}{p}+\frac{k^3\sqrt{\log p}h(z,z^*)}{p^2}. \label{eq:deepnest}
\end{equation}
Moreover, we also have $\wh{\lambda}(z^*)-\lambda^*=\bar{w}(z^*) \sim \mathn(0, 1/\sum_{i\neq j} (z_i^* \circ z_j^{*-1})^2)$ where the variance is smaller than $1/\alpha p^2$ according to (\ref{eq:link-zelda}).  Thus,
\begin{equation}
|\wh{\lambda}(z^*)-\lambda^*| \lesssim \frac{1}{\sqrt{p}}, \label{eq:theabyss}
\end{equation}
with probability $1-e^{-C'p}$. This implies $\abs{\wh{\lambda}(z^*)}\lesssim \abs{\lambda^*} + \frac{1}{\sqrt{p}}$ and
\begin{equation}
\|\lambda^*z^*\circ a^{-1} - \wh{\lambda}(z^*)z^*\circ a^{-1}\|^2 \lesssim k^2. \label{eq:radiance}
\end{equation}
Using (\ref{eq:deepnest}) and (\ref{eq:theabyss}), we have
\begin{eqnarray}
\nonumber && \|\wh{\lambda}(z)z\circ a^{-1} - \wh{\lambda}(z^*)z^*\circ a^{-1}\| \\
\nonumber &\leq& |\wh{\lambda}(z)-\wh{\lambda}(z^*)|\|z\circ a^{-1}\| + |\wh{\lambda}(z^*)|\|z\circ a^{-1} - z^*\circ a^{-1}\| \\
\nonumber &\lesssim& \br{ |\lambda^*|k\sqrt{k}\sqrt{h(z,z^*)} + \frac{\sqrt{k^3\log k h(z,z^*)}}{\sqrt{p}}+\frac{k^3\sqrt{k}h(z,z^*)}{p\sqrt{p}} } + \br{\abs{\lambda^*} + \frac{1}{\sqrt{p}}} \sqrt{k h(z,z^*)}\\
\label{eq:aspid} &\lesssim& |\lambda^*|k\sqrt{k}\sqrt{h(z,z^*)},
\end{eqnarray}
under the condition that $\frac{p\lambda^{*2}}{k^4}\rightarrow\infty$.

Now we are ready to bound (\ref{eq:error-zk1})-(\ref{eq:error-zk3}). 
For (\ref{eq:error-zk1}), we have
\begin{eqnarray*}
&& \sum_{j=1}^p\max_{b\in\Zk\backslash\{z_j^*\}}\frac{F_j(z_j^*,b;z)^2 \norm{\mu_j(B^*,b) - \mu_j(B^*,z_j^*)}^2}{\Delta_j(z_j^*,b)^4\ell(z,z^*)} \\
&=& \sum_{j=1}^p\max_{b\in\Zk\backslash\{z_j^*\}}\frac{F_j(z_j^*,b;z)^2}{\Delta_j(z_j^*,b)^2\ell(z,z^*)} \\
&\leq& \sum_{a\in\Zk}\sum_{b\in\Zk\backslash\{a\}}\sum_{j=1}^p\indc{z_j^*=a}\frac{F_j(a,b;z)^2}{(p\lambda^{*2})^2h(z,z^*)} \\
&\leq& \frac{\max_{a\in\Zk}\|\wh{\lambda}(z)z\circ a^{-1} - \wh{\lambda}(z^*)z^*\circ a^{-1}\|^2}{(p\lambda^{*2})^2h(z,z^*)}\sum_{a\in\Zk}\left\|\sum_{j=1}^p\indc{z_j^*=a}\epsilon_j\epsilon_j^T\right\| \\
&\lesssim& \frac{k^4}{p\lambda^{*2}},
\end{eqnarray*}
by (\ref{eq:aspid}) and (\ref{eq:esp-km3}).
For (\ref{eq:error-zk2}), we have
\begin{eqnarray*}
&& \frac{\tau}{4\Delta_{\min}^2|T|}\sum_{j\in T}\max_{b\in\Zk\backslash\{z_j^*\}}\frac{G_j(z_j^*,b;z)^2 \norm{\mu_j(B^*,b) - \mu_j(B^*,z_j^*)}^2}{\Delta_j(z_j^*,b)^2\ell(z,z^*)} \\
&=& \frac{\tau}{4\Delta_{\min}^2|T|}\sum_{j\in T}\max_{b\in\Zk\backslash\{z_j^*\}}\frac{G_j(z_j^*,b;z)^2}{\Delta_j(z_j^*,b)^2\ell(z,z^*)} \\
&\lesssim& \frac{\tau}{(p\lambda^{*2})^3h(z,z^*)}\max_{a\in\Zk}\|\wh{\lambda}(z)z\circ a^{-1} - \wh{\lambda}(z^*)z^*\circ a^{-1}\|^4 \\
&& + \frac{\tau}{(p\lambda^{*2})^3h(z,z^*)}\left(\max_{a\in\Zk}\|\wh{\lambda}(z)z\circ a^{-1} - \wh{\lambda}(z^*)z^*\circ a^{-1}\|^2\right)\left(\max_{a\in\Zk}\|\lambda^*z^*\circ a^{-1} - \wh{\lambda}(z^*)z^*\circ a^{-1}\|^2\right) \\
&& +  \frac{\tau}{(p\lambda^{*2})^2h(z,z^*)}\max_{a\in\Zk}\|\wh{\lambda}(z)z\circ a^{-1} - \wh{\lambda}(z^*)z^*\circ a^{-1}\|^2 \\
&\lesssim& \frac{\tau k^6}{p^2\lambda^{*2}},
\end{eqnarray*}
by (\ref{eq:aspid}) and (\ref{eq:radiance}).
Finally, for (\ref{eq:error-zk3}), we have
\begin{eqnarray*}
\frac{|H_j(z_j^*,a)|}{\Delta_j(z_j^*,a)^2} &\lesssim& \frac{\max_{a\in\Zk}\|\lambda^*z^*\circ a^{-1} - \wh{\lambda}(z^*)z^*\circ a^{-1}\|^2}{p\lambda^{*2}} \\
&& + \sqrt{\frac{\max_{a\in\Zk}\|\lambda^*z^*\circ a^{-1} - \wh{\lambda}(z^*)z^*\circ a^{-1}\|^2}{p\lambda^{*2}}} \\
&\lesssim& \frac{k^2}{p\lambda^{*2}} + \sqrt{\frac{k^2}{p\lambda^{*2}}},
\end{eqnarray*}
by (\ref{eq:radiance}).
The proof is complete.
\end{proof}

\begin{proof}[Proof of Lemma \ref{lem:ideal-zk}]
For any $z_1^*$ and $b\neq z_1^*$, we have
\begin{eqnarray*}
&& \mathbb{P}\left(\sum_{j=1}^pw_{j1}(z_j^*\circ z_1^{*-1}-z_j^*\circ b^{-1})\wh{\lambda}(z^*)\leq -\frac{1-\delta}{2}\lambda^{*2}\sum_{j=1}^p(z_j^*\circ z_1^{*-1}-z_j^*\circ b^{-1})^2\right) \\
&\leq& \mathbb{P}\left(\sum_{j=1}^pw_{j1}(z_j^*\circ z_1^{*-1}-z_j^*\circ b^{-1})\lambda^*\leq -\frac{1-\delta-\bar{\delta}}{2}\lambda^{*2}\sum_{j=1}^p(z_j^*\circ z_1^{*-1}-z_j^*\circ b^{-1})^2\right) \\
&& + \mathbb{P}\left(\sum_{j=1}^pw_{j1}(z_j^*\circ z_1^{*-1}-z_j^*\circ b^{-1})(\wh{\lambda}(z^*)-\lambda^*)\leq -\frac{\bar{\delta}}{2}\lambda^{*2}\sum_{j=1}^p(z_j^*\circ z_1^{*-1}-z_j^*\circ b^{-1})^2\right),
\end{eqnarray*}
where the sequence $\bar{\delta}=\bar{\delta}_p$ is to be determined later. For the first term, by a standard Gaussian tail bound, we have
\begin{eqnarray*}
&& \mathbb{P}\left(\sum_{j=1}^pw_{j1}(z_j^*\circ z_1^{*-1}-z_j^*\circ b^{-1})\lambda^*\leq -\frac{1-\delta-\bar{\delta}}{2}\lambda^{*2}\sum_{j=1}^p(z_j^*\circ z_1^{*-1}-z_j^*\circ b^{-1})^2\right) \\
&\leq& \exp\left(-\frac{(1-\delta-\bar{\delta})^2}{8}\lambda^{*2}\sum_{j=1}^p(z_j^*\circ z_1^{*-1}-z_j^*\circ b^{-1})^2\right).
\end{eqnarray*}
For the second term, as we have established in the proof of Lemma \ref{lem:error-zk}, $\wh{\lambda}(z^*)-\lambda^* \sim \mathn(0, 1/\sum_{i\neq j} (z_i^* \circ z_j^{*-1})^2)$ where the variance is smaller than $1/\alpha p^2$. Then by a standard Gaussian tail bound, we have
\begin{eqnarray*}
&& \mathbb{P}\left(\sum_{j=1}^pw_{j1}(z_j^*\circ z_1^{*-1}-z_j^*\circ b^{-1})(\wh{\lambda}(z^*)-\lambda^*)\leq -\frac{\bar{\delta}}{2}\lambda^{*2}\sum_{j=1}^p(z_j^*\circ z_1^{*-1}-z_j^*\circ b^{-1})^2\right) \\
&\leq& \mathbb{P}\left(\frac{C\sqrt{x}}{p}\left|\sum_{j=1}^pw_{j1}(z_j^*\circ z_1^{*-1}-z_j^*\circ b^{-1})\right| \geq \frac{\bar{\delta}}{2}\lambda^{*2}\sum_{j=1}^p(z_j^*\circ z_1^{*-1}-z_j^*\circ b^{-1})^2\right) \\
&& + \mathbb{P}\left(|\wh{\lambda}(z^*)-\lambda^*|>\frac{C\sqrt{x}}{p}\right) \\
&\leq& 2\exp\left(-C'\frac{\bar{\delta}^2p^2\lambda^{*4}\sum_{j=1}^p(z_j^*\circ z_1^{*-1}-z_j^*\circ b^{-1})^2}{x}\right) + e^{-x}.
\end{eqnarray*}
Take $x=\bar{\delta}p|\lambda^*|^2\sqrt{\sum_{j=1}^p(z_j^*\circ z_1^{*-1}-z_j^*\circ b^{-1})^2}$, and we obtain the bound
\begin{eqnarray*}
&& 3\exp\left(-C'\bar{\delta}p|\lambda^*|^2\sqrt{\sum_{j=1}^p(z_j^*\circ z_1^{*-1}-z_j^*\circ b^{-1})^2}\right) \\
&\leq& 3\exp\left(-\frac{(1-\delta-\bar{\delta})^2}{8}\lambda^{*2}\sum_{j=1}^p(z_j^*\circ z_1^{*-1}-z_j^*\circ b^{-1})^2\right),
\end{eqnarray*}
where the above inequality uses the condition that $p/k^2\rightarrow\infty$ and the fact that $\bar{\delta}$ tends to zero at a sufficiently slow rate. Combining the above bounds, we obtain
\begin{eqnarray*}
&& \mathbb{P}\left(\sum_{j=1}^pw_{j1}(z_j^*\circ z_1^{*-1}-z_j^*\circ b^{-1})\wh{\lambda}(z^*)\leq -\frac{1-\delta}{2}\lambda^{*2}\sum_{j=1}^p(z_j^*\circ z_1^{*-1}-z_j^*\circ b^{-1})^2\right) \\
&\leq& 4\exp\left(-\frac{(1-\delta-\bar{\delta})^2}{8}\lambda^{*2}\sum_{j=1}^p(z_j^*\circ z_1^{*-1}-z_j^*\circ b^{-1})^2\right).
\end{eqnarray*}
A similar bound holds for
$$\mathbb{P}\left(\sum_{j=1}^pw_{jl}(z_j^*\circ z_l^{*-1}-z_j^*\circ b^{-1})\wh{\lambda}(z^*)\leq -\frac{1-\delta}{2}\lambda^{*2}\sum_{j=1}^p(z_j^*\circ z_l^{*-1}-z_j^*\circ b^{-1})^2\right)$$
for each $l\in[p]$. Since $\max_l \max_{b\neq z_l^*} \sum_{j=1}^p(z_j^*\circ z_l^{*-1}-z_j^*\circ b^{-1})^2 \geq  \lambda^{*2}p$, this implies
$$\mathbb{E}\xi_{\rm ideal}(\delta)\leq p\exp\left(-\frac{1+o(1)}{8}\lambda^{*2}p\right),$$
under the condition that $p\lambda^{*2}\rightarrow\infty$. The conclusion is implied by Markov inequality.
\end{proof}

\begin{proof}[Proof of Proposition \ref{prop:ini}]
Let $P=\mathbb{E}Y\in\mathbb{R}^{p\times p}$. Then, $P_{ij}=\lambda^*(z_i^*\circ z_j^{*-1})$ and thus $P$ has $k$ different columns. We denote the $k$ different columns by $\theta_0,\cdots,\theta_{k-1}\in\mathbb{R}^p$.  Namely, $\theta_l$ is the vector $\lambda^*(z^*\circ l^{-1})$. The same analysis in the proof of Proposition \ref{prop:ini-clust} leads to the bound
\begin{equation}
\sum_{j=1}^p\|\theta_{\pi(\bar{z}_j)}-\theta_{z_j^*}\|^2\lesssim (M+1)kp,\label{eq:use-spec-bound}
\end{equation}
for some permutation $\pi$ acting on the set $\{0,1,\cdots,k-1\}$ under the condition that $\min_{a\in\Zk}\sum_{j=1}^p\indc{z_j^*=a}\geq\frac{\alpha p}{k}$ for some constant $\alpha>0$. Since $\min_{a\neq b}\|\theta_a-\theta_b\|^2\gtrsim p\lambda^{*2}$, we have
\begin{equation}
\sum_{j=1}^p\indc{\pi(\bar{z}_j)\neq z_j^*}\lesssim \frac{(M+1)k}{\lambda^{*2}}.\label{eq:overcook}
\end{equation}
Therefore, when $\frac{(M+1)k^2}{p\lambda^{*2}}=o(1)$, we must have
\begin{equation}
\min_{a\in\Zk}\sum_{j=1}^p\indc{\bar{z}_j=a}\geq \frac{\alpha p}{2k}.\label{eq:sd-40}
\end{equation}
For any $a,b\in\Zk$, recall the notation $\bar{\Z}_{ab}=\{(i,j):\bar{z}_i=a,\bar{z}_j=b\}$. We will bound the difference between $\bar{Y}_l=\frac{1}{|\bar{\Z}_{l0}|}\sum_{(i,j)\in\bar{\Z}_{l0}}Y_{ij}$ and $\lambda^*(\pi(l)\circ \pi(0)^{-1})$. By triangle inequality, we have
\begin{equation}
|\bar{Y}_l-\lambda^*(\pi(l)\circ \pi(0)^{-1})| \leq \left|\frac{1}{|\bar{\Z}_{l0}|}\sum_{(i,j)\in\bar{\Z}_{l0}}(\mathbb{E}Y_{ij}-\lambda^*(\pi(l)\circ \pi(0)^{-1}))\right| + \left|\frac{1}{|\bar{\Z}_{l0}|}\sum_{(i,j)\in\bar{\Z}_{l0}}w_{ij}\right|. \label{eq:shift-joycon}
\end{equation}
For the second term of (\ref{eq:shift-joycon}), we have
$$\left|\frac{1}{|\bar{\Z}_{l0}|}\sum_{(i,j)\in\bar{\Z}_{l0}}w_{ij}\right|\lesssim \frac{k}{p}\max_{\Z_{l0}}\left|\frac{1}{\sqrt{|\Z_{l0}|}}\sum_{(i,j)\in\Z_{l0}}w_{ij}\right|,$$
where the above inequality is by (\ref{eq:sd-40}), and the maximization is over all $k^p$ possible clustering configurations. A simple union bound argument implies $\max_{\Z_{l0}}\left|\frac{1}{\sqrt{|\Z_{l0}|}}\sum_{(i,j)\in\Z_{l0}}w_{ij}\right|\lesssim \sqrt{p\log k}$ with probability at least $1-e^{-C'p\log k}$. Therefore,
$$\max_{l}\left|\frac{1}{|\bar{\Z}_{l0}|}\sum_{(i,j)\in\bar{\Z}_{l0}}w_{ij}\right|\lesssim \frac{k\sqrt{\log k}}{\sqrt{p}},$$
with high probability. For the first term of (\ref{eq:shift-joycon}), we have
\begin{eqnarray*}
&& \left|\frac{1}{|\bar{\Z}_{l0}|}\sum_{(i,j)\in\bar{\Z}_{l0}}(\mathbb{E}Y_{ij}-\lambda^*(\pi(l)\circ \pi(1)^{-1}))\right| \\
&\leq& |\lambda^*|k \frac{1}{|\bar{\Z}_{l0}|}\sum_{(i,j)\in\bar{\Z}_{l0}}\left(\indc{z_i^*\neq \pi(l)}+\indc{z_j^*\neq\pi(1)}\right) \\
&\leq& k|\lambda^*|\left(\frac{\sum_{i=1}^p\indc{\bar{z}_i=l,z_i^*\neq \pi(l)}}{\sum_{i=1}^p\indc{\bar{z}_i=l}}+\frac{\sum_{j=1}^p\indc{\bar{z}_j=1,z_j^*\neq \pi(1)}}{\sum_{j=1}^p\indc{\bar{z}_j=1}}\right) \\
&\lesssim& \frac{k^2|\lambda^*|}{p}\frac{(M+1)k}{\lambda^{*2}},
\end{eqnarray*}
where the last inequality is by (\ref{eq:overcook}) and (\ref{eq:sd-40}). Combining the two bounds, we can then bound (\ref{eq:shift-joycon}) by
$$\max_l|\bar{Y}_l-\lambda^*(\pi(l)\circ \pi(0)^{-1})|\lesssim \frac{k^3(M+1)}{p|\lambda^*|}+\frac{k\sqrt{\log k}}{\sqrt{p}}=o(|\lambda^*|),$$
under the condition that $\frac{(M+1)k^3}{p\lambda^{*2}}=o(1)$. We thus have
$$
\max_l||\bar{Y}_l|-|\lambda^*|(\pi(l)\circ \pi(0)^{-1})| = o(|\lambda^*|),
$$
with high probability. Since the difference among $|\lambda^*|(\pi(l)\circ \pi(0)^{-1})$ with different $l$'s is at least $|\lambda^*|$, the order of $\{|\bar{Y}_l|\}$ perfectly recovers the order of $\{|\lambda^*|(\pi(l)\circ \pi(0)^{-1})\}$. Since $|\bar{Y}_{l}|=|\bar{Y}|_{(\wh{\pi}(l))}$, we have
\begin{eqnarray*}
\ell(z^{(0)},z^*\circ \pi(0)^{-1}) &=& \sum_{j=1}^p\|\theta_{z_j^{(0)}}-\theta_{z_j^*\circ \pi(0)^{-1}}\|^2 \\
&=& \sum_{j=1}^p\|\theta_{\pi(\bar{z}_j)}-\theta_{z_j^*}\|^2 \\
&\lesssim& (M+1)kp,
\end{eqnarray*}
where the last inequality is by (\ref{eq:use-spec-bound}). The proof is complete.
\end{proof}

\subsection{Proofs in Section \ref{sec:perm}}

In this section, we present the proofs of Theorem \ref{thm:lower-per}, Lemma \ref{lem:error-per}, Lemma \ref{lem:ideal-per} and Proposition \ref{prop:ini-per}. The conclusions of Theorem \ref{thm:per-con-alg} and Corollary \ref{cor:per-con-alg} are direct consequences of Theorem \ref{thm:main}, and thus their proofs are omitted. We first state a technical lemma.

\begin{lemma}\label{lem:large-w}
Consider the error matrix $W=Y-\mathbb{E}Y\in\mathbb{R}^{pd\times pd}$ in the problem of permutation synchronization. There exists some constant $C>0$, such that
$$\mathbb{P}\left(\|W\|>C\sqrt{pd+x}\right)\leq e^{-x},$$
for any $x>0$.
\end{lemma}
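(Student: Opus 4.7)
The plan is to recognize $W$ as a symmetric Gaussian random matrix of dimension $pd \times pd$ and then apply a standard $\epsilon$-net argument. Concretely, by the block structure $W_{ji}=W_{ij}^T$ and the independence of the entries of $W_{ij}$ for $1\leq i<j\leq p$, the full matrix $W \in \mathbb{R}^{pd\times pd}$ is symmetric, has zero diagonal, and the scalar entries strictly above its diagonal are independent $\mathcal{N}(0,1)$ random variables. So the claim reduces to the well-known tail bound for the operator norm of a symmetric matrix with independent standard Gaussian entries above the diagonal.

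For a self-contained argument, first I would use that for a symmetric matrix $W$ one has $\|W\| \leq 4\max_{v\in\mathcal{N}}|v^T W v|$, where $\mathcal{N}$ is a $\tfrac14$-net of the unit sphere $S^{pd-1}$ with $|\mathcal{N}|\leq 9^{pd}$ (constructed via a standard volumetric argument). Then for each fixed unit vector $v$,
\begin{equation*}
v^T W v \;=\; 2\sum_{1\leq k<\ell\leq pd} v_k v_\ell W_{k\ell}
\end{equation*}
is a centered Gaussian with variance $4\sum_{k<\ell}v_k^2v_\ell^2\leq 4\|v\|^4=4$, so $\mathbb{P}(|v^T W v|>s)\leq 2e^{-s^2/8}$. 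A union bound over $\mathcal{N}$ gives
\begin{equation*}
\mathbb{P}\!\left(\max_{v\in\mathcal{N}}|v^T W v|>s\right) \;\leq\; 2\cdot 9^{pd}\,e^{-s^2/8}.
\end{equation*}
Choosing $s = C_1(\sqrt{pd}+\sqrt{x})$ with $C_1$ large enough absorbs the $9^{pd}$ factor and yields $\mathbb{P}(\|W\|> 4s)\leq e^{-x}$, which is the desired bound after adjusting the constant.

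There is essentially no main obstacle here; this is a textbook random matrix estimate (e.g.\ the argument in \cite{vershynin2010introduction} that was already invoked for Lemma \ref{lem:gw-r-m} in the $\mathbb{Z}_2$ synchronization setting). The only point requiring a brief check is that the block symmetry constraint $W_{ji}=W_{ij}^T$, together with $W_{ii}=0$, is precisely what makes the scalar entries above the diagonal i.i.d.\ $\mathcal{N}(0,1)$, so no extra dependence has to be tracked and the variance computation for $v^T W v$ goes through as above.
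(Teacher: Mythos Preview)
Your proposal is correct and is exactly the approach the paper has in mind: the paper's proof simply states that this is an extension of Lemma~\ref{lem:gw-r-m} via the standard $\epsilon$-net technique of \cite{vershynin2010introduction}, and you have spelled out that argument. One small inaccuracy worth fixing: since the entire diagonal blocks $W_{ii}$ are zero (not just the scalar diagonal), the above-diagonal scalar entries of $W$ lying inside a diagonal block are $0$ rather than $\mathcal{N}(0,1)$; this only lowers the variance of $v^TWv$, so your bound $\mathrm{Var}(v^TWv)\leq 4$ and the rest of the argument are unaffected.
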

\begin{proof}
This lemma is an extension of Lemma \ref{lem:gw-r-m}, and can be proved using the same technique in \cite{vershynin2010introduction}. We omit the details.
\end{proof}

\begin{proof}[Proof of Theorem \ref{thm:lower-per}]
Let $\bar{U}$ to be a matrix obtained by switching the first and the second rows of $I_d$. In other words, we have $\fnorm{\bar{U}-I_d}^2=4$.
Consider the parameter space
$$\mathcal{Z}=\left\{Z: Z_j=I_d\text{ for all }1\leq j\leq p/2\text{ and }Z_j\in\{I_d,\bar{U}\}\text{ for all }p/2<j\leq p\right\}.$$
Then, by the same argument that leads to (\ref{eq:metroidvania}), we have
\begin{eqnarray}
\nonumber && \inf_{\wh{Z}}\sup_{Z^*}\mathbb{E}\min_{U}\frac{1}{p}\sum_{j=1}^p\indc{\wh{Z}_j\neq UZ_j^*} \\
\nonumber &\geq& \frac{1}{p}\sum_{j>p/2}\text{ave}_{Z_{-j}^*}\inf_{\wh{Z}_j}\left(\frac{1}{2}\mathbb{P}_{(Z_j^*=I_d,Z_{-j}^*)}(\wh{Z}_j\neq I_d) + \frac{1}{2}\mathbb{P}_{(Z_j^*=\bar{U},Z_{-j}^*)}(\wh{Z}_j\neq \bar{U})\right),
\end{eqnarray}
Note that the quantity
$$\inf_{\wh{Z}_j}\left(\frac{1}{2}\mathbb{P}_{(Z_j^*=I_d,Z_{-j}^*)}(\wh{Z}_j\neq I_d) + \frac{1}{2}\mathbb{P}_{(Z_j^*=\bar{U},Z_{-j}^*)}(\wh{Z}_j\neq \bar{U})\right)$$
is the optimal testing error for a hypothesis testing problem where under $H_0$ we have $\mathbb{E}Y_{ij}=\lambda^*Z_i^*$ for all $i\in[p]\backslash\{j\}$ and under $H_1$ we have $\mathbb{E}Y_{ij}=\lambda^*Z_i^*\bar{U}^T$ for all $i\in[p]\backslash\{j\}$. The distributions are both Gaussian under the two hypotheses. By Neyman-Pearson lemma, the optimal testing error is $\mathbb{P}(\mathn(0,1)>\lambda^*\sqrt{p-1})$. Therefore,
$$\inf_{\wh{Z}}\sup_{Z^*}\mathbb{E}\min_{U}\frac{1}{p}\sum_{j=1}^p\indc{\wh{Z}_j\neq UZ_j^*}\geq \frac{1+o(1)}{2}\mathbb{P}(\mathn(0,1)>\lambda^*\sqrt{p-1})=\exp\left(-\frac{1+o(1)}{2}p\lambda^{*2}\right),$$
under the condition that $p\lambda^{*2}\rightarrow\infty$. When $p\lambda^{*2}=O(1)$, we obtain a constant lower bound.
\end{proof}

\begin{proof}[Proof of Lemma \ref{lem:error-per}]
We can write
$$\wh{B}(z)=\frac{\iprod{\lambda^*Z^*Z^{*T}+W}{ZZ^T}}{p^2d^2}Z=\lambda^*\frac{\iprod{Z^*Z^{*T}}{ZZ^T}}{p^2d^2}Z + \frac{\iprod{W}{ZZ^T}}{p^2d^2}Z,$$
where $W=Y-\lambda^*Z^*Z^{*T}\in\mathbb{R}^{pd\times pd}$ is a Gaussian error matrix. Then, we have $\wh{B}(Z^*)=B^*+\frac{\iprod{W}{Z^*Z^{*T}}}{p^2d^2}Z^*$. By triangle inequality, we have
\begin{equation}
\fnorm{\wh{B}(Z)-\wh{B}(Z^*)} \leq \fnorm{\lambda^*\frac{\iprod{Z^*Z^{*T}}{ZZ^T}}{p^2d^2}Z-B^*} + \fnorm{\frac{\iprod{W}{ZZ^T}}{p^2d^2}Z-\frac{\iprod{W}{Z^*Z^{*T}}}{p^2d^2}Z^*}. \label{eq:mario-odyssey}
\end{equation}
We will bound the two terms on the right hand side of (\ref{eq:mario-odyssey}) separately. The first term can be bounded as
\begin{eqnarray}
\nonumber\fnorm{\lambda^*\frac{\iprod{Z^*Z^{*T}}{ZZ^T}}{p^2d^2}Z-B^*} &\leq& \lambda^*\fnorm{Z}\left|\frac{\iprod{Z^*Z^{*T}}{ZZ^T}}{p^2d^2}-1\right| + \lambda^*\fnorm{Z-Z^*} \\
\label{eq:korok}&\leq& \frac{2\lambda^*}{p^{1/2}d^{3/2}}\fnorm{Z-Z^*}^2 + \lambda^*\fnorm{Z-Z^*} \\
\nonumber&\lesssim& \lambda^*\fnorm{Z-Z^*},
\end{eqnarray}
where the inequality (\ref{eq:korok}) is due to
\begin{eqnarray*}
|p^2d^2-\iprod{Z^*Z^{*T}}{ZZ^T}| &=& \frac{1}{2}\fnorm{ZZ^T-Z^*Z^{*T}}^2 \\
&\leq& \fnorm{Z^*(Z-Z^*)^T}^2+\fnorm{(Z-Z^*)Z^T}^2 \\
&\leq& (\norm{Z}^2+\norm{Z^*}^2)\fnorm{Z-Z^*}^2 \\
&\leq& 2p\fnorm{Z-Z^*}^2.
\end{eqnarray*}
For the second term of (\ref{eq:mario-odyssey}), we have
\begin{eqnarray}
\nonumber && \fnorm{\frac{\iprod{W}{ZZ^T}}{p^2d^2}Z-\frac{\iprod{W}{Z^*Z^{*T}}}{p^2d^2}Z^*} \\
\nonumber &\leq& \frac{|\iprod{W}{ZZ^T-Z^*Z^{*T}}|}{p^2d^2}\fnorm{Z} + \fnorm{Z-Z^*}\frac{|\iprod{W}{Z^*Z^{*T}}|}{p^2d^2} \\
\label{eq:akinfenwa} &\leq& \frac{\sqrt{2d}\sqrt{pd}\|W\|\fnorm{ZZ^T-Z^*Z^{*T}}}{p^2d^2} + \fnorm{Z-Z^*}\frac{\|W\|\sqrt{d}\fnorm{Z^*Z^{*T}}}{p^2d^2} \\
\nonumber &\lesssim& \frac{\|W\|\fnorm{Z-Z^*}}{pd} \\
\label{eq:adebayor} &\lesssim& \frac{\fnorm{Z-Z^*}}{\sqrt{pd}}.
\end{eqnarray}
The inequality (\ref{eq:akinfenwa}) is by applying SVD to the rank-$(2d)$ matrix $ZZ^T-Z^*Z^{*T}=\sum_{l=1}^{2d}d_lu_lu_l^T$ so that $|\iprod{W}{ZZ^T-Z^*Z^{*T}}|$ is bounded by
$$\sum_{l=1}^{2d}|d_l||u_l^TWu_l| \leq \|W\|\sum_{l=1}^{2d}|d_l|\leq \sqrt{2d}\|W\|\sqrt{\sum_{l=1}^{2d}d_l^2}=\sqrt{2d}\|W\|\fnorm{ZZ^T-Z^*Z^{*T}}.$$
Similarly, we also have $|\iprod{W}{Z^*Z^{*T}}|\leq \sqrt{d}\|W\|\fnorm{Z^*Z^{*T}}$. The last inequality (\ref{eq:adebayor}) is by Lemma \ref{lem:large-w}. Combining the above bounds, we have
\begin{equation}
\fnorm{\wh{B}(Z)-\wh{B}(Z^*)} \leq \left(\lambda^*+\frac{1}{\sqrt{pd}}\right)\fnorm{Z-Z^*}. \label{eq:crystal-peak}
\end{equation}
Now we are ready to bound (\ref{eq:error-per1})-(\ref{eq:error-per3}). For (\ref{eq:error-per1}), we have
\begin{align*}
&\sum_{j=1}^p\max_{U\neq Z_j^*} \frac{F_j(Z_j^*,U;Z)^2\norm{\mu_j(B^*,U) - \mu_j(B^*,Z_j^*)}^2}{\Delta_j(Z_j^*,U)^4\ell(Z,Z^*)} \\
& =\sum_{j=1}^p\max_{U\neq Z_j^*}\frac{\left|\iprod{\epsilon_j}{(\wh{B}(Z^*)-\wh{B}(Z))(Z_j^*-U)^T}\right|^2}{\fnorm{B^*(Z_j^*-U)^T}^2\ell(Z,Z^*)}  \\
 & \lesssim  \sum_{j=1}^p\max_{U\neq Z_j^*}\frac{\left|\iprod{Z_j^*-U}{\epsilon_j^T(\wh{B}(Z^*)-\wh{B}(Z))}\right|^2}{p\lambda^{*2} \fnorm{Z_j^* - U}^2\ell(Z,Z^*)}  \\
 &  \lesssim  \sum_{j=1}^p\max_{U\neq Z_j^*}\frac{\fnorm{Z_j^*-U}^2\fnorm{\epsilon_j^T(\wh{B}(Z^*)-\wh{B}(Z))}^2}{p\lambda^{*2} \fnorm{Z_j^* - U}^2\ell(Z,Z^*)}   \\
 & =  \sum_{j=1}^p\frac{\fnorm{\epsilon_j^T(\wh{B}(Z^*)-\wh{B}(Z))}^2}{p\lambda^{*2} \ell(Z,Z^*)}   \\
 &  = \frac{\iprod{ (\wh{B}(Z^*)-\wh{B}(Z))(\wh{B}(Z^*)-\wh{B}(Z))^T}{\sum_{j=1}^p \epsilon_j\epsilon_j^T}}{p\lambda^{*2} \ell(Z,Z^*)}  \\
 & \leq  \frac{ \norm{\sum_{j=1}^p \epsilon_j\epsilon_j^T}\norm{(\wh{B}(Z^*)-\wh{B}(Z))(\wh{B}(Z^*)-\wh{B}(Z))^T}_*}{p\lambda^{*2} \ell(Z,Z^*)},
\end{align*}
where $\norm{\cdot}_*$ is the matrix nuclear norm. %Since the rank of $\wh{B}(Z^*)-\wh{B}(Z)$ is at most $d$, we have $\|(\wh{B}(Z^*)-\wh{B}(Z))(\wh{B}(Z^*)-\wh{B}(Z))^T\|_*\leq \sqrt{d} \fnorm{(\wh{B}(Z^*)-\wh{B}(Z))(\wh{B}(Z^*)-\wh{B}(Z))^T} \leq  \sqrt{d} \fnorm{\wh{B}(Z^*)-\wh{B}(Z)}^2$.
Note that $\sum_{j=1}^p \epsilon_j\epsilon_j^T = WW^T$ and $\|(\wh{B}(Z^*)-\wh{B}(Z))(\wh{B}(Z^*)-\wh{B}(Z))^T\|_* = \fnorm{\wh{B}(Z^*)-\wh{B}(Z)}^2$. We have
\begin{align*}
\sum_{j=1}^p\max_{U\neq Z_j^*}\frac{F_j(Z_j^*,U;Z)^2\norm{\mu_j(B^*,U) - \mu_j(B^*,Z_j^*)}^2}{\Delta_j(Z_j^*,U)^4\ell(Z,Z^*)}  &\lesssim \frac{\fnorm{\wh{B}(Z^*)-\wh{B}(Z)}^2 \norm{W}^2}{p\lambda^{*2} \ell(Z,Z^*)}\\
& \lesssim  \frac{d}{p\lambda^{*2}} + \frac{1}{p^2\lambda^{*4}},
\end{align*}
where we have used Lemma \ref{lem:large-w} and (\ref{eq:crystal-peak}). 
For (\ref{eq:error-per2}), we have
\begin{eqnarray*}
&& \frac{\tau}{\Delta_{\min}^2|T|}\sum_{j\in T}\max_{U\neq Z_j} \frac{G_j(Z_j^*,U;Z)^2\norm{\mu_j(B^*,U) - \mu_j(B^*,Z_j^*)}^2}{\Delta_j(Z_j^*,U)^4\ell(Z,Z^*)}  \\
&=& \frac{\tau}{\Delta_{\min}^2|T|}\sum_{j\in T}\max_{U\neq Z_j}\frac{\left|\iprod{\wh{B}(Z^*)-\wh{B}(Z)}{B^*(I_d-Z_j^{*T}U)}\right|^2}{\fnorm{B^*(Z_j^*-U)^T}^2\ell(Z,Z^*)} \\
&\lesssim& \frac{\tau}{p\lambda^{*2}}\frac{\fnorm{\wh{B}(Z)-\wh{B}(Z^*)}^2\fnorm{B^*}^2}{p\lambda^{*2}\ell(Z,Z^*)} \\
&\lesssim& \frac{\tau\left(\lambda^2+\frac{1}{pd}\right)}{(p\lambda^{*2})^2}.
\end{eqnarray*}
Finally, for (\ref{eq:error-per3}), we have
$$\fnorm{\wh{B}(Z^*)-B^*}=\left|\frac{\iprod{W}{ZZ^T}}{p^2d^2}\right|\fnorm{Z}\leq \frac{\sqrt{d}\|W\|\fnorm{Z^*Z^{*T}}}{p^2d^2}\fnorm{Z^*}\lesssim 1,$$
by Lemma \ref{lem:large-w}, and thus for all $j\in[p]$ and $U\neq Z_j^*$,
$$\frac{\abs{H_j(Z_j^*,U)}}{\Delta_j(Z_j^*,U)^2} = \frac{\left|\iprod{B^*-\wh{B}(Z^*)}{B^*(I_d-Z_j^{*T}U)}\right|}{\fnorm{B^*(Z_j^*-U)^T}^2}\lesssim \fnorm{\wh{B}(Z^*)-B^*}\sqrt{\frac{d}{p\lambda^{*2}}}\lesssim \sqrt{\frac{d}{p\lambda^{*2}}}.$$
The proof is complete.
\end{proof}

\begin{proof}[Proof of Lemma \ref{lem:ideal-per}]
We use the notation $\wh{\lambda}(Z)=\frac{\iprod{Y}{ZZ^T}}{p^2d^2}$. Then, for $U\neq Z_1^*$, we have
\begin{eqnarray*}
&& \mathbb{P}\left(\wh{\lambda}(Z^*)\sum_{j=1}^p\iprod{W_{j1}}{Z_j^*(Z_1^*-U)^T}\leq -\frac{1-\delta}{2}\lambda^{*2}p\fnorm{Z_1^*-U}^2\right) \\
&\leq& \mathbb{P}\left(\lambda^*\sum_{j=1}^p\iprod{W_{j1}}{Z_j^*(Z_1^*-U)^T}\leq -\frac{1-\delta-\bar{\delta}}{2}\lambda^{*2}p\fnorm{Z_1^*-U}^2\right) \\
&& + \mathbb{P}\left((\wh{\lambda}(Z^*)-\lambda^*)\sum_{j=1}^p\iprod{W_{j1}}{Z_j^*(Z_1^*-U)^T}\leq -\frac{\bar{\delta}}{2}\lambda^{*2}p\fnorm{Z_1^*-U}^2\right).
\end{eqnarray*}
The first term can be bounded by a standard Gaussian tail bound,
\begin{eqnarray*}
&& \mathbb{P}\left(\lambda^*\sum_{j=1}^p\iprod{W_{j1}}{Z_j^*(Z_1^*-U)^T}\leq -\frac{1-\delta-\bar{\delta}}{2}\lambda^{*2}p\fnorm{Z_1^*-U}^2\right) \\
&\leq& \exp\left(-\frac{(1-\delta-\bar{\delta})^2}{8}\lambda^{*2}p\fnorm{Z_1^*-U}^2\right).
\end{eqnarray*}
For the second term, note that we have $\wh{\lambda}(Z^*) -\lambda^* = (p^2d^2)^{-1} \iprod{W}{Z^*Z^{*T}}$ which is normally distribution. We have
\begin{eqnarray*}
&& \mathbb{P}\left((\wh{\lambda}(Z^*)-\lambda^*)\sum_{j=1}^p\iprod{W_{j1}}{Z_j^*(Z_1^*-U)^T}\leq -\frac{\bar{\delta}}{2}\lambda^{*2}p\fnorm{Z_1^*-U}^2\right) \\
&\leq& \mathbb{P}\left(\frac{C\sqrt{pd+x}}{pd}\left|\sum_{j=1}^p\iprod{W_{j1}}{Z_j^*(Z_1^*-U)^T}\right|\geq \frac{\bar{\delta}}{2}\lambda^{*2}p\fnorm{Z_1^*-U}^2\right) \\
&& + \mathbb{P}\left(|\wh{\lambda}(Z^*)-\lambda^*|>\frac{C\sqrt{pd+x}}{pd}\right) \\
&\leq& 2\exp\left(-C'\frac{p^2d^2\bar{\delta^2}\lambda^{*4}p\fnorm{Z_1-U}^2}{pd+x}\right) + e^{-x}.
\end{eqnarray*}
Take $x=pd\bar{\delta}\lambda^{*2}\sqrt{p}\fnorm{Z_1-U}$, and we obtain the bound
\begin{eqnarray*}
&& 2\exp\left(-C_1p^2d\bar{\delta}^2\lambda^{*4}\fnorm{Z_1-U}^2\right) + \exp\left(-pd\bar{\delta}\lambda^{*2}\sqrt{p}\fnorm{Z_1-U}\right) \\
&\leq& 3\exp\left(-\frac{(1-\delta-\bar{\delta})^2}{8}\lambda^{*2}p\fnorm{Z_1^*-U}^2\right),
\end{eqnarray*}
under the condition that $p\lambda^{*2}\rightarrow\infty$ and $\bar{\delta}$ tends to zero at a sufficiently slow rate. Combining the above bounds, we have
\begin{eqnarray*}
&& \mathbb{P}\left(\wh{\lambda}(Z^*)\sum_{j=1}^p\iprod{W_{j1}}{Z_j^*(Z_1^*-U)^T}\leq -\frac{1-\delta}{2}\lambda^{*2}p\fnorm{Z_1^*-U}^2\right)\\
&\leq& 4\exp\left(-\frac{(1-\delta-\bar{\delta})^2}{8}\lambda^{*2}p\fnorm{Z_1^*-U}^2\right).
\end{eqnarray*}
A similar bound holds for $\mathbb{P}\left(\wh{\lambda}(Z^*)\sum_{j=1}^p\iprod{W_{jl}}{Z_j^*(Z_l^*-U)^T}\leq -\frac{1-\delta}{2}\lambda^{*2}p\fnorm{Z_l^*-U}^2\right)$ for each $l\in[p]$.

Now we are ready to bound $\xi_{\rm ideal}(\xi)$. We have
\begin{eqnarray*}
\mathbb{E}\xi_{\rm ideal}(\xi) &=& \sum_{l=1}^p\sum_{U\in\mathcal{P}_d}\lambda^{*2}p\fnorm{Z_l^*-U}^2\mathbb{P}\left(\wh{\lambda}(Z^*)\sum_{j=1}^p\iprod{W_{jl}}{Z_j^*(Z_l^*-U)^T}\leq -\frac{1-\delta}{2}\lambda^{*2}p\fnorm{Z_l^*-U}^2\right) \\
&\leq& 4\sum_{l=1}^p\sum_{U\in\mathcal{P}_d}\lambda^{*2}p\fnorm{Z_l^*-U}^2\exp\left(-\frac{(1-\delta-\bar{\delta})^2}{8}\lambda^{*2}p\fnorm{Z_l^*-U}^2\right) \\
&=& p\exp\left(-\frac{1+o(1)}{2}p\lambda^{*2}\right),
\end{eqnarray*}
under the condition that $\frac{p\lambda^{*2}}{d\log d}\rightarrow\infty$. The desired conclusion is implied by Markov inequality.
\end{proof}

\begin{proof}[Proof of Proposition \ref{prop:ini-per}]
It is direct to check that the matrix $U^*$ such that $U^{*T}=(Z_1^*,\cdots,Z_p^*)/\sqrt{p}$ satisfies $U^*\in\mathcal{O}(pd,d)$ and collects the eigenvectors of $\mathbb{E}Y$. By Davis-Kahan theorem, there exists some $\mathcal{O}(d,d)$, such that
$$\|\wh{U}-U^*O\|\lesssim \frac{\|W\|}{p\lambda^*}\lesssim \sqrt{\frac{d}{p\lambda^{*2}}},$$
where the last inequality is by Lemma \ref{lem:large-w}.
According to the definition of $Z_j^{(0)}$, we have
\begin{eqnarray*}
\fnorm{Z_j^{(0)}-Z_j^*O} &\leq& \fnorm{Z_j^{(0)}-\sqrt{p}\wh{U}_j} + \fnorm{\sqrt{p}\wh{U}_j-\sqrt{p}U_j^*O} \\
&\leq& 2\fnorm{\sqrt{p}\wh{U}_j-\sqrt{p}U_j^*O}.
\end{eqnarray*}
Therefore,
$$\sum_{j=1}^p\fnorm{Z_j^{(0)}-Z_j^*O}^2 \leq 4\sum_{j=1}^p\fnorm{\sqrt{p}\wh{U}_j-\sqrt{p}U_j^*O}^2 = 4p\fnorm{\wh{U}-U^*O}^2\lesssim pd\|\wh{U}-U^*O\|^2\lesssim \frac{d^2}{\lambda^{*2}}.$$
Then,
\begin{eqnarray*}
\sum_{i=1}^p\sum_{j=1}^p\fnorm{Z_i^{(0)}-Z_i^*Z_j^{*T}Z_j^{(0)}}^2 &=& \sum_{i=1}^p\sum_{j=1}^p\fnorm{Z_i^{(0)}Z_j^{(0)T}-Z_i^*Z_j^{*T}}^2 \\
&=& \fnorm{Z^{(0)}Z^{(0)T}-Z^*Z^{*T}}^2 \\
&\leq& 2\fnorm{Z^{(0)}(Z^{(0)}-Z^*O)^T}^2 + 2\fnorm{(Z^{(0)}-Z^*O)O^TZ^{*T}}^2 \\
&\leq& 4p\fnorm{Z^{(0)}-Z^*O}^2 \\
&=& 4p\sum_{j=1}^p\fnorm{Z_j^{(0)}-Z_j^*O}^2 \\
&\lesssim& \frac{pd^2}{\lambda^{*2}}.
\end{eqnarray*}
Let $\bar{j}=\argmin_{j\in[p]}\sum_{i=1}^p\fnorm{Z_i^{(0)}-Z_i^*Z_j^{*T}Z_j^{(0)}}^2$. Then, we have
$$\sum_{i=1}^p\fnorm{Z_i^{(0)}-Z_i^*Z_{\bar{j}}^{*T}Z_{\bar{j}}^{(0)}}^2\leq \frac{1}{p}\sum_{i=1}^p\sum_{j=1}^p\fnorm{Z_i^{(0)}-Z_i^*Z_j^{*T}Z_j^{(0)}}^2\lesssim \frac{d^2}{\lambda^{*2}}.$$
The proof is complete.
\end{proof}

%\end{raggedright}           % Comment this out if you don't want ragged edges.
\bibliographystyle{dcu}
\bibliography{reference}

\end{document}